\documentclass[11pt]{amsart}

\usepackage[T1]{fontenc}
\usepackage[utf8]{inputenc}
\usepackage{lmodern}
\usepackage{microtype}
\usepackage{amsmath,amssymb,mathtools}
\usepackage{mathrsfs}
\usepackage{amsthm}
\usepackage{enumitem}
\usepackage{booktabs}
\usepackage[hidelinks,hypertexnames=false]{hyperref}
\usepackage{url}
\usepackage[margin=0.8in]{geometry}
\usepackage{comment}

\allowdisplaybreaks
\setlist[enumerate]{leftmargin=2.2em}
\setlist[itemize]{leftmargin=2.0em}

\newtheorem{theorem}{Theorem}[section]
\newtheorem{proposition}[theorem]{Proposition}
\newtheorem{lemma}[theorem]{Lemma}

\theoremstyle{definition}

\theoremstyle{remark}
\newtheorem{remark}[theorem]{Remark}

\newcommand{\R}{\mathbb R}

\newcommand{\Vol}{\operatorname{Vol}}
\newcommand{\Area}{\operatorname{Area}}
\newcommand{\Ric}{\operatorname{Ric}}
\newcommand{\Rm}{\operatorname{Rm}}

\newcommand{\Hess}{\nabla^2}
\newcommand{\inj}{\operatorname{inj}}

\newcommand{\Id}{\operatorname{Id}}
\newcommand{\tr}{\operatorname{tr}}
\newcommand{\im}{\operatorname{Im}}

\title[Sharp Yamabe bounds on spherical space forms]{Sharp Yamabe Bounds for Spherical Three-Space Forms via Four-Dimensional Poincar\'e--Einstein Compactness}

\author{Jakob Loren}
\address{Department of Mathematics, University of California Santa Barbara, California, USA}
\date{13 Oct. 2026}

\begin{document}

\begin{abstract}
Let $\Gamma<SO(4)$ be a nontrivial finite group acting freely on
$S^3$, and let $M=S^3/\Gamma$. We prove that every smooth conformal
class $C$ on $M$ satisfies
\[
Y(M,C)\le |\Gamma|^{-2/3}Y(S^3),
\]
with equality for the quotient-round conformal class. Consequently,
\[
\sigma(M)=|\Gamma|^{-2/3}Y(S^3).
\]

Our approach uses four-dimensional Poincar\'e--Einstein fillings of the
ball. We first establish compactness for fillings whose conformal
infinities range over a compact family of positive-scalar-curvature
metrics. Since compactness alone does not guarantee continuation through
degenerate Einstein metrics, we use equivariant Fredholm parity to
obtain a filling with the prescribed boundary conformal class and
group action.
A common interior fixed point then makes the
Li--Qing--Shi boundary test function invariant, and its descent to
$M$ gives the sharp covering factor.
\end{abstract}

\maketitle

\section{Introduction}\label{sec:intro}

In two dimensions, the uniformization theorem associates a metric
of constant curvature to every conformal class on a closed surface.
The Yamabe problem asks for an analogue in higher dimensions, with
scalar curvature taking the place of Gaussian curvature. 

Let $M^n$ be a closed manifold of dimension $n\ge3$. For a
Riemannian metric $g$, consider the normalized total scalar
curvature
\[
\mathcal S(g)=
\frac{\displaystyle\int_M R_g\,dV_g}
     {\displaystyle\Vol(M,g)^{(n-2)/n}}.
\]
The normalization makes $\mathcal S$ unchanged by constant
rescaling of the metric. Its infimum within a conformal class
$C=[g]$ is the Yamabe constant
\[
Y(M,C)=\inf_{h\in C}\mathcal S(h).
\]
The work of Yamabe, Trudinger, Aubin, and Schoen established that
this infimum is attained by a metric of constant scalar curvature;
see \cite{Yamabe1960,Trudinger1968,Aubin1976,Schoen1984,LeeParker1987}.
Thus every conformal class contains such a
metric, although the metric itself need not be unique.

A different question arises when the conformal class is also
allowed to vary. Following Kobayashi and Schoen
\cite{Kobayashi1987,Schoen1989}, one defines
\[
\sigma(M)=\sup_C Y(M,C).
\]
This is the smooth Yamabe invariant of $M$. Whereas the Yamabe
constant belongs to conformal geometry, its supremum is an
invariant of the smooth manifold. The distinction is important:
solving the Yamabe problem in each conformal class does not tell
us which conformal classes yield the largest value.

The round sphere provides the universal upper bound. Aubin's
sharp comparison gives
\[
Y(M,C)\le Y(S^n,[g_{\rm rd}]),
\qquad
\sigma(M)\le Y(S^n,[g_{\rm rd}]).
\]
Equality in the second inequality need not characterize the
sphere. For example, the product $S^{n-1}\times S^1$ has the same
Yamabe invariant as $S^n$ \cite{Kobayashi1987,Schoen1989}.
In other settings the invariant is sensitive to considerably
finer geometry. LeBrun's work on complex surfaces, for example,
revealed connections with Kodaira dimension and
Seiberg--Witten theory \cite{LeBrun1999}. Explicit values are
nevertheless rare.

Dimension three is particularly relevant.
The topology of manifolds admitting positive scalar curvature
is understood: every closed orientable such manifold is a
connected sum of spherical space forms and copies of
$S^2\times S^1$. On the other hand, geometrization and the
work of Perelman give a description of the Yamabe invariant in
the nonpositive case in terms of the hyperbolic pieces of the
manifold; see
\cite{Perelman2002,Perelman2003a,Perelman2003b,
AkutagawaIshidaLeBrun2007}.
Thus the positive case presents a somewhat different problem.
The topology is relatively simple, yet even for a manifold
carrying a metric of constant positive sectional curvature,
it can be difficult to show that this metric gives the largest
possible Yamabe constant.

The first result of this kind was obtained by Bray
and Neves \cite{BrayNeves2004}. They proved
\[
\sigma(\mathbb RP^3)=2^{-2/3}Y(S^3).
\]
The factor on the right is precisely the one suggested by the
round double covering $S^3\to\mathbb RP^3$. What is remarkable
about their proof is the route by which this factor is recovered.
An observation of Schoen relates the conformal geometry of
punctured $\mathbb RP^3$ to the Schwarzschild geometry appearing
in the Riemannian Penrose inequality. Bray and Neves developed
this connection using inverse mean curvature flow and the
monotonicity of Hawking mass. Their method illustrates how a
sharp inequality for scalar curvature may emerge from the
global geometry of a punctured three-manifold.

For a general spherical space form, the same quotient-round
value is the candidate, as anticipated in Schoen's
conjectural picture \cite{Schoen1989}. Let
\[
\Gamma<SO(4)
\]
be a nontrivial finite group acting freely on $S^3$, and set
\[
M=S^3/\Gamma,\qquad N=|\Gamma|.
\]
The round metric descends to a metric $h_{\rm rd}$ on $M$ of
scalar curvature $6$ and volume $2\pi^2/N$. Its Yamabe
constant is
\[
Y(M,[h_{\rm rd}])
=N^{-2/3}Y(S^3).
\]
The question is whether some other conformal class can give
a larger value.

Our main result answers this question.

\begin{theorem}
For every smooth conformal class $C$ on $S^3/\Gamma$,
\[
Y(S^3/\Gamma,C)
\le |\Gamma|^{-2/3}Y(S^3).
\]
Equality holds for the quotient-round conformal class.
Consequently,
\[
\sigma(S^3/\Gamma)
=|\Gamma|^{-2/3}Y(S^3).
\]
\end{theorem}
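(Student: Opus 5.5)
The upper bound will come from a single $\Gamma$-invariant test function, built from a Poincar\'e--Einstein filling of the pulled-back conformal class, and then descended to $M$. Equality and the identification of $\sigma$ are immediate. The quotient-round class gives the value $N^{-2/3}Y(S^3)$, as computed above, so the upper bound yields $\sigma(M)=N^{-2/3}Y(S^3)$.

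\textbf{Reduction to positive classes.} Fix a conformal class $C$ on $M$ and let $\tilde C=\pi^*C$ be its pullback to $S^3$. This class is $\Gamma$-invariant. If $Y(M,C)\le 0$ there is nothing to prove. Otherwise we may choose $\hat h\in C$ with positive scalar curvature, and then $\tilde h=\pi^*\hat h$ is a $\Gamma$-invariant metric of positive scalar curvature on $S^3$.

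\textbf{Step 1: an invariant filling.} We want a Poincar\'e--Einstein metric $g_+$ on $B^4$ with conformal infinity $(S^3,[\tilde h])$ on which $\Gamma$ acts by isometries extending its action on $S^3$. To obtain it, connect $\tilde h$ to the round metric by a path $\tilde h_t$ of $\Gamma$-invariant metrics of positive scalar curvature. Such a path should exist because the PSC space on $S^3/\Gamma$ is connected by Marques' theorem, and we pull it back to $S^3$. Along the path we argue in three parts.
\begin{enumerate}
\item Compactness of fillings over the compact family $\{\tilde h_t\}$, as announced in the abstract. This uses the positivity of the boundary Yamabe constant and the ball topology to exclude bubbling and cusp formation. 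Since the dimension is four, we can also use the Gauss--Bonnet--Chern formula to control $\int|W|^2$.
\item Properness of the boundary map from $\Gamma$-invariant Poincar\'e--Einstein metrics to $\Gamma$-invariant conformal infinities. Together with Fredholmness, this gives a well-defined $\mathbb Z_2$ degree.
\item A degree computation at the round endpoint. There the only filling is hyperbolic space, and it is nondegenerate, so the degree is $1$. It follows that every $\Gamma$-invariant class, in particular $\tilde C$, has a $\Gamma$-invariant filling.
\end{enumerate}
I expect the main obstacle to be the uniqueness of the filling at the round end, together with the properness statement. The first attempt for compactness will be the Anderson/Chang--Qing--Yang type arguments under positive boundary Yamabe constant. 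If degenerate metrics occur in the interior of the path, we do not try to follow them; the parity argument handles them through transversality within the invariant class.

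\textbf{Step 2: a fixed point and the invariant test function.} The filling $(B^4,g_+)$ is complete with negative curvature at infinity and is Cartan--Hadamard-like in the relevant sense, so the compact group $\Gamma$ has a fixed point $p$ in the interior. We can take $p$ to be the center of mass of any orbit, using convexity of the distance function. This needs a center-of-mass argument, or the structure results of Li--Qing--Shi on ball fillings.

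The Li--Qing--Shi inequality then gives $Y(S^3,[\tilde h])\le Y(S^3)$ by means of a boundary test function $u_p$ built from the Busemann-type function $e^{-d_{g_+}(p,\cdot)}$ viewed at infinity. Because $p$ is $\Gamma$-fixed, $u_p$ is $\Gamma$-invariant. Its Rayleigh-quotient numerator and denominator are both $N$ times the corresponding quantities for the descended function $\bar u$ on $M$. Writing $E(u)$ for the numerator $\int\bigl(8|\nabla u|^2+R\,u^2\bigr)$, we get
\[
Y(M,C)\le \frac{E(\bar u)}{\|\bar u\|_{L^6}^2}
=\frac{N^{-1}E(u_p)}{N^{-1/3}\|u_p\|_{L^6}^2}
=N^{-2/3}\frac{E(u_p)}{\|u_p\|_{L^6}^2}
\le N^{-2/3}Y(S^3).
\]
This is the covering factor and proves the theorem.
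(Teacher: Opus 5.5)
Your outline follows the paper's architecture: an invariant PSC path, compactness plus a mod-two parity argument seeded at $\mathbb{H}^4$, an interior fixed point, the Li--Qing--Shi function based there, and the covering factor $N^{2/3}$. But two steps you actually argue would fail as written.

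\textbf{The fixed point.} A Poincar\'e--Einstein filling is only asymptotically hyperbolic. In the interior nothing controls the sign of its sectional curvature or excludes conjugate points. So $d_{g_+}(y,\cdot)^2$ need not be convex, and $\sum_\gamma d_{g_+}(\cdot,\gamma y)^2$ need not have a unique minimizer. The center-of-mass argument then produces only a $\Gamma$-invariant set of minimizers, not a fixed point; ``Cartan--Hadamard-like'' is simply not available. The paper argues topologically instead:
\begin{enumerate}
\item The isometries $I_\gamma$ preserve the geodesic defining function of the invariant representative, hence extend smoothly to $\bar B^4$.
\item $\Gamma$ has a nontrivial normal $p$-subgroup. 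If $|\Gamma|$ is even, freeness forces the unique involution to be $-\Id$. If $|\Gamma|$ is odd, Feit--Thompson gives solvability, so a minimal normal subgroup is elementary abelian.
\item A fixed-point theorem for such finite actions on the acyclic manifold $\operatorname{int}B^4\cong\R^4$ then gives a common fixed point.
\end{enumerate}

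\textbf{Compactness.} This is where most of the paper's work lies, and the mechanism you suggest does not supply it. For a PE four-manifold, Gauss--Bonnet--Chern writes $\int|W|^2$ as a multiple of $\chi(B^4)=1$ minus a positive multiple of the renormalized volume. That volume is bounded above by Bishop--Gromov but has no a priori lower bound, so you get no bound on the Weyl energy. That bound is exactly the hypothesis the existing (Chang--Ge) compactness theorem requires. The paper reverses the order. It first proves uniform bounds on $|\Rm|+|\nabla\Rm|$ for the scalar-flat adapted compactifications by blow-up. Only then does it bound the volume, by testing the first Escobar quotient with $u\equiv1$ and using the Gauss bound on $H$, and hence $\int|W|^2$. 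The blow-up has two cases, and you treat neither.
\begin{enumerate}
\item If curvature concentrates at bounded distance from the boundary, the limit has conformal boundary $(\R^3,\delta)$. A rigidity theorem then shows it is $\mathbb{H}^4$ with flat adapted compactification, by playing hyperbolic volume comparison of the renormalized distance against the sharp Sobolev inequality on $\R^3$.
\item If it escapes into the interior, the defining-function equations force a Ricci-flat limit. The inherited Sobolev inequality and four-dimensional curvature estimates make it ALE. Because its core embeds in $B^4$, one gets $\chi=1$, $\tau=0$, and link $S^3/Q_8$ or $S^3/I^*$. The eta invariant of either link violates the ALE Gauss--Bonnet/signature bound.
\end{enumerate}

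\textbf{Smaller points.} Your parity step silently needs several things:
\begin{itemize}
\item the invariant kernel and cokernel must agree as representations;
\item invariant boundary variations must span the invariant obstruction, which uses a Green identity plus unique continuation;
\item the argument must run on marked fillings, so that the prescribed action survives limits.
\end{itemize}
Round-end uniqueness is not the obstacle you expect. With any base point, $Y(S^3)\le\mathcal Q_{g_{\rm rd}}(w)\le Y(S^3)\Theta^{2/3}$ forces $\Theta=1$, and Bishop--Gromov rigidity gives $\mathbb{H}^4$. Finally, Marques gives connectedness only modulo diffeomorphisms. That suffices, since $Y$ is diffeomorphism invariant, or you can cite Bamler--Kleiner as the paper does.
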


There is a simple reason for the appearance of the group order,
but also a subtle reason why the theorem does not follow from
the ordinary Yamabe inequality on the sphere.

Let $\pi:S^3\to M$ be the covering map, let $h$ be a metric
on $M$, and put $q=\pi^*h$. For a nonzero function $u$ on $M$,
the three-dimensional Yamabe quotient satisfies
\[
\mathcal Q_q(\pi^*u)
=N^{2/3}\mathcal Q_h(u).
\]
Indeed, both the energy integral and the integral of the sixth
power are multiplied by $N$, while the latter appears to the
power $1/3$ in the denominator. Thus the desired bound would
follow at once if, for every lifted conformal class $[q]$, we
could find a $\Gamma$-invariant function $w$ on $S^3$ with
\[
\mathcal Q_q(w)\le Y(S^3).
\]
Aubin's inequality guarantees the existence of a good test
function on $S^3$, but it gives no reason for that function to
be invariant under $\Gamma$. This is the essential difficulty.
We need a construction which remembers the group action while
retaining the sharp constant of the sphere.

The construction used here comes from four-dimensional
Poincar\'e--Einstein geometry. A Poincar\'e--Einstein metric
$G$ on the interior of the four-ball satisfies
\[
\Ric_G=-3G
\]
and, after multiplication by the square of a suitable defining
function, extends to the boundary. The induced conformal class
on $S^3$ is called its conformal infinity. Hyperbolic
four-space is the simplest example: in its ball model, the
conformal infinity is the round three-sphere.

We may therefore regard a Poincar\'e--Einstein filling as a
way of associating four-dimensional geometry to a conformal
class on $S^3$. The usefulness of this association is that
distances and volumes inside the filling give rise to functions
on its conformal boundary. The relative-volume construction
of Li, Qing, and Shi \cite{LiQingShi2017} supplies just the
Yamabe comparison we need, provided that the filling and a
suitable interior base point respect the action of $\Gamma$.

It remains to produce such a filling for an arbitrary
$\Gamma$-invariant conformal class of positive Yamabe type.
The round conformal class has the hyperbolic filling, and
the deformation theorem of Graham and Lee
\cite{GrahamLee1991} supplies fillings for nearby boundary
data. There is also a path joining the round boundary metric
to any prescribed lifted positive-scalar-curvature metric:
the contractibility theorem of Bamler and Kleiner
\cite{BamlerKleiner2019}, applied on $M$ and followed by
lifting to $S^3$, gives such a path.

We are led to a continuation problem. Starting with the
hyperbolic metric, can one follow Einstein fillings along
the entire path of conformal infinities?

Two difficulties arise, and they are of different kinds.
The first is geometric: a sequence of fillings may cease to
be compact. The second is analytic and topological: even if
compactness is known, a filling may be degenerate, so the
implicit function theorem cannot always continue it to
nearby boundary data. The proof treats these difficulties
separately.

We begin with compactness. Chang and Ge's theory of
scalar-flat adapted compactifications \cite{ChangGe2025}
allows the Einstein equation to be studied on the closed
four-ball, where curvature and boundary geometry can be
compared directly. Our compactness argument begins by 
excluding curvature concentration. The resulting curvature 
estimates then yield a bound on the total Weyl energy.

To see the nature of the argument, suppose that a sequence
of fillings develops unbounded curvature. Rescaling around
points of large curvature produces a complete limiting
geometry. There are two possibilities, according to whether
the rescaled points remain at bounded distance from the
boundary or escape into the interior.

In the first case the boundary becomes Euclidean. We prove
a rigidity theorem for complete Einstein fillings with flat
conformal boundary. A renormalized distance function yields
a differential inequality on $\mathbb R^3$. Hyperbolic
volume comparison gives an upper bound for its critical
$L^6$ norm, while the sharp Euclidean Sobolev inequality
gives the reverse bound. Equality forces the Einstein
metric to be hyperbolic, and its adapted compactification
to be flat. This contradicts the curvature retained by
the rescaling.

The interior case leads to a different phenomenon. The
adapted defining-function equations force the limiting
metric to be Ricci-flat. A uniform Sobolev inequality,
together with four-dimensional curvature estimates, gives
Euclidean volume growth and finite total curvature.
The limit is therefore asymptotically locally Euclidean,
or ALE: far from a compact set, it resembles Euclidean
four-space modulo a finite group.

Here topology becomes decisive. The compact core of the
ALE limit must embed in the original four-ball. Such an
embedding imposes strong restrictions on the topology
of the core and on the spherical three-manifold at
infinity. The Chern--Gauss--Bonnet and signature formulas
then relate the curvature of the limit to the eta
invariant of that boundary. The resulting inequality
is incompatible with the possible nontrivial links.
Thus an interior curvature bubble cannot occur either.

Once both possibilities have been excluded, the curvature
bounds yield the missing volume and Weyl-energy estimates.
A finite-regularity elliptic bootstrap then completes the
compactness theorem. 

We next return to the continuation problem. Compactness
does not by itself imply existence at the far end of a
path. For Einstein metrics, the difficulty arises when the
linearized Einstein operator has a nontrivial kernel: the implicit
function theorem then no longer guarantees local continuation.

The appropriate replacement for ordinary continuation is
a mod-two Fredholm degree, or parity principle. If the solutions 
over a path form a compact one-dimensional manifold, then they
have an even number of boundary points. In our problem
the round conformal class has exactly one regular filling,
the hyperbolic metric. If the terminal class had no
filling, the space of solutions would have just one
boundary point, an impossibility. The difficulty is to
make this picture valid when degenerate fillings occur.

For this purpose we construct compatible
finite-dimensional continuation families for the
Einstein equation. The key analytic ingredient is a weighted
Einstein--DeTurck equation with harmonic-map gauge fixing,
using the deformation theory of
\cite{Biquard2000,Lee2006,Anderson2010} and boundary
regularity results such as \cite{CDLS2005}.
The boundary expansion and the choice of weight are
arranged so that the compactified solutions possess
sufficient regularity. The finite-dimensional parameters
are chosen to span the obstruction space to solving the
linearized equation; a boundary Green identity shows
that the required invariant variations can be found.

There is a further point which cannot be omitted. Local
families constructed in different gauges must describe
the same geometric solutions, and their changes of
coordinates must be smooth. We establish this by using
elliptic regularity to retain enough spatial derivatives
for the finite-dimensional families. We also compare
the geometric compactness topology with the topology
of the gauge-fixed equations. In this way the compactness
proved earlier becomes compactness of the actual
continuation space, rather than merely compactness of
metrics modulo an unspecified choice of coordinates.

After adjoining a common finite-dimensional space of
boundary perturbations, one obtains a compact
finite-dimensional continuation structure. Sard's
theorem provides a transverse one-dimensional
solution manifold, and the parity argument applies.
The perturbations are chosen to vanish at the endpoints,
so the resulting contradiction forces a filling of
the prescribed conformal infinity.

The final step is comparatively short. The equivariant
filling carries an isometric action of $\Gamma$ extending
the given action on $S^3$. A fixed-point theorem for
finite group actions gives a common point in the
interior of the ball. Taking distances from this point
in the Li--Qing--Shi construction produces a
$\Gamma$-invariant boundary function $w$. The sharp
relative-volume inequality gives
\[
\mathcal Q_q(w)\le Y(S^3).
\]
Since $w$ descends to a function $\bar w$ on $M$, the
covering identity yields
\[
Y(M,C)
\le \mathcal Q_h(\bar w)
=N^{-2/3}\mathcal Q_q(w)
\le N^{-2/3}Y(S^3),
\]
as required.

The proof thus brings together two forms of rigidity.
Geometric rigidity prevents Einstein fillings from
escaping by curvature concentration; parity prevents
them from disappearing at a degenerate solution.
Together they allow a four-dimensional comparison
to produce the sharp three-dimensional invariant.

\section{Main results and notation}\label{sec:results}

We begin by fixing the normalizations used throughout the paper.
For a closed Riemannian three-manifold $(M^3,h)$ and a nonzero
function $u\in H^1(M)$, the Yamabe quotient is
\begin{equation}\label{eq:yamabe-quotient}
\mathcal Q_h(u):=
\frac{\displaystyle\int_M\left(8|du|_h^2+R_hu^2\right)\,dV_h}
{\displaystyle\left(\int_M|u|^6\,dV_h\right)^{1/3}},
\qquad
Y(M,[h]):=\inf_{0\neq u\in H^1(M)}\mathcal Q_h(u).
\end{equation}
The constants $8$ and $6$ are those of the conformal Laplacian
and the critical Sobolev exponent in dimension three. We write
\begin{equation}\label{eq:sphere-normalization}
\omega_3:=\Vol(S^3,g_{\rm rd})=2\pi^2,
\qquad
\Lambda:=Y(S^3,[g_{\rm rd}])=6\omega_3^{2/3}.
\end{equation}
Throughout, $\Gamma<SO(4)$ denotes a nontrivial finite subgroup
acting freely on $S^3$. We put $N:=|\Gamma|$, let $M=S^3/\Gamma$,
and write $\pi:S^3\to M$ for the quotient map. Thus $N$ is both
the order of the group and the degree of the covering.

Our principal result is the following.

\begin{theorem}[Sharp classwise Yamabe bound]\label{thm:main}
For every smooth conformal class $C$ on $M$,
\begin{equation}\label{eq:main-bound}
Y(M,C)\le N^{-2/3}\Lambda.
\end{equation}
Equality holds for the quotient-round conformal class $C_{\rm rd}$:
\begin{equation}\label{eq:round-equality}
Y(M,C_{\rm rd})=N^{-2/3}\Lambda.
\end{equation}
In particular,
\begin{equation}\label{eq:main-sigma}
\sigma(S^3/\Gamma)=N^{-2/3}\Lambda.
\end{equation}
\end{theorem}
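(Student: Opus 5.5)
The plan follows the outline in the introduction, and reduces the bound \eqref{eq:main-bound} to producing, for each conformal class, one good $\Gamma$-invariant test function on $S^3$.

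\textbf{Step 1: reductions.} The equality \eqref{eq:round-equality} is a direct computation. The quotient-round metric $h_{\rm rd}$ has $R=6$ and volume $\omega_3/N$. By the covering identity $\mathcal Q_{\pi^*h}(\pi^*u)=N^{2/3}\mathcal Q_h(u)$, every minimizing sequence on $M$ lifts to a $\Gamma$-invariant sequence on $S^3$, so $Y(M,C_{\rm rd})\ge N^{-2/3}\Lambda$. Constants give the reverse inequality: $6(\omega_3/N)^{2/3}=N^{-2/3}\Lambda$. The statement \eqref{eq:main-sigma} then follows from \eqref{eq:main-bound} by taking the supremum over $C$.

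For \eqref{eq:main-bound}, if $Y(M,C)\le 0$ there is nothing to prove. So assume $C$ has positive Yamabe type and pick $h\in C$ with $R_h>0$. Put $q=\pi^*h$; it is $\Gamma$-invariant. It then suffices to find a $\Gamma$-invariant $w\ne0$ on $S^3$ with $\mathcal Q_q(w)\le\Lambda$. Such a $w$ descends to $\bar w$ on $M$, and $Y(M,C)\le\mathcal Q_h(\bar w)=N^{-2/3}\mathcal Q_q(w)$.

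\textbf{Step 2: an equivariant Poincar\'e--Einstein filling of $[q]$.} Bamler--Kleiner contractibility of the space of positive-scalar-curvature metrics on $M$ gives a path $h_t$ from $h_{\rm rd}$ to $h$. Lifting it gives a compact family $q_t=\pi^*h_t$ of $\Gamma$-invariant positive-scalar-curvature metrics on $S^3$. The earlier compactness theorem applies to Poincar\'e--Einstein fillings of $B^4$ whose infinities lie in this family. Together with the equivariant Fredholm parity argument, this gives a filling $G$ with conformal infinity $[q]$, and by the paper's earlier results $\Gamma$ acts isometrically on $(B^4,G)$ extending the action on $S^3$.

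The parity argument runs on the $\Gamma$-invariant slice of the gauge-fixed Einstein--DeTurck problem, perturbed by finite-dimensional invariant boundary perturbations that vanish at the endpoints. At $t=0$ the only filling is hyperbolic space, which is nondegenerate. So the mod-two count of boundary points of the one-dimensional solution manifold forces a solution at $t=1$. A limiting argument, using compactness again, then removes the perturbation.

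\textbf{Step 3: a common fixed point and the test function.} The closure $\bar B^4$ with the extended action is a finite group acting on a contractible compact manifold, and its interior is a complete Cartan--Hadamard-like Einstein space. I would try the center of mass of a $\Gamma$-orbit, which requires convexity of the distance function, or a Smith-theory/fixed-point result for finite groups acting on the ball. Either way the fixed point must lie in the interior, since $\Gamma$ acts freely on the boundary $S^3$. Call it $p$.

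Take the Li--Qing--Shi function $w$ built from the renormalized distance $r_p$ to $p$, namely $w=\lim e^{-(\cdot)}$ of $e^{r_p}$ times the defining function. It is $\Gamma$-invariant because $p$ is fixed and $\Gamma$ acts by isometries. Their relative volume comparison (Bishop--Gromov against $\mathbb H^4$) gives $\mathcal Q_q(w)\le\Lambda$, which completes the argument.

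\textbf{Main obstacle.} The main obstacle is Step 2: ruling out curvature blow-up along the path (the boundary and ALE bubble analysis) and making the parity argument rigorous through degenerate fillings while keeping $\Gamma$-equivariance. That is, one must justify that the invariant obstruction space is spanned by invariant boundary variations, and that the gauges are compatible. I would rely on the preceding sections for both. The fixed-point step is the secondary risk if the filling is not known to be nonpositively curved.
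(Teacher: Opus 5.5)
\textbf{Where the proposal agrees with the paper.} Your Steps 1 and 2 and the final descent follow the paper's route. That covers the reduction to positive Yamabe type, the Bamler--Kleiner path lifted to $\Gamma$-invariant PSC metrics on $S^3$, compactness plus equivariant parity giving the filling with its coherent action (Proposition~\ref{prop:equivariant-filling}), the covering identity, and the round-class equality via constants and lifting. One small correction: no limiting argument is needed to remove the perturbation. The auxiliary boundary directions are cut off on collars of both endpoints, and the paper argues by contradiction from an empty terminal fiber, obtaining a compact one-manifold with exactly one boundary point.

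\textbf{The gap: Step~3.} You never actually produce a common interior fixed point, and neither of your suggestions works as stated. A center of mass needs convexity of $d_G^2$, essentially nonpositive curvature. Nothing in the continuation gives a curvature sign for a filling of a non-round PSC boundary. Without it, the minimizers of $\sum_\gamma d_G(\cdot,I_\gamma p)^2$ form an invariant compact set that need not be a point. Nor is there a general fixed-point theorem for finite groups acting on a ball. Smith theory only covers $p$-groups, and there are smooth fixed-point-free actions of non-$p$-groups on disks (Floyd--Richardson for $A_5$, Oliver in general); composing with $I^*\to A_5$ already affects an admissible $\Gamma$. The fixed point cannot be cheaply bypassed either: using the distance to the orbit gives an invariant function, but volume comparison then bounds $A$ at best by $N\omega_3$, which after descent only returns Aubin's bound.

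\textbf{What fills it (Proposition~\ref{prop:common-fixed-point}).} Three ingredients are needed.
\begin{enumerate}
\item The isometries $I_\gamma$ extend smoothly and orientation-preservingly to $\bar B^4$. This holds because the geodesic defining function of the invariant representative $q$ is unique, so $x\circ I_\gamma=x$. The same identity is what makes $r$, hence $w$, invariant; fixing $p$ alone does not suffice.
\item Lemma~\ref{lem:normal-p-subgroup} gives a nontrivial normal $p$-subgroup $P$. If $|\Gamma|$ is even, freeness forces the only involution to be $-\Id$, so take $P=\{\pm\Id\}$. If $|\Gamma|$ is odd, use Feit--Thompson and a minimal normal subgroup.
\item Apply Smith theory to $P$ alone, in dimension four, via the Guazzi--Mecchia--Zimmermann lemma. $\operatorname{Fix}(P)$ is nonempty and $\mathbb F_p$-acyclic, and $\Gamma$ preserves it by normality.
\end{enumerate}
For $P=\{\pm\Id\}$ you can see this directly. $P$ acts freely on $S^3$, so $\operatorname{Fix}(P)$ is a compact, connected, closed submanifold of the interior of even codimension. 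No closed surface is $\mathbb F_2$-acyclic, so it is a single point, which every $I_\gamma$ must fix.

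\textbf{Test-function normalization.} With $r=-\log(x/2)$ and $t=d_G(p,\cdot)$, the Li--Qing--Shi function is $w=e^{(r-t)/2}|_{S^3}$. The quantity ``$e^{t}$ times the defining function'' that you wrote is $2w^{-2}$, not $w$.
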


The equality statement follows from the sharp Yamabe inequality
on the round sphere and the elementary behavior of the Yamabe
quotient under a finite covering. The inequality for an arbitrary
conformal class is the substantial part of the theorem.

If $Y(M,C)\le0$, there is nothing to prove. We may therefore
suppose that $C$ has positive Yamabe constant and choose a metric
$h\in C$ of positive scalar curvature. As explained in the
introduction, the space of positive-scalar-curvature metrics on
$M$ is path connected. We can consequently join the
quotient-round metric to $h$ by a smooth path $h_s$ of such metrics.
Lifting to the sphere gives
\[
q_s=\pi^*h_s,\qquad 0\le s\le1,
\]
a smooth compact family of $\Gamma$-invariant boundary metrics
with $q_0=g_{\rm rd}$.

This path provides the setting for the proof. We seek an Einstein
metric on the interior of $B^4$ whose conformal infinity is
$[q_1]$ and whose isometries extend the prescribed action of
$\Gamma$ on the boundary. The hyperbolic metric gives such a
filling at $s=0$. The problem is to carry this filling to
$s=1$.

There are two distinct questions. First, can a family of
Einstein fillings escape by developing unbounded curvature?
Second, if such escape is excluded, can a filling nevertheless
disappear when the linearized Einstein equation becomes
degenerate? The first question is answered by the compactness
theorem below; the second by the equivariant continuation
theorem developed in
Section~\ref{sec:finite-equivariant-continuation}.

For continuation it is useful to retain the boundary metric
itself, rather than only its conformal class. Near a given
filling $G$, the Einstein equation is written in harmonic
identity gauge on complete weighted H\"older spaces. Its
linearization has a finite-dimensional space $K_G$ of
square-integrable infinitesimal Einstein deformations,
usually called the $L^2$ Jacobi space. The obstruction to
solving the linearized equation is identified with this
same space by a self-adjoint pairing.

The presence of $\Gamma$ requires more than the ordinary
numerical index-zero statement. Both the kernel and the
obstruction space must be identified as representations of
the group. The resulting invariant kernel and cokernel
are each $K_G^\Gamma$. A boundary Green identity then shows
that the invariant obstructions can be reached by finitely
many smooth $\Gamma$-invariant variations of the prescribed
boundary metric.

These variations serve as additional parameters in the
Einstein equation. The enlarged equation has smooth
finite-dimensional local solution manifolds, even at
degenerate fillings. We prove that different choices of
harmonic gauge lead to smoothly compatible descriptions
of the same geometric solutions. This permits us to work
with a Hausdorff space of \emph{marked fillings}: fillings
for which the identification of the conformal boundary
with the prescribed $S^3$ is retained, and equivalence is
taken only under diffeomorphisms fixing that boundary.

The distinction between marked and unmarked fillings is
important. The original action of $\Gamma$ on $S^3$ must
remain identifiable throughout continuation; otherwise,
a limiting metric could exist without giving the
equivariant filling required for the Yamabe argument.

Once compactness has been established in this marked
solution space, the continuation argument becomes a
question of parity. The round boundary has exactly one
regular filling, namely the hyperbolic metric. Suppose
that the terminal boundary metric had no equivariant
filling. By perturbing the finite-dimensional boundary
parameters away from the endpoints, we obtain a compact
one-dimensional manifold whose only boundary point is
the hyperbolic filling. This is impossible, since every
compact one-dimensional manifold has an even number of
boundary points.

Thus an equivariant filling exists at $s=1$, whether or
not its Jacobi kernel is trivial. Boundary-identity
rigidity ensures that the prescribed action of $\Gamma$
extends uniquely to a coherent action by isometries of
the filling.

The passage from this existence result to the Yamabe
inequality is short. The finite group action has a
common interior fixed point $p$. Using $p$ as the
base point in the Li--Qing--Shi construction gives a
nonzero $\Gamma$-invariant boundary function
$w\in H^1(S^3)$ satisfying
\[
\mathcal Q_q(w)\le\Lambda.
\]
Because $w$ is invariant, it descends to a function
$\bar w$ on $M$. With $q=\pi^*h$ and $w=\pi^*\bar w$,
the covering formula reads
\[
\mathcal Q_q(w)=N^{2/3}\mathcal Q_h(\bar w).
\]
Consequently,
\[
Y(M,C)\le\mathcal Q_h(\bar w)
=N^{-2/3}\mathcal Q_q(w)
\le N^{-2/3}\Lambda.
\]
The reverse inequality in the quotient-round class
follows by lifting every test function to the round
sphere. This proves the equality statement as well.

We turn now to the geometric compactness theorem on
which the continuation argument rests. Let
$\operatorname{Met}^{k,\alpha}_{\rm PSC}(S^3)$ denote
the space of $C^{k,\alpha}$ metrics on $S^3$ having
positive scalar curvature. For a Poincar\'e--Einstein
metric $G$ with prescribed boundary representative $q$,
we use the scalar-flat adapted compactification
$g=\rho^2G$. Its purpose is to replace the complete
metric $G$, which has infinite volume and lies at
infinite distance from its conformal boundary, by a
metric on the closed four-ball for which curvature and
boundary estimates can be formulated in ordinary
geometric terms.

The theorem asserts that a compact family of positive
boundary metrics prevents degeneration of these adapted
compactifications.

\begin{theorem}[Compactness over compact PSC boundary families]\label{thm:compactness}
Fix an integer $k\ge 6$ and $0<\alpha<1$.  Let
\[
\mathcal K\Subset \operatorname{Met}^{k,\alpha}_{\rm PSC}(S^3)
\]
be compact, and let $q_i\in\mathcal K$ be smooth.  Let $G_i$ be smooth
Poincar\'e--Einstein metrics on $\operatorname{int}B^4$ satisfying
\begin{equation}\label{eq:PE-sequence}
\Ric(G_i)=-3G_i,
\qquad
\partial_\infty[G_i]=[q_i].
\end{equation}
Let $g_i=\rho_i^2G_i$ be the scalar-flat adapted compactification determined by
$q_i$.  Then there exist a subsequence, boundary-fixing diffeomorphisms
$\Phi_i:B^4\to B^4$, and a scalar-flat adapted compactification
$g_\infty=\rho_\infty^2G_\infty$ of a smooth Poincar\'e--Einstein metric such
that, for every $0<\alpha'<\alpha$,
\begin{equation}\label{eq:compactness-conclusion}
\Phi_i^*g_i\longrightarrow g_\infty
\quad\text{in }C^{k,\alpha'}(B^4).
\end{equation}
The limiting pair has the asserted finite boundary regularity. If
the limiting boundary metric is smooth, it has a smoothly
compactified representative after a boundary-marked change of
coordinates. The PE metric is smooth on the interior in all cases.

\end{theorem}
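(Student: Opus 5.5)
The plan follows the blow-up scheme sketched in the introduction. The first goal is a uniform bound
\[
\sup_{B^4}|\Rm_{g_i}|_{g_i}\le C,
\]
together with uniform control of the boundary second fundamental form and a lower bound on $\inj$ up to the boundary in the adapted compactifications $g_i=\rho_i^2G_i$. From this bound the remaining estimates should follow in turn: volume, total Weyl energy $\int|W|^2$, and, by elliptic bootstrap, $C^{k,\alpha}$ control. I would argue by contradiction. Suppose $\lambda_i^2:=\max|\Rm_{g_i}|\to\infty$ at points $x_i$; after a point-picking argument, $x_i$ can be taken almost maximal on a ball of radius $A/\lambda_i$. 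Rescale $\tilde g_i=\lambda_i^2g_i$. Since the $q_i$ lie in the compact set $\mathcal K\subset\operatorname{Met}^{k,\alpha}_{\rm PSC}$, the rescaled boundary metrics converge to a flat $\R^3$. Chang--Ge's adapted equations control $\rho_i$ and its derivatives in terms of the curvature of $g_i$, so the pointed limits exist in $C^{1,\beta}$ (and better on the interior). I would split according to whether $\lambda_i\,d_{g_i}(x_i,\partial B^4)$ stays bounded or tends to infinity.

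\emph{Boundary case.} The limit is a complete Einstein filling whose conformal infinity is flat $\R^3$, and whose adapted compactification is curved at a point at bounded distance from the boundary. I would prove rigidity as follows. Take the renormalized distance $r$ from a base point, and set $u=e^{-r}$ suitably normalized on the boundary. Laplacian comparison for $\Ric=-3G$ gives a differential inequality for the induced boundary function on $\R^3$. Relative volume comparison with $\mathbb H^4$, in the manner of Li--Qing--Shi, bounds its $L^6$ norm from above in terms of the energy. The sharp Euclidean Sobolev inequality gives the reverse bound. Equality in Bishop--Gromov then forces $G$ to be hyperbolic and the adapted compactification to be the flat half-space, which contradicts $|\Rm|=1$ at the base point.

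\emph{Interior case.} As $\rho_i\lambda_i\to\infty$ in rescaled units, the defining-function equations force the limit of $\tilde g_i$ to be Ricci-flat. To get a uniform Sobolev constant I would use the positivity of the boundary Yamabe constants over $\mathcal K$. Together with $\epsilon$-regularity for four-dimensional Einstein metrics, this gives Euclidean volume growth and $\int|\Rm|^2<\infty$, so the limit is ALE with group $\Gamma'\subset O(4)$ at infinity. The ALE core embeds in $B^4$, so its boundary $S^3/\Gamma'$ embeds in $\R^4$. This restricts the topology: the core is a rational homology ball and $\Gamma'$ is cyclic of odd order or trivial. The Chern--Gauss--Bonnet and signature formulas with the $\eta$-invariant correction then give
\[
\chi=\frac1{8\pi^2}\int|W|^2+\frac1{|\Gamma'|},\qquad \tau=-\frac1{12\pi^2}\int(|W^+|^2-|W^-|^2)-\eta(S^3/\Gamma').
\]
With $\chi=1$ and $\tau=0$ these are incompatible for nontrivial $\Gamma'$. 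For trivial $\Gamma'$ the formulas give $\int|W|^2=0$, so the limit is flat $\R^4$, contradicting $|\Rm|=1$.

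Once curvature is bounded, I would bound the volume of $g_i$ by Gauss--Bonnet with boundary, bound the Weyl energy, and obtain a uniform injectivity radius and a collar from the boundary estimates. Cheeger--Gromov compactness with boundary then supplies the boundary-fixing diffeomorphisms $\Phi_i$; harmonic coordinates adapted to the marked boundary keep them boundary-fixing. Boundary regularity of CDLS type, together with the elliptic system for $(g,\rho)$, bootstraps the convergence to $C^{k,\alpha'}$, losing only the Hölder exponent, and interior smoothness follows from the Einstein equation. I expect the interior bubble step to be the main obstacle, specifically the combination of the uniform Sobolev bound and the topological exclusion via $\eta$-invariants. If that exclusion fails for some lens space $\Gamma'$, I would first try to refine it using the Weyl-energy bound inherited from the boundary Yamabe positivity.
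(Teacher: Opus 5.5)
The outline is the paper's, and your boundary case is essentially the paper's argument: renormalized distance, Laplacian comparison, a Bishop--Gromov upper bound on the $L^6$ mass, the sharp Euclidean Sobolev lower bound, then flatness of the adapted compactification. The interior case, however, fails at the decisive step.

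\textbf{The topological restriction is backwards.} By Hantzsche's theorem, the torsion of $H_1$ of a closed $3$-manifold embedded in $S^4$ has the form $A\oplus A$. So no lens space $L(p,q)$ with $p>1$ embeds, whether of odd order or not. The only spherical links that can occur are $S^3$, $S^3/Q_8$ and $S^3/I^*$ (Crisp--Hillman, Hillman), and this is the list the paper uses.

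\textbf{The claimed incompatibility is false as a general statement.} You assert that $\chi=1$, $\tau=0$ contradict the ALE Gauss--Bonnet and signature formulas for every nontrivial $\Gamma'$. Eliminating $\int|W^\pm|^2$ only gives $|\eta(S^3/\Gamma')|\le\frac23(1-|\Gamma'|^{-1})$, and a contradiction needs an eta invariant violating this. It fails in the very class you allowed: for $L(5,2)$ the cotangent-sum formula gives $\eta=0$. What is missing is threefold:
\begin{enumerate}
\item the correct list of links;
\item a proof that the free representation of $Q_8$ or $I^*$ lies in one $SU(2)$ factor up to orientation, since $\eta$ depends on the representation and not just the group;
\item the explicit values $|\eta(S^3/Q_8)|=3/4>7/12$ and $|\eta(S^3/I^*)|=361/180>119/180$.
\end{enumerate}
Your fallback is not available either. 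No a priori Weyl-energy bound follows from boundary Yamabe positivity; in the paper that bound is an \emph{output} of the curvature bound.

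Several supporting steps also fail as written.
\begin{enumerate}
\item Divergence $\rho_i\lambda_i\to\infty$ at the base point does not follow from the boundary receding, since $|d\rho|\le1$ only gives $\rho\le d(\cdot,\partial B^4)$. The paper argues by contradiction. A bounded limit would be a positive solution of $\Delta\rho=-2(1-|d\rho|^2)/\rho$ with $|d\rho|\le1$ on a complete manifold with Ricci bounded below, which Omori--Yau excludes.
\item $\epsilon$-regularity cannot yield $\int|\Rm|^2<\infty$ without an energy bound. The paper uses the Cheeger--Naber scale-invariant $L^2$ estimate for noncollapsed bounded-Ricci four-manifolds, applied at every scale.
\item Gauss--Bonnet with boundary does not bound the volume: with $R=0$ its integrand is a Weyl term minus a Ricci term and has no sign. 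The paper tests the first Escobar quotient on $u\equiv1$, getting $c_0\Vol(B^4,g_i)^{1/2}\le2\int_{S^3}H_i$.
\item Rescaling by $\sup|\Rm|$ alone gives only $C^{1,\beta}$ control up to the boundary. Then $|\Rm|=1$ at a base point approaching $\partial B^4$ need not survive in the limit. The expansion $h_x=\delta+O(x^{2+\beta})$ needed for rigidity is also unavailable. The paper instead:
\begin{itemize}
\item normalizes by $\max\{|\Rm|,|\nabla\Rm|^{2/3}\}$;
\item proves injectivity and collar bounds at the blow-up scale, which you postpone until after curvature is bounded;
\item obtains $C^{3,\beta}$ boundary charts from the mixed Dirichlet--Neumann problem for the Schouten tensor.
\end{itemize}
\end{enumerate}
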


The role of the boundary-fixing diffeomorphisms in this
statement is worth emphasizing. They allow the interior
coordinates to change while keeping the prescribed
identification of the boundary fixed. Thus the theorem
gives precisely the geometric form of compactness
compatible with the group action.

The proof begins by supposing that curvature, or its
first covariant derivative, becomes unbounded. Rescaling
at the appropriate curvature scale gives a complete
pointed limit with a nonzero curvature defect. If the
boundary remains at finite distance, its induced metric
becomes flat, and the boundary rigidity theorem forces
the limit to be flat as well. If the boundary escapes,
the adapted defining-function equations give a
complete Ricci-flat limit. Sobolev and curvature
estimates make this limit ALE, while an obstruction
to embedding its compact core in the four-ball
excludes the nonflat case.

Both possibilities lead to contradictions. Curvature
and its first derivative are therefore uniformly
bounded. The scalar Gauss equation then controls the
boundary mean curvature. Testing the first Escobar
inequality on the constant function gives a uniform
bound for the compactified volume. In dimension four,
the integral of $|W|^2$ is conformally invariant;
hence these estimates also control the total Weyl
curvature of the Einstein fillings.

The remaining regularity is obtained from the
coupled interior Schouten equations and the boundary
harmonic equations. The estimates are carried out
at each prescribed finite order.

There is one final distinction to make before applying
this result to continuation. The compactness theorem
concerns sequences of metrics after boundary-fixing
changes of coordinates. The parity argument, however,
takes place in a particular space of marked solutions
of an elliptic equation. Geometric convergence must
therefore be shown to agree with convergence in that
space. It is not enough simply to refer to an
unspecified moduli topology.

Section~\ref{sec:continuation-compactness-bridge}
makes this passage explicit. It constructs the marked
topology, proves the required Hausdorff and uniqueness
properties, and identifies the inverse image of the
prescribed boundary path with a compact auxiliary
quotient of adapted pairs. The geometric compactness
theorem then supplies the compactness needed by the
finite-dimensional continuation argument.

In this way the two principal ingredients play
complementary roles. Compactness keeps Einstein
fillings from escaping; parity prevents them from
disappearing. Together they produce the equivariant
filling from which the sharp Yamabe inequality follows.

\section{Preliminaries}\label{sec:prelim}

The compactness argument begins with an apparent difficulty.  A
Poincar\'e--Einstein metric is complete and places its conformal boundary
at infinite distance.  To study a sequence of such metrics, we would
prefer to work instead on a compact manifold with boundary.  A
suitable conformal change accomplishes this, and, more importantly,
turns the Einstein equation into identities that remain useful when
the metrics are rescaled.

We collect three sets of facts.  First come the equations satisfied
by the compactified metric and its defining function.  Next are the
Sobolev and injectivity estimates that prevent collapse.  Finally we
prove the finite-order regularity needed to take limits at the scale
of large curvature.  The last step is essential: a bound for curvature
alone would not necessarily preserve a nonzero covariant derivative
of curvature in the limit.

\subsection{Poincar\'e--Einstein metrics and adapted compactifications}

Let $X=\operatorname{int}\bar X$ be the interior of a compact
four-manifold with boundary.  A \emph{defining function} is a smooth
function $\rho$ that is positive on $X$, vanishes precisely on
$\partial\bar X$, and has nonvanishing differential there.  A complete
metric $G$ on $X$ is \emph{conformally compact} if, for such a $\rho$,
the metric
\[
 g=\rho^2G
\]
extends to a Riemannian metric on $\bar X$.  We call $G$
\emph{Poincar\'e--Einstein} (PE) when, with our normalization,
\begin{equation}\label{eq:PE}
\Ric_G=-3G.
\end{equation}

Different choices of $\rho$ change the metric induced on the boundary
by a positive conformal factor.  Thus they determine the same
conformal class.  This class, denoted $\partial_\infty[G]$, is the
\emph{conformal infinity} of $G$.  We shall frequently specify not
only this class but an actual representative $q$.

For a representative $q$ of positive scalar curvature, Chang--Ge
construct a canonical \emph{scalar-flat adapted compactification}
$g=\rho^2G$; see \cite[Section~2]{ChangGe2025}.  It is this choice,
rather than an arbitrary defining function, that we use below.
The mean curvature $H$ is taken with the convention that the
Euclidean unit ball has boundary mean curvature $H=3$.  In
particular, $H$ is the trace, not the average, of the second
fundamental form; $\partial_{\rm in}\rho$ denotes differentiation
along the inward unit normal.

The following identities give the basic connection between the
Einstein metric and its scalar-flat compactification.
\begin{lemma}[Scalar-flat adapted equations]\label{lem:adapted-equations}
For the scalar-flat adapted compactification of a four-dimensional PE metric with positive-scalar-curvature boundary representative $q$,
\begin{equation}\label{eq:adapted-basic}
R_g=0,\qquad \rho|_{\partial X}=0,\qquad \partial_{\rm in}\rho=1,
\qquad |d\rho|_g\le1.
\end{equation}
The boundary is umbilic.  With
\[
P=\frac{1-|d\rho|_g^2}{\rho},
\]
one has
\begin{equation}\label{eq:adapted-system}
\begin{aligned}
P&\ge0, &
\Hess\rho&=-\frac12\rho\Ric_g-\frac12Pg,\\
\Delta_g\rho&=-2P, &
dP&=\Ric_g(\nabla\rho,\cdot).
\end{aligned}
\end{equation}
and
\begin{equation}\label{eq:P-boundary}
P|_{\partial X}=\frac{2H}{3}.
\end{equation}
These equations are invariant under constant rescaling
\[
(g,\rho,P)\longmapsto(K^2g,K\rho,K^{-1}P).
\]
All blow-up rescalings used below are constant.
\end{lemma}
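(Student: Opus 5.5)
The plan is to derive everything from the conformal transformation law for Ricci curvature under $g=\rho^2G$ in dimension four, combined with the Einstein condition $\Ric_G=-3G$ and the defining property $R_g=0$ of the Chang--Ge compactification. Only the normalization at the boundary and the sign statements need more than computation, and for those I would cite \cite[Section~2]{ChangGe2025}.

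\emph{Step 1: the pointwise identities.} For $g=\rho^2G$ in dimension $n=4$,
\[
\Ric_g=\Ric_G-\frac{2}{\rho}\Hess_g\rho-\Bigl(\frac{\Delta_g\rho}{\rho}-\frac{3|d\rho|_g^2}{\rho^2}\Bigr)g .
\]
Substituting $\Ric_G=-3G=-3\rho^{-2}g$ gives
\[
\rho\Ric_g=-2\Hess_g\rho-\Bigl(\Delta_g\rho+\frac{3(1-|d\rho|_g^2)}{\rho}\Bigr)g .
\]
Taking the trace and using $R_g=0$ yields $\Delta_g\rho=-2P$, where $P=(1-|d\rho|_g^2)/\rho$. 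Inserting this back gives $\Hess\rho=-\tfrac12\rho\Ric_g-\tfrac12Pg$. For $dP$, differentiate $\rho P=1-|d\rho|^2$:
\[
\rho\,dP+P\,d\rho=-2\Hess\rho(\nabla\rho,\cdot)=\rho\Ric_g(\nabla\rho,\cdot)+P\,d\rho .
\]
Hence $dP=\Ric_g(\nabla\rho,\cdot)$ on the interior, and by continuity up to the boundary.

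\emph{Step 2: boundary behaviour.} The condition $|d\rho|_g=1$ on $\partial X$ is the standard consequence of $\Ric_G=-3G$ via the asymptotic curvature $-|d\rho|^2_g$. Together with $\rho>0$ inside and $\rho|_{\partial X}=0$, it gives $\partial_{\rm in}\rho=1$. Since $1-|d\rho|^2$ vanishes on the boundary, $P$ extends smoothly; this requires the regularity of $\rho$ from Chang--Ge's construction, which I take as given.

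On $\partial X$ we have $\nabla\rho=\nu$, the inward normal, so restricting the Hessian formula to tangent vectors gives
\[
\Hess\rho(X,Y)=-\tfrac12P\,g(X,Y).
\]
Since $\Hess\rho(X,Y)=\langle\nabla_X\nabla\rho,Y\rangle$ is, up to sign, the second fundamental form of $\partial X$, the boundary is umbilic. Comparing the trace with the convention $H=3$ for the Euclidean ball determines $P=\tfrac{2H}{3}$. I would check the sign and normalization on the model case $G$ hyperbolic, $\rho=(1-|x|^2)/2\cdot\ldots$ suitably normalized: there $g$ is flat, the boundary is a round sphere, $P$ is constant, and the formula can be verified directly.

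\emph{Step 3: the sign conditions.} This step is the main obstacle, because $P\ge0$ (equivalently $|d\rho|_g\le1$) is genuinely global. The plan is a maximum-principle argument. From Step 1 and Bochner's formula one computes that $P$ satisfies an elliptic equation of the form $\Delta P=$ (terms involving $|E|^2$ times $\rho$) $+\,\langle\ldots,dP\rangle$, where $E$ is the traceless Ricci tensor. The natural route is to show that $P$ is superharmonic modulo gradient terms, so that its minimum lies on the boundary. There $P=\tfrac{2H}{3}$, and positivity of $H$ should follow from the PSC boundary representative through Chang--Ge's result. Alternatively, I would simply cite \cite[Section~2]{ChangGe2025}, where the inequality $|d\rho|_g\le1$ is proved for positive scalar curvature boundaries.

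\emph{Step 4: scaling invariance.} Under $g\mapsto K^2g$ and $\rho\mapsto K\rho$:
\begin{itemize}
\item $|d\rho|_g$ is unchanged, so $P\mapsto K^{-1}P$;
\item $\Ric_g$ is unchanged as a $(0,2)$-tensor, and $\Hess\rho$ scales by $K$;
\item the identities in each equation are therefore homogeneous of the same degree, and the boundary normalizations $\rho|_{\partial X}=0$ and $\partial_{\rm in}\rho=1$ are preserved.
\end{itemize}
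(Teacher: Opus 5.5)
Your proposal is correct and follows essentially the same route as the paper. Both take the construction, $R_g=0$ and $|d\rho|_g\le1$ from Chang--Ge, obtain $\Delta_g\rho=-2P$, the Hessian identity and $dP=\Ric_g(\nabla\rho,\cdot)$ by tracing and back-substituting in the four-dimensional conformal Ricci law, and check the scaling by direct substitution. Your argument differs only in deriving umbilicity and $P|_{\partial X}=2H/3$ explicitly, by evaluating $\Hess\rho=-\tfrac12Pg$ at $\rho=0$ to get $L=\tfrac P2\,q$ for the inward normal. This is a cleaner, self-contained version of the paper's appeal to the first normal coefficient of the umbilic boundary expansion.
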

\begin{proof}
The construction of the adapted defining function, its scalar
flatness, and the estimate $|d\rho|_g\le1$ are given by
\cite[Lemma~2.6]{ChangGe2025}.  We derive the other identities to fix
signs and normalizations.

The conformal Ricci transformation law for $G=\rho^{-2}g$ reads
\[
 \Ric_G=\Ric_g+2\rho^{-1}\Hess\rho
 +\bigl(\rho^{-1}\Delta_g\rho
       -3\rho^{-2}|d\rho|_g^2\bigr)g.
\]
Substitute $\Ric_G=-3\rho^{-2}g$ and take the $g$-trace.  Since
$R_g=0$, one obtains
\[
 \Delta_g\rho=-\frac{2(1-|d\rho|_g^2)}{\rho}=-2P.
\]
Putting this back into the Ricci equation gives
\[
 \Hess\rho=-\frac12\rho\Ric_g-\frac12Pg.
\]
Finally, differentiate the definition
$\rho P=1-|d\rho|_g^2$.  For any vector field $V$,
\[
 P\,d\rho(V)+\rho\,dP(V)
   =-2\Hess\rho(\nabla\rho,V)
   =\rho\Ric_g(\nabla\rho,V)+P\,d\rho(V),
\]
so $dP=\Ric_g(\nabla\rho,\cdot)$.  The boundary identity
$P=2H/3$ follows by taking the first normal coefficient in
the umbilic boundary expansion, with the convention fixed above.
The scaling rule follows by direct substitution and also shows why
these identities survive the constant rescalings used later.
\end{proof}

One immediate consequence will repeatedly be useful.  Because the
boundary is umbilic, its second fundamental form is
$L=(H/3)q$.  The scalar Gauss equation, together with $R_g=0$,
therefore takes the form
\begin{equation}\label{eq:gauss-boundary}
R_q=-2\Ric_g(\nu,\nu)+\frac23H^2.
\end{equation}
It relates the boundary mean curvature to quantities controlled by
ambient and intrinsic curvature.  In particular, once these
curvatures are bounded, the size of $H$ is bounded as well.

\subsection{Escobar constants, injectivity, and localized regularity}

The first task is to prevent the compactified metrics from collapsing
while we magnify their curvature.  Two kinds of control are needed:
Sobolev inequalities, which provide quantitative noncollapse, and
injectivity-radius estimates, which allow us to choose uniform
coordinate charts.  We state each with the normalization used later.

\subsubsection{Escobar constants and local Sobolev inequalities}

On a manifold with boundary, the ordinary Yamabe functional has a
counterpart containing a boundary mean-curvature term.
There are two useful critical exponents in dimension four: the
interior exponent $4$ and the boundary trace exponent $3$.
For the compactification $(B^4,g)$, with boundary metric $q$ and mean
curvature $H$, we use the following two constants.

The \emph{first Escobar--Yamabe constant} is
\begin{equation}\label{eq:escobar}
Y_1(B^4,S^3,[g])
=
\inf_{0\ne u\in H^1(B^4)}
\frac{
\displaystyle
\int_{B^4}(6|du|^2+R_gu^2)\,dV_g
+
2\int_{S^3}Hu^2\,dV_q
}{
\displaystyle
\left(\int_{B^4}|u|^4\,dV_g\right)^{1/2}
}.
\end{equation}
The \emph{second Escobar--Yamabe constant} uses the same energy but
measures the function on the boundary:
\begin{equation}\label{eq:escobar-Y2}
Y_2(B^4,S^3,[g])
:=
\inf_{\substack{u\in H^1(B^4)\\ u|_{S^3}\not\equiv0}}
\frac{
\displaystyle
\int_{B^4}(6|du|^2+R_gu^2)\,dV_g
+
2\int_{S^3}Hu^2\,dV_q
}{
\displaystyle
\left(\int_{S^3}|u|^3\,dV_q\right)^{2/3}
}.
\end{equation}

Their importance is that positivity of the Yamabe constant at
conformal infinity controls both constants in the interior.
Assume
\[
Y(S^3,[q])\ge Y_0>0.
\]
In dimension four, Chang--Ge
\cite[Theorem~1.1 and Lemma~1.4]{ChangGe2025} give
\begin{equation}\label{eq:Y1-Y2-lower}
Y_1(B^4,S^3,[g])
\ge
c_1(Y_0):=24^{1/4}Y_0^{3/4},
\qquad
Y_2(B^4,S^3,[g])
\ge
c_2(Y_0):=\sqrt{6Y_0}.
\end{equation}
These are uniform bounds: their constants depend on $Y_0$ and not
on the particular filling.

For example, if $f\in C_c^\infty(\operatorname{int}B^4)$, the
boundary term vanishes.  Since $R_g=0$, the first inequality gives
\begin{equation}\label{eq:global-interior-sobolev}
\left(\int_{B^4}|f|^4\,dV_g\right)^{1/2}
\le
\frac{6}{c_1(Y_0)}
\int_{B^4}|df|^2\,dV_g.
\end{equation}
This is the scale-invariant Sobolev inequality later inherited by
an interior blow-up limit.

Near the boundary, a localized inequality is preferable.  The
boundary mean-curvature term need not have a favorable sign, but
it can be absorbed when its $L^3$ norm is small.  More precisely,
there is $\varepsilon=\varepsilon(Y_0)>0$ such that, for
$p\in\partial B^4$, the condition
\begin{equation}\label{eq:local-sob-smallness}
\|\Rm_g\|_{L^2(B_g(p,a))}
+
\|H\|_{L^3(B_g(p,a)\cap\partial B^4)}
\le
\varepsilon
\end{equation}
implies the following bounds for every
$f\in H^1(B_g(p,a))$ that vanishes on the interior part of
$\partial B_g(p,a)$:
\begin{align}
\left(
\int_{B_g(p,a)}|f|^4\,dV_g
\right)^{1/2}
&\le
C(Y_0)
\int_{B_g(p,a)}|df|^2\,dV_g,
\label{eq:local-sobolev}\\
\left(
\int_{B_g(p,a)\cap\partial B^4}|f|^3\,dV_q
\right)^{2/3}
&\le
C(Y_0)
\int_{B_g(p,a)}|df|^2\,dV_g.
\label{eq:local-trace}
\end{align}
These are the local Sobolev and trace inequalities of
\cite[Lemma~3.1]{ChangGe2025}.  For clarity, we verify the simple
absorption on which their use here rests.

\begin{proof}[Verification of the local inequalities]
Interpret the zero lateral-boundary condition by extending $f$ by
zero to the rest of the compactification.  Equivalently, start with
smooth functions supported away from the artificial boundary and
take the $H^1$ closure.  Set
\[
 A_f=\left(\int|f|^4\right)^{1/2},\quad
 B_f=\left(\int_{\partial B^4}|f|^3\right)^{2/3},\quad
 D_f=\int|df|^2,
\]
where the integrals are taken over the appropriate support.
The two Escobar inequalities and H\"older's inequality yield
\[
 c_1A_f\le6D_f+2\varepsilon B_f,\qquad
 c_2B_f\le6D_f+2\varepsilon B_f.
\]
Choose $\varepsilon\le c_2/4$.  The second inequality gives
$B_f\le12D_f/c_2$, and substitution into the first gives
$A_f\le12D_f/c_1$.  Thus we may take
\[
 C(Y_0)=\max\{12/c_1(Y_0),12/c_2(Y_0)\}.
\]
Both inequalities are scale invariant in dimension four.  Notice
that the absorption takes place entirely on the chosen ball.
\end{proof}
\subsubsection{Injectivity radii}

Curvature bounds do not by themselves provide coordinate charts of
uniform size: a manifold can collapse while its curvature remains
small.  On a manifold with boundary we must control two different
ways in which this can happen.

We use the following precise convention.  The \emph{interior
injectivity radius} $i_{\rm int}(g)$ is the supremum of $r>0$ such
that every unit geodesic starting at an interior point is minimizing
up to $\min\{r,t_\gamma\}$, where $t_\gamma$ is its first meeting
with the boundary.  The \emph{boundary-normal injectivity radius}
$i_\partial(g)$ is the maximal height of the inward normal collar.
These are the conventions in
\cite[Section~2.3]{ChangGeQing2020}.  In particular,
$i_{\rm int}$ is \emph{not} the infimum of ordinary exponential-ball
radii over points tending to the boundary.

Suppose that
\begin{equation}\label{eq:normalized-inj-assumptions}
\sup_{B^4}|\Rm_g|\le1,
\qquad
\inj(S^3,q)\ge i_0>0,
\qquad
Y(S^3,[q])\ge Y_0>0.
\end{equation}
The relevant estimates of Chang--Ge distinguish which injectivity
radius is smaller.  If $i_\partial(g)\le i_{\rm int}(g)$,
\cite[Lemma~6.1]{ChangGe2025} gives
\[
\sup|\Rm_g|\,i_\partial(g)^2+i_\partial(g)
\ge c_\partial(i_0)>0.
\]
If instead $i_{\rm int}(g)\le i_\partial(g)$, their Lemma~6.2 gives
\[
\sup|\Rm_g|\,i_{\rm int}(g)^2+i_{\rm int}(g)
\ge c_{\rm int}(Y_0,i_0)>0.
\]
Set $c_*:=\min\{c_\partial,c_{\rm int}\}$.  For the smaller
radius $j=\min\{i_\partial(g),i_{\rm int}(g)\}$ we therefore have
$j^2+j\ge c_*$.  Solving this quadratic inequality gives the
uniform bound
\begin{equation}\label{eq:uniform-normalized-inj}
\min\{i_\partial(g),i_{\rm int}(g)\}
\ge
i_*:=
\frac{\sqrt{1+4c_*}-1}{2}>0.
\end{equation}

The next verification is included because the injectivity estimate
will be used \emph{before} the later higher-regularity and global
energy bounds.  We verify the injectivity estimates directly from 
the stated hypotheses.

\begin{proof}[Proof of the injectivity estimates]
\textit{Normalization.}
We use Kodani's convention described above, also employed in
Chang--Ge--Qing, Section~2.3.  It implies AKKLT condition (3.0.5):
at a point whose distance from the boundary is at least $r$, the
exponential map is a diffeomorphism through radius $r$; nearer
to the boundary it has this property up to the boundary distance.
At strict radii, the distinction between minimizing and nonsingular
exponential balls causes no difficulty here.

Suppose the lower bound fails along a sequence.  Write
$j_i=\min(i_{\partial,i},i_{{\rm int},i})$ and rescale by
\[
 g_i'=j_i^{-2}g_i,\qquad
 \rho_i'=j_i^{-1}\rho_i,\qquad
 P_i'=j_iP_i.
\]
The scaling rules give
\[
 j_i\longrightarrow0,\quad
 \|\Rm(g_i')\|_\infty\longrightarrow0,\quad
 \min(i_\partial(g_i'),i_{\rm int}(g_i'))=1,
 \quad\inj(q_i')\longrightarrow\infty.
\]
All the adapted identities and the same lower Escobar constants
remain valid.  We shall obtain a contradiction in either case of
the normalized minimum.

\smallskip
\noindent\textit{Control of boundary geometry.}
No separate bound for the unrescaled intrinsic boundary curvature
is needed.  The Codazzi equation gives
\[
 \|\widehat\nabla H_i'\|_\infty
 \le C\|\Rm(g_i')\|_\infty\longrightarrow0.
\]
Suppose that $H_i'(y_i)\ge\varepsilon>0$ along a subsequence.
Along a boundary geodesic of any fixed sufficiently large length
$L$ from $y_i$, the last bound gives
$H_i'\ge\varepsilon/2$ for large $i$.
Because the boundary is umbilic, Gauss gives on each tangential
plane
\[
 \sec(q_i')=\sec(g_i')+\frac{(H_i')^2}{9}
                 \ge\frac{\varepsilon^2}{72}.
\]
Choose $L>\pi\sqrt{72}/\varepsilon$.  Jacobi comparison then
forces a conjugate point before length $L$, contrary to
$\inj(q_i')\to\infty$.  Therefore
\[
 \|H_i'\|_\infty\to0,\quad
 \|L_i'\|_\infty\to0,
 \quad\|\widehat\Rm(q_i')\|_\infty\to0.
\]
In particular, the boundary geometry becomes flat at the
normalized scale.

\smallskip
\noindent\textit{Case 1: the boundary-normal radius is smaller.}
The normal Jacobi equation, together with
$|\Rm_i'|+|L_i'|\to0$, excludes focal points through any fixed
length, in particular through length three.  Kodani's Lemma~6.3
(printed page 187) then supplies a geodesic
$\gamma_i:[0,2]\to X_i$ perpendicular to the boundary at both
endpoints.  Consider the defining function along this geodesic,
$a_i=\rho_i'\circ\gamma_i$.  The adapted boundary conditions and
gradient bound give
\[
 a_i(0)=a_i(2)=0,\quad a_i'(0)=1,\quad a_i'(2)=-1,
 \quad 0\le a_i(t)\le\min(t,2-t)\le1.
\]
At the initial endpoint,
$P_i'(\gamma_i(0))=2H_i'(\gamma_i(0))/3\to0$.
Since $|(P_i'\circ\gamma_i)'|\le|\Ric_i'|\to0$, the function
$P_i'$ tends to zero along the whole segment.  The Hessian identity
now gives $\|a_i''\|_\infty\to0$.  But integration would then
yield $a_i'(2)-a_i'(0)\to0$, whereas the displayed boundary values
make this difference equal to $-2$.  This is the desired
contradiction.

\smallskip
\noindent\textit{Case 2: the interior radius is smaller.}
Rauch comparison excludes conjugate points through every fixed
length.  Kodani's Lemma~6.4 (printed pages 187--188) then gives a
simple closed geodesic of length two at the normalized minimum.

The local harmonic-radius argument of
Anderson--Katsuda--Kurylev--Lassas--Taylor (AKKLT) applies:
both Ricci bounds are uniform, $H_i'$ is uniformly Lipschitz,
and both ambient injectivity radii are at least one.  Thus,
after passage to a subsequence, the metrics converge locally
in $C^{1,\gamma}$.  The limiting curvature is zero in the
distributional sense; the harmonic-coordinate Ricci equation then
makes the limit smooth and flat.  Convergence of first derivatives
and compactness for geodesics preserve the length-two loop.
We show that such a loop cannot occur in either possible limit.

If the loop stays at bounded distance from the boundary, the
adapted equations give a limiting defining function.  On each
pointed compact set $\rho_i'$ is bounded because
$\rho_i'\le d_{g_i'}(\cdot,\partial X_i)$; moreover, integrating
$dP_i'$ from a boundary point a bounded distance away gives
$P_i'\to0$.  Thus $\nabla^2\rho_i'\to0$, and in the limit
\[
 \rho_\infty=0\text{ on the boundary},\qquad
 \nabla^2\rho_\infty=0,\qquad |d\rho_\infty|=1.
\]
An interior zero would be an interior minimum of a nonnegative
function with gradient of length one, which is impossible.
The complete unit parallel gradient flow therefore identifies
the connected limit with $\partial X_\infty\times[0,\infty)$:
backward flow reaches the boundary in time $\rho_\infty$,
while completeness prevents forward flow from ending in finite
time.  The distinguished flat boundary is complete and has
infinite injectivity radius, hence is $\mathbb R^3$.  The limit
is Euclidean half-space, which has no nonconstant geodesic loop.

If the boundary escapes to infinity, the limit is instead complete,
flat, and without boundary.  It inherits the interior Sobolev
inequality, which gives positive Euclidean volume growth by the
cutoff argument used later.  To see why the limit is then
$\mathbb R^4$, write a complete flat manifold as
$\mathbb R^4/\Lambda$ and choose a Dirichlet domain $D$ about a lift
$o$.  Given $N$ distinct deck transformations with
$d(o,\lambda o)\le L$, the $N$ disjoint translates of
$D\cap B_R(o)$ lie in $B_{R+L}(o)$.  Consequently
\[
 N\operatorname{Vol}(B_R^{\mathbb R^4/\Lambda})
 \le\omega_4(R+L)^4.
\]
A positive asymptotic volume ratio rules out arbitrarily large $N$,
so $\Lambda$ is finite.  Every finite group of Euclidean isometries
fixes the barycenter of an orbit; a free deck action must therefore
be trivial.  Again the limit is Euclidean space and cannot contain
the length-two geodesic loop.

Both cases give contradictions, proving the two injectivity
estimates with constants depending only on the stated data.
\end{proof}

\subsubsection{Finite normalized regularity}

The injectivity estimates provide geometric room for coordinate
charts.  We next need bounds for the metric itself in those charts,
through three derivatives.  The third derivative is not an arbitrary
luxury: $\nabla\Rm$ involves three derivatives of the metric, so a
limit of only $C^{2,\beta}$ metrics could lose the curvature-derivative
defect that selected the blow-up point.

A scalar-flat conformal compactification of an Einstein metric is
Bach-flat.  This supplies an elliptic equation for its Schouten
tensor, with boundary conditions determined by the umbilic geometry.
The next proposition establishes the finite regularity we need.
\begin{proposition}[Finite normalized regularity]
\label{prop:normalized-local-regularity}
Fix
\[
0<\beta<\widetilde\beta<\frac12.
\]
Let $g_i$ be scalar-flat adapted compactifications of four-dimensional
Poincar\'e--Einstein metrics, with positive-scalar-curvature boundary
representatives $q_i$.  Assume that there are constants
\[
Y_0>0,\qquad i_0>0,\qquad C_B<\infty,\qquad i_1>0
\]
such that
\begin{equation}\label{eq:normalized-boundary-geometry}
Y(S^3,[q_i])\ge Y_0,
\qquad
\inj(S^3,q_i)\ge i_0,
\qquad
\sum_{j=0}^{4}
\left\|
\widehat\nabla^{\,j}\widehat\Rm_{q_i}
\right\|_{L^\infty(q_i)}
\le C_B,
\end{equation}
and
\begin{equation}\label{eq:normalized-local-Rm}
\sup|\Rm_{g_i}|\le1,
\qquad
\sup|\nabla\Rm_{g_i}|\le1.
\end{equation}
Assume moreover that the interior and boundary-normal injectivity radii
of $g_i$ are bounded below by $i_1>0$.

Then there exist $a_0>0$ and $C<\infty$, depending only on the constants
above and on $\widetilde\beta$, such that every point has an interior
harmonic chart or a boundary harmonic chart of controlled radius
$a_0/4$, in a normalized atlas with uniform ellipticity, on which
\begin{equation}\label{eq:C3betatilde-local-metric}
\|g_i\|_{C^{3,\widetilde\beta}}
+\|g_i^{-1}\|_{C^{3,\widetilde\beta}}\le C.
\end{equation}
A boundary harmonic chart has four ambient harmonic coordinate
functions, with the fourth vanishing on the boundary and the
tangential restrictions harmonic for the boundary metric.
Consequently every pointed sequence has a subsequence converging in
pointed $C^{3,\beta}$ Cheeger--Gromov topology on compact subsets, with
boundary when present.  If at the base point
\[
|\Rm_{g_i}|=1
\qquad\text{or}\qquad
|\nabla\Rm_{g_i}|=1,
\]
then the corresponding equality is preserved in the limit.

\end{proposition}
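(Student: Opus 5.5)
The plan is to build harmonic coordinates of uniform size from the curvature and injectivity bounds, and then raise the regularity of the metric to $C^{3,\widetilde\beta}$ using the Einstein structure encoded in Lemma~\ref{lem:adapted-equations}. The order of steps is as follows.

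\emph{Step 1 (harmonic charts).} The hypotheses give $|\Ric_{g_i}|\le C$, injectivity radii at least $i_1$, and umbilic boundary with mean curvature controlled. For $H_i$, the Gauss equation \eqref{eq:gauss-boundary} bounds $H_i^2$ in terms of $R_{q_i}$ and $\Ric_g(\nu,\nu)$; Codazzi then gives $|\widehat\nabla H_i|\le C|\Rm_{g_i}|\le C$. With these inputs the AKKLT harmonic-radius theorem for manifolds with boundary produces interior harmonic charts and boundary harmonic charts of radius $a_0$. The boundary charts have their fourth coordinate vanishing on $\partial B^4$ and tangential coordinates $q_i$-harmonic. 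In these charts $g_i$ is uniformly elliptic and bounded in $C^{1,\gamma}$ for every $\gamma<1$.

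\emph{Step 2 (bootstrap to $C^{2,\widetilde\beta}$).} In harmonic coordinates,
\[
\Ric_{g}=-\tfrac12 g^{ab}\partial_a\partial_b g+Q(g,\partial g).
\]
Inside, $|\nabla\Rm|\le1$ makes $\Ric_g$ Lipschitz, in particular $C^{0,\widetilde\beta}$ in the chart. Schauder estimates then give $g\in C^{2,\widetilde\beta}$.

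On a boundary chart we need boundary conditions. The tangential block is controlled by the intrinsic boundary metric: $q_i$ in $q_i$-harmonic coordinates is $C^{5,\widetilde\beta}$-bounded by the bound on $\widehat\nabla^{j}\widehat\Rm_{q_i}$ for $j\le4$. The normal derivatives are controlled by the second fundamental form $L=(H/3)q$, which is a Neumann-type condition with coefficient $H$ of controlled regularity. Together these give the Anderson / AKKLT mixed Dirichlet--Neumann boundary system, and boundary Schauder estimates give $g\in C^{2,\widetilde\beta}$ up to the boundary. The restriction $\widetilde\beta<\frac12$ I expect to come from the trace/Codazzi step, where $H$ is a priori only $C^{1,\gamma}$ along the boundary.

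\emph{Step 3 (the extra derivative, the main obstacle).} Curvature bounds alone give only $C^{2,\widetilde\beta}$, so the third derivative must come from the Einstein structure. Since $R_g=0$ and $g$ is conformal to an Einstein metric, $g$ is Bach-flat. Hence the Schouten tensor $A=\tfrac12\Ric_g$ satisfies an elliptic system of the form
\[
\Delta A=\Rm*A+\text{lower order},
\]
equivalently $\Delta\Ric=\Rm*\Ric$ for $R=0$.

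\begin{itemize}
\item \emph{Interior.} The right-hand side is $C^{0,\widetilde\beta}$ by Step~2, so $\Ric\in C^{2,\widetilde\beta}$. Feeding this back into the harmonic-coordinate Ricci equation gives $g\in C^{4,\widetilde\beta}$, which is more than the $C^{3,\widetilde\beta}$ needed.
\item \emph{Boundary.} This is the hard part: we need boundary conditions for $\Ric$. I would get them from the adapted system and the umbilic geometry. The relation $dP=\Ric(\nabla\rho,\cdot)$ with $P|_{\partial}=2H/3$ gives $\Ric(\nu,\cdot)$ tangentially on the boundary in terms of $\widehat\nabla H$. The Gauss equation gives $\Ric(\nu,\nu)$ via $R_q$ and $H$. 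The tangential components come from the Gauss--Codazzi identities. Because the boundary is umbilic and $\rho$-totally geodesic to first order, the normal derivative of the tangential Ricci is determined by the structure of $\Hess\rho=-\tfrac12\rho\Ric-\tfrac12Pg$ differentiated once at $\rho=0$.
\end{itemize}

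These conditions make a Lopatinski--Shapiro elliptic boundary problem for $(A,H)$, coupled to the boundary equation for $H$. Its data involve at most $\widehat\nabla^{j}\widehat\Rm_{q}$ for $j\le4$. Applying $L^p$ and then Schauder estimates in the boundary charts should yield $\Ric\in C^{1,\widetilde\beta}$ up to the boundary. Then $g\in C^{3,\widetilde\beta}$ from the harmonic-coordinate equation with the Step~2 boundary conditions. Verifying that these boundary conditions are complementing is where I expect most of the work.

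\emph{Step 4 (convergence).} The uniform $C^{3,\widetilde\beta}$ bounds on an atlas of radius $a_0/4$, with uniform overlaps, allow the Cheeger--Gromov construction (with boundary). This gives pointed subsequential convergence in $C^{3,\beta}$ for $\beta<\widetilde\beta$, by Arzel\`a--Ascoli. Since $|\Rm|$ and $|\nabla\Rm|$ involve at most three derivatives of $g$, they converge uniformly on compact sets. So the base-point normalizations $|\Rm|=1$ or $|\nabla\Rm|=1$ pass to the limit.
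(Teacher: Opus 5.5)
Your architecture is the paper's: AKKLT charts, the triangular harmonic-coordinate Ricci system, Bach-flatness giving $\Delta A=\Rm*A$, and a boundary problem for $A$ fed back into the metric equation. Steps 1, 2, 4 and the interior half of Step 3 are fine. The gap is the boundary problem for $A$, which is the real content of the proposition, and the conditions you list are not the right ones.

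\emph{The tangential components.} Gauss--Codazzi does \emph{not} determine $\Ric_{ab}$ on $\partial B^4$. These components contain the normal curvature block $\mathcal N_{ab}$ built from $\Rm(e_a,\nu,\nu,e_b)$, which the Gauss equation does not see; only $\operatorname{tr}\mathcal N=\Ric_{\nu\nu}$ is fixed, via $R_g=0$. The missing input is the vanishing of the boundary Weyl tensor, which the paper obtains from the Cotton identity $\rho C=\pm W(\nabla\rho,\cdot,\cdot,\cdot)$. The tangential Gauss equation can then be solved for $A_{ab}$ in dimension three, giving $A_{ab}=\widehat A_{ab}-\frac{H^2}{18}q_{ab}$. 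The identity you invoke actually yields this value, not a normal derivative. Differentiating $\Hess\rho=-\frac12\rho\Ric-\frac12Pg$ once along $\nu$ at $\rho=0$ and commuting derivatives gives $\Ric_{ab}+\Ric_{\nu\nu}q_{ab}=2\mathcal N_{ab}$. This is the vanishing of the $W_{a\nu\nu b}$ block, and combined with Gauss it gives the \emph{value} of $\Ric_{ab}$. So your list contains no valid normal-derivative condition at all. As written, ten Dirichlet plus six Neumann conditions would also be overdetermined.

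\emph{The mixed components.} Even after this correction there is a regularity obstruction you have not addressed. With Dirichlet data on all ten components, the mixed datum is $A_{\nu a}=\frac13\widehat\nabla_aH$, as your $dP$ relation gives, and a Dirichlet problem gains nothing there. Concluding $A\in C^{1,\widetilde\beta}$ would require $H\in C^{2,\widetilde\beta}$, which by Codazzi is equivalent to what you want to prove. Meanwhile the a priori bound $H\in C^{1,1}$ makes this datum only Lipschitz, below the $W^{2-1/p,p}$ Dirichlet trace class. (That bound comes from differentiating Codazzi once and using $|\nabla\Rm|\le1$, not merely from $|\Rm|\le1$ as in your Step 1.)

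The paper's key device is to replace these three Dirichlet conditions by Neumann conditions coming from $\operatorname{div}A=0$. Splitting the divergence at the boundary gives $(\nabla_\nu A)_{\nu a}=-\widehat\nabla^bA_{ba}+\frac{4H}{3}A_{\nu a}=-\frac14\widehat\nabla_aR_q+\frac{5H}{9}\widehat\nabla_aH$. These data involve only one derivative of $H$, so they are Lipschitz and lie in the $W^{1-1/p,p}$ Neumann trace class.

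The resulting seven-Dirichlet, three-Neumann problem is complementing, since the principal symbol is scalar times the identity. It gives $A\in W^{2,p}\subset C^{1,\theta}$. Codazzi then gives $H\in C^{2,\widetilde\beta}$. Only with that can the metric system be upgraded to $g\in C^{3,\widetilde\beta}$, because its oblique condition $\nu(g^{00})=2Hg^{00}$ involves $H$. No coupled boundary equation for $H$ is needed; $H$ enters only as data.
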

\begin{proof}
We describe the argument in
six steps, separating the choice of coordinates from the elliptic
regularity estimates.

\smallskip
\noindent\textit{Step 1: mean curvature and the initial boundary charts.}
Write $A=\Ric/2$, the Schouten tensor in the scalar-flat
four-dimensional case.  The contracted Bianchi identity gives
$\operatorname{tr}A=0$ and $\operatorname{div}A=0$.
The scalar Gauss equation and the normalized curvature bounds
imply $|H|\le C$.  Contracted Codazzi gives
$dH=3A(\nu,\cdot)$, so $|\widehat\nabla H|\le C$.
Differentiating in a tangential direction and using
$\nabla_a\nu=-L_a{}^ce_c$, one obtains
\[
 (\widehat\nabla^2H)_{ba}
 =3(\nabla_b A)_{\nu a}-3L_b{}^cA_{ca}+3L_{ba}A_{\nu\nu}.
\]
Thus $H$ has a uniform intrinsic $C^{1,1}$ bound.  The boundary
curvature-jet and injectivity furnish the required
intrinsic coordinate bounds.

We use \emph{boundary harmonic coordinates}: four ambient harmonic
functions, with the fourth equal to zero on the boundary and the
restrictions of the other three harmonic for $q$.  They must not be
confused with coordinates obtained by following normal geodesics.
The local harmonic-radius argument of AKKLT,
\cite[Section~3.2]{AndersonKatsudaKurylevLassasTaylor2004},
gives charts of uniform radius with $C^{1,\gamma}$ control for
every fixed $\gamma<1$.

Although the theorem as stated in AKKLT includes a diameter bound,
its \emph{local radius argument} does not use one.  Were the minimum
harmonic radius to tend to zero, rescale it to one.  The rescaled
Ricci bounds and the Lipschitz mean-curvature bound tend to zero,
and all three injectivity radii tend to infinity.  Pointed compactness
in the harmonic charts is provided by their Theorem~3.1.1;
Lemma~3.2.2 identifies the limit as Euclidean space or half-space,
and the improvement estimates of Section~3.2 make the harmonic radius
strictly larger than one.  This contradicts the normalization.
The local facts are AKKLT (3.2.3)--(3.2.5); diameter enters their
global, unpointed compactness result, not this local contradiction.
Uniform metric and inverse-metric estimates also provide uniform
ellipticity in the resulting charts.

\smallskip
\noindent\textit{Step 2: a first $C^{2,\gamma}$ bound for the metric.}
The bound on $\nabla\Rm$, combined with bounded Christoffel symbols,
makes the coordinate Ricci components uniformly Lipschitz.
In harmonic coordinates they satisfy
\[
 g^{ij}\partial_i\partial_jg_{kl}
  =-2\Ric_{kl}+Q_{kl}(g^{-1},\partial g).
\]
The boundary equations for inverse-metric components, in the
convention used here, are AKKLT (2.1.16)--(2.1.17):
\[
 \nu(g^{00})=2Hg^{00},\qquad
 \nu(g^{a0})=Hg^{a0}
   +\frac{1}{2\sqrt{g^{00}}}g^{ak}\partial_k g^{00}.
\]
AKKLT use the averaged mean curvature with the opposite sign,
so $3H_{\rm AKKLT}=-H$.  The tangential covariant components have
Dirichlet data $g_{ab}=q_{ab}$.

The system has a useful form.  First estimate the
Dirichlet components and the oblique equation for $g^{00}$.
Its first derivative is then controlled sufficiently to serve
as boundary data in the equation for $g^{a0}$.  Algebraic
inversion recovers the remaining components.  The standard
boundary elliptic estimates yield a uniform $C^{2,\gamma}$ bound.

\smallskip
\noindent\textit{Step 3: the Schouten boundary conditions.}
A conformal Einstein compactification in dimension four satisfies
$W|_{\partial X}=0$.  The conformal
Cotton identity has the form
$\rho C=\pm W(\nabla\rho,\cdot,\cdot,\cdot)$.
Since $C$ is bounded, passage to the boundary makes every
Weyl component with a normal entry vanish.  Its remaining
tangential part is a trace-free algebraic curvature tensor in
dimension three, and hence vanishes as well.  This is the boundary
identity used by Chang--Ge in Proposition~3.2 (printed page 21);
their Appendix A, Lemmas A.1--A.2, records the resulting Schouten
formulas.

The tangential and scalar Gauss equations now imply
\begin{align*}
 A_{ab}&=\widehat A_{ab}-\frac{H^2}{18}q_{ab},
 &\widehat A&=\Ric_q-\frac{R_q}{4}q,\\
 A_{\nu\nu}&=\frac{H^2}{6}-\frac{R_q}{4},
 &A_{\nu a}&=\frac13\widehat\nabla_aH.
\end{align*}
We also require one normal derivative.  Choose a tangential
orthonormal frame whose intrinsic connection vanishes at the
point.  With
$\nabla_{e_b}e_a=\widehat\nabla_{e_b}e_a+L_{ba}\nu$,
\[
 \sum_b(\nabla_bA)_{ba}
  =\widehat\nabla^bA_{ba}-\frac{4H}{3}A_{\nu a}.
\]
Using $\operatorname{div}A=0$ and
$\widehat\nabla^b\widehat A_{ba}
  =\frac14\widehat\nabla_aR_q$, we find
\[
 (\nabla_\nu A)_{\nu a}
  =-\frac14\widehat\nabla_aR_q
    +\frac{5H}{9}\widehat\nabla_aH.
\]
The coefficient $5/9$ is the sum of $1/9$, from differentiating
$-H^2q/18$, and $4/9$, from the two normal-slot connection terms.
These formulas identify seven Dirichlet components and three
normal-derivative components of the symmetric tensor $A$.

\smallskip
\noindent\textit{Step 4: the interior Schouten equation.}
The Bach equation, using $\operatorname{div}A=0$, gives the exact
identity \cite[(3.3)]{ChangGe2025}
\[
 \Delta A_{ij}+R_{ikjp}A^{pk}-R_i{}^pA_{jp}
                     +A^{kl}W_{ikjl}=0.
\]
It follows by inserting the Cotton-divergence identity into the
Bach equation (3.2) and commuting the remaining covariant derivatives.
For regularity, its important feature is simple: it is a
Laplace-type system $\Delta A=\Rm*A$, with no curvature derivatives
on the right.

\smallskip
\noindent\textit{Step 5: the mixed boundary problem and regularity gain.}
Take the normal to the level sets of the fourth harmonic coordinate
and complete it to a frame by algebraic Gram--Schmidt.  This produces
a uniformly $C^{2,\gamma}$ adapted frame without changing to
normal-flow coordinates.  The six tangential and one normal-normal
components of $A$ have Dirichlet boundary data; the remaining
three normal-tangential components have the normal-derivative
data just derived.

These conditions form a \emph{complementing} elliptic boundary
problem: the only exponentially decaying solution of the frozen
principal equations with zero boundary data is the zero solution.
Indeed, at a flattened boundary point the principal symbol is a
scalar positive quadratic form times the identity.  A decaying
Fourier mode is $e^{-|\xi|t}v$.  The seven Dirichlet conditions kill
seven components of $v$, and the three normal-derivative conditions
kill the other three.  The trace-free relation does not interfere,
since the estimate is applied to the full ten-component symmetric
tensor satisfied by the given solution.

Fix a finite $p$.  Because $H\in C^{1,1}$ and the boundary curvature
jets are bounded, the Dirichlet data lie uniformly in
$W^{2-1/p,p}$.  Because $H\,dH$ and $dR_q$ are Lipschitz, the normal
data lie uniformly in $W^{1-1/p,p}$.  Replacing covariant normal
derivatives by ordinary oblique ones introduces terms of the form
connection times $A$; they have the same trace regularity by the
$C^{2,\gamma}$ metric bound and the original estimate
$A\in W^{1,\infty}$.  The lower-order coefficients of the interior
equation use at most two metric derivatives.  Thus local mixed
elliptic estimates, with cutoffs supported away from the artificial
corners of the charts, give
\[
 \|A\|_{W^{2,p}(B^+_{r/2})}\le C_p.
\]
This is a local estimate with all terms on the right controlled
by the original normalized assumptions.

For completeness, Chang--Ge's Weyl equation (3.4) can be written as
\[
 \Delta W=2\operatorname{Alt}(\nabla^2A)+\Rm*\Rm,
 \qquad W|_{\partial X}=0.
\]
The right-hand side is in $L^p$, and Dirichlet regularity gives
$W\in W^{2,p}$, hence $\Rm\in W^{2,p}$.
For the metric estimate it already suffices that
$A\in W^{2,p}\subset C^{1,\theta}$, for every
$\theta<1-4/p$.  Choose $p>8$ and an initial chart exponent
larger than the required $\widetilde\beta<1/2$.  Codazzi then
gives $H\in C^{2,\widetilde\beta}$.  A second application of the
triangular boundary harmonic Ricci system yields
\[
 \|g\|_{C^{3,\widetilde\beta}}
  +\|g^{-1}\|_{C^{3,\widetilde\beta}}\le C.
\]
The interior argument uses the same elliptic equation without
boundary conditions.  

\smallskip
\noindent\textit{Step 6: pointed convergence and preservation of the defect.}
The uniform coordinate radii and bilipschitz bounds give local
volume doubling and a uniform bound on the number of charts
meeting a given chart.  Any chart reached within distance $R$
of the base chart is connected to it by at most
$C(1+R/r)$ overlaps.  Hence a fixed pointed metric ball is covered
by a uniformly bounded number of charts even without a diameter
bound for the whole manifold.

Choose convergent subsequences in these charts and on their
overlaps, then diagonalize over increasing pointed balls.  The
result is a pointed $C^{3,\beta}$ Cheeger--Gromov limit.
Its metric-space completeness follows from the same finite-chart
covering: a bounded Cauchy sequence is contained in finitely many
precompact limit charts.  Smooth approximation of the convergence
diffeomorphisms gives smooth embeddings of compact subsets into
the original manifolds.

Finally, $\Rm$ and $\nabla\Rm$ depend continuously on the metric
and its derivatives through orders two and three, respectively.
Their norms converge uniformly in the common charts, so an equality
$|\Rm|=1$ or $|\nabla\Rm|=1$ at the base point persists in the limit.
\end{proof}
\subsubsection{Boundary families and the blow-up scale}

We now apply these estimates to the compact boundary families of
Theorem~\ref{thm:compactness}.  The first point is elementary but
useful: compactness of the boundary data supplies a \emph{single}
set of geometric constants for every filling in the sequence.

Choose exponents
\begin{equation}\label{eq:beta-choice}
0<\beta<\widetilde\beta<\min\{\alpha,\tfrac12\}.
\end{equation}
Since
\[
\mathcal K\Subset
\operatorname{Met}_{\rm PSC}^{k,\alpha}(S^3),
\qquad k\ge6,
\]
there are constants $r_0>0$, $C_{S,3}<\infty$, $i_0>0$ and
$C_j<\infty$ ($0\le j\le4$), independent of $q\in\mathcal K$,
for which
\begin{align}
R_q&\ge r_0,
\label{eq:compact-family-R-lower}\\
\inj(S^3,q)&\ge i_0,
\label{eq:compact-family-inj}\\
\|\widehat\nabla^{\,j}\widehat\Rm_q\|_{L^\infty(q)}
&\le C_j,
\label{eq:compact-family-curv-jets}
\end{align}
and
\begin{equation}\label{eq:boundary-Sobolev-uniform}
\|u\|_{L^6(q)}^2
\le
C_{S,3}
\left(
\|du\|_{L^2(q)}^2+\|u\|_{L^2(q)}^2
\right).
\end{equation}
Combining the last inequality with $R_q\ge r_0$ gives
\[
 \mathcal Q_q(u)\ge
 C_{S,3}^{-1}\min\{8,r_0\}
\]
for every nonzero $u$.  We therefore fix
\begin{equation}\label{eq:Y0-def}
Y_0:=\inf_iY(S^3,[q_i])>0.
\end{equation}
The associated lower Escobar bound will be abbreviated by
\begin{equation}\label{eq:c0-def}
c_0:=c_1(Y_0)=24^{1/4}Y_0^{3/4}>0.
\end{equation}

To detect a failure of compactness we shall rescale at the larger
of two curvature scales.  Under the constant change
$\bar g=K^2g$, the tensor norms satisfy
\begin{equation}\label{eq:curvature-scaling-laws}
|\Rm_{\bar g}|=K^{-2}|\Rm_g|,
\qquad
|\nabla\Rm_{\bar g}|=K^{-3}|\nabla\Rm_g|.
\end{equation}
Thus choosing $K$ so that either quantity has normalized maximum
one keeps curvature and its first covariant derivative under
simultaneous control.  The regularity established above then
ensures that the nonzero quantity is still visible in the limit.

The following proposition makes this blow-up procedure precise.
Its two alternatives simply record whether the boundary remains
within a bounded rescaled distance of the selected point.
\begin{proposition}[Normalized pointed extraction]
\label{prop:normalized-extraction}
Assume
\begin{equation}\label{eq:unbounded-curvature-derivative}
\sup_i
\left(
\|\Rm_{g_i}\|_{L^\infty(B^4)}
+
\|\nabla\Rm_{g_i}\|_{L^\infty(B^4)}
\right)
=\infty.
\end{equation}
After passing to a subsequence, there exist $K_i\to\infty$ and points
$p_i\in B^4$ such that, with
\begin{equation}\label{eq:K-normalization}
\begin{aligned}
K_i^2
&:=
\max\left\{
\sup_{B^4}|\Rm_{g_i}|,
\left(\sup_{B^4}|\nabla\Rm_{g_i}|\right)^{2/3}
\right\},\\
\bar g_i&:=K_i^2g_i,
\qquad
\bar\rho_i:=K_i\rho_i,
\qquad
\bar P_i:=K_i^{-1}P_i,
\end{aligned}
\end{equation}
one has
\begin{equation}\label{eq:normalized-max}
\sup_{B^4}|\Rm_{\bar g_i}|\le1,
\qquad
\sup_{B^4}|\nabla\Rm_{\bar g_i}|\le1,
\end{equation}
and the points $p_i$ may be chosen so that
\begin{equation}\label{eq:normalized-base-defect}
|\Rm_{\bar g_i}|(p_i)=1
\qquad\text{or}\qquad
|\nabla\Rm_{\bar g_i}|(p_i)=1.
\end{equation}

After a further subsequence,
\[
(B^4,\bar g_i,p_i)
\longrightarrow
(X_\infty,g_\infty,p_\infty)
\]
in pointed $C^{3,\beta}$ Cheeger--Gromov topology on compact subsets,
with boundary when present.  The limit is complete as a Riemannian
manifold with boundary, and
\begin{equation}\label{eq:nonzero-normalized-defect}
|\Rm_{g_\infty}|(p_\infty)=1
\qquad\text{or}\qquad
|\nabla\Rm_{g_\infty}|(p_\infty)=1.
\end{equation}
Every compact subset of $X_\infty$ is, for all sufficiently large $i$,
the image under a smooth pointed convergence embedding into $B^4$.

Finally, after passage to a subsequence, exactly one of
\begin{align}
\sup_i d_{\bar g_i}(p_i,\partial B^4)&<\infty,
\label{eq:boundary-alternative}\\
d_{\bar g_i}(p_i,\partial B^4)&\longrightarrow\infty
\label{eq:interior-alternative}
\end{align}
holds.
\end{proposition}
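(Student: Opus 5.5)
The plan is to reduce everything to Proposition~\ref{prop:normalized-local-regularity} by checking its hypotheses for the rescaled sequence $(\bar g_i,\bar\rho_i,\bar P_i)$.

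\textbf{Step 1: choice of scale and base points.} By \eqref{eq:unbounded-curvature-derivative}, after a subsequence the quantity $K_i^2$ defined in \eqref{eq:K-normalization} tends to infinity. We may take $K_i\ge1$ from the outset. The scaling laws \eqref{eq:curvature-scaling-laws} then give \eqref{eq:normalized-max}, since $K_i^{-2}\sup|\Rm_{g_i}|\le1$ and $K_i^{-3}\sup|\nabla\Rm_{g_i}|\le1$. Equality holds in at least one of the two.
\begin{itemize}
\item If equality holds in the first, then $\sup|\Rm_{\bar g_i}|=1$.
\item If it holds in the second, then $\sup|\nabla\Rm_{\bar g_i}|=1$.
\end{itemize}
Each $g_i$ is smooth on the compact ball $B^4$, so the relevant supremum is attained at some point $p_i$. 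This gives \eqref{eq:normalized-base-defect}. Passing to a subsequence, we may assume the same alternative occurs for every $i$.

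\textbf{Step 2: uniform hypotheses after rescaling.} The adapted identities of Lemma~\ref{lem:adapted-equations} survive constant rescaling, and each $\bar g_i$ is again a scalar-flat adapted compactification. Its boundary representative is $K_i^2q_i$. The boundary hypotheses \eqref{eq:normalized-boundary-geometry} hold with the constants fixed for $\mathcal K$.
\begin{itemize}
\item The conformal class, and hence $Y(S^3,[q_i])\ge Y_0$, is unchanged by scaling.
\item Injectivity radius scales up by $K_i\ge1$, so it stays $\ge i_0$.
\item The curvature jets $\widehat\nabla^j\widehat\Rm$ scale by $K_i^{-2-j}\le1$, so \eqref{eq:compact-family-curv-jets} persists.
\end{itemize}
With $\sup|\Rm_{\bar g_i}|\le1$, the injectivity estimate \eqref{eq:uniform-normalized-inj} applies to $\bar g_i$. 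It yields the uniform lower bound $i_1:=i_*$ for the interior and boundary-normal injectivity radii. All hypotheses of Proposition~\ref{prop:normalized-local-regularity} are therefore satisfied uniformly in $i$.

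\textbf{Step 3: extraction, completeness and the dichotomy.} Proposition~\ref{prop:normalized-local-regularity} provides a subsequence converging in pointed $C^{3,\beta}$ Cheeger--Gromov topology, with boundary when present. It preserves the equality $|\Rm|=1$ or $|\nabla\Rm|=1$ at the base point, which is \eqref{eq:nonzero-normalized-defect}. The same proposition, specifically Step~6 of its proof, supplies completeness of the limit and the smooth convergence embeddings of compact subsets.

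For the final alternative, consider the sequence $d_{\bar g_i}(p_i,\partial B^4)\in[0,\infty)$. Either it has a bounded subsequence or it tends to infinity. Passing to that subsequence gives exactly one of \eqref{eq:boundary-alternative} or \eqref{eq:interior-alternative}. When the distance stays bounded, the boundary lies in a fixed pointed ball, so the limit has nonempty boundary.

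\textbf{Main obstacle.} The main point to check is that the rescaled metrics still fall under the uniform constants. Two facts carry this. First, scaling by $K_i\ge1$ only improves the boundary injectivity radius and the curvature-jet bounds. Second, the injectivity estimate depends only on $Y_0$ and $i_0$, both of which are scale-invariant or improved. Once these are verified, the rest is bookkeeping.
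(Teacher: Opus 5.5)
Your proposal is correct and follows essentially the same route as the paper. You rescale, verify the hypotheses of Proposition~\ref{prop:normalized-local-regularity} (scale invariance of the Yamabe constant, growth of the boundary injectivity radius, decay of the curvature jets, and the normalized injectivity estimate), extract the pointed limit with its preserved defect, and then split on the boundary distance; the only difference is that the paper re-runs the chart-counting, Arzel\`a--Ascoli and completeness argument in its own Steps~4--5, whereas you cite the convergence conclusion and Step~6 of the local-regularity proof directly.
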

\begin{proof}
\textit{Step 1: choice of scale and base point.}
Set
\[
 A_i:=\sup_{B^4}|\Rm_{g_i}|,
 \qquad B_i:=\sup_{B^4}|\nabla\Rm_{g_i}|.
\]
Thus, after passage to a subsequence, we have
$\max\{A_i,B_i^{2/3}\}\to\infty$.  Hence the $K_i$ in
\eqref{eq:K-normalization} tend to infinity.  By the scaling laws
\eqref{eq:curvature-scaling-laws}, the rescaled curvature and its
first derivative have supremum at most one.

The original compactification is smooth on the compact closed ball,
so the corresponding curvature maxima are achieved.  If
$K_i^2=A_i$, choose $p_i$ with $|\Rm_{g_i}|(p_i)=A_i$.
If $K_i^2=B_i^{2/3}$, instead choose $p_i$ with
$|\nabla\Rm_{g_i}|(p_i)=B_i$.  In either case one of the
normalized equalities in \eqref{eq:normalized-base-defect} holds.

\smallskip
\noindent\textit{Step 2: the rescaled boundary geometry.}
Write $\bar q_i=K_i^2q_i$.  The Yamabe constant is unchanged
by constant scaling, while
\[
 \inj(S^3,\bar q_i)
   =K_i\inj(S^3,q_i)\ge K_i i_0\longrightarrow\infty.
\]
For $0\le j\le4$, the boundary curvature derivatives satisfy
\begin{equation}\label{eq:scaled-boundary-geometry}
\left\|
\widehat\nabla^{\,j}\widehat\Rm_{\bar q_i}
\right\|_{L^\infty(\bar q_i)}
=
K_i^{-2-j}
\left\|
\widehat\nabla^{\,j}\widehat\Rm_{q_i}
\right\|_{L^\infty(q_i)}
\le
K_i^{-2-j}C_j
\longrightarrow0.
\end{equation}
The normalized boundary becomes flat on every bounded geometric
scale and retains the same positive lower Yamabe bound.

\smallskip
\noindent\textit{Step 3: noncollapse and uniform regularity.}
Let $i_{\partial,i}$ and $i_{{\rm int},i}$ denote the two
injectivity radii of $(B^4,\bar g_i)$.  Apply the two cases of
the injectivity estimate, according to which radius is smaller.
Because $\sup|\Rm_{\bar g_i}|\le1$, their minimum $j_i$ satisfies
\[
 j_i^2+j_i\ge c_*,\qquad
 c_*:=\min\{c_\partial,c_{\rm int}\}>0.
\]
In particular,
\begin{equation}\label{eq:normalized-injectivity-lower}
\min\{i_{\partial,i},i_{{\rm int},i}\}
\ge
\frac{\sqrt{1+4c_*}-1}{2}
=:i_*>0.
\end{equation}
The scalar Gauss equation also bounds the mean curvature of the
rescaled boundary.  The rescaled curvature and injectivity bounds,
together with \eqref{eq:scaled-boundary-geometry}, now verify all
hypotheses of Proposition~\ref{prop:normalized-local-regularity}.
Consequently each centered ball of a fixed small radius has uniform
$C^{3,\widetilde\beta}$ bounded geometry in interior or boundary
harmonic coordinates.

\smallskip
\noindent\textit{Step 4: construction and completeness of the limit.}
Fix $R>0$.  Uniform coordinate radii, ellipticity, and local
noncollapse allow $B_{\bar g_i}(p_i,R)$ to be covered by a number
of charts bounded independently of $i$ (though depending on $R$).
Arzel\`a--Ascoli in those charts and on their overlaps gives
$C^{3,\beta}$ convergence after passage to a subsequence.
Diagonalizing over $R=1,2,\ldots$ produces the pointed limit
$(X_\infty,g_\infty,p_\infty)$, with boundary if the boundary
remains at finite distance.

The limit is complete as a metric space.  A Cauchy sequence is
bounded and therefore eventually lies in a fixed-radius pointed
ball; such a ball is covered by finitely many precompact limit
charts.  It has a convergent subsequence, and the Cauchy property
then gives convergence of the whole sequence.  In the alternative
where the boundary escapes, the limit is a complete manifold
without boundary.

The convergence maps can be chosen smooth, even though the
pulled-back metrics converge only in $C^{3,\beta}$.  Thus every
compact subset of $X_\infty$ embeds smoothly in $B^4$ for all
sufficiently large $i$.  We shall need this topological fact to
exclude an interior ALE bubble.

\smallskip
\noindent\textit{Step 5: preservation of the normalized defect.}
Because curvature and its first covariant derivative depend on
at most three metric derivatives, pointed $C^{3,\beta}$ convergence
gives
\[
 \Rm_{\bar g_i}\longrightarrow\Rm_{g_\infty},
 \qquad
 \nabla\Rm_{\bar g_i}\longrightarrow\nabla\Rm_{g_\infty}
\]
uniformly on compact sets.  Hence whichever equality was selected
at $p_i$ passes to the base point of the limit.  In particular the
limit is not flat to first order in curvature.

\smallskip
\noindent\textit{Step 6: the boundary-distance alternative.}
Finally set
\[
 d_i:=d_{\bar g_i}(p_i,\partial B^4)\in[0,\infty).
\]
Either a subsequence of $d_i$ is bounded, giving
\eqref{eq:boundary-alternative}, or a subsequence tends to
infinity, giving \eqref{eq:interior-alternative}.  These are the
only possibilities and complete the proof.
\end{proof}

\section{Rigidity at a flat conformal boundary}\label{sec:boundary-rigidity}

We turn first to the possibility that the curvature concentrates near
$\partial B^4$.  According to the bounded-distance alternative
\eqref{eq:boundary-alternative}, the boundary remains visible at the
scale of the blow-up.  But at that scale its intrinsic curvature tends
to zero and its injectivity radius tends to infinity.  Thus the
boundary of the limiting space is Euclidean $\mathbb R^3$.

A flat boundary does not, by itself, make the interior metric flat.
The essential fact is a rigidity statement for the complete Einstein
metric on the interior: the Euclidean conformal boundary forces this
metric to be hyperbolic.  We shall then show that its \emph{adapted}
compactification is Euclidean.  This last conclusion is what contradicts
the normalized curvature defect.

There is a small but important regularity issue before we can make this
argument.  The pointed limit is initially obtained in the scalar-flat
adapted gauge, whereas the distance comparison is most transparent in
geodesic coordinates.  The first lemma supplies the change of gauge
and records exactly the regularity it preserves.

\subsection{From adapted to geodesic coordinates}

For a conformally compact metric, a geodesic defining function is one
whose gradient has unit length in the corresponding compactified
metric.  It puts the Einstein metric in the form
$G=x^{-2}(dx^2+h_x)$, separating the normal variable $x$ from the
metrics $h_x$ on the boundary slices.  What we need is not merely the
existence of such a coordinate, but control of its first two normal
coefficients when the original compactification has only finite
regularity.

\begin{lemma}[Finite-regularity geodesic-gauge acquisition]
\label{lem:geodesic-gauge-acquisition}
Fix $0<\beta<1$.  Let $(\overline X,g,\rho)$ be a scalar-flat adapted
compactification of a Poincar\'e--Einstein metric
\[
G=\rho^{-2}g,
\qquad
\Ric_G=-3G,
\]
defined on a neighborhood of a compact set
$K\Subset\partial X$.  Assume that $g$ and $\rho$ extend with
$C^{3,\beta}$ regularity to the boundary, that the induced boundary metric
on the component containing $K$ is the Euclidean metric $\delta$, that the
boundary is umbilic, and that the boundary-to-interior $g$-normal
injectivity radius is positive on a neighborhood of $K$.

Then, after shrinking to a collar of $K$, there exists a geodesic defining
function $x$ associated with the boundary representative $\delta$ such that
\[
G=x^{-2}(dx^2+h_x),
\qquad
h_0=\delta,
\]
where $h_x$ is $C^{2,\beta}$ up to $x=0$ and
\begin{equation}\label{eq:local-geodesic-expansion}
h_x=\delta+O(x^{2+\beta}),
\qquad
\partial_xh_x=O(x^{1+\beta}).
\end{equation}
The estimates are uniform on $K$ for families with uniform local
$C^{3,\beta}$ bounds for $g$, $g^{-1}$, and $\rho$ in controlled
charts, uniform ellipticity, and a uniform positive lower bound for
the boundary normal injectivity radius.
\end{lemma}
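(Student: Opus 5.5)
We construct $x$ as a conformal rescaling of the adapted defining function: write $x=e^{w}\rho$ and require $|dx|^2_{x^{-2}G}=1$. Since $x^{-2}G^{-1}$ applied to $dx$ equals $e^{-2w}|d(e^w\rho)|_g^2$, the condition reads
\[
2\langle d\rho,dw\rangle_g+\rho|dw|_g^2=\frac{1-|d\rho|_g^2}{\rho}=P .
\]
This is a noncharacteristic first-order Hamilton--Jacobi equation, because $\partial_{\rm in}\rho=1\neq0$ on the boundary. We impose $w|_{\partial X}=0$, so that $x^2G$ induces $\delta$ on the boundary. We solve it by characteristics emanating from the boundary, which is the standard Graham--Lee construction. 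The positive $g$-normal injectivity radius near $K$ guarantees that the characteristics, which are $g$-close to normal geodesics, do not cross on a collar of uniform height. The resulting flow of $\nabla_{\bar g}x$ for $\bar g=x^2G$ then gives the product form $G=x^{-2}(dx^2+h_x)$ with $h_0=\delta$.

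\textbf{Regularity bookkeeping.} The coefficients of the equation involve $d\rho$ and $P$. From $g,\rho\in C^{3,\beta}$ we get $d\rho\in C^{2,\beta}$. The identity $P=(1-|d\rho|^2)/\rho$ looks like it loses a derivative, but Lemma~\ref{lem:adapted-equations} gives $\Delta_g\rho=-2P$ and $dP=\Ric_g(\nabla\rho,\cdot)$, so $P\in C^{1,\beta}$ directly. Characteristics therefore produce $w\in C^{2,\beta}$, hence $x\in C^{2,\beta}$. The metric $\bar g=e^{2w}g$ is $C^{2,\beta}$, and the normal exponential map of $\bar g$ is $C^{1,\beta}$ in general. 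The point of the argument is that $h_x$ is nevertheless $C^{2,\beta}$. I would obtain this not from the chart map but from the Einstein equation in geodesic gauge, which gives the ODE system
\[
x\,h''-2h'-\operatorname{tr}(h^{-1}h')h-x\,\ldots=-2x\Ric(h_x)+\ldots,
\]
with the standard Fefferman--Graham structure. Combining this with the a priori $C^{1,\beta}$ regularity, one can bootstrap in the tangential directions using harmonic coordinates.

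\textbf{Leading coefficients.} The first normal derivative $h'(0)$ is, up to a constant, the second fundamental form of the boundary in $\bar g$. Since $w|_{\partial}=0$, the boundary metrics agree and $\bar L=L+(\partial_\nu w)\,\delta$. The boundary is $g$-umbilic with $L=(H/3)\delta$, and the equation at $\rho=0$ gives $2\partial_{\rm in}w=P=2H/3$, so $\partial_\nu w=H/3$. With the sign conventions these cancel, and $\bar L=0$, i.e.\ $h'(0)=0$; this is the usual fact that the geodesic compactification has totally geodesic boundary. For the second coefficient, Fefferman--Graham in dimension four forces $h''(0)=-2P_\delta$, where $P_\delta$ is the Schouten tensor of the boundary, which vanishes for $\delta$. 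Hence $h_x=\delta+o(x^2)$. Upgrading $o(x^2)$ to $O(x^{2+\beta})$ and $\partial_xh=O(x^{1+\beta})$ comes from the Hölder continuity of $h''$ at $x=0$. The constants in all these estimates depend only on the $C^{3,\beta}$ chart bounds, the ellipticity, and the injectivity radius, which gives the claimed uniformity over families.

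\textbf{Main obstacle.} I expect the hard step to be the regularity claim $h_x\in C^{2,\beta}$ up to $x=0$, given only $C^{3,\beta}$ data and the apparent loss of a derivative through $P$ and the characteristic flow. I would handle it by using $dP=\Ric_g(\nabla\rho,\cdot)$ to retain one more derivative of $P$ along characteristics. I would then read $\partial_x^2h$ off the Einstein ODE, so that normal regularity comes from the equation rather than from the coordinate map.
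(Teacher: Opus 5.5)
There is a genuine gap, and it sits exactly at the step you flag as the main obstacle. Your proposed fix does not close it.

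Much of the outline matches the paper's proof and is fine:
\begin{itemize}
\item the eikonal equation for $x=e^{w}\rho$, which is noncharacteristic because $\partial_{\rm in}\rho=1$;
\item its solution by characteristics and the normal flow of $\nabla_{\bar g}x$;
\item $h_0'=0$ from the conformal change of the second fundamental form (your cancellation with $\partial_{\rm in}w=H/3$ is correct);
\item $h_0''=-2\mathsf P_{h_0}=0$, and the H\"older integration giving the decay.
\end{itemize}

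Start from only $g,\rho\in C^{3,\beta}$ and count derivatives. With $P\in C^{1,\beta}$ alone, the momentum equation of the characteristic system has the merely H\"older coefficient $dP$, so the characteristics are not even uniquely defined. Using $dP=\Ric_g(\nabla\rho,\cdot)$ improves this to $P\in C^{2,\beta}$. Even then you get at best $w,x,\bar g\in C^{2,\beta}$, a $C^{1,\beta}$ normal flow, and $h_x\in C^{0,\beta}$. That is two derivatives short of what the expansion \eqref{eq:local-geodesic-expansion} needs.

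Reading $\partial_x^2h$ off the Fefferman--Graham ODE is circular. Its source term $x\Ric(h_x)$ contains two tangential derivatives of $h_x$, which are precisely what you lack. ``Bootstrapping tangentially in harmonic coordinates'' also cannot be carried out after passing to geodesic gauge. The map from harmonic to geodesic coordinates is the lossy normal-flow map itself. As stated, the argument also gives no uniform constants for families.

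The missing idea is to gain the derivatives on the adapted compactification \emph{before} changing gauge, using a specific elliptic boundary problem that the flat boundary makes available. The paper works in boundary harmonic coordinates and iterates three steps:
\begin{itemize}
\item the mixed boundary problem for the Schouten tensor $A=\Ric_g/2$, which satisfies $\Delta A=\Rm*A$. It has Dirichlet data for the tangential and normal--normal components and Neumann data for the mixed ones, both given by the Gauss--Codazzi formulas of the preliminaries;
\item the Codazzi identity $dH=3A(\nu,\cdot)$;
\item the triangular harmonic-coordinate equations for the metric.
\end{itemize}
Each round gains one derivative. Because the boundary metric is $\delta$, no boundary curvature jets cap the iteration.

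On shrinking patches this gives $g\in C^{6,\beta}$. Then $P=-\tfrac12\Delta_g\rho$, $dP=\Ric_g(\nabla\rho,\cdot)$ and the Hessian equation give $\rho\in C^{6,\beta}$ and $P\in C^{5,\beta}$. The constants depend only on the local $C^{3,\beta}$ bounds, ellipticity, and collar width, which is also where the uniformity in families comes from.

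With this reserve the losses in your construction are affordable: $w\in C^{4,\beta}$, $\bar g\in C^{4,\beta}$, $\nabla_{\bar g}x$ and its flow in $C^{3,\beta}$, and $h_x\in C^{2,\beta}$. Your leading-coefficient computations and the decay estimates then go through.
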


\begin{proof}
The proof has two parts.  We first obtain enough derivatives to solve
the geodesic defining-function equation without losing the regularity
required in the statement.  We then read off the first two coefficients
of the metric from the Einstein equation.

\medskip\noindent\emph{Regularity and the change of defining function.}
Choose relatively compact boundary neighborhoods
\[
K\Subset U'\Subset U
\]
in the given Euclidean boundary component.  On $U$ the metric
$g=\rho^2G$ is $C^{3,\beta}$, so $G$ is in particular $C^2$
conformally compact.  The boundary representative is the smooth flat
metric $\delta$.

We first use boundary harmonic coordinates.  Their coordinate
functions are $C^{4,\beta}$; passing to these coordinates preserves
the initial $C^{3,\beta}$ regularity of $g$.  The mixed Schouten
boundary system then improves this regularity one step at a time.
Indeed, suppose on a smaller patch that
\[
 g\in C^{k+2,\beta},\quad A\in C^{k,\beta},\quad
 H\in C^{k+1,\beta},\qquad k\ge1.
\]
The tangential Dirichlet data for the Schouten tensor are
$C^{k+1,\beta}$, and its normal data are $C^{k,\beta}$.  The interior
equation has the corresponding coefficient regularity.  The mixed
elliptic estimate consequently gives $A\in C^{k+1,\beta}$.  The
Codazzi equation then gives $H\in C^{k+2,\beta}$, and the triangular
harmonic-coordinate equations for the metric give
$g\in C^{k+3,\beta}$.

Repeating these three steps on successively smaller compact patches
produces $g\in C^{6,\beta}$.  The estimates depend only on the stated
local norm, ellipticity, patch separations, and collar control.  Since
$q=\delta$, no higher boundary-curvature jets enter.  The same
bootstrap applies to a finite-regularity limit by weak boundary
elliptic regularity; the boundary identities already have continuous
traces at the starting regularity.

It remains to improve the defining function.  To avoid dividing by a
function that vanishes at the boundary, use the adapted identities
\[
 P=-\tfrac12\Delta_g\rho\in C^{1,\beta},\quad
 dP=\Ric(\nabla\rho,\cdot),\quad
 \nabla^2\rho=-\tfrac12\rho\Ric-\tfrac12Pg.
\]
They successively control derivatives of $P$ and $\rho$;
in coordinates, the Hessian contains the usual Christoffel-symbol term
$\Gamma*d\rho$.  Iteration gives
$\rho\in C^{6,\beta}$ and $P\in C^{5,\beta}$.

Write $x=e^\omega\rho$.  The requirement that $x$ be geodesic is
equivalent to the following first-order equation for $\omega$:
\[
 F(z,d\omega):=2\langle d\rho,d\omega\rangle_g+
                  \rho|d\omega|_g^2-P=0,
 \qquad\omega|_{\partial X}=0.
\]
The equation is noncharacteristic at the boundary because
$\partial_{\rm in}\rho=1$.  Its Hamiltonian has
$C^{5,\beta}$ coefficients in the position variables, so the
characteristic vector field is $C^{4,\beta}$.  The characteristic
construction, followed by the inverse-function theorem, gives
$\omega\in C^{4,\beta}$ on a smaller collar whose width is uniform
under the assumed bounds.

We therefore have $x=e^\omega\rho$ and
$\bar g=e^{2\omega}g$ of class $C^{4,\beta}$, with
$|dx|_{\bar g}=1$.  The gradient field
$V=\nabla_{\bar g}x$ is $C^{3,\beta}$, as is its flow map.  Pulling
back the metric by that flow uses one derivative of the map.  This
explains the two-derivative reserve: the resulting normal-flow
splitting has precisely the regularity
\[
 \bar g=dx^2+h_x,\qquad h_x\in C^{2,\beta}.
\]
This accounts for the loss of derivatives in passing from adapted to
geodesic coordinates.  A finite cover of $K$ gives uniform estimates
for the families in the lemma.

\medskip\noindent\emph{The first coefficients of $h_x$.}
These coefficients are not free.  The Einstein equation fixes them
in terms of the boundary metric.  For
\[
G=x^{-2}\bar g
\]
in dimension four, the conformal Ricci formula, together with
$\Ric_G=-3G$ and $|dx|_{\bar g}=1$, gives
\begin{equation}\label{eq:geodesic-Einstein}
x\,\Ric_{\bar g}
+
2\nabla_{\bar g}^2x
+
(\Delta_{\bar g}x)\bar g
=
0.
\end{equation}
In the geodesic splitting $\bar g=dx^2+h_x$,
\[
\nabla_{\bar g}^2x|_{T\{x=\mathrm{const}\}}
=
\frac12h_x',
\qquad
\Delta_{\bar g}x
=
\frac12\tr_{h_x}h_x'.
\]
Hence the tangential part of \eqref{eq:geodesic-Einstein} is
\begin{equation}\label{eq:tangential-Einstein-geodesic}
x(\Ric_{\bar g})_{ij}
+
(h_x')_{ij}
+
\frac12(\tr_{h_x}h_x')(h_x)_{ij}
=
0.
\end{equation}

At $x=0$ the Ricci term disappears.  Taking the $h_0$-trace of
the remaining equation gives
\[
h_0'=0.
\]
The next coefficient is obtained by differentiating once in the
normal direction.  More precisely, subtract the equation at $x=0$,
divide by $x$, and let $x\downarrow0$.  This is legitimate because
$h_x\in C^{2,\beta}$, and gives
\begin{equation}\label{eq:second-boundary-coefficient-1}
(\Ric_{\bar g})_{ij}|_{x=0}
+
(h_0'')_{ij}
+
\frac12(\tr_{h_0}h_0'')(h_0)_{ij}
=
0.
\end{equation}

Let
\[
L_{ij}:=\frac12(h_x')_{ij}
\]
be the second fundamental form of the level sets of $x$.  Since $L=0$
at $x=0$, the tangential Gauss formula gives
\[
(\Ric_{\bar g})_{ij}|_{x=0}
=
(\Ric_{h_0})_{ij}
-\frac12(h_0'')_{ij}.
\]
Substitution into \eqref{eq:second-boundary-coefficient-1} gives
\[
\Ric_{h_0}
+
\frac12h_0''
+
\frac12(\tr_{h_0}h_0'')h_0
=
0.
\]
Taking the trace in boundary dimension three,
\[
\tr_{h_0}h_0''
=
-\frac12R_{h_0},
\]
and therefore
\[
h_0''
=
-2\Ric_{h_0}
+\frac12R_{h_0}h_0
=
-2\mathsf P_{h_0}.
\]
Thus the second coefficient is minus twice the Schouten tensor of
the boundary metric.  Since $h_0=\delta$ is flat,
\[
h_0'=0,
\qquad
h_0''=0.
\]

The desired decay is now a simple consequence of H\"older
regularity.  Since $h_x\in C^{2,\beta}$,
\[
|h_x''-h_0''|\le Cx^\beta.
\]
Thus
\[
|h_x'|
\le
C\int_0^x s^\beta\,ds
\le Cx^{1+\beta},
\]
and a second integration gives
\[
|h_x-\delta|
\le
C\int_0^x s^{1+\beta}\,ds
\le Cx^{2+\beta}.
\]
Therefore
\[
h_x=\delta+O(x^{2+\beta}),
\qquad
\partial_xh_x=O(x^{1+\beta}),
\]
with constants uniform under the stated family bounds.
\end{proof}

\begin{remark}\label{rem:gauge-acquisition-scope}
Notice that the pointed convergence does not immediately yield Lemma~\ref{lem:geodesic-gauge-acquisition}.  In the normalized blow-up below, boundary-normal injectivity is supplied first by the injectivity estimates, and the finite $C^{3,\beta}$ bounded-geometry control is supplied separately by the normalized local regularity argument.  The lemma only converts the finite-regularity into the geodesic gauge needed for the boundary rigidity theorem.  
\end{remark}

\subsection{The rigidity mechanism}

We can now state the central rigidity theorem.  Its proof is based on
a comparison between two ways of measuring the same boundary quantity:
one through volume growth in the Einstein metric, and the other through
the sharp Sobolev inequality on Euclidean $\mathbb R^3$.

The quantity arises already in hyperbolic space.  If $x$ is a
geodesic defining function, put $r=-\log x$; let $t$ be the distance
from a fixed interior point.  Toward the conformal boundary one has
$t=r+O(1)$.  The expression
\[
F=e^{(r-t)/2}=x^{-1/2}e^{-t/2}
\]
removes the leading divergence of $t$.  We shall show that its
boundary trace satisfies a Yamabe-type differential inequality on
$\mathbb R^3$.  Volume comparison bounds its $L^6$ mass from above;
the sharp Sobolev inequality bounds the same mass from below.  The two
constants coincide, forcing equality in Bishop--Gromov and hence
hyperbolic rigidity.

The assumptions are local on compact
boundary patches; we must therefore keep all distance
comparisons local until a globally defined boundary function is produced.

\begin{theorem}[Flat-boundary PE rigidity]\label{thm:boundary-rigidity}
Let $(X^4,G)$ be a smooth complete connected Riemannian manifold with $\Ric_G=-3G$.  Suppose $G$ has a conformal boundary component identified with $(\R^3,\delta)$ and, on every relatively compact boundary coordinate patch, a geodesic compactification
\[
G=x^{-2}(dx^2+h_x),
\qquad
h_0=\delta,
\]
which is locally $C^{2,\beta}$ for some $\beta>0$ and satisfies
\[
h_x=\delta+O(x^{2+\beta}),
\qquad
h_x'=O(x^{1+\beta}).
\]
Then $G$ is isometric to hyperbolic four-space.

If, in addition, $g=\rho^2G$ is a scalar-flat adapted compactification with boundary representative $\delta$, $\rho>0$ in $X$, and $|d\rho|_g\le1$, then $g$ is the flat metric on the standard closed half-space.
\end{theorem}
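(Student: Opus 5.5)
Fix an interior base point $o\in X$ and let $t=d_G(o,\cdot)$. On each relatively compact boundary patch, with the geodesic defining function $x$ and $r=-\log x$, set $F=x^{-1/2}e^{-t/2}$. The strategy has three stages: show that $F$ has a globally defined boundary trace $f$ on $\R^3$; bound its critical $L^6$ mass from above by volume comparison and from below by the sharp Sobolev inequality; and then read off hyperbolicity from the equality case. Only afterwards is the adapted compactification treated.

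\emph{Step 1: the trace and its differential inequality.} On a patch, the expansion $h_x=\delta+O(x^{2+\beta})$, $h_x'=O(x^{1+\beta})$ puts $G$ within $O(x^{2+\beta})$ of the upper half-space model in these coordinates. Comparing $G$-geodesics from $o$ with their model counterparts gives that $t-r$ converges as $x\downarrow0$ to a Lipschitz function $-2\log f$ on the patch. A Lipschitz bound on $x^{-1}e^{-t}$ then controls $|d(t-r)|$ tangentially. Because $t$ is defined globally, these local limits agree on overlaps, so $f>0$ is a well-defined function on all of $\R^3$. Laplacian comparison with $\Ric_G=-3G$ gives $\Delta_G t\le 3\coth t$ in the barrier sense. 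In the boundary coordinates this becomes $\Delta_G t=3+O(x^{2})$-type corrections, and the $O(x^{1+\beta})$ control on $h_x'$ makes the mean-curvature error of the level sets of $x$ vanish in the limit. Passing to the limit yields a weak inequality of the form
\[
8\Delta_\delta f\ \ge\ -c\,f^5
\]
on $\R^3$, with $c$ equal to the hyperbolic constant, which is equality for the bubble $f_0=\bigl(\tfrac{2}{1+|y|^2}\bigr)^{1/2}$ arising from $\mathbb H^4$. The normalization of $c$ will be fixed by the explicit hyperbolic computation.

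\emph{Step 2: the two bounds.} Bishop--Gromov relative to $\mathbb H^4$ bounds the area of geodesic spheres $\{t=T\}$ by $\omega_3\sinh^3T$. The coarea formula in the collar coordinates rewrites this area, to leading order, as $e^{3T}\int_{\R^3}f^6\,dy/8$. Thus $\int f^6\le \int f_0^6=\omega_3$, where the equality holds for hyperbolic space. Localizing to patches and exhausting $\R^3$ keeps this step local, as the statement requires. In the other direction, multiply the inequality from Step 1 by $f$ with cutoffs; decay of $f$ at infinity follows from $t\ge r-C$ globally, as in the usual Li--Qing--Shi argument. This gives $8\int|df|^2\le c\int f^6$. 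The sharp Euclidean Sobolev inequality $8\int|df|^2\ge\Lambda(\int f^6)^{1/3}$ then yields $\int f^6\ge(\Lambda/c)^{3/2}=\omega_3$. Hence equality holds throughout. In particular equality holds in Bishop--Gromov for all $T$, which forces $G$ to be hyperbolic on the whole exponential ball and therefore, by completeness, to be $\mathbb H^4$.

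\emph{Step 3: the adapted compactification.} Now $G=\mathbb H^4$ in the upper half-space model $y^{-2}(dy^2+\delta)$ has a boundary component $\R^3$. The candidate is $\rho=y$, which satisfies $R_g=0$, $|d\rho|=1$, and $P=0$. For uniqueness, set $\rho=y\,e^{\phi}$. Scalar flatness of $g=e^{2\phi}(dy^2+\delta)$ means that $e^{\phi}$ is a positive harmonic function on the half-space with boundary value $1$, since the boundary representative is $\delta$. The gradient bound $|d\rho|_g\le1$ gives $|d(ye^\phi)|\le e^{\phi}$, and hence growth control. The half-space Liouville theorem for positive harmonic functions with constant boundary data then leaves $e^\phi=1+ay$ with $a\ge0$. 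Substituting this into $|d\rho|_g\le1$ forces $a=0$. Therefore $g$ is flat half-space.

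The main obstacle is Step 1. Specifically, one must produce the sharp differential inequality for the trace $f$ with only $C^{2,\beta}$ geodesic-gauge control, and justify the uniform convergence $t-r\to-2\log f$ together with the barrier Laplacian comparison at the cut locus up to the boundary. I would handle the cut locus with Calabi's trick and exploit the $O(x^{2+\beta})$ remainder in the expansion to make every error term vanish at $x=0$.
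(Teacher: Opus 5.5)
Your architecture matches the paper's: the renormalized distance, a Yamabe-type inequality for its trace, a Bishop--Gromov upper bound and a sharp-Sobolev lower bound on $\int f^6$, then rigidity. However, the step you call ``passing to the limit'' is a genuine gap, and the tools you propose for it do not reach it. Put $J=\sqrt{\det h_x}$, $m=\partial_xJ/J$ and $v=e^{-t/2}=x^{1/2}F$. The distributional comparison $(\Delta_G+\tfrac54)v\ge-3v^5/(1-v^4)$ becomes
\[
F_{xx}+(m-x^{-1})F_x+\Delta_{h_x}F+\frac{m}{2x}F\ge-\frac{3F^5}{1-x^2F^4}.
\]
The metric expansion only handles the terms containing $m$, since $m/x=O(x^\beta)$. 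The terms $F_{xx}-x^{-1}F_x$ involve the unknown $F$, which is merely Lipschitz up to $x=0$. Neither the $O(x^{2+\beta})$ remainder nor Calabi's trick says anything about them, and there is no pointwise limit to take.

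A weak formulation does not help with Lipschitz control alone. Test the divergence form, with respect to $J\,dx\,dy/x$, against $\chi_\epsilon(x)\eta(y)$, where $\chi_\epsilon$ is supported in $(\epsilon/2,x_0)$ and equals one on $(\epsilon,x_0/2)$. This leaves the normal flux $-\int(J/x)F_x\,\eta\,\chi_\epsilon'$. The bound $|F_x|\le C$ controls it only by $C\epsilon^{-1}$, which is too large even after dividing by the logarithmic length $L_\epsilon=\int\chi_\epsilon\,dx/x$.

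The missing idea is the eikonal equation. Since $|dt|_G=1$ and $t=-\log x-2\log F$, one gets exactly $F_x=-\frac{x}{F}\bigl(F_x^2+|d_yF|_{h_x}^2\bigr)$ a.e., so $F_x/x$ is locally bounded. The flux is then $O(1)$ and disappears after division by $L_\epsilon\to\infty$. The logarithmic averages of the remaining terms converge to $\int F_0\Delta_\delta\eta$ and $-3\int F_0^5\eta$. This is how the paper obtains $\Delta_\delta F_0\ge-3F_0^5$.

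Relatedly, the Lipschitz bound on $t-r$ cannot come from comparing geodesics from $o$ with model geodesics, because $G$ is arbitrary outside the collar. Also, a Lipschitz bound on $x^{-1}e^{-t}$ is the conclusion, not an input. The paper shows that a minimizing geodesic to a point over a compact $K$ enters a thin collar through a compact slice with nonnegative radial velocity $\varphi=dr/ds$. It then integrates $\varphi'=\lambda(s)(1-\varphi^2)$, $\tfrac12\le\lambda\le\tfrac32$, along the terminal segment to get $1-\varphi^2\le Cx^2$, i.e.\ $|d(t-r)|_{dx^2+h_x}\le C$. This is exactly what feeds the eikonal bound.

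Two smaller corrections.
\begin{itemize}
\item In the Sobolev step the cutoff error is not controlled by decay of $f$: the bound $t\ge r-C$ only bounds $f$ from above, and only with patch-dependent constants. It is controlled by $\int f^6<\infty$ from your own volume bound and H\"older, $\int f^2|d\zeta_R|^2\le C\bigl(\int_{B_{2R}\setminus B_R}f^6\bigr)^{1/3}\to0$. So the volume bound must be proved first.
\item In Step 3 you must first show that the given chart $(\R^3,\delta)$ is the boundary of a standard upper half-space model. The paper does this via the endpoint map of the normal geodesics and Liouville's theorem. After that, your Poisson-representation argument ($e^{\phi}=1+ay$ with $y$ the height, then $a=0$ from the gradient bound) is a valid alternative to the paper's route, which uses $u_w\le0$, hence $0<u\le1$, then odd reflection and the bounded Liouville theorem.
\end{itemize}
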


\begin{proof}
We first construct the boundary trace of the renormalized distance,
then derive an inequality for it.  The final step is a comparison of
sharp constants.  The construction is local near each compact part of
the boundary, whereas the comparison concerns the entire $\mathbb R^3$.

\medskip\noindent\emph{1. Controlling minimizing geodesics near the boundary.}

Fix one interior base point \(o\) for the rest of the proof.
Given \(K\Subset U'\Subset U\Subset\mathbb R^3\), where $K$ is compact and $U',U$ are bounded coordinate domains,
choose a thin collar
\[
(0,x_0)\times U
\]
whose closure excludes $o$. Shrink $x_0>0$ so that the tangential
metrics in
\[
G=x^{-2}(dx^2+h_x)
\]
are uniformly comparable to \(\delta\). Write
\(|Z|_\delta\le c^{-1/2}|Z|_{h_x}\) for a fixed comparison
constant. Set \(t=d_G(o,\cdot)\) and \(r=-\log x\).

The key geometric observation is that a geodesic which reaches a very
thin collar cannot travel far in the tangential directions.  This
follows by differentiating $r$ along the geodesic.

On the collar, \(|dr|_G=1\), and on vectors perpendicular to \(\nabla r\), \begin{equation}
\nabla_G^2r=\operatorname{Id}-\frac{x}{2}h_x^{-1}h_x'
=\operatorname{Id}+O(x^{2+\beta}).
\label{eq:br-hess}
\end{equation} Consequently, on every connected geodesic segment contained in this thin collar, \[
\phi=r'(\gamma),\qquad
\phi'=a(s)(1-\phi^2),\qquad
\tfrac12\le a(s)\le\tfrac32.
\] The tangential speed satisfies \[
|y'|_\delta\le c^{-1/2}x\sqrt{1-\phi^2}.
\] For a segment contained in \(x\le x_1\), monotonicity of \(\phi\), with the limiting interpretation on radial segments, gives \begin{equation}
\int |y'|_\delta\,ds
\le \frac{x_1}{(1/2)\sqrt c}
\int_{-1}^{1}\frac{d\phi}{\sqrt{1-\phi^2}}
=\frac{2\pi x_1}{\sqrt c}.
\label{eq:br-displacement}
\end{equation} Choose \(x_1>0\) so small that the right side is less than the
Euclidean distance from \(K\) to \(\partial U'\).  

For a minimizing geodesic from \(o\) to a point in \((0,x_1)\times K\), consider its last connected segment in \((0,x_1)\times U'\). Its entrance cannot be lateral: that would require the tangential displacement from \(\partial U'\) to \(K\), contradicting \eqref{eq:br-displacement}. Nor can entrance be through \(x=0\), which lies at infinite \(G\)-distance. It must enter through \(x=x_1\), at a point of the compact slice \(\{x_1\}\times\overline{U'}\). Write its entry time \(s_0\). At entry the radial velocity is nonnegative:
\[
\phi=\frac{d}{ds}r(\gamma(s))\ge0.
\]
Indeed, the geodesic enters the thin collar through $x=x_1$ in the
direction of decreasing $x$.  Along its final segment, \begin{equation}
\phi'=(1-\phi^2)(1+e(s)),\qquad |e(s)|\le Cx(s)^{2+\beta}.
\label{eq:br-phi}
\end{equation} Thus \(\phi\) increases, becomes bounded below by a positive constant within bounded time, and cannot exit back through \(x=x_1\). The selected terminal segment already stays inside the coordinate domain by construction.

Since \(x'=-x\phi\), direct differentiation gives \[
\frac d{ds}\log\frac{1-\phi^2}{x^2}=-2\phi e(s).
\] The integral of its absolute value is bounded: \[
\int_{s_0}^s|\phi e|\,ds
\le C\int_{x(s)}^{x_1}x^{1+\beta}\,dx.
\] It follows that \begin{equation}
1-\phi^2\le Cx^2.
\label{eq:br-phi-x}
\end{equation} The constant depends on the fixed compact local collar. Moreover, after bounded time \(x(s)\) decreases exponentially. The tangential speed \[
|dy/ds|_\delta\le Cx\sqrt{1-\phi^2}\le Cx^2
\] is integrable on the terminal part; the initial bounded-time part also has bounded displacement. The entry point belongs to the compact slice chosen above.

We next establish the asymptotic relation $t=r+O(1)$.  The entry points lie in a
compact slice, so their distances from $o$ are uniformly bounded.
Along the terminal segment, $\phi(s_0)\ge0$ and
$\phi'=a(s)(1-\phi^2)$ with $a(s)\ge\frac12$. Hence
\[
0\le\int_{s_0}^s(1-\phi)\,d\sigma
=\int_{\phi(s_0)}^{\phi(s)}\frac{d\phi}{a(1+\phi)}
\le 2\log2.
\]
Since $d(t-r)/ds=1-\phi$, the compactness of the entry slice
gives uniform two-sided bounds for $t-r$ on compact boundary cylinders.
This establishes the distance asymptotics without appealing to the
subsequently derived gradient estimate.

At points where the distance function is differentiable, its
gradient is the terminal unit tangent of a minimizing geodesic.  Hence \[
|d(t-r)|_G^2=2(1-\phi)\le Cx^2,
\] so \begin{equation}
|d(t-r)|_{dx^2+h_x}\le C.
\label{eq:br-gradient}
\end{equation}This estimate is understood almost everywhere; cut points need not
be excluded.  Since distance is locally Lipschitz, the uniform weak
gradient bound extends $t-r$ as a locally Lipschitz function to
$x=0$.

Define \[
F=x^{-1/2}e^{-t/2}=e^{(r-t)/2}.
\] There is a positive locally Lipschitz boundary trace \(F_0\), and on each compact boundary cylinder \begin{equation}
0<c_K\le F\le C_K,\qquad |dF|_{dx^2+h_x}\le C_K.
\label{eq:br-F-bounds}
\end{equation} 
\medskip\noindent\emph{2. The inequality satisfied by the renormalized distance.}

The boundary trace exists, but we still need an equation that
constrains it.  The starting point is Laplacian comparison for
the distance $t$.  It remains valid in the sense of distributions,
which is essential because $t$ is not smooth on its cut locus.

Completeness and \(\operatorname{Ric}(G)=-3G\) give distributional Laplacian comparison \[
\Delta_Gt\le3\coth t
\] away from \(o\), while \(|dt|_G=1\) almost everywhere. Applying the weak chain rule to \(v=e^{-t/2}\) reverses the comparison sign because the exponential is decreasing: \begin{equation}
(\Delta_G+\tfrac54)v
\ge-\frac{3v^5}{1-v^4}.
\label{eq:br-lapl-v}
\end{equation} The singular part supported on the cut locus has the favorable
sign.  Thus there is no need to describe the cut locus explicitly.

To pass from the interior inequality to the boundary, conjugate by
the leading factor $x^{1/2}$.  Write \(J=\sqrt{\det h_x}\), \(a=\partial_x\log J\). Since \(v=x^{1/2}F\), direct computation gives \[
(\Delta_G+\tfrac54)(x^{1/2}F)
=x^{5/2}\left[
F_{xx}+(a-x^{-1})F_x+\Delta_{h_x}F+\frac{a}{2x}F
\right].
\] Thus, in distributions, \begin{equation}
F_{xx}+(a-x^{-1})F_x+\Delta_{h_x}F+\frac{a}{2x}F
\ge-\frac{3F^5}{1-x^2F^4}.
\label{eq:br-F-pde}
\end{equation} The second-order operator is in divergence form with respect to
$J\,dx\,dy/x$.  This particular measure explains why logarithmic
averaging in $x$ is natural.

One further estimate is needed to remove the normal derivative in
that averaging argument.  It comes directly from the eikonal equation:
 \begin{equation}
F_x=-\frac{x}{F}\left(F_x^2+|d_yF|_{h_x}^2\right)
\quad\text{a.e.}
\label{eq:br-eikonal}
\end{equation} Consequently \(F_x/x\) is locally bounded, by \eqref{eq:br-F-bounds}. This is the key normal estimate.

\medskip\noindent\emph{3. Passing the inequality to the boundary.}

We cannot simply put $x=0$ in \eqref{eq:br-F-pde}: its normal part
contains the singular coefficient $x^{-1}$.  Instead we average with
respect to $dx/x$.  The normal contribution has bounded mass, whereas
the logarithmic length of the averaging interval tends to infinity.

Fix nonnegative \(\eta\in C_c^\infty(\mathbb R^3)\), and a sufficiently small fixed \(\delta<x_1\) in a compact collar neighborhood of its support. Geodesic defining functions associated with the same boundary representative agree on overlaps, so the local trace \(F_0\) is globally well-defined. Choose smooth \(0\le\chi_\epsilon\le1\), supported in \((\epsilon/2,\delta)\), equal to one on \((\epsilon,\delta/2)\), increasing and then decreasing, with \[
\int|\chi_\epsilon'|\,dx\le2.
\] Put \(L_\epsilon=\int\chi_\epsilon(x)\,dx/x\to\infty\). Test \eqref{eq:br-F-pde}, in its divergence form with measure \(J\,dx\,dy/x\), against \(\chi_\epsilon(x)\eta(y)\). The normal term is \[
-\int\frac JxF_x\eta\chi_\epsilon'\,dx\,dy.
\] Its absolute value is \(O_K(1)\), independently of \(\epsilon\), because \(F_x/x\) is bounded and \(\int|\chi_\epsilon'|\le2\). Dividing by \(L_\epsilon\), it tends to zero.

After integration by parts in \(y\), the tangential term is \[
\frac1{L_\epsilon}\int\frac{\chi_\epsilon}{x}
F\,(\Delta_{h_x}\eta)\,J\,dx\,dy
\longrightarrow \int F_0\Delta_\delta\eta\,dy.
\] This is the elementary logarithmic version of Ces\`aro
convergence: on the fixed compact support, all tangential coefficients
converge uniformly as $x\downarrow0$.

Because \(a/x=O(x^\beta)\), the zeroth-order term has logarithmic average zero. The right side converges to \(-3\int F_0^5\eta\). Therefore \begin{equation}
\Delta_\delta F_0\ge-3F_0^5
\quad\text{in distributions on }\mathbb R^3.
\label{eq:br-boundary-pde}
\end{equation} 

Set \(\psi=\sqrt2F_0\). Then \begin{equation}
-8\Delta_\delta\psi\le6\psi^5.
\label{eq:br-psi-pde}
\end{equation}

\medskip\noindent\emph{4. The upper bound from hyperbolic volume comparison.}

The main analytic step is finished: $\psi$ is a positive function on
$\mathbb R^3$ satisfying \eqref{eq:br-psi-pde}.  We now obtain two
sharp inequalities for its total sixth power.  First, metric balls
in $G$ cannot grow faster than balls in hyperbolic space, and the
boundary trace controls how much of such a ball lies in a thin collar.

On a fixed compact boundary set \(K\), \(F(x,y)\to F_0(y)\) uniformly and positively. Since \[
t=-\log x-2\log F,
\] for any fixed \(\eta>0\), a sufficiently thin compact collar has the property that, for large \(T\), \[
\{y\in K,\ e^{-T+\eta}F_0(y)^{-2}\le x\le\delta\}
\subset B_G(o,T).
\] The volume density is \(x^{-4}J\,dx\,dy\). Its leading term gives \[
\liminf_{T\to\infty}e^{-3T}\operatorname{Vol}_G B_G(o,T)
\ge\frac{e^{-3\eta}}3\int_KF_0^6\,dy.
\] The hyperbolic ball asymptotic is \[
\operatorname{Vol}_{\mathbb H^4}B(T)
\sim\frac{\omega_3}{24}e^{3T},\qquad \omega_3=\operatorname{Vol}(S^3)=2\pi^2.
\] Bishop--Gromov gives an upper volume ratio of one. Letting \(\eta\downarrow0\), then exhausting \(\mathbb R^3\) by \(K\), proves \begin{equation}
\int_{\mathbb R^3}F_0^6\le\frac{\omega_3}{8},
\qquad
0<A:=\int_{\mathbb R^3}\psi^6\le\omega_3.
\label{eq:br-mass}
\end{equation} More precisely, if \(\Theta\) is the asymptotic hyperbolic volume ratio, the same calculation gives \begin{equation}
\Theta\ge A/\omega_3.
\label{eq:br-theta}
\end{equation} We have used volumes of distance balls, rather than areas of
possibly singular distance spheres to avoid a regularity assumption on the distance function.

\medskip\noindent\emph{5. The reverse inequality from the sharp Sobolev constant.}

The boundary differential inequality gives the complementary estimate.
Since $\psi$ is not yet known to have finite Dirichlet energy, we
first test it with a compactly supported cutoff and only then let
the cutoff exhaust $\mathbb R^3$.

Let \(\zeta_R\) be a Euclidean cutoff, equal to one on \(B_R\), supported in \(B_{2R}\), with \(|d\zeta_R|\le C/R\). Test \eqref{eq:br-psi-pde} against \(\zeta_R^2\psi\), justified by local Lipschitz regularity and nonnegative smooth approximation. Completion of the square gives \begin{equation}
8\int|d(\zeta_R\psi)|^2
\le6\int\zeta_R^2\psi^6+8\int\psi^2|d\zeta_R|^2.
\label{eq:br-sobolev}
\end{equation} The cutoff error satisfies
\begin{align*}
\int\psi^2|d\zeta_R|^2
&\le\frac C{R^2}\operatorname{Vol}(B_{2R}\setminus B_R)^{2/3}
\left(\int_{B_{2R}\setminus B_R}\psi^6\right)^{1/3}\\
&\le C\left(\int_{B_{2R}\setminus B_R}\psi^6\right)^{1/3}\longrightarrow0.
\end{align*}
Thus \(\psi\) has finite homogeneous \(H^1\) energy and \begin{equation}
8\int|d\psi|^2\le6A.
\label{eq:br-equality}
\end{equation} Applying the sharp Euclidean three-dimensional Sobolev inequality to \(\zeta_R\psi\), and passing to the limit, gives \[
6\omega_3^{2/3}A^{1/3}\le8\int|d\psi|^2\le6A.
\] Since \(A>0\), the preceding chain implies \(A\ge\omega_3\).
Together with \eqref{eq:br-mass}, this gives
\[
A=\omega_3.
\]
Equation \eqref{eq:br-theta} now gives \(\Theta\ge1\), while
Bishop--Gromov gives \(\Theta\le1\).  All centered volume ratios
must therefore be equal to one.  Rigidity in Bishop--Gromov implies
that $(X,G)$ is hyperbolic four-space, proving the first assertion.

\medskip\noindent\emph{6. Flatness in the adapted gauge.}

Hyperbolicity is not quite enough for our blow-up argument, which
concerns the compactified metric $g$.  We must also identify the
boundary coordinates and show that the adapted conformal factor
is the standard one.

Once \(G\) is identified with hyperbolic four-space, the interior isometry induces, by taking endpoints of the normal geodesic rays in the local compactification collars, a globally defined local conformal map
\[
\operatorname{dev}:\mathbb R^3\longrightarrow S^3_\infty.
\]
The endpoint map is conformal because the induced metric on the
conformal boundary can be recovered from the leading term of the
hyperbolic metric.  Here is a direct verification.  Use the hyperboloid model of
$\mathbb H^4$ in Minkowski space with bilinear form
$\langle\ ,\ \rangle_{4,1}$. The normal curves, parametrized by
$r=-\log x$, are unit geodesics. Their images have the form
\[
 Z(r,y)=\tfrac12\bigl(e^rU(y)+e^{-r}V(y)\bigr),
\]
where $U,V$ are future null vectors. Since
$G_{ab}=e^{2r}(\delta_{ab}+o(1))$, differentiation in $y$ gives
\[
 \tfrac14\langle\partial_aU,\partial_bU\rangle_{4,1}
 =\lim_{r\to\infty}e^{-2r}G_{ab}=\delta_{ab}.
\]
Writing $U=U^0(1,\vartheta)$, with $U^0>0$ and
$\vartheta\in S^3$, the null relation gives
\[
 \vartheta^*g_{\rm rd}=\frac{4}{(U^0)^2}\delta.
\]
Thus the endpoint map is a local conformal immersion.
Compatibility on overlaps follows because the interior isometry is single-valued and geodesic defining functions for the same boundary representative agree where both are defined.  By the Liouville theorem in dimension three, each local conformal chart is the restriction of a M\"obius transformation of \(S^3\). On an overlap, two such transformations agree on an open set and hence coincide globally. The connectedness of \(\mathbb R^3\) allows continuation across a chain of overlapping charts to obtain one M\"obius transformation restricting to \(\operatorname{dev}\). Hence \(\operatorname{dev}\) is injective and its image is the complement of one point.  Composing the hyperbolic isometry with the inverse Möbius isometry therefore identifies the given boundary coordinates with the canonical Euclidean boundary coordinates \(y\) of \[
G=w^{-2}(dw^2+dy^2),\qquad w>0.
\] The scalar-flat compactification takes the form \[
g=u^2(dw^2+dy^2),\qquad u=\rho/w>0,\qquad u|_{w=0}=1.
\] The boundary value follows from the prescribed boundary representative \(\delta\), or from uniqueness of the geodesic defining function for that representative. Scalar flatness gives \(\Delta_{\mathbb R^4}u=0\).

The adapted gradient inequality becomes \[
|d(wu)|_\delta^2\le u^2,
\] and hence \[
w^2|du|_\delta^2+2wu\,u_w\le0.
\]
Thus $u_w\le0$.  Since $u(y,0)=1$ and $u>0$, it follows that
$0<u\le1$.  The function $u-1$ is therefore bounded and harmonic on
the half-space, with zero boundary values.  Its odd reflection is a
bounded harmonic function on $\mathbb R^4$.  By the harmonic Liouville
theorem it is constant, and by oddness the constant is zero.  Hence
$u=1$ and $g$ is Euclidean.  This proves the second assertion.

\end{proof}

\subsection{Excluding the boundary blow-up}

The flat-boundary theorem applies to the normalized limit, but only
after we show that the limit defining function remains positive in
the interior and that the resulting Einstein metric is complete.
These points are not automatic consequences of convergence of the
compactified metrics, and we verify them in the following proof.

\begin{proposition}[Exclusion of the boundary alternative]
\label{prop:exclude-boundary}
The alternative \eqref{eq:boundary-alternative} of
Proposition~\ref{prop:normalized-extraction} is impossible.
\end{proposition}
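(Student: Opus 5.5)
Suppose the bounded-distance alternative \eqref{eq:boundary-alternative} holds. By Proposition~\ref{prop:normalized-extraction}, the pointed limit $(X_\infty,g_\infty,p_\infty)$ is a complete $C^{3,\beta}$ manifold with nonempty boundary. By \eqref{eq:scaled-boundary-geometry} and $\inj(\bar q_i)\to\infty$, the boundary component is complete, flat, and simply connected at every scale, hence isometric to $(\R^3,\delta)$. The plan is to show that the limit satisfies the hypotheses of Theorem~\ref{thm:boundary-rigidity}. That theorem then makes $g_\infty$ the flat half-space metric, contradicting the normalized defect \eqref{eq:nonzero-normalized-defect}.

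\textbf{Step 1: pass the adapted structure to the limit.} On compact sets, $\bar\rho_i$ is bounded by the boundary distance, and $|d\bar\rho_i|\le1$. The identity $\Hess\bar\rho_i=-\frac12\bar\rho_i\Ric-\frac12\bar P_ig$ bounds second derivatives once $\bar P_i$ is bounded. To bound $\bar P_i$, I use $\bar P_i=2\bar H_i/3$ on the boundary, where $\bar H_i$ is bounded by Gauss, together with $|d\bar P_i|\le|\Ric|\le C$ along paths of bounded length. Arzel\`a--Ascoli and the $C^{3,\beta}$ convergence then give a limit $\rho_\infty$ and $P_\infty$ satisfying every identity of Lemma~\ref{lem:adapted-equations}: $R=0$, umbilic boundary, $\rho_\infty|_\partial=0$, $\partial_{\rm in}\rho_\infty=1$, and $|d\rho_\infty|\le1$.

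\textbf{Step 2 (the main obstacle): positivity of $\rho_\infty$ in the interior and completeness of $G_\infty=\rho_\infty^{-2}g_\infty$.} Near the boundary, $\partial_{\rm in}\rho_\infty=1$ already gives positivity. An interior zero would be an interior minimum of the nonnegative function $\rho_\infty$, so $d\rho_\infty=0$ there. The Hessian identity then reads $\Hess\rho_\infty=-\frac12P_\infty g$. Since $P_\infty\ge0$, this is $\le0$, so the point is a local maximum as well as a minimum. The first thing I would try is to exploit this directly: Hessian $\le 0$ at a zero minimum forces $P_\infty=0$ there. Then $\rho_\infty$ satisfies $\Delta\rho_\infty=-2P_\infty\le0$, so it is superharmonic and nonnegative. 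The strong minimum principle makes $\rho_\infty\equiv0$ on the interior component, which contradicts $\partial_{\rm in}\rho_\infty=1$ near the boundary. Hence $\rho_\infty>0$.

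The same structure underlies completeness. Off the boundary, $|d\log\rho_\infty|\ge$ a positive bound fails only where $\rho_\infty$ is bounded below, and there $G_\infty$ is comparable to the complete metric $g_\infty$. Near the boundary, $\rho_\infty\le d_{g_\infty}(\cdot,\partial)$, so the $G_\infty$-length of any path to the boundary is $\ge\int ds/s=\infty$. Together these make $G_\infty$ complete. Because $G_\infty$ is a smooth limit of the Einstein metrics $G_i$ on compact interior sets, it satisfies $\Ric=-3G_\infty$.

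\textbf{Step 3: geodesic gauge and conclusion.} The uniform boundary-normal injectivity bound \eqref{eq:normalized-injectivity-lower} passes to the limit, and the $C^{3,\beta}$ bounds come from Proposition~\ref{prop:normalized-local-regularity}. These supply the hypotheses of Lemma~\ref{lem:geodesic-gauge-acquisition} on every compact boundary patch. The lemma yields a geodesic compactification with the decay $h_x=\delta+O(x^{2+\beta})$ and $h_x'=O(x^{1+\beta})$. Theorem~\ref{thm:boundary-rigidity} then shows that $g_\infty$ is the flat closed half-space metric. In particular $\Rm_{g_\infty}\equiv0$ and $\nabla\Rm_{g_\infty}\equiv0$, which contradicts \eqref{eq:nonzero-normalized-defect} at $p_\infty$.
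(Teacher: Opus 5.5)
\textbf{Assessment.} Your Steps 1 and 3 follow the paper's proof. The paper bootstraps the Hessian and $dP$ identities to bound $\bar\rho_i$ in $C^{3,\gamma}$, then applies Lemma~\ref{lem:geodesic-gauge-acquisition} with an exponent $\gamma<\beta$. You should state the same, since a $C^{1,1}$ bound plus Arzel\`a--Ascoli does not by itself give the $C^{3}$ regularity of $\rho_\infty$ that the lemma needs; this is routine.

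\textbf{Positivity.} Your argument is correct but takes a different route from the paper. Since $\bar P_i\ge0$ and $\Delta\bar\rho_i=-2\bar P_i$, the limit $\rho_\infty$ is a nonnegative superharmonic function. The strong minimum principle on the connected interior then excludes an interior zero. The paper instead passes $|d\bar\rho_i|^2=1-\bar\rho_i\bar P_i$ to the limit. At an interior zero this gives $|d\rho_\infty|=1$, contradicting $d\rho_\infty=0$ at an interior minimum. That route is shorter and needs no maximum principle. Your aside that a nonpositive Hessian makes the point ``a local maximum as well'' is false as stated, but you do not use it.

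\textbf{The gap: completeness of $G_\infty$.} Your treatment of paths running into $\partial X_\infty$ is fine. Indeed, $\rho_\infty\le d_{g_\infty}(\cdot,\partial X_\infty)$ makes $\log d_{g_\infty}(\cdot,\partial X_\infty)$ $1$-Lipschitz for $G_\infty$, so such paths have infinite $G_\infty$-length. However, $X_\infty$ is noncompact, so a finite-$G_\infty$-length path could also escape to infinity inside the interior.

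Your claim that $G_\infty$ is comparable to $g_\infty$ where $\rho_\infty$ is bounded below only gives $G_\infty\le c^{-2}g_\infty$. That is the wrong direction for transferring completeness from $g_\infty$. What you need is that $L_{g_\infty}(\sigma)=\int_\sigma\rho_\infty\,ds_{G_\infty}$ is finite, which requires an \emph{upper} bound on $\rho_\infty$ along $\sigma$. Such a bound is not automatic: $\rho_\infty$ is unbounded, and on the expected half-space limit it is the height coordinate.

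The missing step is the one the paper uses: $|d\log\rho_\infty|_{G_\infty}=|d\rho_\infty|_{g_\infty}\le1$. The argument then runs as follows.
\begin{enumerate}
\item Along any path of finite $G_\infty$-length, $\log\rho_\infty$ stays bounded, so $\rho_\infty$ is bounded above and below.
\item Hence the $g_\infty$-length of the path is finite.
\item Completeness of $(X_\infty,g_\infty)$ gives a limit point.
\item The lower bound on $\rho_\infty$ places that limit point in the interior.
\end{enumerate}
With this replacement, which also subsumes your boundary argument, the rest of your proof goes through.
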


\begin{proof}
\medskip\noindent\emph{The limiting boundary.}
Assume \eqref{eq:boundary-alternative}.  After passing to a subsequence,
choose
\[
y_i\in\partial B^4
\]
such that
\[
d_{\bar g_i}(p_i,y_i)
=
d_{\bar g_i}(p_i,\partial B^4)
\le C.
\]
By \eqref{eq:scaled-boundary-geometry},
\[
\inj(S^3,\bar q_i,y_i)\longrightarrow\infty
\]
and, for every derivative order needed below,
\[
\widehat\nabla^{\,j}\widehat\Rm_{\bar q_i}
\longrightarrow0
\]
uniformly on bounded pointed balls.  Hence the pointed boundary manifolds
\[
(S^3,\bar q_i,y_i)
\]
converge in bounded-geometry coordinates to a complete flat
three-manifold of infinite injectivity radius.  Such a manifold is
isometric to
\[
(\R^3,\delta).
\]
Consequently the pointed limit
\[
(X_\infty,g_\infty,p_\infty)
\]
contains a boundary component intrinsically isometric to
$(\R^3,\delta)$.

\medskip\noindent\emph{Convergence of the defining functions.}
We must now recover the Einstein metric from the compactified limit.
The adapted equations pass unchanged under the constant rescaling;
explicitly, they read
\begin{equation}\label{eq:bar-adapted-system}
\begin{aligned}
\bar\rho_i|_{\partial B^4}&=0,
&
\partial_{\rm in}\bar\rho_i&=1,
&
|d\bar\rho_i|_{\bar g_i}&\le1,
\\
\nabla^2_{\bar g_i}\bar\rho_i
&=
-\frac12\bar\rho_i\Ric_{\bar g_i}
-\frac12\bar P_i\bar g_i,
&
d\bar P_i
&=
\Ric_{\bar g_i}(\nabla\bar\rho_i,\cdot).
\end{aligned}
\end{equation}
Moreover
\[
\bar P_i|_{\partial B^4}
=
\frac23\bar H_i.
\]
The scalar Gauss equation and normalized curvature bound give
uniform bounds for the rescaled boundary mean curvatures.  This is
the initial control for $\bar P_i$ because its boundary value is
$2\bar H_i/3$.

Fix a compact set
\[
L\Subset X_\infty
\]
at finite $g_\infty$-distance from the distinguished boundary component.
For sufficiently large $i$, identify $L$ with its image in $B^4$ by the
pointed convergence maps.  Since
\[
|d\bar P_i|_{\bar g_i}
\le
|\Ric_{\bar g_i}|\,
|d\bar\rho_i|_{\bar g_i}
\le C,
\]
and since $\bar P_i$ is uniformly bounded on the boundary, integration
along curves of uniformly bounded length from the boundary gives
\begin{equation}\label{eq:P-C0-bound-boundary}
\|\bar P_i\|_{C^0(L)}\le C_L.
\end{equation}
Similarly, $|d\bar\rho_i|\le1$ and $\bar\rho_i=0$ on the boundary imply
\begin{equation}\label{eq:rho-C0-bound-boundary}
\|\bar\rho_i\|_{C^0(L)}\le C_L.
\end{equation}

In the interior and boundary harmonic charts of
Proposition~\ref{prop:normalized-local-regularity},
\[
\|\bar g_i\|_{C^{3,\beta}}\le C_L,
\]
so
\[
\|\Ric_{\bar g_i}\|_{C^{1,\beta}}\le C_L.
\]
The coordinate form of the Hessian equation in
\eqref{eq:bar-adapted-system} is
\[
\partial_a\partial_b\bar\rho_i
=
\Gamma^c_{ab}(\bar g_i)\partial_c\bar\rho_i
-\frac12\bar\rho_i(\Ric_{\bar g_i})_{ab}
-\frac12\bar P_i(\bar g_i)_{ab}.
\]
The right-hand side is uniformly bounded, so $\bar\rho_i$ has
local $C^{1,1}$ bounds.  The point of using the Hessian equation,
rather than only the scalar Laplace equation, is that it controls
every second derivative, including the mixed derivatives at the
boundary.  Combining it with the Lipschitz bounds gives
\[
\|\bar\rho_i\|_{C^{2,\gamma}(L)}\le C_{L,\gamma}
\qquad(0<\gamma<\beta).
\]
Then
\[
d\bar P_i
=
\Ric_{\bar g_i}(\nabla\bar\rho_i,\cdot)
\]
gives
\[
\|\bar P_i\|_{C^{1,\gamma}(L)}\le C_{L,\gamma},
\]
and substitution once more in the Hessian equation gives
\[
\|\bar\rho_i\|_{C^{3,\gamma}(L)}\le C_{L,\gamma}.
\]
The same coordinate calculation applies on boundary harmonic charts; no
additional boundary regularity theorem is needed because all second
derivatives of $\bar\rho_i$ are prescribed by the Hessian equation and
the boundary values are fixed.

Thus, after a diagonal subsequence,
\begin{equation}\label{eq:boundary-rho-limit}
\bar P_i\longrightarrow P_\infty
\quad\text{locally in }C^{1,\gamma},
\qquad
\bar\rho_i\longrightarrow\rho_\infty
\quad\text{locally in }C^{3,\gamma},
\end{equation}
for every fixed
\[
0<\gamma<\beta,
\]
with
\[
\rho_\infty|_{\partial X_\infty}=0,
\qquad
\partial_{\rm in}\rho_\infty=1,
\qquad
|d\rho_\infty|_{g_\infty}\le1.
\]

\medskip\noindent\emph{Positivity of the limiting defining function.}
We claim that
\[
\rho_\infty>0
\qquad\text{on }\operatorname{int}X_\infty.
\]
This prevents the conformal factor from creating an interior
singularity in the limiting Einstein metric.  Indeed, the identity
\[
|d\bar\rho_i|_{\bar g_i}^2
=
1-\bar\rho_i\bar P_i
\]
passes to the limit.  If $\rho_\infty(z)=0$ at an interior point, then
$\rho_\infty\ge0$ and $z$ is an interior minimum, so
\[
d\rho_\infty(z)=0.
\]
On the other hand, the limiting identity gives
\[
|d\rho_\infty(z)|_{g_\infty}^2=1,
\]
a contradiction.

Hence, on every
\[
L\Subset\operatorname{int}X_\infty,
\qquad
\inf_L\rho_\infty>0.
\]
Since
\[
G_i
=
\rho_i^{-2}g_i
=
\bar\rho_i^{-2}\bar g_i,
\]
we obtain
\begin{equation}\label{eq:boundary-PE-limit}
G_i
\longrightarrow
G_\infty:=\rho_\infty^{-2}g_\infty
\quad\text{in }C^{3,\gamma}(L).
\end{equation}
Passing to the Ricci tensors gives
\[
\Ric_{G_\infty}=-3G_\infty
\]
on the interior.  Harmonic-coordinate elliptic regularity for
the Einstein equation then implies that $G_\infty$ is smooth on
$\operatorname{int}X_\infty$.

\medskip\noindent\emph{Completeness of the limiting Einstein metric.}
It remains to exclude a finite-length escape toward the conformal
boundary.  For this the gradient bound is especially well suited:
from
\[
G_\infty=\rho_\infty^{-2}g_\infty
\]
and $|d\rho_\infty|_{g_\infty}\le1$,
\begin{equation}\label{eq:logrho-Lipschitz-G}
|d\log\rho_\infty|_{G_\infty}
=
|d\rho_\infty|_{g_\infty}
\le1.
\end{equation}
Let $\sigma:[0,T)\to\operatorname{int}X_\infty$ have finite
$G_\infty$-length.  Equation \eqref{eq:logrho-Lipschitz-G} implies
\[
|\log\rho_\infty(\sigma(t))
-\log\rho_\infty(\sigma(0))|
\le L_{G_\infty}(\sigma|_{[0,t]}),
\]
so $\rho_\infty$ stays bounded above and bounded away from zero along
$\sigma$.  Hence
\[
L_{g_\infty}(\sigma)
=
\int_\sigma \rho_\infty\,ds_{G_\infty}
<\infty.
\]
Completeness of $(X_\infty,g_\infty)$ as a manifold with boundary
therefore gives a limit point in its metric completion.  It cannot lie
on $\partial X_\infty$, because there $\rho_\infty\to0$, while the
preceding logarithmic estimate keeps $\rho_\infty$ bounded away from
zero.  Thus the limit lies in the interior.  Hence $G_\infty$ is
complete.

\medskip\noindent\emph{Application of flat-boundary rigidity.}
Fix
\[
0<\gamma<\beta.
\]
The normalized boundary-normal injectivity bound and the
$C^{3,\gamma}$ bounded geometry pass to every compact boundary patch.
Lemma~\ref{lem:geodesic-gauge-acquisition}, applied with exponent
$\gamma$, yields a geodesic defining function $x$ satisfying
\begin{equation}\label{eq:boundary-geodesic-expansion-main}
G_\infty
=
x^{-2}(dx^2+h_x),
\qquad
h_x
=
\delta+O(x^{2+\gamma}),
\qquad
\partial_xh_x
=
O(x^{1+\gamma}).
\end{equation}
The remaining adapted-compactification properties also pass to the
limit.  Indeed,
\[
R_{\bar g_i}=0
\]
and the pointed $C^{3,\gamma}$ convergence imply
\[
R_{g_\infty}=0.
\]
Umbilicity is a first-order boundary condition on the metric and
therefore passes to the boundary limit.  Moreover
\[
\rho_\infty|_{\partial X_\infty}=0,
\qquad
\partial_{\rm in}\rho_\infty=1,
\qquad
|d\rho_\infty|_{g_\infty}\le1,
\]
by \eqref{eq:boundary-rho-limit} and the convergence already established.
Thus
\[
g_\infty=\rho_\infty^2G_\infty
\]
is a scalar-flat adapted compactification with Euclidean boundary
representative on the distinguished boundary component.
We have verified the local geodesic expansion, completeness of the
Einstein metric, and the adapted boundary conditions required by
Theorem~\ref{thm:boundary-rigidity}.  It follows that
\[
G_\infty\cong\mathbb H^4,
\qquad
g_\infty\cong(\R^4_+,g_{\rm eucl}),
\]
contradicting the normalized nonzero defect
\eqref{eq:nonzero-normalized-defect}.  Thus the boundary alternative is
impossible.
\end{proof}

\section{Interior blow-up and Ricci-flat ALE limits}\label{sec:interior}

We have excluded curvature concentration near the conformal boundary.
The remaining possibility is that the normalized base points recede
without bound from $\partial B^4$.  In that case the pointed limit is
complete and has no boundary.  One might expect it to be Ricci-flat,
but this does not follow merely by taking a limit of the scalar-flat
compactifications.  We must use the equation for their defining
functions.

The key observation is that the rescaled defining function cannot
remain bounded at a fixed point of a complete interior limit.  Once
its values tend to infinity, the Hessian equation forces the Ricci
tensor of the limit to vanish.  The Sobolev inequality then gives the
volume growth needed to recognize the limit as an asymptotically
locally Euclidean (ALE) space.  In the next section, the topology of
an ALE core will exclude that possibility unless the limit is flat.

\subsection{Curvature energy and the ALE condition}

Two facts special to four dimensions turn the Ricci-flat limit into
a space with controlled geometry at infinity.  First, Cheeger--Naber
\cite[Theorem~1.13]{CheegerNaber2015} prove a scale-invariant
$L^2$ curvature estimate for noncollapsed four-manifolds with bounded
Ricci curvature.  Consequently, if a complete Ricci-flat
four-manifold satisfies
\[
\Vol B(p,R)\ge v_0R^4
\qquad(R>0),
\]
then, after rescaling by $R^{-2}$ and applying their unit-ball estimate,
\[
\int_{B(p,R)}|\Rm|^2\,dV\le C(v_0)
\qquad(R>0).
\]
The essential point is that in dimension four the integral of
$|\Rm|^2$ is unchanged by this rescaling.  Since the same constant
works for every radius, monotone convergence gives finite total
curvature energy.

A complete Ricci-flat four-manifold with Euclidean volume growth
and finite total $L^2$ curvature admits ALE coordinates at infinity,
by the theorem of Bando--Kasue--Nakajima
\cite{BandoKasueNakajima1989}.  Thus, outside a compact set, the space
approaches $(\mathbb R^4\setminus B_R)/\Gamma$ for a finite group
acting freely on $S^3$.

These two results explain the form of the next theorem.  The analytic
part produces Ricci-flatness and ALE geometry; the last assertion
uses the topological obstruction proved in
Section~\ref{sec:ale-topology}.

\begin{theorem}[Interior defining-function divergence]\label{thm:interior-rho}
Let $(X_i,g_i,p_i)$ be smooth scalar-flat four-manifolds, possibly with boundary, and let $\rho_i>0$ in the interior.  Assume:
\begin{enumerate}
\item $G_i=\rho_i^{-2}g_i$ satisfies $\Ric_{G_i}=-3G_i$;
\item $|d\rho_i|_{g_i}\le1$;
\item $|\Ric_{g_i}|\le C$ globally;
\item $d_{g_i}(p_i,\partial X_i)\to\infty$;
\item $(X_i,g_i,p_i)$ converges in $C^{2,\gamma}_{\mathrm{loc}}$ on an exhaustion, for some $\gamma>0$, to a complete boundaryless pointed manifold $(E,g_\infty,p_\infty)$.  In the application one has $C^{3,\beta}$ convergence.
\end{enumerate}
Then
\[
\rho_i(p_i)\to\infty,
\qquad
\Ric_{g_\infty}=0.
\]
If, in addition, the limit inherits a Sobolev inequality
\[
\left(\int_E|f|^4\right)^{1/2}\le C_S\int_E|df|^2,
\qquad f\in C_c^1(E),
\]
then $E$ has two-sided Euclidean volume growth, finite total $L^2$ curvature, and is ALE.  If every compact subset of $E$ embeds smoothly in $B^4$, then $E$ is flat.
\end{theorem}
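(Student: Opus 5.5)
I would prove the theorem in three stages: first the divergence of the defining function and Ricci-flatness, then the analytic passage to an ALE limit, and finally the topological flatness statement.

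\emph{Divergence of $\rho_i(p_i)$.} The Hessian identity of Lemma~\ref{lem:adapted-equations} holds for each $(g_i,\rho_i)$; I would use it together with $|d\rho_i|\le1$, which makes $\rho_i$ $1$-Lipschitz. Suppose, after passing to a subsequence, that $\rho_i(p_i)\le C_0$. On each ball $B_{g_i}(p_i,R)$ one then has $\rho_i\le C_0+R$. The identity $\rho_iP_i=1-|d\rho_i|^2\in[0,1]$ alone does not bound $P_i$ where $\rho_i$ is small, so I would argue along unit geodesics instead. The ball lies in the interior for large $i$, since $d(p_i,\partial X_i)\to\infty$. Along a unit geodesic $\gamma$ set $a=\rho_i\circ\gamma$. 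Then
\[
a''=-\tfrac12 a\,\Ric(\dot\gamma,\dot\gamma)-\tfrac12P_i\le \tfrac C2 a ,
\]
and $a'=\langle d\rho_i,\dot\gamma\rangle$ satisfies $|a'|\le1$. Integrating $a''$ over an interval of length $2$ bounds $\int P_i$ along the geodesic by $C(1+a)$, so $P_i$ has uniform local $L^1$ bounds along geodesic families. Since $dP_i=\Ric(\nabla\rho_i,\cdot)$ is bounded, these give local $C^0$ bounds on $P_i$. Differentiating the Hessian equation as in Proposition~\ref{prop:exclude-boundary} then gives $C^{2,\gamma}$ bounds on $\rho_i$, and $\rho_i\to\rho_\infty\ge0$ on $E$. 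The limit satisfies $|d\rho_\infty|^2=1-\rho_\infty P_\infty$, so at an interior zero we would have both $|d\rho_\infty|=1$ and $d\rho_\infty=0$, which is impossible. Hence $\rho_\infty>0$, and $\rho_\infty$ is a positive function on a complete boundaryless manifold with $|d\rho_\infty|\le1$. Since $\rho_\infty>0$ everywhere and $\Delta\rho_\infty=-2P_\infty\le0$, $\rho_\infty$ is a positive superharmonic function, and $\rho_\infty^{-2}g_\infty$ would be a PE metric. I would also follow a unit geodesic in a direction where $|d\rho_\infty|$ is close to $1$, using the ODE $a''=-\frac12a\Ric-\frac12P$ with $P\ge0$ and bounded $\Ric$. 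Along it $a$ must hit zero in bounded time unless $\Ric(\dot\gamma,\dot\gamma)$ is negative and of definite size, and this would contradict $\rho_\infty>0$. \textbf{This step is the main obstacle.} If the geodesic argument fails, I would fall back on completeness of $G_\infty$, which makes a finite-distance boundary impossible, combined with $|d\log\rho_\infty|_{G}\le1$.

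\emph{Ricci-flatness.} With $\rho_i(p_i)\to\infty$, the Lipschitz bound gives $\rho_i\to\infty$ uniformly on every pointed ball. Then $P_i\le1/\rho_i\to0$ locally uniformly. The Hessian identity gives $\Ric_{g_i}=-2\rho_i^{-1}(\nabla^2\rho_i+\tfrac12P_ig_i)$, so it remains to show $\rho_i^{-1}\nabla^2\rho_i\to0$. For this I would apply the same derivative bootstrap to the normalized functions $\tilde\rho_i=\rho_i/\rho_i(p_i)$. These satisfy $|d\tilde\rho_i|\le1/\rho_i(p_i)\to0$ and $\tilde\rho_i(p_i)=1$, so $\tilde\rho_i\to1$ locally. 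The limit of $\nabla^2\tilde\rho_i=-\frac12\tilde\rho_i\Ric_{g_i}-\frac{P_i}{2\rho_i(p_i)}g_i$ is therefore zero, while its right side tends to $-\frac12\Ric_{g_\infty}$. Hence $\Ric_{g_\infty}=0$, and scalar-flatness is consistent.

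\emph{ALE structure and flatness.} For $f$ a cutoff on $B(p,R)$, the Sobolev inequality together with the standard iteration gives $\Vol B(p,r)\ge v_0r^4$. Bishop--Gromov for $\Ric=0$ gives the upper bound. The Cheeger--Naber estimate recalled above then gives finite $\int|\Rm|^2$, and Bando--Kasue--Nakajima gives ALE coordinates with a group $\Gamma_\infty$. The final assertion invokes the embedding obstruction of Section~\ref{sec:ale-topology}: a compact core bounded by $S^3/\Gamma_\infty$ embeds in $B^4$. Chern--Gauss--Bonnet and signature, with the eta invariant of the link, then force $\int|\Rm|^2=0$ and $\Gamma_\infty$ trivial, so $E$ is flat.
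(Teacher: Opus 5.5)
\textbf{There is a genuine gap at the step you flag as the main obstacle.} You have a positive $C^2$ limit $\rho_\infty$ on the complete boundaryless $(E,g_\infty)$ with $|d\rho_\infty|\le1$ and $\Delta\rho_\infty=-2(1-|d\rho_\infty|^2)/\rho_\infty$. Neither of your proposed routes turns this into a contradiction.

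\emph{The geodesic argument.} It only gives $a''\le\frac C2a$ with $a'(0)\approx-1$. Comparison with $a_0\cosh(\omega t)-\omega^{-1}\sinh(\omega t)$, where $\omega=\sqrt{C/2}$, forces a zero only when $\rho_\infty(z)\sqrt C$ is small. The quantity $\rho^2|\Ric|$ is invariant under the scaling $(g,\rho,P)\mapsto(K^2g,K\rho,K^{-1}P)$, so it cannot be normalized small. Nothing in the hypotheses stops $\Ric(\dot\gamma,\dot\gamma)$ from being negative of definite size, since only $|\Ric|\le C$ is assumed.

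\emph{The fallback.} Using $|d\log\rho_\infty|_{G}\le1$ only proves that $G_\infty$ is complete, and that is not contradictory. The upper half-space with $G=w^{-2}(dw^2+dy^2)$ and $\rho=w$ satisfies every local identity ($P=0$, $\nabla^2\rho=0$, $|d\rho|_g=1$) with $G$ complete. What rules the situation out is completeness of $g_\infty$ itself.

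\emph{Superharmonicity.} Positivity and superharmonicity alone cannot use that completeness. Positive strictly superharmonic functions exist on complete manifolds, for example $(1+|x|^2)^{-1}$ on $\R^4$.

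\textbf{The missing idea is a maximum principle at infinity.} The paper applies the Omori--Yau minimum principle to $\rho_\infty$. The hypotheses hold: $\rho_\infty$ is bounded below by $0$, it is $C^2$ by your bootstrap, and $(E,g_\infty)$ is complete with Ricci curvature bounded below, inherited from $|\Ric_{g_i}|\le C$.
\begin{itemize}
\item Omori--Yau yields points $z_j$ with $\rho_\infty(z_j)\to\inf\rho_\infty$, $|d\rho_\infty|(z_j)\to0$, and $\Delta\rho_\infty(z_j)\ge-1/j$.
\item For large $j$ one has $|d\rho_\infty|^2(z_j)\le\frac12$ and $\rho_\infty(z_j)\le\rho_\infty(p)+1$.
\item The traced equation then gives $\Delta\rho_\infty(z_j)\le-1/\rho_\infty(z_j)\le-1/(\rho_\infty(p)+1)$, which is a contradiction. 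This also covers the case $\inf\rho_\infty=0$.
\end{itemize}

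With this lemma inserted, the rest of your plan matches the paper:
\begin{itemize}
\item Your single integration of $a''$, using $|a'|\le1$ to bound $P_i(p_i)$, is a valid variant of the paper's two-sided integration.
\item The normalization $\rho_i/\rho_i(p_i)$ for Ricci-flatness is the paper's step.
\item The Sobolev lower volume bound, Cheeger--Naber, Bando--Kasue--Nakajima and the embedded-core obstruction are also the paper's steps.
\end{itemize}
You should add one point before invoking the single-end obstruction theorem: $E$ has exactly one end. The paper gets this from Cheeger--Gromoll, since two ends would split $E=\R\times N$ with $N$ compact, contradicting Euclidean volume growth.
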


\begin{remark}[Logical order in the interior blow-up]\label{rem:interior-noncircular}
The two conclusions $\rho_i(p_i)\to\infty$ and $\Ric_{g_\infty}=0$ are proved in that order.  The divergence of the defining function uses only the scalar-flat conformal-Einstein equations, the gradient bound, the uniform normalized Ricci bound, completeness of the pointed limit, and the Omori--Yau principle.  Ricci-flatness is then obtained by dividing the Hessian equation by the diverging base-point value.  Thus we are able to establish Ricci-flatness using the defining-function itself. 
\end{remark}

\begin{proof}
The order of the proof is important.  We first show that the defining
function diverges before Ricci-flatness is established; only afterward do
we deduce that the limit is Ricci-flat.  The Sobolev and ALE conclusions
come last.

\medskip\noindent\emph{1. The adapted equations.}

Take \(\Delta=\operatorname{div}\nabla\). Put \[
P=\frac{1-|d\rho|^2}{\rho}\geq0.
\] The conformal Ricci formula is \[
\operatorname{Ric}_{G}
=\operatorname{Ric}_{g}+\frac2\rho\nabla^2\rho+
\left(\frac{\Delta\rho}{\rho}-\frac{3|d\rho|^2}{\rho^2}\right)g
=-\frac3{\rho^2}g.
\] Tracing with \(R_g=0\), then substituting back, gives \begin{equation}
\nabla^2\rho=-\frac{\rho}{2}\operatorname{Ric}_g-\frac P2g,
\qquad
\Delta\rho=-2P=-\frac{2(1-|d\rho|^2)}{\rho}.
\label{eq:int-hess}
\end{equation} Differentiating \(P\) and using \eqref{eq:int-hess} gives \begin{equation}
\nabla P=\operatorname{Ric}_g(\nabla\rho,\cdot).
\label{eq:int-P}
\end{equation} In particular $|\nabla P_i|\le C$.  These equations are
preserved by $(g,\rho,P)\mapsto(K^2g,K\rho,K^{-1}P)$.

\medskip\noindent\emph{2. A maximum-principle observation.}

The following elementary consequence of the Omori--Yau principle
explains why bounded defining functions are impossible in a complete
interior limit.

\noindent\textbf{Auxiliary lemma.} A complete boundaryless manifold with Ricci curvature bounded below admits no positive \(C^2\) function satisfying \begin{equation}
|d\rho|\leq1,\qquad
\Delta\rho=-2(1-|d\rho|^2)/\rho.
\label{eq:int-no-positive}
\end{equation}

\emph{Proof of the auxiliary lemma.} Apply the Omori--Yau minimum
principle to $\rho$, which is bounded below by zero. It gives \(z_j\) with \[
\rho(z_j)\to\inf\rho,\quad |d\rho|(z_j)\to0,\quad
\Delta\rho(z_j)\geq-j^{-1}.
\] For a fixed \(p\), eventually \(\rho(z_j)\leq\rho(p)+1\) and \(|d\rho|^2(z_j)\leq1/2\). Equation \eqref{eq:int-no-positive} implies \[
\Delta\rho(z_j)\leq-\frac1{\rho(z_j)}
\leq-\frac1{\rho(p)+1},
\] a contradiction. This also handles \(\inf\rho=0\). On a compact manifold use an actual minimum. \(\square\)

Only boundedness below of $\rho$ is required: one applies the
bounded-above form of the principle to $-\rho$.  The lower Ricci bound
is sufficient; see \cite{Borbely2012}.

\medskip\noindent\emph{3. Bounded basepoint values would give a forbidden limit.}

Suppose a subsequence has \(\rho_i(p_i)\leq A\). The gradient bound alone controls $\rho_i$ on bounded balls, but it
does not immediately control $P_i$.  We obtain the missing bound from
the Hessian equation.  Every unit-speed geodesic through $p_i$,
restricted to $[-1,1]$, lies in the interior for large $i$.  Along it, \[
0<\rho_i(\gamma(t))\leq A+1,\qquad
P_i(\gamma(t))\geq P_i(p_i)-C.
\] For \(a_i(t)=\rho_i(\gamma(t))\), \eqref{eq:int-hess} gives \[
a_i''(t)\leq\frac{C(A+2)-P_i(p_i)}2.
\] Integrating in both directions and adding cancels \(a_i'(0)\), so \[
0<a_i(1)+a_i(-1)
\leq2A+\frac{C(A+2)-P_i(p_i)}2.
\] Hence \begin{equation}
0\leq P_i(p_i)<4A+C(A+2).
\label{eq:int-P-base}
\end{equation}

On every fixed-radius pointed ball, the gradient estimates bound both \(\rho_i\) and \(P_i\). Equation \eqref{eq:int-hess} then bounds \(\nabla^2\rho_i\). Pull back to the fixed limit manifold and take a diagonal subsequence: \[
\rho_i\to\rho_\infty\text{ in }C^{1,\beta}_{\rm loc},\quad
P_i\to P_\infty\text{ locally uniformly},\quad 0<\beta<1.
\] If the nonnegative \(\rho_\infty\) vanished at an interior point, its gradient would vanish there. But \[
|d\rho_i|^2=1-\rho_iP_i
\] passes to the limit and would give \(|d\rho_\infty|^2=1\) there. Therefore \(\rho_\infty>0\) everywhere.

To improve the regularity of the limiting function, observe that the scalar
equation
\[
\Delta_{g_i}\rho_i
=
-\frac{2(1-|d\rho_i|_{g_i}^2)}{\rho_i}
\]
passes to the limit in distributions.  Since $\rho_\infty>0$,
$\rho_\infty\in C^{1,\beta}_{\rm loc}$, and
$g_\infty\in C^{2,\gamma}_{\rm loc}$, its right-hand side is locally
H\"older continuous after decreasing the exponent if necessary.
Interior Schauder estimates applied on compact subsets therefore give,
after passage to a further subsequence,
\[
\rho_i\longrightarrow\rho_\infty
\quad\text{locally in }C^{2,\beta'}
\]
for some $\beta'>0$.

We also need enough regularity to apply the maximum principle
in the classical form.  In fact the limit is smooth.  On every
compact subset
$K\Subset E$, positivity gives
\[
\inf_K\rho_\infty>0.
\]
Hence
\[
G_i=\rho_i^{-2}g_i
\longrightarrow
G_\infty:=\rho_\infty^{-2}g_\infty
\]
locally in $C^{2,\beta''}$ for some $\beta''>0$.
Passing to the Ricci tensors in
\[
\Ric_{G_i}=-3G_i
\]
gives
\[
\Ric_{G_\infty}=-3G_\infty
\]
classically.  Harmonic-coordinate elliptic regularity for the Einstein
equation therefore makes $G_\infty$ smooth. 

Moreover $R_{g_i}=0$ and the $C^2$ convergence give
\[
R_{g_\infty}=0.
\]
Since
\[
g_\infty=\rho_\infty^2G_\infty,
\]
the conformal scalar-curvature formula in dimension four yields
\[
-6\Delta_{G_\infty}\rho_\infty
+
R_{G_\infty}\rho_\infty
=
0.
\]
Because $R_{G_\infty}=-12$, this becomes
\[
\Delta_{G_\infty}\rho_\infty=-2\rho_\infty.
\]
Elliptic bootstrapping now gives
\[
\rho_\infty\in C^\infty(E),
\qquad
g_\infty=\rho_\infty^2G_\infty\in C^\infty(E).
\]

The limit Ricci tensor remains bounded below by $-C$.  We may therefore
apply the Omori--Yau minimum principle to the smooth complete manifold
$(E,g_\infty)$ and the positive function $\rho_\infty$.  The preceding
``Complete positive finite defining functions are impossible'' lemma
gives a contradiction.  Thus
\[
\rho_i(p_i)\longrightarrow\infty.
\]

Notice the logical point: this divergence uses the boundedness of
the normalized Ricci tensors, not their convergence to zero.  We may
therefore establish Ricci-flatness without circular reasoning.

\medskip\noindent\emph{4. Ricci-flatness of the pointed limit.}

Let \(a_i=\rho_i(p_i)\to\infty\) and \(u_i=\rho_i/a_i\). On every fixed pointed ball $B(p_i,R)$ the gradient bound gives $|\rho_i-a_i|\le R$; hence $\rho_i\to\infty$ locally uniformly, $u_i\to1$, and $du_i\to0$ locally uniformly. Divide the Hessian equation by \(a_i\): \begin{equation}
\nabla^2u_i=-\frac{u_i}{2}\operatorname{Ric}_{g_i}
-\frac{P_i}{2a_i}g_i.
\label{eq:int-u-hess}
\end{equation} Since \[
0\leq P_i/a_i\leq(a_i\rho_i)^{-1}\to0,
\] the right side tends to \(-\operatorname{Ric}_{g_\infty}/2\). The left side tends to zero in distributions because \(u_i\to1\), \(du_i\to0\), and the metrics converge in $C^{2,\gamma}_{\mathrm{loc}}$. Therefore \(\operatorname{Ric}_{g_\infty}=0\).  In harmonic coordinates the equation
\[
\operatorname{Ric}(g_\infty)=0
\]
is a uniformly elliptic quasilinear system with $C^{0,\gamma}$ coefficients.  Interior Schauder bootstrapping therefore upgrades $g_\infty$ from $C^{2,\gamma}$ to smoothness on $E$.  Together with the preceding step, this proves the first two conclusions of Theorem~\ref{thm:interior-rho}.

\medskip\noindent\emph{5. Volume growth, curvature energy, and ALE geometry.}

The final part explains why the Ricci-flat limit belongs to the class
of spaces treated by the embedded-core obstruction.  Assume that the limit satisfies
\begin{equation}\label{eq:int-sobolev}
\left(\int_E|f|^4\,dV_{g_\infty}\right)^{1/2}
\le
C_S\int_E|df|^2\,dV_{g_\infty},
\qquad
f\in C_c^1(E).
\end{equation}
Since $\Ric_{g_\infty}=0$, Bishop--Gromov gives
\[
\Vol B(p,2R)\le16\,\Vol B(p,R).
\]
Let $\eta$ be a cutoff which is one on $B(p,R)$, supported in
$B(p,2R)$, and satisfies $|d\eta|\le R^{-1}$.  Applying
\eqref{eq:int-sobolev} to $\eta$ yields
\[
\Vol B(p,R)^{1/2}
\le
C_SR^{-2}\Vol B(p,2R)
\le
16C_SR^{-2}\Vol B(p,R).
\]
Therefore
\begin{equation}\label{eq:int-growth}
\frac{R^4}{256C_S^2}
\le
\Vol B(p,R)
\le
\omega_4R^4
\qquad(R>0).
\end{equation}

Set
\[
v_0:=\frac{1}{256C_S^2}>0.
\]
For each $R>0$, define
\[
g_R:=R^{-2}g_\infty.
\]
Then
\[
\Ric_{g_R}=0,
\qquad
\Vol_{g_R}B_{g_R}(p,1)
=
R^{-4}\Vol_{g_\infty}B_{g_\infty}(p,R)
\ge v_0.
\]
Cheeger--Naber \cite[Theorem~1.13]{CheegerNaber2015} therefore gives
\[
\int_{B_{g_R}(p,1)}
|\Rm_{g_R}|^2\,dV_{g_R}
\le C(v_0).
\]
Here the statement of Cheeger--Naber is in terms of the average
of $|\Rm|^2$.  Bishop's upper bound
$\Vol_{g_R}B(p,1)\le\omega_4$ gives the displayed integral estimate.
If its volume is written with a strict inequality,
use the fixed lower constant $v_0/2$.
Because the $L^2$-curvature integral is invariant under constant
rescaling in dimension four,
\begin{equation}\label{eq:int-CN-global}
\int_{B_{g_\infty}(p,R)}
|\Rm_{g_\infty}|^2\,dV_{g_\infty}
\le C(v_0)
\qquad(R>0).
\end{equation}
Letting $R\to\infty$ and using monotone convergence gives
\begin{equation}\label{eq:int-finite-energy}
\int_E|\Rm_{g_\infty}|^2\,dV_{g_\infty}<\infty.
\end{equation}

The ALE description will have only one end.  Indeed, the complete
Ricci-flat manifold $E$ has exactly one end: if it had
at least two ends, the minimizing-geodesic construction would give
a line and the Cheeger--Gromoll theorem would imply $E\cong\R\times N$.
For this product to have more than one end, $N$ must be compact.
That gives at most linear volume growth, contrary to
$\Vol B(p,R)\ge v_0R^4$.

Hence, $E$ is complete, one-ended,
Ricci-flat, with Euclidean volume growth and finite critical
curvature.
Theorem~(1.5) and Remark~(1.8)(1) of
\cite{BandoKasueNakajima1989} give ALE coordinates, of order four
in dimension four, with derivative decay. In particular, outside
a compact set $E$ is diffeomorphic to
\[
(\R^4\setminus B_R)/\Gamma
\]
for a finite subgroup $\Gamma<SO(4)$ acting freely on $S^3$.

\medskip\noindent\emph{6. The embedding obstruction.}
Suppose every compact subset of $E$ embeds smoothly in $B^4$ and $E$
is nonflat.  We may then choose a sufficiently large compact ALE core
$D\subset E$.
However, by hypothesis, $D$ embeds smoothly in $B^4$.
Theorem~\ref{thm:embedded-ALE}, as we will establish below, excludes such an embedding for a complete
nonflat Ricci-flat ALE four-manifold.  Hence $E$ must be flat.  This
proves the final assertion of the theorem.

\end{proof}

\section{The obstruction to an embedded ALE core}\label{sec:ale-topology}

Why should an ALE bubble be excluded simply because it forms inside
the four-ball?  The decisive information is not the curvature near
the bubble's center, but the topology of a large compact region
$K$ in the bubble.  Its boundary is a spherical space form
$Y=S^3/\Gamma_\infty$.  If $K$ embeds in $B^4$, the hypersurface
$Y$ embeds in $S^4$, and the topology of its two complementary
regions severely restricts the possibilities.

The obstruction brings together two facts.  Mayer--Vietoris forces
the Euler characteristic and signature of $K$ to take their smallest
possible values.  The ALE Chern--Gauss--Bonnet and signature formulas
then turn those values into a stringent inequality for the signature
eta invariant of $Y$.  The possible embedded spherical links have
eta invariants too large to satisfy it.

\begin{theorem}[Embedded-core ALE obstruction]\label{thm:embedded-ALE}
Let $(E,g)$ be a connected, complete, oriented, smooth, nonflat Ricci-flat four-manifold with one ALE end $S^3/\Gamma_\infty$.  Suppose the decay is sufficient for the ALE Chern--Gauss--Bonnet and signature formulas, as is the case for Ricci-flat ALE metrics.  If a sufficiently large compact ALE truncation $K\subset E$ smoothly embeds in $B^4$, then a contradiction follows.  Hence no such nonflat $E$ exists.
\end{theorem}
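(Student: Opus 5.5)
The plan is to convert the embedding into rational-homology information about $K$, then feed that into the ALE Chern--Gauss--Bonnet and signature formulas, and finally test the resulting inequality against the possible links. Throughout, write $Y=\partial K\cong S^3/\Gamma_\infty$ and $m=|\Gamma_\infty|$.

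\emph{Step 1: topology of $K$.} Compose $K\hookrightarrow B^4\subset S^4$ and set $W=S^4\setminus\operatorname{int}K$, so that $S^4=K\cup_Y W$. Since $Y$ is a spherical space form, it is a rational homology sphere. Mayer--Vietoris with $\mathbb Q$ coefficients, using $H_j(Y;\mathbb Q)=0$ for $j=1,2$ and $H_j(S^4;\mathbb Q)=0$ for $j=1,2,3$, gives $H_1(K;\mathbb Q)=H_2(K;\mathbb Q)=0$. Also $H_3(K;\mathbb Q)\cong H^1(K,Y;\mathbb Q)=0$, and $K$ is connected. Hence
\[
\chi(K)=1,\qquad \tau(K)=0 .
\]
The same sequence with $\mathbb Z$ coefficients shows that $H_1(Y)=\Gamma_\infty^{\rm ab}$ splits as $H_1(K)\oplus H_1(W)$. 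By Hantzsche's theorem its torsion linking form is then hyperbolic; in particular $H_1(Y)\cong G\oplus G$.

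\emph{Step 2: the curvature identities.} For a Ricci-flat ALE metric the Chern--Gauss--Bonnet formula, whose boundary term at infinity equals $1/m$ by the ALE decay, reads
\[
\chi(K)=\frac1{8\pi^2}\int_E|W|^2+\frac1m .
\]
Combined with $\chi(K)=1$, this gives $\int_E|W|^2=8\pi^2(1-1/m)$. If $m=1$, then $W\equiv0$, and Ricci-flatness forces $\Rm\equiv0$, contradicting nonflatness. So assume $m\ge2$. The APS signature formula gives
\[
0=\tau(K)=\frac1{12\pi^2}\int_E\bigl(|W^+|^2-|W^-|^2\bigr)\mp\eta(Y),
\]
where the sign is fixed by the orientation of the end. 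Therefore
\[
|\eta_{\rm sign}(S^3/\Gamma_\infty)|\le\frac{1}{12\pi^2}\int_E|W|^2=\frac23\Bigl(1-\frac1m\Bigr),
\]
with equality only if $W^+\equiv0$ or $W^-\equiv0$.

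\emph{Step 3: eliminating the groups.} Hantzsche's condition rules out every $\Gamma_\infty$ whose abelianization is cyclic of order greater than $1$. This excludes all lens spaces, $Z_4$-type prism groups, binary tetrahedral and octahedral groups, and most products with cyclic factors. Two kinds of candidates survive: the perfect group $I^*$, and the quaternion-type groups with $\Gamma^{\rm ab}\cong\mathbb Z_2^2$, for example $Q_8$ and $Q_{8k}$ with $k$ odd, possibly times a cyclic group of coprime order.

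The Poincaré sphere is excluded directly. An embedding would make it bound the rational (indeed integral) homology ball $K$, so its Rokhlin invariant would vanish; but it bounds the $E_8$ plumbing, so its Rokhlin invariant is $1$. For the remaining quaternionic links, I would use the explicit formula for the signature eta invariant of $S^3/\Gamma$ as a sum over $\gamma\ne1$ of $\cot(\theta_1/2)\cot(\theta_2/2)$. I would then show that the resulting value, for either orientation, exceeds $\tfrac23(1-1/m)$. As a cross-check, the sharper Crisp--Hillman classification of spherical space forms embedding in $S^4$ may dispose of these cases outright; I would try that first and fall back on the eta computation.

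\emph{Main obstacle.} The main obstacle is Step 3 for the quaternion-type family. It requires both a uniform eta computation over the infinite families $Q_{8k}\times\mathbb Z_\ell$ and careful bookkeeping of orientation conventions, since $\Gamma_\infty<SO(4)$ need not act as a left multiplication. Steps 1 and 2 are standard once the ALE boundary terms are justified by the order-four decay from Bando--Kasue--Nakajima.
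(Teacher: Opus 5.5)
\textbf{Where the proposal stands.} Your Steps 1 and 2 reproduce the paper's argument: Mayer--Vietoris gives $\chi=1$ and $\tau=0$, and Gauss--Bonnet plus the signature formula give $|\eta_{\rm sig}(Y)|\le\frac23(1-\frac1m)$, with $m=1$ forcing flatness. Step 3 takes a different route from the paper, which uses Hillman's classification ($\Gamma_\infty\cong1,Q_8,I^*$) followed by eta computations. You instead filter with Hantzsche's linking-form criterion and exclude $I^*$ by the Rokhlin invariant. Both substitutions are sound. The Rokhlin argument is more elementary than the paper's and uses only smoothness of the embedding. You should still say why $Y\cong\pm\Sigma(2,3,5)$; this follows because a free orthogonal $I^*$-action lies in one $SU(2)$ factor, as the paper shows. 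Also, products with a nontrivial cyclic factor are already excluded by your own $G\oplus G$ condition, since $\mathbb Z_2^2\oplus\mathbb Z_\ell$ with $\ell>1$ odd is not of that form. So no ``$Q_{8k}\times\mathbb Z_\ell$'' family needs treatment.

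\textbf{The gap.} The quaternionic case carries the whole contradiction, and it is not proved. ``I would then show that the resulting value exceeds $\frac23(1-1/m)$'' is a plan, not an argument. Your fallback does not remove the need for it either: the classification the paper cites only narrows the group to $1$, $Q_8$, $I^*$ and leaves $Q_8$ open. Two ingredients are missing, and both are supplied (for $Q_8$) in the paper.

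\emph{The representation is rigid.} Your worry that $\Gamma_\infty$ need not act by one-sided multiplication is unfounded for $\mathrm{Dic}_n=\langle a,b\rangle$. Freeness forces $a^n=-I=b^2$. Suppose the rotation angles of $a$ differed up to sign. Then $b$, which conjugates $a$ to $a^{-1}$, would preserve both rotation planes. On a plane where $a\neq\pm I$ it would have to act as a reflection, so $b^2=I$ there, contradicting $b^2=-I$. Hence $a=\cos\theta\,I+\sin\theta\,J$ with $bJ=-Jb$. The operators $J$ and $b$ then generate a quaternionic structure that places the group in one $SU(2)$ factor up to orientation. Reversing orientation only changes the sign of $\eta$, which is irrelevant since only $|\eta|$ enters.

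\emph{The cotangent sum is explicit.} For $\mathrm{Dic}_n$ of order $4n$, the cyclic part contributes $\sum_{j=1}^{2n-1}\cot^2\frac{\pi j}{2n}=\frac{(2n-1)(2n-2)}3$. The $2n$ elements of order four contribute $1$ each. Therefore
\[
|\eta_{\rm sig}(S^3/\mathrm{Dic}_n)|=\frac1{4n}\Bigl(\frac{(2n-1)(2n-2)}3+2n\Bigr)=\frac n3+\frac1{6n}>\frac23-\frac1{6n}=\frac23\Bigl(1-\frac1{4n}\Bigr)
\]
for every $n\ge2$; for $n=2$ this is the paper's $\frac34>\frac7{12}$.

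With these two facts your route closes uniformly over the whole surviving family. Without them, Step 3 is incomplete.
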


The theorem uses the actual free orthogonal representation of
$\Gamma_\infty$, not just its abstract group.  This matters when
computing eta invariants.  The proof therefore has three parts:
topology of the core, the curvature inequality, and the representations
of the possible links.

\begin{proof}
\medskip\noindent\emph{1. Topology forced by the embedding.}
Let $Y=\partial K=S^3/\Gamma_\infty$ and compose the embedding $K\hookrightarrow B^4$ with $B^4\hookrightarrow S^4$.  Because the embedding of $K$ is smooth, the induced embedding of its
boundary $Y$ is smooth and hence locally flat.  Thus the locally-flat
embedding restrictions for spherical three-manifolds apply to this
actual boundary embedding. The connected hypersurface $Y$ separates $S^4$ into the given
embedded region $K$ and a connected compact complementary region $L$.
Thus
\[
S^4=K\cup_YL.
\]
Because $Y$ is a spherical space form, $H_1(Y;\mathbb Z)$ is finite and $H_2(Y;\mathbb Z)=0$.  Integral Mayer--Vietoris gives
\[
H_2(K;\mathbb Z)=H_2(L;\mathbb Z)=0.
\]
At degree one, the same sequence shows that
$H_1(K;\mathbb Z)\oplus H_1(L;\mathbb Z)$ is a quotient of the finite group
$H_1(Y;\mathbb Z)$; hence
\[
H_1(K;\mathbb Q)=H_1(L;\mathbb Q)=0.
\]
The connecting map $H_4(S^4;\mathbb Z)\to H_3(Y;\mathbb Z)$ sends the fundamental class to the oriented separating hypersurface and is an isomorphism, so $H_3(K;\mathbb Z)=H_3(L;\mathbb Z)=0$.  Since both pieces have nonempty boundary, $H_4(K;\mathbb Q)=H_4(L;\mathbb Q)=0$.  It follows that $\chi(K)=1$.  Moreover $H_2(K;\mathbb R)=0$, so the intersection form of $K$ is trivial and $\tau(K)=0$.  A sufficiently large ALE end is diffeomorphic to $Y\times[R,\infty)$. Since
\[
E=K\cup_Y\bigl(Y\times[0,\infty)\bigr),
\]
and the product end strongly deformation retracts onto
$Y=Y\times\{0\}\subset K$, the inclusion
\[
K\hookrightarrow E
\]
is a homotopy equivalence.  Consequently the real homology of $E$ is the homology of $K$:
\[
H_*(E;\R)\cong H_*(K;\R),
\qquad
\chi(E)=\chi(K)=1.
\]
In particular $H_2(E;\R)=0$, so the intersection form, and hence the
signature in the ALE formula, vanishes:
\[
\tau(E)=0.
\]  Hence
\begin{equation}\label{eq:chi-tau-ALE}
\chi(E)=\chi(K)=1,
\qquad
\tau(E)=\tau(K)=0.
\end{equation}

\medskip\noindent\emph{2. The curvature inequality for the link.}

The Ricci-flat condition leaves only the two Weyl curvature terms in
the characteristic formulas.  We specify the norm convention because
it affects the numerical coefficients.  In the following formulas,
$|W^\pm|$ denotes the
Hilbert--Schmidt norm of the Weyl endomorphism of $\Lambda^\pm$.
Its squared full four-index tensor norm is four times this squared
operator norm. With the full tensor norm the corresponding coefficients
are $1/(32\pi^2)$ and $1/(48\pi^2)$; the resulting eta inequality is
identical.
Write
\[
I_+=\int_E|W^+|^2\,dV,
\qquad
I_-=\int_E|W^-|^2\,dV.
\]
For a Ricci-flat ALE four-manifold, the Chern--Gauss--Bonnet and signature formulas are
\begin{align}
\chi(E)&=\frac{I_++I_-}{8\pi^2}+\frac1{|\Gamma_\infty|},\label{eq:ALE-GB}\\
\tau(E)&=\frac{I_+-I_-}{12\pi^2}-\eta_{\rm sig}(Y),\label{eq:ALE-signature}
\end{align}
with the usual orientation convention; see \cite{Nakajima1990,Zhang2019}.  From \eqref{eq:chi-tau-ALE} and $|I_+-I_-|\le I_++I_-$,
\begin{equation}\label{eq:eta-bound}
|\eta_{\rm sig}(Y)|
\le\frac23\left(1-\frac1{|\Gamma_\infty|}\right)<\frac23
\end{equation}
whenever $\Gamma_\infty$ is nontrivial.  If $\Gamma_\infty$ is trivial, \eqref{eq:ALE-GB} forces
$I_++I_-=0$ and the metric is flat, contrary to our hypothesis.

\medskip\noindent\emph{3. The possible links and their representations.}

The inequality \eqref{eq:eta-bound} is meaningful only if we know
which spherical space forms can occur as the boundary of an embedded
region of $S^4$.  Since the actual spherical link $Y$ embeds smoothly, and hence locally
flatly, in $S^4$, the finite-fundamental-group case of
\cite[Theorem~5.2(1), p.~52]{Hillman2024} (see also \cite{CrispHillman1998}) gives
\[
\Gamma_\infty\cong 1,\quad Q_8,\quad\text{or }I^*=SL(2,5).
\]
The trivial case is excluded.  For the other two groups, the eta
invariant depends on the given free action on $S^3$, not merely on an
abstract identification of $\Gamma_\infty$.  We therefore establish
that the relevant orthogonal representations lie, up to orientation,
in one of the two $SU(2)$ factors of $SO(4)$.

\smallskip\noindent\emph{The binary dihedral case.}
For the generalized binary dihedral group with $m\ge2$,
\[
\operatorname{Dic}_m=
\langle a,b: a^{2m}=1,\ b^2=a^m,\ bab^{-1}=a^{-1}\rangle,
\]
a faithful free orthogonal action on $\R^4$ forces the two real rotation angles of $a$ to agree up to sign.  Indeed $a^m$ has order two, and freeness implies $a^m=-I$.  If the two angles were distinct up to sign, the two rotation planes of $a$ would be intrinsically determined and preserved by $b$; on each plane $b$ conjugates a nontrivial rotation to its inverse and is therefore orientation-reversing, hence has square $I$.  This would give $b^2=I$, contradicting $b^2=a^m=-I$.  Hence there is an orthogonal complex structure $J$ with
\[
a=\cos\theta\,I+\sin\theta\,J,
\qquad bJ=-Jb,
\qquad J^2=b^2=-I.
\]
The operators $J$ and $b$ generate a quaternionic structure on $\R^4$; after
identifying $\R^4\cong\mathbb H$, both $a$ and $b$ act by left multiplication
by unit quaternions.  Thus, after possibly reversing orientation, the
representation lies in one $SU(2)$ factor.  In particular this applies to $Q_8=\operatorname{Dic}_2$.

\smallskip\noindent\emph{The binary icosahedral case.}
For $I^*$, let
\[
\rho_\pm:I^*\longrightarrow SO(\Lambda^2_\pm)\cong SO(3)
\]
be the two homomorphisms induced by the end representation.  Let $z$ be the
central involution of $I^*$.  Freeness implies $z=-I$ on $\R^4$, so $z$ acts
trivially on $\Lambda^2_\pm$.  Thus both $\rho_\pm$ factor through
\[
I^*/\{1,z\}\cong A_5.
\]
Because $A_5$ is simple, each induced homomorphism is either trivial or
injective, in the latter case with image isomorphic to $A_5$.  Suppose both
are nontrivial.  Choose $u\in I^*$ of order four with $u^2=z$.  Its image in
$A_5$ has order two, so both $\rho_+(u)$ and $\rho_-(u)$ have order two.
Represent $u$ by $(\ell,r)\in SU(2)\times SU(2)$ under
$\operatorname{Spin}(4)\to SO(4)$.  Since the induced actions are $\operatorname{Ad}_\ell$ and
$\operatorname{Ad}_r$, both $\ell$ and $r$ have trace zero.  Hence they are conjugate unit
quaternions: $\ell=q r q^{-1}$ for some $q\in S^3\subset\mathbb H$.  But then
$u$ fixes $q$ under the action $x\mapsto\ell x r^{-1}$, contradicting
freeness.  Therefore one of $\rho_\pm$ is trivial.  The kernel of either
projection $SO(4)\to SO(\Lambda^2_\pm)$ is a single $SU(2)$ factor, so the
actual end representation lies in one $SU(2)$ factor, up to orientation.

\medskip\noindent\emph{4. The eta invariants.}
We can now use the same equivariant signature formula for both
remaining links.  For a finite subgroup of one $SU(2)$ factor,
if an element has complex eigenvalues $e^{\pm i\theta_\gamma}$,
the spherical equivariant signature formula \cite{LockViaclovsky2016} gives, up to orientation,
\begin{equation}\label{eq:eta-sum}
|\eta_{\rm sig}(S^3/\Gamma_\infty)|
=
\frac1{|\Gamma_\infty|}\sum_{\gamma\ne1}
\cot^2\!\left(\frac{\theta_\gamma}{2}\right).
\end{equation}
For $Q_8=\operatorname{Dic}_2$, the cyclic subgroup contributes
\[
\sum_{k=1}^{3}\cot^2\frac{\pi k}{4}=2,
\]
and the remaining four elements each contribute $1$.  Dividing by $8$ gives
\begin{equation}\label{eq:eta-Q8}
|\eta_{\rm sig}(S^3/Q_8)|=\frac34.
\end{equation}
This contradicts \eqref{eq:eta-bound}, whose right-hand side is $7/12$.

For $I^*$, quotient by the center to the rotational icosahedral group $A_5$.  It has $15$ rotations through $\pi$, $20$ through $2\pi/3$, and $12$ each through $2\pi/5$ and $4\pi/5$.  The two $SU(2)$ lifts of a rotation through $\alpha$ contribute
\[
\cot^2(\alpha/4)+\tan^2(\alpha/4)
=4\csc^2(\alpha/2)-2.
\]
The central nonidentity element contributes zero.  The total contributions are respectively $30$, $200/3$, and $144$, so
\begin{equation}\label{eq:eta-Istar}
|\eta_{\rm sig}(S^3/I^*)|
=\frac{30+200/3+144}{120}
=\frac{361}{180}.
\end{equation}
The right-hand side of \eqref{eq:eta-bound} is $119/180$ in this
case, giving the required contradiction.  We have exhausted the
possible spherical links, and the theorem follows.
\end{proof}

\subsection{Excluding the interior blow-up}

We apply the embedded-core obstruction to the pointed limit already
constructed in Section~\ref{sec:interior}.  Only one point remains
to be checked: the global Sobolev inequality must pass to that limit.
This passage uses the fact that every fixed compact subset is
represented in the interior of the original manifold for large $i$.

\begin{proposition}[Exclusion of the interior alternative]\label{prop:exclude-interior}
The alternative \eqref{eq:interior-alternative} of Proposition~\ref{prop:normalized-extraction} is impossible.
\end{proposition}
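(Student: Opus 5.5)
Assume the interior alternative \eqref{eq:interior-alternative} holds, and let $(X_\infty,g_\infty,p_\infty)=(E,g_\infty,p_\infty)$ be the complete boundaryless pointed $C^{3,\beta}$ limit supplied by Proposition~\ref{prop:normalized-extraction}. The plan is to verify every hypothesis of Theorem~\ref{thm:interior-rho} for the sequence $(B^4,\bar g_i,\bar\rho_i,p_i)$, including the optional Sobolev and embedding hypotheses. That theorem then makes $E$ flat, which contradicts the preserved defect \eqref{eq:nonzero-normalized-defect}.

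\textbf{The structural hypotheses.} By Lemma~\ref{lem:adapted-equations} and its scaling rule, each $\bar g_i$ is scalar-flat. Moreover $\bar G_i:=\bar\rho_i^{-2}\bar g_i=G_i$ satisfies $\Ric=-3G_i$, $|d\bar\rho_i|_{\bar g_i}\le1$, and $\bar\rho_i>0$ in the interior. The normalization \eqref{eq:normalized-max} gives $|\Ric_{\bar g_i}|\le C|\Rm_{\bar g_i}|\le C$ globally. Hypothesis (4) is exactly \eqref{eq:interior-alternative}. For hypothesis (5), the pointed $C^{3,\beta}$ convergence of Proposition~\ref{prop:normalized-extraction} holds on an exhaustion by compact sets. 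Those sets eventually lie at distance $\to\infty$ from $\partial B^4$, so the limit has no boundary.

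\textbf{Passing the Sobolev inequality to the limit.} This is the only genuinely new point. Take $f\in C_c^1(E)$ with support $S$. For large $i$, the smooth convergence embeddings $\phi_i$ carry a neighborhood of $S$ into the interior of $B^4$, at $\bar g_i$-distance from $\partial B^4$ tending to infinity. Hence $f_i:=f\circ\phi_i^{-1}$, extended by zero, is a Lipschitz function compactly supported in $\operatorname{int}B^4$. Apply \eqref{eq:global-interior-sobolev}, which is scale invariant in dimension four, so it holds for $\bar g_i$ with the same constant $6/c_0$, $c_0$ as in \eqref{eq:c0-def}. The boundary term vanishes, and $R_{\bar g_i}=0$. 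Since $\phi_i^*\bar g_i\to g_\infty$ in $C^{3,\beta}$ on $S$, both $\int|f_i|^4\,dV_{\bar g_i}$ and $\int|df_i|^2\,dV_{\bar g_i}$ converge to the corresponding $g_\infty$-integrals. This yields \eqref{eq:int-sobolev} with $C_S=6/c_0$. One should check that a $C^1$ function with compact support is an admissible test function in the Escobar quotient, which holds by density in $H^1$.

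\textbf{Conclusion.} The last statement of Proposition~\ref{prop:normalized-extraction} gives that every compact subset of $E$ embeds smoothly in $B^4$. Theorem~\ref{thm:interior-rho} then gives, in order: $\bar\rho_i(p_i)\to\infty$, $\Ric_{g_\infty}=0$, Euclidean volume growth, finite energy, the ALE structure, and finally flatness via Theorem~\ref{thm:embedded-ALE}. A flat metric has $\Rm\equiv0$ and $\nabla\Rm\equiv0$. This contradicts $|\Rm_{g_\infty}|(p_\infty)=1$ or $|\nabla\Rm_{g_\infty}|(p_\infty)=1$.

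The main obstacle is the orientability needed by Theorem~\ref{thm:embedded-ALE}. I expect it to follow because $E$ embeds in the oriented $B^4$ compact set by compact set, so $E$ inherits an orientation. I would check this explicitly, together with compatibility of the orientations on overlaps of the exhaustion.
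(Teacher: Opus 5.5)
Your proposal is correct and matches the paper's proof. Both arguments verify the hypotheses of Theorem~\ref{thm:interior-rho}, transport compactly supported test functions into the interior so the limit inherits the scale-invariant Sobolev inequality \eqref{eq:global-interior-sobolev}, and use the smooth embeddings of compact subsets into $B^4$ to force flatness, contradicting \eqref{eq:nonzero-normalized-defect}. The orientability point you flag, which the paper leaves implicit, is easy: an embedded orientation-reversing loop in $E$ would have a compact non-orientable tubular neighborhood, and this neighborhood would embed smoothly in the oriented $B^4$, which is impossible.
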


\begin{proof}
Assume \eqref{eq:interior-alternative}.  The pointed limit
$(X_\infty,g_\infty,p_\infty)$ is complete and boundaryless.
Theorem~\ref{thm:interior-rho} first gives
\[
\bar\rho_i(p_i)\longrightarrow\infty,
\qquad
\Ric_{g_\infty}=0.
\]

It remains to verify the Sobolev statement in the second part of
that theorem.  Since the relevant exponent is the critical exponent
in dimension four, the Sobolev constant is unchanged by the
constant blow-up rescaling.  By \eqref{eq:global-interior-sobolev}, every compactly
supported interior function for the rescaled metrics satisfies
\[
\left(\int|f|^4\,dV_{\bar g_i}\right)^{1/2}
\le
\frac{6}{c_1(Y_0)}
\int|df|_{\bar g_i}^2\,dV_{\bar g_i}.
\]
This inequality is invariant under constant rescaling in dimension four.

Let $f\in C_c^1(X_\infty)$.  Since the boundary escapes to infinity,
the support of $f$ is identified, for all sufficiently large $i$, with
a compact interior subset of $(B^4,\bar g_i)$ by the pointed convergence
maps.  Transport $f$ to that subset and extend it by zero after inserting
a cutoff which is identically one on its support.  Passing to the limit
under the pointed $C^{3,\beta}$ convergence gives
\[
\left(\int_{X_\infty}|f|^4\,dV_{g_\infty}\right)^{1/2}
\le
\frac{6}{c_1(Y_0)}
\int_{X_\infty}|df|_{g_\infty}^2\,dV_{g_\infty}.
\]
Thus the Sobolev requirement of
Theorem~\ref{thm:interior-rho} holds.

Proposition~\ref{prop:normalized-extraction} also gives smooth embeddings
of every compact subset of $X_\infty$ into $B^4$.  The final assertion of
Theorem~\ref{thm:interior-rho} therefore implies that $g_\infty$ is flat.
This contradicts \eqref{eq:nonzero-normalized-defect}.  Hence the interior
alternative is impossible.
\end{proof}

\section{Completion of the compactness theorem}\label{sec:compactness-proof}

The two blow-up alternatives have both been excluded.  The remaining
steps take place on the original four-ball, without rescaling.
First we recover a global bound on volume and Weyl curvature from the
pointwise curvature estimate.  Then we use the mixed elliptic equations
from the preliminaries to improve local control to the prescribed
boundary regularity and assemble the local charts into a global,
boundary-marked limit.

It is useful to separate these tasks: the energy estimate is short,
whereas the global compactness requires attention to coordinates,
boundary markings, and defining functions.

\begin{lemma}[Recovery of volume and Weyl energy]\label{lem:energy-recovery}
Assume
\begin{equation}\label{eq:pointwise-curvature-bound}
\sup_i\|\Rm_{g_i}\|_{L^\infty(B^4)}\le K<\infty.
\end{equation}
Set
\[
A_0:=\sup_i\Area(S^3,q_i)<\infty.
\]
Then there exists $H_0<\infty$, depending only on $K$ and the compact boundary family, such that $|H_i|\le H_0$, and
\begin{equation}\label{eq:volume-bound2}
\Vol(B^4,g_i)\le
V_0:=\left(\frac{2A_0H_0}{c_0}\right)^2.
\end{equation}
Consequently
\begin{equation}\label{eq:energy-bound-conclusion}
\int_{B^4}|W_{G_i}|^2\,dV_{G_i}
=\int_{B^4}|W_{g_i}|^2\,dV_{g_i}
\le K^2V_0.
\end{equation}
\end{lemma}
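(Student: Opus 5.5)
The proof has three short steps: a pointwise bound on $H_i$, a volume bound from the first Escobar inequality, and transfer to the Weyl energy by conformal invariance.

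\emph{Step 1: the mean-curvature bound.} The scalar Gauss equation \eqref{eq:gauss-boundary} reads
\[
\tfrac23H_i^2=R_{q_i}+2\Ric_{g_i}(\nu,\nu).
\]
The compact family $\mathcal K$ gives $|R_{q_i}|\le C$ through the curvature jets \eqref{eq:compact-family-curv-jets}. The hypothesis \eqref{eq:pointwise-curvature-bound} gives $|\Ric_{g_i}|\le CK$. Hence $H_i^2\le \tfrac32(C+2CK)$, so $H_0$ depends only on $K$ and $\mathcal K$. Since $R_{q_i}\ge r_0>0$, this also shows that $H_i$ is real and bounded. No sign of $H_i$ is needed below.

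\emph{Step 2: the volume bound.} Test the first Escobar quotient \eqref{eq:escobar} with $u\equiv1$. Because $R_{g_i}=0$ and $du=0$, the numerator is $2\int_{S^3}H_i\,dV_{q_i}\le 2H_0A_0$. The denominator is $\Vol(B^4,g_i)^{1/2}$. The uniform lower bound \eqref{eq:Y1-Y2-lower}, with $Y_0$ from \eqref{eq:Y0-def} and $c_0$ from \eqref{eq:c0-def}, then gives
\[
c_0\Vol(B^4,g_i)^{1/2}\le 2A_0H_0.
\]
Squaring yields \eqref{eq:volume-bound2}. Here $A_0<\infty$ by compactness of $\mathcal K$. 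A subtle point is that the infimum defining $Y_1$ could be negative if $H$ were negative. However, the Chang--Ge bound says $Y_1\ge c_0>0$, so the constant test function already yields the inequality. This is the step to check most carefully, including whether $u\equiv1$ is admissible in $H^1(B^4)$; it is, since $B^4$ is compact with $g_i$ smooth up to the boundary.

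\emph{Step 3: the Weyl energy.} In dimension four, $|W|^2dV$ is pointwise conformally invariant: under $G=\rho^{-2}g$ one has $|W_G|_G^2=\rho^4|W_g|_g^2$ and $dV_G=\rho^{-4}dV_g$. Integrating over the interior gives the equality in \eqref{eq:energy-bound-conclusion}. The inequality follows from $|W_{g_i}|\le|\Rm_{g_i}|\le K$, with the norm convention making the Weyl part an orthogonal projection of $\Rm$, together with Step 2. If a different norm convention is in force, one gets a harmless dimensional constant, and it can be absorbed by adjusting $K$.

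I expect no serious obstacle. The only care needed is the sign and normalization bookkeeping, namely $H$ as the trace and the factor $2$ in the Escobar boundary term, so that the constants come out exactly as $(2A_0H_0/c_0)^2$.
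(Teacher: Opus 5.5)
Your proposal is correct and follows the paper's proof step for step. The paper likewise bounds $H_i$ by the scalar Gauss equation \eqref{eq:gauss-boundary}, tests the first Escobar quotient with $u\equiv1$ against the Chang--Ge lower bound $c_0$ to get $\Vol(B^4,g_i)^{1/2}\le 2A_0H_0/c_0$, and then uses conformal invariance of $|W|^2\,dV$ in dimension four together with $|W_{g_i}|\le|\Rm_{g_i}|\le K$.
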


\begin{proof}
The scalar Gauss equation \eqref{eq:gauss-boundary} bounds the
boundary mean curvatures by $H_0$, using the curvature
and the compact family of boundary metrics.  To turn that boundary
bound into an interior volume bound, test the first Escobar quotient
\eqref{eq:escobar} with the constant function $u\equiv1$.
Since $R_{g_i}=0$, \eqref{eq:Y1-Y2-lower} and \eqref{eq:c0-def}
give
\begin{equation}\label{eq:volume-bound}
c_0\le Y_1(B^4,S^3,[g_i])
\le\frac{2\int_{S^3}H_i\,dV_{q_i}}{\Vol(B^4,g_i)^{1/2}}
\le\frac{2\int_{S^3}|H_i|\,dV_{q_i}}{\Vol(B^4,g_i)^{1/2}}
\le\frac{2A_0H_0}{\Vol(B^4,g_i)^{1/2}},
\end{equation}
Rearranging gives \eqref{eq:volume-bound2}.  Finally, in dimension
four the density $|W|^2dV$ is conformally invariant.  Because
$|W_{g_i}|\le|\Rm_{g_i}|$, the pointwise curvature bound together
with the volume estimate gives \eqref{eq:energy-bound-conclusion}.
\end{proof}

\begin{remark}[The recovered global energy bound]\label{rem:first-global-CG}
Lemma~\ref{lem:energy-recovery} is the first point where a global
Weyl-energy bound is available.  It is useful to identify precisely
what this changes.  Before the lemma, the curvature estimates were
proved by normalized injectivity, finite boundary regularity and the
two blow-up exclusions.  After the lemma, the hypotheses of the
compactness theorem \cite[Theorem~I]{ChangGe2025} are accessible to us:
the boundary family is compact and PSC, its Yamabe constants have a
uniform positive lower bound, the total Weyl energy is bounded, and
$H_2(B^4;\mathbb Z)=H^2(B^4;\mathbb Z)=0$.
We thus invoke that theorem for final finite-order
compactness. 
Below, we record also the direct boundary-harmonic/Schouten closure 
with the coordinate, boundary-marking and defining-function steps used 
by the later weighted continuation argument.
\end{remark}

\begin{proof}[Proof of Theorem~\ref{thm:compactness}]
\medskip\noindent\emph{1. Uniform curvature and the first global bounds.}
Suppose, contrary to the first assertion, that
\[
\sup_i\left(
\|\Rm_{g_i}\|_{L^\infty(B^4)}
+
\|\nabla\Rm_{g_i}\|_{L^\infty(B^4)}
\right)=\infty,
\]
then Proposition~\ref{prop:normalized-extraction} produces, after passage to a
subsequence, a normalized pointed limit satisfying either
\eqref{eq:boundary-alternative} or \eqref{eq:interior-alternative}.
Propositions~\ref{prop:exclude-boundary} and
\ref{prop:exclude-interior} exclude both alternatives.  Therefore
\begin{equation}\label{eq:global-curvature-first-derivative}
\sup_i\left(\|\Rm_{g_i}\|_{L^\infty}+\|\nabla\Rm_{g_i}\|_{L^\infty}\right)<\infty.
\end{equation}
Lemma~\ref{lem:energy-recovery} now supplies the missing global estimate
\[
\sup_i
\int_{B^4}|W_{G_i}|^2\,dV_{G_i}
<\infty.
\]
At this stage the energy-dependent compactness theorem of Chang--Ge
\cite[Theorem~I]{ChangGe2025} applies, with no missing topological
condition on the four-ball.  For any preassigned $k\ge6$ and
$\alpha'<\alpha$ it gives the asserted convergence of adapted
compactifications modulo boundary-fixing diffeomorphisms. We give 
the direct finite-regularity closure as well, since it makes the 
boundary marking and the defining function used in the next section explicit.

\medskip\noindent\emph{2. A finite global atlas.}
We now use the pointwise bounds and the finite regularity estimates
to obtain compactness of the metrics themselves.  Once $|\Rm|+|\nabla\Rm|$ is uniformly bounded, use one fixed
rescaling and the proved injectivity estimate to obtain the controlled
atlas in Proposition~\ref{prop:normalized-local-regularity}. Let its radius be $r>0$.
Every metric ball of radius $r/4$, including a boundary ball, has volume
at least $v>0$. The volume bound in Lemma~\ref{lem:energy-recovery} bounds the number of
disjoint such balls. Selecting separated points along any minimizing
metric-space geodesic therefore gives a uniform diameter bound.
Consequently the whole compact manifold is covered by a uniformly
bounded number of the controlled charts. The chart-and-overlap proof
gives global $C^{3,\beta}$ Cheeger--Gromov compactness, with the
eventual smooth diffeomorphism type of the ball.

\medskip\noindent\emph{3. Keeping the boundary marking fixed.}

Geometric compactness initially gives convergence after changing
coordinates on the whole compact manifold.  Our continuation argument
must keep the identification of the boundary with the fixed sphere.
We therefore explain why the changes of coordinates can be corrected
to fix the boundary pointwise.

To see this explicitly,
suppose $\Phi_i:X_\infty\to B^4$ are the initial global convergence
diffeomorphisms and $f_i=\Phi_i|_{\partial X_\infty}$. The original
boundary metrics $q_i$ converge on the fixed $S^3$. The pulled-back
boundary metrics converge on $\partial X_\infty$. The maps $f_i$
are uniformly bilipschitz. In local coordinates the transformation law
for the Levi--Civita connections of $f_i^*q_i$ and $q_i$ bounds the
second derivatives of $f_i$ by first derivatives and the two Christoffel
symbols. Differentiating that identity supplies a uniform
$C^{4,\beta}$ bound at the current metric regularity. After a
subsequence $f_i\to f_\infty$ as diffeomorphisms, with one more
derivative than the boundary metrics. Fix a sufficiently large index $i_0$.
The smooth boundary maps $b_i=f_i^{-1}\circ f_{i_0}$ are all close
to the identity and converge to $b_\infty=f_\infty^{-1}\circ f_{i_0}$.
Extend them to the fixed collar by a cutoff isotopy, obtaining smooth
diffeomorphisms $\eta_i$ converging to $\eta_\infty$ in a topology
with one more derivative than the metric convergence. Then
$\Phi_i\circ\eta_i$ all restrict to the same smooth map $f_{i_0}$.
The maps
$\Phi_i\circ\eta_i\circ\Phi_{i_0}^{-1}:B^4\to B^4$
are smooth and fix the boundary pointwise; their pulled-back metrics
converge to a fixed pullback of $\eta_\infty^*g_\infty$.
The construction uses only maps close to the identity in a fixed
collar.

\medskip\noindent\emph{4. Recovering the prescribed boundary regularity.}

The normalized regularity proposition gives only the first three
metric derivatives.  We now use the mixed Schouten and harmonic
boundary systems to reach the finite order specified in the theorem.
Assume
$q_i$ is uniformly $C^{k,\alpha}$, $k\ge6$. At an intermediate
stage $g_i\in C^{r,\beta}$, $3\le r\le k-1$, one has
$A_i\in C^{r-2,\beta}$ and $H_i\in C^{r-1,\beta}$.
The Schouten Dirichlet data have regularity $C^{r-1,\beta}$,
because $\widehat A(q_i)\in C^{k-2,\alpha}$; the Neumann
data have regularity $C^{r-2,\beta}$, because
$dR_{q_i}\in C^{k-3,\alpha}$. The mixed estimate gains one
derivative for $A_i$, Codazzi then gains one for $H_i$, and the
triangular metric equation gains one for $g_i$. Iteration gives
$g_i\in C^{k,\beta}$ uniformly in a finite atlas.

The same iteration also recovers any H\"older exponent strictly
below the one prescribed at the boundary.  More precisely, there is no
obstruction to recovering $\alpha'<\alpha$.
In a final mixed estimate for $A_i\in C^{k-2,\alpha'}$, the
interior right side needs only $C^{k-4,\alpha'}$; the already
available $C^{k,\beta}$ metric gives spare derivatives for all those
coefficients and products. In the Dirichlet data,
$\widehat A(q_i)$ is $C^{k-2,\alpha}$ and $H_i^2$ is
$C^{k-1,\beta}\subset C^{k-2,\alpha'}$. In the Neumann
data, $dR_{q_i}$ is $C^{k-3,\alpha}$ and $H_i\,dH_i$ is
$C^{k-2,\beta}\subset C^{k-3,\alpha'}$. Thus the mixed
estimate yields $A_i\in C^{k-2,\alpha'}$, Codazzi yields
$H_i\in C^{k-1,\alpha'}$, and the metric boundary equations
yield $g_i\in C^{k,\alpha'}$. To obtain convergence rather than
just a bound, do this with an exponent strictly between the desired
$\alpha'$ and $\alpha$, then use compact embedding. The boundary
marking correction improves simultaneously by the connection identity.

\medskip\noindent\emph{5. Recovering the defining function.}

Convergence of the compactified metrics is not quite the end:
we must show that the limiting compactification is the canonical
adapted one and that it gives a complete Einstein metric on the
interior.  We use the defining-function identities again.

Suppose $g_i\to g_\infty$ in $C^{k,\nu}$ on the fixed ball,
after boundary-fixing diffeomorphisms. Uniform
ellipticity gives a uniform diameter $D$. The adapted gradient bound
and boundary condition give
\[
 0\le\rho_i\le d_{g_i}(\cdot,\partial B^4)\le D.
\]
Moreover $P_i|_{\partial}=2H_i/3$ and
$|dP_i|\le|\Ric_i|$ give a uniform global bound for $P_i$.
The Hessian equation supplies $\rho_i\in C^{1,1}$ in the fixed
controlled charts. Repeatedly using
\[
 dP_i=\Ric_i(\nabla\rho_i,\cdot),\qquad
 \partial_a\partial_b\rho_i
 =\Gamma^c_{ab}(g_i)\partial_c\rho_i
   -\tfrac12\rho_i(\Ric_i)_{ab}-\tfrac12P_i(g_i)_{ab}
\]
gives uniform $C^{k,\nu}$ bounds for $\rho_i$ and
$C^{k-1,\nu}$ bounds for $P_i$. Thus, for every
$\nu'<\nu$, they converge after a subsequence in
$C^{k,\nu'}$ and $C^{k-1,\nu'}$, respectively.
The limit has boundary value zero and inward derivative one.
The identity $|d\rho_\infty|^2=1-\rho_\infty P_\infty$
prevents an interior zero. Hence
$G_\infty=\rho_\infty^{-2}g_\infty$ is a PE metric on the
interior and is complete, since the defining function vanishes
simply at the entire boundary of a compact manifold. 
The resulting scalar-flat compactification is canonical for
$q_\infty$.  Indeed, if
$\widetilde g=\widetilde\rho^{\,2}G_\infty$ is another scalar-flat
compactification with the same boundary metric, choose a fixed smooth
defining function $r$. Both $\rho_\infty/r$ and
$\widetilde\rho/r$ extend positively and continuously to the boundary:
this follows by integrating their $r$-derivatives and needs only
$C^1$ defining functions. The constructed quotient
$\rho_\infty/r$ is in fact $C^{k-1,\nu'}$. Hence
$f=\widetilde\rho/\rho_\infty$ is positive and continuous on the
compact closure, $C^2$ in the interior, and $f|_{\partial B^4}=1$
because $\widetilde g|_{T\partial B^4}=g_\infty|_{T\partial B^4}
=q_\infty$. Since $\widetilde g=f^2g_\infty$, the four-dimensional
scalar-curvature transformation law gives
\[
 -6\Delta_{g_\infty}f
 =R_{\widetilde g}f^3-R_{g_\infty}f=0.
\]
The maximum and minimum principles on the compact connected manifold
with boundary yield $f\equiv1$. Therefore the constructed limit agrees
with the canonical adapted compactification.

This proves all parts of the compactness theorem: a
boundary-marked limit, the stated finite H\"older regularity,
and a complete smooth Einstein metric in the interior.
\end{proof}

\section{Finite-dimensional equivariant Poincar\'e--Einstein continuation}
\label{sec:equivariant-moduli}

We now come to the second difficulty in the proof: the existence of a
filling must be continued even when the linearized Einstein equation
has a nontrivial kernel.  At such a metric the implicit-function
theorem does not, by itself, continue the solution along an arbitrary
path of boundary metrics.  Compactness prevents solutions from
escaping; it does not prevent two branches from meeting or a branch
from turning back.

The guiding idea is to enlarge the prescribed boundary path by a
\emph{finite} number of auxiliary parameters.  These parameters are
chosen to remove the obstruction at each filling.  The resulting
solution sets are then ordinary finite-dimensional manifolds, and a
mod-two boundary count can be carried out.  Three points require
care: local solutions must be genuine Einstein metrics, different
local descriptions must agree smoothly, and the compactness theorem
must apply in the topology in which those descriptions live.

We treat these questions in that order.  The first subsection
constructs local harmonic-map gauges and finite-dimensional solution
families.  The second identifies the invariant obstruction using a
boundary pairing.  The third supplies a topology of \emph{marked}
fillings---metrics whose prescribed boundary identification has not
been forgotten---and transfers geometric compactness to it.  The
last subsection assembles the construction and applies parity.

\subsection{A differentiable harmonic-map gauge for finite parameter families}
\label{sec:finite-harmonic-gauge}

Composing a metric of fixed H\"older regularity with a moving diffeomorphism can
lose derivatives. We solve the Einstein equation in complete weighted
Banach spaces, and obtain as many additional \emph{spatial}
derivatives as desired along the finite-dimensional families of
solutions.  Those derivatives will later make changes of gauge
smooth in the finite parameters.

Throughout this subsection each displayed weighted space is equipped
with its actual complete norm.  

We work first near one fixed smooth filling.  The boundary geometry
is encoded by a defining function $x$, while a power of $x$ measures
how fast a perturbation vanishes at infinity.  Thus the weight will
control both the elliptic mapping properties and the ordinary
regularity of the compactified metric.

Fix a smooth conformally compact Poincar\'e--Einstein metric $G_*$ on
$X=B^4$, with $\operatorname{Ric}_{G_*}=-3G_*$, and a smooth boundary
representative $q_*$.  In a fixed collar choose its geodesic defining
function $x$ and identify
\[
 x^2G_*=dx^2+q_*-x^2P_{q_*}+O(x^3),
 \qquad P_q=\operatorname{Ric}_q-\tfrac14R_qq.
\]
Extend $x$ positively to the interior.  For a tensor bundle $E$ define
\[
 \mathcal H^{k,\alpha}_\delta(E)
 =\{u:x^{-\delta}u\in C^{k,\alpha}(X,G_*;E)\},\qquad
 \|u\|_{k,\alpha,\delta}
 =\|x^{-\delta}u\|_{C^{k,\alpha}(X,G_*)}.
 \tag{HG.1}
\]
Here the unweighted norm is the bounded, uniform geometric H\"older
norm, equivalently the supremum of ordinary norms in uniformly sized
rescaled hyperbolic charts.  We always take $k\ge4$ and $0<\alpha<1$.
These are Banach spaces.  The notation $\mathcal H^\infty_\delta$
means membership at every finite order; it does not mean ordinary
smoothness on the compactification.
Fix throughout the metric and gauge weight
\[
 d=\frac52.
 \tag{HG.1a}
\]
The inequalities $2<d<3$ serve two different purposes.  The upper
bound places the metric equation in the Fredholm range determined by
its indicial roots.  The lower bound ensures that the compactified
remainder has two continuous derivatives and no contribution to the
first two boundary jets.  Both facts will be used; the numerical value
$d=5/2$ is a convenient choice in this interval.

The reference metric must be chosen with some care.  If we varied
only the leading boundary term $q$, the error in the Einstein equation
would appear too early in the expansion to lie at the chosen weight.
We therefore prescribe also the second normal coefficient, determined
by the Schouten tensor of $q$.  This is the \emph{prepared boundary
jet}.

Let $a\mapsto q(a)$ be a finite-dimensional smooth family of smooth
boundary metrics near $q_*$.  Smoothness here includes smooth dependence
in every finite boundary $C^r$ norm.  With a fixed cutoff equal to one
near $x=0$, set
\[
 H(a)=G_*+\chi x^{-2}\pi^*\left[
 q(a)-q_*-x^2\bigl(P_{q(a)}-P_{q_*}\bigr)\right].
 \tag{HG.2}
\]
Thus $x^2H(a)=dx^2+q(a)-x^2P_{q(a)}+O(x^3)$, with the universal
geodesic second jet as well as zero radial first jet.
For small $a$ these are positive metrics, uniformly equivalent to
$G_*$, and their Ricci curvatures are uniformly negative.  Each finite
spatial derivative of $H(a)$ depends smoothly on $a$ in the uniform
geometric norm.  The collar curvature formula gives
\[
 \operatorname{Ric}_{H(a)}+3H(a)
 \in\mathcal H^\infty_3(S^2T^*X)
 \subset\mathcal H^\infty_d(S^2T^*X),
 \tag{HG.3}
\]
smoothly in $a$.  Here is the exact second-jet check.  Put
$\bar H=dx^2+k(x)$, $k=q+x^2b+O(x^3)$, and
$A=\tfrac12k'$, $\mathcal H=\operatorname{tr}_kA$.
The conformal Ricci identity is
\[
 \operatorname{Ric}_{x^{-2}\bar H}+3x^{-2}\bar H
 =\operatorname{Ric}_{\bar H}
       +x^{-1}\bigl(2\nabla^2x+(\Delta x)\bar H\bigr).
 \tag{HG.3a}
\]
At $x=0$, its tangential ordinary components have constant term
$\operatorname{Ric}_q+b+(\operatorname{tr}_qb)q$;
its $00$ constant term is $-\operatorname{tr}_qb+
\operatorname{tr}_qb=0$, and its mixed components vanish to
positive order.  With $b=-P_q$, the tangential term is zero because
$\operatorname{tr}P_q=R_q/4$.
Thus the ordinary residual is $O(x)$ and its complete-metric norm
is $O(x^3)$.  Smoothness supplies the same order after every
geometric derivative.  

\subsubsection{The Einstein equation in harmonic identity gauge}

The Ricci tensor is invariant under diffeomorphisms, and its
linearization is not elliptic before one fixes this freedom.  We use
an equation which is elliptic and has a particularly simple
linearization at the reference Einstein metric.  The added term
measures the failure of the identity map to be harmonic.  We will prove
the vanishing for every solution
of the gauged equation.

For two metrics $g,H$ put
\[
 T(g,H)^c=g^{ab}\bigl(\Gamma(H)^c_{ab}-\Gamma(g)^c_{ab}\bigr),
 \qquad \omega(g,H)=g(T(g,H),\cdot).
 \tag{HG.4}
\]
Thus $T(g,H)$ is the tension of the identity map from $(X,g)$ to
$(X,H)$.  Define
\[
 \Phi(a,h)=\operatorname{Ric}_g+3g+\delta_g^*\omega(g,H(a)),
 \qquad g=H(a)+h.
 \tag{HG.5}
\]
This is the Einstein--DeTurck equation belonging to harmonic identity
gauge.  

The first lemma ensures that this gauge introduces no spurious
solutions, using the negative Ricci curvature to force the
gauge one-form to vanish.

\begin{lemma}[Mapping properties and removal of the DeTurck term]
\label{lem:hg-equation}
For $h$ sufficiently small in $\mathcal H^{k,\alpha}_d$,
\[
 \Phi:A\times\mathcal H^{k,\alpha}_d(S^2T^*X)
       \longrightarrow\mathcal H^{k-2,\alpha}_d(S^2T^*X)
 \tag{HG.6}
\]
is smooth.  Its zeros are precisely Einstein metrics $g=H(a)+h$
satisfying $T(g,H(a))=0$.  At $(a,h)=(0,0)$, where $H(0)=G_*$,
\[
 D_h\Phi= L_{G_*}
 =\tfrac12\nabla^*\nabla-\mathring R.
 \tag{HG.7}
\]
\end{lemma}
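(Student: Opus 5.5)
The proof has three parts, taken in the order stated in Lemma~\ref{lem:hg-equation}: smoothness of the map (HG.6), the linearization (HG.7), and the identification of zeros with gauged Einstein metrics.

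\emph{Smoothness of (HG.6).} Write
\[
\Phi(a,h)=\bigl(\operatorname{Ric}_{H(a)}+3H(a)\bigr)+\mathcal N(a,h).
\]
The first term lies in $\mathcal H^\infty_d$ and depends smoothly on $a$, by (HG.3). The remainder $\mathcal N(a,h)$ vanishes at $h=0$, because $T(H,H)=0$. In uniformly sized rescaled hyperbolic charts, $\mathcal N$ is a universal smooth function of $g^{-1}$, $h$, $\nabla_H h$ and $\nabla_H^2h$, with coefficients built from $H(a)$ and its curvature. Those coefficients are bounded in $C^{k-2,\alpha}$ uniformly and smoothly in $a$, and every term contains at least one factor of $h$. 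The weight $x^d$ commutes with geometric derivatives up to bounded factors, since $|d\log x|_{G_*}$ is bounded. Hence multiplication by bounded smooth coefficients preserves $\mathcal H^{k-2,\alpha}_d$, and smoothness in $(a,h)$ follows from the standard Nemytskii argument for smooth algebraic nonlinearities on these uniform Banach algebras.

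\emph{Linearization (HG.7).} At $a=0$ we have $H=G_*$ Einstein. The standard DeTurck computation gives
\[
D_h\operatorname{Ric}=\tfrac12\nabla^*\nabla h-\mathring R h+\tfrac12(\operatorname{Ric}\circ h+h\circ\operatorname{Ric})-\delta^*\beta(h),
\]
where $\beta(h)=\delta h+\tfrac12 d\operatorname{tr}h$ is the Bianchi operator. The derivative of the gauge term is $D_h[\delta^*_g\omega(g,G_*)]=\delta^*\beta(h)$, since $\omega(G_*,G_*)=0$ kills the term coming from varying $\delta^*_g$. The Bianchi contributions therefore cancel. With $\operatorname{Ric}=-3G_*$ the Ricci products contribute $-3h$, which cancels the $+3h$ from $D_h(3g)$. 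This leaves $L_{G_*}=\tfrac12\nabla^*\nabla-\mathring R$.

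\emph{Zeros.} Solutions of the gauged equation are Einstein metrics in harmonic identity gauge, and conversely if $T=0$ then $\omega=0$ and $\Phi=\operatorname{Ric}_g+3g$. For the forward direction, suppose $\Phi(a,h)=0$ and set $\omega=\omega(g,H(a))$. Applying the Bianchi operator $\beta_g$ and using $\beta_g(\operatorname{Ric}_g+3g)=0$ gives
\[
\beta_g\delta_g^*\omega=0.
\]
This is a Bochner-type equation: up to normalization, $\tfrac12(\nabla^*\nabla\omega-\operatorname{Ric}_g(\omega))=0$. Substituting $\operatorname{Ric}_g=-3g-\delta_g^*\omega$ shows $\operatorname{Ric}_g$ is close to $-3g$ when $h$ and $a$ are small, so the zeroth-order term is uniformly positive.

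The one-form $\omega$ decays, $\omega\in\mathcal H^{k-1,\alpha}_d$ with $d=5/2>0$, so $\omega\to0$ at the conformal boundary. It remains to exclude a nonzero decaying solution. I would first try integrating $\langle\beta\delta^*\omega,\omega\rangle$ against a cutoff. The boundary flux is $O(x^{2d}\cdot x^{-3})$, which tends to zero because $2d>3$, leaving $\int|\nabla\omega|^2+c\int|\omega|^2\le0$ and hence $\omega=0$. As a fallback, the maximum principle applied to $|\omega|^2$ works because this function decays at infinity. Once $\omega=0$, the tension vanishes, $T(g,H(a))=0$, and then $\Phi=\operatorname{Ric}_g+3g=0$.

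I expect the main obstacle to be the vanishing of $\omega$: it requires uniform positivity of the zeroth-order term for all small $(a,h)$, and it requires checking that the weight $d$ lies above the indicial threshold so that no boundary flux survives.
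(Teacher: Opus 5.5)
Your proposal is correct and follows essentially the same route as the paper. The three ingredients match: smoothness comes from chartwise algebraic expressions together with (HG.3) for the $h=0$ term; the linearization comes from the standard DeTurck cancellation $D\omega=\beta h$, which leaves $\tfrac12\nabla^*\nabla-\mathring R$; and the zeros are handled by the identity $\beta_g\Phi=\tfrac12(\nabla^*\nabla-\operatorname{Ric}_g)\omega$ integrated against $\omega$ with cutoffs, using uniform Ricci negativity and the decay condition $2d>3$. The paper phrases that last condition as $\omega,\nabla\omega\in L^2$. The only differences are cosmetic: you do the linearization before the zeros rather than after, and you add an optional maximum-principle fallback.
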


\begin{proof}
\emph{Smoothness in the weighted spaces.}
In each uniform chart the expression is a smooth algebraic expression
in $g,g^{-1},\partial g,\partial^2g,H,\partial H,\partial^2H$.
All differences in (HG.4) contain $h$ or one derivative of $h$;
hence $\omega\in\mathcal H^{k-1,\alpha}_d$.  Taylor expansion about
$H(a)$, together with (HG.3), puts (HG.5) in the stated weighted
range.  Banach multiplication and matrix inversion prove smoothness;
no coefficient of the moving metric is evaluated at a moving point.

The same local calculation gives a quantitative estimate which will be
needed at degenerate fillings.  In a hyperbolically rescaled chart,
the second tensor derivative of the gauged operator is a sum of terms
of the forms
\[
 u\,D_0^2v,\qquad v\,D_0^2u,\qquad
 (D_0u)(D_0v),\qquad u\,D_0v,\qquad uv,
\]
with uniformly controlled contractions.  Differentiating at most
$k-2$ times uses no more than $k$ derivatives of either tensor.
Because the product of two weight-$d$ tensors has weight $2d>d$,
\[
 \|D_h^2\Phi(a,h)[u,v]\|_{k-2,\alpha,d}
 \le C\|u\|_{k,\alpha,d}\|v\|_{k,\alpha,d}.
\]
Integrating twice along the segment from $0$ to $h$ gives
\begin{equation}\label{eq:uniform-gauge-quadratic}
\|\Phi(a,h)-\Phi(a,0)-D_h\Phi(a,0)h\|_{k-2,\alpha,d}
\le C\|h\|_{k,\alpha,d}^{2},
\end{equation}
and comparing the derivatives along the segment from $(0,0)$ to
$(a,h)$ gives
\begin{equation}\label{eq:uniform-gauge-derivative}
\|D_h\Phi(a,h)-L_{G_*}\|_{\mathcal L(\mathcal H_d^{k,\alpha},
                                            \mathcal H_d^{k-2,\alpha})}
\le C\bigl(|a|+\|h\|_{k,\alpha,d}\bigr).
\end{equation}
The constants are uniform on a smaller finite-parameter neighborhood.
These are estimates at the stated order.

\smallskip\noindent\emph{Removing the gauge term.}
Use $\beta_g=\delta_g+\tfrac12d\operatorname{tr}_g$, with
$\delta$ the negative divergence.  The contracted Bianchi identity
and the one-form Weitzenb\"ock identity give, exactly,
\[
 \beta_g\Phi
 =\beta_g\delta_g^*\omega
 =\tfrac12(\nabla^*\nabla-\operatorname{Ric}_g)\omega.
 \tag{HG.8}
\]
The metric $g$ remains uniformly Ricci-negative in the chosen
neighborhood.  At a zero of $\Phi$, integrate (HG.8) against $\omega$.
Both $\omega$ and $\nabla\omega$ are $L^2$: the squared decay
$x^{2d}=x^5$ is integrable against $x^{-4}dx\,dV_q$.  Cutoffs with bounded
geometric derivatives justify integration by parts, and give
\[
 0=\int_X\bigl(|\nabla\omega|^2
       -\operatorname{Ric}_g(\omega^\sharp,\omega^\sharp)\bigr)dV_g.
\]
Consequently $\omega=0$ and $\operatorname{Ric}_g=-3g$.
The converse is immediate.  Finally, at $g=H=G_*$,
$D_g\omega(h)=\beta_{G_*}h$.  The derivative of Ricci plus $3g$
is $L_{G_*}h-\delta^*\beta h$; the added term cancels the second
summand and proves (HG.7).
\end{proof}

\subsubsection{A gauge change which avoids derivative loss}

The preceding equation describes metrics already in harmonic identity
gauge.  To compare two nearby metrics, however, we must also put a
metric into that gauge by a boundary-fixing change of coordinates.
This is where the choice of harmonic \emph{map} matters.  In its
equation the possibly rough domain metric is evaluated at a fixed
point; only the smooth reference metric is evaluated at a moving
point.  Hence differentiation with respect to the map does not
require differentiating a rough tensor by composition.

For a small vector field $V$ let $F_V(z)=\exp_z^{G_*}V(z)$.
Transport the tension of $F_V:(X,g)\to(X,H(a))$ back to $T_zX$
along this fixed-reference short geodesic.  Denote the resulting
section by $\mathcal T(a,h,V)$.  Its coordinate formula before
transport is
\[
 \tau^c(g,H,F)=g^{ij}(z)\left[
 \partial_i\partial_jF^c
 -\Gamma(g)^\ell_{ij}(z)\partial_\ell F^c
 +\Gamma(H)^c_{ab}(F(z))\partial_iF^a\partial_jF^b\right].
 \tag{HG.9}
\]
The position of the arguments in this formula is essential: the
rough metric $g$ is evaluated at $z$, and only the smooth reference
$H(a)$ is evaluated at $F(z)$.

We state the local gauge theorem in the precise weighted spaces in
which it will be used.  The final assertion about the boundary jet is
what permits later comparisons without changing the prescribed
boundary representative.

\begin{lemma}[Nonlinear harmonic-map gauge]
\label{lem:hg-nonlinear-gauge}
Fix $\delta\in(2,d]$.  On a neighborhood of $(0,0,0)$,
\[
 \mathcal T:
 A\times\mathcal H^{k,\alpha}_d(S^2T^*X)
  \times\mathcal H^{k+1,\alpha}_\delta(TX)
 \longrightarrow\mathcal H^{k-1,\alpha}_\delta(TX)
 \tag{HG.10}
\]
is smooth.  Its vector derivative at $(0,0,0)$ is
\[
 D_V\mathcal T=-(\nabla^*\nabla-\operatorname{Ric}_{G_*})
             =-(\nabla^*\nabla+3).
 \tag{HG.11}
\]
This is an isomorphism.  There is therefore a unique small
$V(a,h)\in\mathcal H^{k+1,\alpha}_d$ with
$\tau(g,H(a),F_{V(a,h)})=0$, depending smoothly on $(a,h)$.
It is also the unique sufficiently small root at every auxiliary
weight $\delta\in(2,d)$.

The maps $F_{V(a,h)}$ are diffeomorphisms, extend as $C^{3,\theta}$
maps to $\bar X$ for every $0<\theta<d-2=1/2$, and have the
identity three-jet at the boundary.
The pushed-forward metric $(F_{V(a,h)})_*g$ is in harmonic identity
gauge relative to $H(a)$.
\end{lemma}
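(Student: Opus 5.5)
The plan is to treat $\mathcal T$ as a smooth nonlinear map between weighted Banach spaces and then apply the implicit function theorem in the vector variable. The nonlinear analysis and the linear isomorphism are handled separately.

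\emph{Smoothness.} In each uniformly rescaled hyperbolic chart, formula (HG.9) is a polynomial in $g^{-1}(z)$, $\Gamma(g)(z)$, and $\partial F,\partial^2F$. The only term with a moving evaluation point is $\Gamma(H(a))(F(z))$. Since $H(a)$ is smooth, with every finite geometric derivative smooth in $a$, the composition $z\mapsto\Gamma(H(a))(\exp_z V(z))$ is smooth as a map of $V\in\mathcal H^{k+1,\alpha}_\delta$ into $C^{k,\alpha}$; this is the standard omega-lemma for composition with a smooth outer function. Parallel transport along the fixed-reference short geodesic is likewise a smooth bundle map depending on $V$ pointwise.

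Weights are counted using $F_0=\mathrm{id}$ and $\tau(H(a),H(a),\mathrm{id})=0$. Every difference from the base value then contains a factor $h$, $\partial h$, $V$, $\partial V$ or $\partial^2V$, so it has weight at least $\min(d,\delta)=\delta$. The $a$-dependence at $h=0,V=0$ vanishes identically. Banach multiplication then gives (HG.10), and the second-derivative estimates are as in Lemma~\ref{lem:hg-equation}.

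\emph{Linearization and invertibility.} Differentiating at $(0,0,0)$ gives the harmonic-map Jacobi operator $-(\nabla^*\nabla V-\operatorname{Ric}(V))$, which equals $-(\nabla^*\nabla+3)V$ because $\operatorname{Ric}_{G_*}=-3G_*$. For invertibility on $\mathcal H^{k+1,\alpha}_\delta\to\mathcal H^{k-1,\alpha}_\delta$ I would use the Lee/Graham--Lee theory of geometric elliptic edge operators. The operator $\nabla^*\nabla+3$ on one-forms on an asymptotically hyperbolic four-manifold has indicial roots $-1$ and $4$ (the tangential components of the one-form weights; I would recompute them carefully in the paper's weighting), and its $L^2$ kernel vanishes since $\langle(\nabla^*\nabla+3)V,V\rangle\ge3\|V\|^2$. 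Lee's Fredholm theorem then gives an isomorphism for every weight strictly inside the indicial interval. Since $\delta\in(2,d]\subset(2,3)$ lies in it, the isomorphism holds.

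\emph{Implicit function step.} The implicit function theorem produces a unique small root $V(a,h)$ that depends smoothly on $(a,h)$. For $\delta<d$, the root obtained at weight $d$ also lies in $\mathcal H_\delta$ and is small there. Local uniqueness at weight $\delta$ therefore identifies the two roots.

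\emph{Diffeomorphism property.} Because $V$ is small in $C^1$ for the complete metric, $F_V$ is a proper local diffeomorphism homotopic to the identity, hence a diffeomorphism.

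\emph{Boundary behavior.} This is the step I expect to be the main obstacle. In compactified coordinates a vector field of geometric weight $\delta$ has Euclidean components of size $O(x^{\delta-1})$. More precisely, $|V|_{G_*}=O(x^\delta)$ gives Euclidean size $O(x^{\delta+1})$, and each compactified derivative costs a factor $x^{-1}$ while the geometric Hölder seminorms remain controlled. Hence the displacement $F_V-\mathrm{id}$ is $C^{3,\theta}$ up to $x=0$ with vanishing three-jet whenever $\theta<\delta-2$. Taking $\delta=d$ gives $\theta<1/2$. The careful points are the conversion between rescaled-chart Hölder norms and boundary Hölder norms, and handling the exponential-map correction terms, which are quadratic in $V$ and hence of higher weight.

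\emph{Harmonic identity gauge.} Finally, pushing forward turns harmonicity of $F$ from $(X,g)$ to $(X,H)$ into harmonicity of the identity from $(X,F_*g)$ to $(X,H)$. That is, $T(F_*g,H(a))=0$.
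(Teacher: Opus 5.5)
Your proposal is correct and follows essentially the same route as the paper. The shared steps are: smoothness via Nemytskii composition, with only the smooth reference $H(a)$ evaluated at the moving point; the identity-map Jacobi operator; Lee's weighted Fredholm theorem on the common indicial interval $(-1,4)$ together with $L^2$ coercivity; the implicit-function theorem with two-weight uniqueness; properness and degree one for the global diffeomorphism; ordinary bounds $|D^j(F_V-\operatorname{Id})|\le Cx^{\delta+1-j}$ for the boundary three-jet; and naturality of the tension under isometries.

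The only slip is your first remark that a weight-$\delta$ vector field has compactified components $O(x^{\delta-1})$, which is the one-form scaling. The correct size is $O(x^{\delta+1})$, which you state next and actually use, and it is exactly what makes the third derivatives of the displacement vanish at $x=0$.
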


\begin{proof}
\emph{Differentiability of the map equation.}
Formula (HG.9) is a second-order differential expression in $V$,
with $g$ and its first derivatives occurring at the fixed domain
point.  The smooth target has bounded derivatives of every order in
uniform charts.  Nemytskii differentiation, the chain rule, and
Banach multiplication therefore prove (HG.10).  At $V=0$, the
residual is $T(g,H(a))\in\mathcal H^{k-1,\alpha}_d$; every other
term has at least one factor $V$ or a derivative of $V$, and lies
in weight $\delta$.  This also proves the weighted claim for every
parameter derivative.

\smallskip\noindent\emph{Invertibility of the vector equation.}
The identity-map Jacobi formula gives (HG.11).  For completeness,
the limiting hyperbolic indicial polynomials, in a frame
$e_0=\partial_t$, $e_a=e^{-t}\partial_{y_a}$, are
\[
 s(3-s)+6\quad\hbox{on the normal component},\qquad
 s(3-s)+4\quad\hbox{on the tangential components}.
 \tag{HG.12}
\]
Indeed $\nabla_{e_0}e_a=0$, $\nabla_{e_a}e_0=e_a$, and
$\nabla_{e_a}e_b=-\delta_{ab}e_0$; applying
$\nabla^*\nabla+3$ to $e^{-st}e_0$ and $e^{-st}e_a$ gives
these expressions.  The common interval between roots is $(-1,4)$.
Lee's weighted Fredholm theorem, in the full geometric H\"older
spaces defined above, identifies the kernel and cokernel throughout
this interval with their $L^2$ counterparts; see
\cite[Theorem~C and Proposition~E]{Lee2006}.  In particular no
identification of the full uniform H\"older space with a closure of
smooth bounded sections is required.  The coercive identity
\[
 \langle(\nabla^*\nabla+3)V,V\rangle_{L^2}
 =\|\nabla V\|_{L^2}^2+3\|V\|_{L^2}^2
\]
annihilates both kernel and adjoint obstruction.  Hence the operator
is an isomorphism at $d$ and at every auxiliary
$2<\delta<d$.

One may also see directly why these weights are favorable.  For the
scalar comparison operator $-\Delta_{G_*}+3$,
\[
 (-\Delta_{G_*}+3)x^\delta
 =\bigl(3+\delta(3-\delta)+O(x)\bigr)x^\delta.
\]
Kato's inequality and this positive barrier control the weighted
$C^0$ norm of a vector solution; uniformly rescaled Schauder
estimates control its derivatives.  This is consistent with the
Fredholm inverse just established.

The Banach implicit-function theorem now gives the unique small
solution and its smooth parameter dependence.  More precisely, a
Taylor expansion of the harmonic-map tension at $V=0$ gives
\[
 \mathcal T(a,h,V)
 =T(H(a)+h,H(a))-J_{G_*}V+\mathcal R(a,h,V),
\]
where $J_{G_*}=\nabla_{G_*}^*\nabla_{G_*}+3$ and
\[
 \|\mathcal R(a,h,V)\|_{k-1,\alpha,d}
 \le C\Bigl((|a|+\|h\|_{k,\alpha,d})\|V\|_{k+1,\alpha,d}
                 +\|V\|_{k+1,\alpha,d}^{2}\Bigr).
\]
Absorbing the small terms after applying $J_{G_*}^{-1}$ yields
\begin{equation}\label{eq:uniform-small-harmonic-gauge}
 \|V(a,h)\|_{k+1,\alpha,d}
 \le C\|T(H(a)+h,H(a))\|_{k-1,\alpha,d}
 \le C'\|h\|_{k,\alpha,d}.
\end{equation}
Since the weight-$d$ root lies in each weaker weighted space,
uniqueness in the smaller auxiliary-weight neighborhood identifies
the two constructions.  

\smallskip\noindent\emph{Behavior at the boundary.}
In a rescaled chart at height $x_0$, $V$ and its chart derivatives
are $O(x_0^d)$.  The fixed exponential map therefore gives ordinary
coordinate displacement $F_V-z=O(x^{d+1})$, with
\[
 |D^j(F_V-\operatorname{Id})|\le Cx^{d+1-j},\qquad 0\le j\le4.
 \tag{HG.13}
\]
The bounds for $j=3,4$ give a $C^{3,\theta}$ extension for
$\theta<d-2$, with identity three-jet.  For small $V$ it is a local diffeomorphism,
has bounded $G_*$ displacement, and is proper.  Its degree is one,
so it is a global diffeomorphism of $B^4$.
Finally tension is natural under precomposition by an isometry.
The map $F_V:(X,g)\to(X,(F_V)_*g)$ is an isometry, whence
$\tau(g,H(a),F_V)=0$ is equivalent to
$\tau((F_V)_*g,H(a),\operatorname{Id})=0$.
\end{proof}

\subsubsection{Ordinary boundary regularity and finite-dimensional families}

The weighted estimates are phrased in the complete Einstein metric,
where a boundary neighborhood looks large after rescaling.  For the
later marked topology we must also know what they imply in ordinary
coordinates on the closed ball.  Because $d>2$, the compactified
perturbation has a vanishing second boundary jet and is classically
$C^2$.  This permits the use of boundary regularity for Einstein
metrics with smooth conformal infinity.

\begin{lemma}[The ordinary regularity actually supplied by weight $d$]
\label{lem:hg-admissibility}
If $h\in\mathcal H^{3,\alpha}_d(S^2T^*X)$, then $\bar h=x^2h$
has, in the fixed smooth collar coordinates,
\[
 |D^j\bar h|\le Cx^{d-j},\qquad 0\le j\le3.
 \tag{HG.14}
\]
In particular $x^2(H(a)+h)$ belongs to $C^{2,\theta}$ for every
$0<\theta<d-2=1/2$, and has the same two-jet as
$dx^2+q(a)-x^2P_{q(a)}$.  Every Einstein zero of (HG.5)
admits an ordinarily smooth compactification after a
boundary-identity collar change of coordinates when $q(a)$ is smooth.
\end{lemma}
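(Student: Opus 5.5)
We will prove (HG.14) by converting the weighted geometric H\"older norm into ordinary collar estimates. We will then obtain the $C^{2,\theta}$ statement by a two-scale H\"older argument, and finally reduce the smoothness assertion to the boundary regularity theorem of \cite{CDLS2005}.

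\emph{The derivative bounds.} Work in the fixed collar coordinates $(x,y)$. Near $x=0$ the metric $G_*$ is uniformly equivalent to $x^{-2}(dx^2+q_*)$, so the rescaled frame $e_0=x\partial_x$, $e_a=x\partial_{y_a}$ is uniformly $G_*$-orthonormal up to bounded distortion. A hyperbolically rescaled chart at height $x_0$ is the affine image of a unit ball under $(x,y)\mapsto\bigl((x-x_0)/x_0,(y-y_0)/x_0\bigr)$. The hypothesis $h\in\mathcal H^{3,\alpha}_d$ therefore says that the frame components $h(e_i,e_j)$ and their chart derivatives through order three are $O(x_0^d)$ on that chart. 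Since
\[
\bar h_{ij}=x^2h(\partial_i,\partial_j)=h(e_i,e_j),
\]
we get $|\bar h|\le Cx^d$. Each ordinary coordinate derivative equals $x_0^{-1}$ times a chart derivative. Changing from the frame to coordinate components introduces only the bounded factors $x/x_0$ and their derivatives, which are harmless on a chart of comparable height. Hence $|D^j\bar h|\le Cx^{d-j}$ for $0\le j\le3$, which is (HG.14).

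\emph{H\"older regularity and two-jet.} With $d=5/2$, (HG.14) gives $|D^2\bar h|\le Cx^{1/2}$ and $|D^3\bar h|\le Cx^{-1/2}$. Take two points at distance $r$, and let $x$ be the larger of their heights.
\begin{itemize}
\item If $r<x/2$, integrate $D^3\bar h$ along the segment joining them: the oscillation of $D^2\bar h$ is at most $Cx^{-1/2}r\le Cr^{1/2}$.
\item Otherwise $x\le 2r$, and each value of $D^2\bar h$ is at most $Cx^{1/2}\le Cr^{1/2}$.
\end{itemize}
Thus $D^2\bar h\in C^{0,1/2}$ up to $x=0$, hence $\bar h\in C^{2,\theta}$ for every $\theta<1/2$. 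The bounds for $j\le2$ also show that $\bar h$, $D\bar h$ and $D^2\bar h$ vanish at $x=0$ because $d>2$, so $\bar h$ contributes nothing to the two-jet.

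By (HG.2), $x^2H(a)=dx^2+q(a)-x^2P_{q(a)}+O(x^3)$, and this expression is ordinarily smooth in the collar. It follows that $x^2(H(a)+h)\in C^{2,\theta}$ and that it has the same two-jet as $dx^2+q(a)-x^2P_{q(a)}$.

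\emph{Smoothness of Einstein zeros.} By Lemma~\ref{lem:hg-equation}, a zero of (HG.5) is an Einstein metric $g$. By the previous step its compactification $x^2g$ is $C^{2,\theta}$ on $\bar X$, and its conformal infinity has the smooth representative $q(a)$. This is the setting of \cite{CDLS2005}: a $C^2$ conformally compact Einstein metric with smooth conformal infinity is smooth up to the boundary in suitable coordinates, and in boundary dimension three it has no logarithmic terms.

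It remains to check that the required coordinate change fixes the boundary pointwise.
\begin{itemize}
\item The regularity theorem produces coordinates, of harmonic type near the boundary, in which the compactification is smooth.
\item Since $q(a)$ is smooth, those coordinates restrict on $\partial X$ to a smooth diffeomorphism.
\item Composing with the extension of its inverse across the collar gives coordinates restricting to the identity on the boundary, and the compactification remains smooth in them.
\end{itemize}
Alternatively, one can pass to the geodesic defining function of $q(a)$. The even Fefferman--Graham expansion with $n=3$, together with the regularity just obtained, then gives smoothness in the corresponding normal-collar coordinates, which also restrict to the identity on $\partial X$.

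\emph{Main obstacle.} The only genuinely delicate point is the last one: matching our $C^{2,\theta}$ hypothesis to that of \cite{CDLS2005}, and verifying that their regularizing coordinates can be taken boundary-identity. Weight $d>2$ is exactly what makes the $C^{2,\theta}$ hypothesis available.
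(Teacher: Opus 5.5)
Your proposal is correct and follows the paper's proof: (HG.14) comes from identifying $\bar h_{ij}$ with frame components and counting rescaled-chart derivatives, $D^2\bar h$ is handled by the same two-scale H\"older argument, and the final step is the same appeal to Chru\'sciel--Delay--Lee--Skinner's Theorem~A, fed by the $C^{2,\theta}$ compactification with smooth boundary representative $q(a)$ (together with interior smoothness of the zero, which follows from ellipticity of (HG.5)). At the last step, note that Theorem~A already gives a collar diffeomorphism equal to the identity on $\partial X$, so your composition step is unnecessary; this matters because its justification ``since $q(a)$ is smooth'' would not stand on its own, whereas the paper goes beyond the statement by extending that $C^{1,\lambda}$ collar map to the whole ball with a cutoff and upgrading it to $C^{3,\theta}$ via the negative-scalar compactification, which is what the later admissible-equivalence arguments rely on.
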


\begin{proof}
In the frame $x\partial_x,x\partial_y$, the components of $h$
are exactly the ordinary components of $\bar h$.  Weighted
geometric differentiation controls their derivatives by $x\partial$.
Expressing an ordinary derivative of order $j\le3$ in these
operators proves (HG.14).  The second derivatives tend to zero at
boundary like $x^{d-2}$.  To check their H\"older seminorm,
compare two points with coordinate separation $\ell$.  If $\ell$
is comparable to the smaller boundary height, the pointwise bound
gives $C\ell^{d-2}$.  Otherwise join the points in a region of
comparable height and integrate the third-derivative bound to get
$C\ell x^{d-3}\le C\ell^{d-2}$.  This proves the claimed
$C^{2,\theta}$ control and its continuity in the weighted norm.

Interior ellipticity of (HG.5) first makes every zero smooth in the
given interior coordinates. Apply Chru\'sciel--Delay--Lee--Skinner, \emph{Boundary regularity
of conformally compact Einstein metrics}, Theorem A,
arXiv:math/0401386v2, printed p.~3.  We need only an
Einstein metric, a classical $C^2$ conformal compactification, and a
smooth boundary representative.  In dimension four it produces a
smooth geodesic compactification after a boundary-identity collar
diffeomorphism.  The theorem initially states only $C^{1,\lambda}$
regularity of that diffeomorphism, for every $\lambda<1$.
First extend this collar change to the whole ball.  Both boundary
metrics are $q(a)$, so its first derivative is the identity at the
boundary.  It is $C^1$ close to the identity on a sufficiently thin
collar.  A cutoff of its displacement, in a fixed collar chart or
embedding, gives a diffeomorphism equal to it near the boundary and
to the identity farther inside.  The cutoff derivative times the
displacement tends to zero with the collar width, because the
displacement is $O(x^{1+\lambda})$.  The resulting pullback is
smoothly compactified globally; the extension is smooth in the
interior and fixes every boundary point.

Now the extended map improves to $C^{3,\theta}$.  Construct the
unique negative scalar curvature $-1$ compactification with boundary
metric $q(a)$ on both whole compact balls.  The global
maximum-principle uniqueness in the compactness-bridge section makes
the extended map an isometry of these compact metrics.  The original
one is $C^{2,\theta}$ and the new one is smooth.  A $C^1$ compact
metric isometry is weakly harmonic in target harmonic coordinates,
and elliptic regularity with the $C^{2,\theta}$ coefficients
gives $C^{3,\theta}$ regularity; equivalently one then uses the
connection transformation law.  This supplies an admissible
equivalence without changing the boundary marking or the interior
smooth structure.
\end{proof}

We can now apply the implicit-function theorem.  Its kernel is
finite-dimensional, and additional boundary parameters will later
span its cokernel.  The resulting zero set is finite-dimensional;
the essential supplementary assertion is that its metrics possess
arbitrarily many spatial derivatives.  This is precisely what the
smooth overlap construction will require.

\begin{lemma}[Finite reductions and derivative reserves]
\label{lem:hg-finite-reductions}
Suppose the derivatives of finitely many smooth boundary parameters
span the cokernel of $L_{G_*}$ in (HG.7).  Then the zeros of (HG.5)
near $(0,0)$ form a finite-dimensional smooth manifold of dimension
$\dim A$.  In every local finite-dimensional chart its metric
representatives depend smoothly on the chart coordinates with values
in $\mathcal H^{j,\alpha}_d$ for each finite $j$.

The same conclusion holds after restricting to the tensors invariant
under a fixed finite isometric action, provided the boundary
parameters span the invariant cokernel.
\end{lemma}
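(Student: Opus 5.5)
The plan is to apply the Banach implicit-function theorem to the augmented map $(a,h)\mapsto\Phi(a,h)$ of (HG.6), then bootstrap spatial regularity along the resulting finite-dimensional zero set. By Lemma~\ref{lem:hg-equation}, $\Phi$ is smooth from $A\times\mathcal H^{k,\alpha}_d$ to $\mathcal H^{k-2,\alpha}_d$, and $D_h\Phi(0,0)=L_{G_*}$. Since $d=5/2$ lies strictly inside the indicial interval of $L_{G_*}$, Lee's weighted Fredholm theorem \cite[Theorem~C]{Lee2006} makes $L_{G_*}$ Fredholm of index zero. Its kernel and cokernel are both identified with the $L^2$ Jacobi space $K_{G_*}$ through the self-adjoint $L^2$ pairing. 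Choose a closed complement $Z$ of $K_{G_*}$ in $\mathcal H^{k,\alpha}_d$. The total derivative $D\Phi(0,0)(\dot a,\dot h)=D_a\Phi(0,0)\dot a+L_{G_*}\dot h$ is then surjective, because by hypothesis the parameter derivatives span the cokernel. Its kernel has dimension $\dim A+\dim K_{G_*}-\dim(\operatorname{coker})=\dim A$. It is complemented, so the implicit-function theorem gives a smooth $\dim A$-dimensional submanifold of zeros near $(0,0)$. Its charts are $C^\infty$ maps into $A\times\mathcal H^{k,\alpha}_d$. Lemma~\ref{lem:hg-equation} then identifies these zeros with Einstein metrics in harmonic identity gauge.

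The derivative reserve is the main obstacle. We need every finite $j$, with smooth dependence on the chart coordinates in $\mathcal H^{j,\alpha}_d$, for the same zero set. First run the construction at order $k$. At each higher order $j>k$, the same Fredholm theory applies, since the indicial roots do not depend on $j$. The kernel $K_{G_*}$ consists of smooth elements of every weighted order, by interior elliptic regularity and the decay statement of \cite[Proposition~E]{Lee2006}. The cokernel is likewise independent of $j$. So the surjectivity hypothesis holds at every order, and the implicit-function theorem at order $j$ gives a zero set $\mathcal Z_j$ that is a smooth manifold in $\mathcal H^{j,\alpha}_d$. Since $\mathcal H^{j,\alpha}_d\subset\mathcal H^{k,\alpha}_d$ continuously, $\mathcal Z_j\subset\mathcal Z_k$ near the origin. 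Both are manifolds of dimension $\dim A$, so $\mathcal Z_j$ is an open subset of $\mathcal Z_k$. To get the reverse inclusion on a neighborhood of fixed size, I would use a direct elliptic estimate. A zero with $\|h\|_{k,\alpha,d}$ small satisfies $L_{G_*}h=-(D_a\Phi)a-Q(a,h)$, where the quadratic remainder (\ref{eq:uniform-gauge-quadratic}) involves only algebraic terms in $h$ and its first two derivatives, with uniformly controlled coefficients. Rescaled interior Schauder estimates in uniform hyperbolic charts give $\|h\|_{j,\alpha,d}\le C_j(|a|+\|h\|_{k,\alpha,d})$ by induction on $j$. The weight passes unchanged because each chart estimate is local at a fixed height $x_0$. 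Hence every order-$k$ zero in a fixed neighborhood lies in $\mathcal H^{j,\alpha}_d$. Uniqueness in the implicit-function theorem then identifies the charts. The order-$k$ chart coordinates are therefore smooth chart coordinates into each $\mathcal H^{j,\alpha}_d$. Care is needed to keep the neighborhood from shrinking with $j$. I would first try to handle this by parametrizing the zero set by its projection to $A\times K_{G_*}$, which is a finite-dimensional chart common to all orders. Smoothness in $j$-norms then follows from the order-$j$ implicit-function theorem on that fixed chart, which may require shrinking once per $j$. That is acceptable, since the claim is local.

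For the equivariant version, $\Gamma$ acts by isometries of $G_*$ preserving $x$ up to the choice made, and the boundary parameters $q(a)$ are $\Gamma$-invariant. Then $H(a)$, $\Phi$ and $L_{G_*}$ are $\Gamma$-equivariant, so $\Phi$ restricts to the closed invariant subspaces $(\mathcal H^{k,\alpha}_d)^\Gamma$. Averaging over the finite group gives a bounded projection. It shows that the invariant part of $L_{G_*}$ is Fredholm, with kernel $K_{G_*}^\Gamma$ and cokernel the invariant part of the cokernel, which is again $K_{G_*}^\Gamma$ under the pairing. The hypothesis that the parameters span the invariant cokernel then gives surjectivity on the invariant spaces. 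The same argument, including the regularity bootstrap (which commutes with averaging), gives the invariant statement.
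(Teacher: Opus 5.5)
Your outline matches the paper's: the augmented implicit-function theorem at order $k$, a rescaled Schauder bootstrap putting every small zero in $\mathcal H^\infty_d$, a higher-order implicit-function theorem with uniqueness, and restriction to invariant tensors by averaging. The gap is in the step that carries the real content of the lemma, namely smooth dependence with values in every $\mathcal H^{j,\alpha}_d$.

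\textbf{The gap.} You run the order-$j$ implicit-function theorem only at the center $(0,0)$ and accept that its domain may shrink with $j$, ``since the claim is local.'' That does not suffice. The radius on which the order-$j$ chart is identified with the order-$k$ chart is governed by the order-$j$ inverse and Taylor constants, so nothing prevents it from tending to zero as $j\to\infty$. You are then left with no fixed chart on which the representatives are smooth into all $\mathcal H^{j,\alpha}_d$. Your estimate $\|h\|_{j,\alpha,d}\le C_j(|a|+\|h\|_{k,\alpha,d})$ gives \emph{membership} on a fixed neighborhood, not smooth dependence there. This is why your claim of a ``neighborhood of fixed size'' and your later concession about shrinking are in tension.

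The lemma asserts smoothness in every chart of the fixed zero set, and this is what is used later. Proposition~\ref{prop:hg-overlap} needs arbitrarily many derivative reserves at overlap points, which are generally not chart centers. The continuation argument likewise covers a compact set by finitely many fixed charts.

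A smaller point: a projection to $A\times K_{G_*}$ is not a chart, since that space has dimension $\dim A+\dim K_{G_*}$. The common coordinate should be a projection $\Pi$ onto the $\dim A$-dimensional kernel $V$ of the augmented derivative.

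\textbf{The missing idea.} Apply the higher-order theorem at \emph{every} point of the fixed order-$k$ zero set, not only at the center.
\begin{itemize}
\item At a zero $(a_1,h_1)$, the augmented operator $(D\Phi(a_1,h_1),\Pi)$ on $\mathbb R^J\oplus\mathcal H^{k,\alpha}_d$ is an isomorphism, because invertibility persists on the fixed order-$k$ neighborhood by a Neumann series.
\item It is also an isomorphism at order $j$. Given data $(f,v)\in\mathcal H^{j-2,\alpha}_d\oplus V$, its order-$k$ preimage $(t,u)$ satisfies the linear elliptic equation $D_h\Phi(a_1,h_1)u=f-\sum_i t_i\,\partial_{a_i}\Phi(a_1,h_1)$. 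Since $h_1\in\mathcal H^\infty_d$ and the parameter columns are spatially smooth, its coefficients and right-hand side have the needed weighted regularity, and rescaled Schauder estimates give $u\in\mathcal H^{j,\alpha}_d$.
\item The order-$j$ implicit-function theorem at $(a_1,h_1)$, with the same $\Pi$, together with uniqueness in the original order-$k$ chart, gives smoothness into $\mathcal H^{j,\alpha}_d$ near that point.
\end{itemize}
Since smoothness is local, this holds on the whole fixed chart for every $j$. It also yields the uniform bounds on compact subcharts at every order that the later overlap construction relies on.
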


\begin{proof}
We first construct the local zero set by an implicit-function theorem.
We then explain why its elements acquire arbitrarily high weighted
spatial regularity, and why this extra regularity depends smoothly
on the finite parameters.

\smallskip\noindent\emph{The finite-dimensional zero set.}
Choose coordinates $a=(a_1,\ldots,a_J)$ on the finite boundary
parameter space $A$, where $J=\dim A$, and let $S_j$ be the prepared
Einstein derivative in the $a_j$ direction.  By hypothesis these
columns span the obstruction to $L_*=L_{G_*}$, so the augmented operator
\[
 A_0(t,u)=L_*u+\sum_{j=1}^{J}t_jS_j,
 \qquad (t,u)\in\mathbb R^J\oplus\mathcal H^{k,\alpha}_d,
\]
is surjective.  It is Fredholm, with finite-dimensional complemented
kernel $V$.  Choose a bounded projection $\Pi$ onto $V$.  Then
\[
 \mathscr A_0=(A_0,\Pi):
 \mathbb R^J\oplus\mathcal H^{k,\alpha}_d
 \longrightarrow\mathcal H^{k-2,\alpha}_d\oplus V
\]
is bijective and has bounded inverse by the open-mapping theorem.
The derivative estimate \eqref{eq:uniform-gauge-derivative}, together
with the smooth boundary columns, implies
\[
 \|\mathscr A_{a,h}-\mathscr A_0\|
 \le C\bigl(|a|+\|h\|_{k,\alpha,d}\bigr),
 \qquad \mathscr A_{a,h}=(D\Phi(a,h),\Pi).
\]
After shrinking the parameter and tensor neighborhood, a Neumann
series therefore gives the uniform estimate
\begin{equation}\label{eq:uniform-augmented-inverse}
\|(t,u)\|_{\mathbb R^J\oplus\mathcal H^{k,\alpha}_d}
\le2\|\mathscr A_0^{-1}\|
\|\mathscr A_{a,h}(t,u)\|_{\mathcal H^{k-2,\alpha}_d\oplus V}.
\end{equation}
The implicit-function theorem, using $\Pi$ as the fixed finite
kernel coordinate, now gives a smooth zero-set chart.  Since $L_*$
has index zero, the dimension of the zero set is $\dim A$.
The projection and the Jacobi kernel have not been discarded: only
the finite-dimensional obstruction has been supplied with transverse
boundary directions.

\smallskip\noindent\emph{Spatial elliptic regularity.}
There are two necessary regularity steps.  First, every zero in this
low-order neighborhood has all weighted spatial derivatives.  In a
rescaled ball, the harmonic-gauge Einstein equation is a uniformly
elliptic quasilinear system with smooth reference coefficients.
Its nonhomogeneous reference error and its derivatives are $O(x_0^3)$.
The usual differentiated Schauder estimates, applied to the remainder
and divided by $x_0^d$, bootstrap the remainder at each successive
order with uniform constants.  The leading coefficients are uniformly
elliptic and their lower-order norms are controlled by the preceding
step.  Covering the end by these rescaled balls and handling the
compact interior proves $h\in\mathcal H^\infty_d$.

\smallskip\noindent\emph{Smooth parameter dependence at higher orders.}
Second, fix any one point of the finite-dimensional zero set and any
one higher order $j$.  Its augmented derivative remains an
isomorphism at the original order.  Uniformly degenerate elliptic
regularity upgrades its inverse to order $j$: the finite-dimensional
boundary columns are spatially smooth, and the differential equation
then upgrades the tensor component of a low-order inverse solution.
The higher-order implicit-function theorem at this point, with
the same projection $\Pi$, and uniqueness in the original low-order
chart prove smooth parameter dependence into
$\mathcal H^{j,\alpha}_d$ on a neighborhood of that point.  In
particular, for every preassigned order $r\ge k$ and every finite
parameter derivative order $m$, each compact convex finite-dimensional
subchart $Q$ satisfies
\begin{equation}\label{eq:uniform-finite-family-regularity}
\sup_{z\in Q}\|D_z^m h(z)\|_{r,\alpha,d}\le C_{r,m,Q},
\qquad
\|h(z)-h(z')\|_{r,\alpha,d}\le C_{r,1,Q}|z-z'|.
\end{equation}
The second inequality follows from the first by the mean-value
theorem.  The orders and constants are fixed one at a time, as
required by the later finite-dimensional overlap construction.

\smallskip\noindent\emph{The invariant problem.}
For an invariant construction choose $x$, the collar, and the cutoff
invariant.  All displayed operators are then equivariant under the
fixed action.  Restriction to invariant Banach subspaces commutes
with the elliptic inverses and the implicit-function theorem.  The
same argument applies.
\end{proof}

A finite-dimensional solution manifold would not yet describe the
geometric problem unless nearby genuine fillings actually entered its
gauge.  The next lemma checks this in the \emph{same} weighted norm
used for the implicit-function theorem.  

\begin{lemma}[Quantitative local capture in the actual weighted norm]
\label{lem:hg-local-capture}
Let $a_i\to0$ in a finite smooth boundary family, and let
$g_i=H(a_i)+h_i$ be Einstein metrics with
$h_i\to0$ in $\mathcal H^{k,\alpha}_d$.  The harmonic-map gauges
of Lemma~\ref{lem:hg-nonlinear-gauge} give representatives
\[
 g'_i=(F_i)_*g_i=H(a_i)+h'_i,
 \qquad h'_i\longrightarrow0\quad\hbox{in }
 \mathcal H^{k,\alpha}_d.
 \tag{HG.21}
\]
Moreover $h'_i\to0$ at every preassigned finite higher weighted
order.  When the finite boundary family is transverse, these
representatives lie in its finite-dimensional zero-set chart and
converge in the finite-dimensional chart topology.
\end{lemma}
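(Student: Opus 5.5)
The plan is to apply Lemma~\ref{lem:hg-nonlinear-gauge} directly to each pair $(a_i,h_i)$. Because $a_i\to0$ and $h_i\to0$ in $\mathcal H^{k,\alpha}_d$, the pairs eventually enter the neighborhood of $(0,0)$ on which the harmonic-map gauge root $V(a,h)$ exists, is unique, and depends smoothly on its arguments. Put $V_i:=V(a_i,h_i)$ and $F_i:=F_{V_i}$. The estimate \eqref{eq:uniform-small-harmonic-gauge} then gives
\[
\|V_i\|_{k+1,\alpha,d}\le C'\|h_i\|_{k,\alpha,d}\longrightarrow0,
\]
and the last assertion of that lemma places $g_i'=(F_i)_*g_i$ in harmonic identity gauge relative to $H(a_i)$.

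Next I will write $h_i'=(F_i)_*g_i-H(a_i)$ and estimate it in $\mathcal H^{k,\alpha}_d$. The naive pushforward costs a derivative, so I will avoid it. First I use that $F_i$ is $\mathcal H^{k+1,\alpha}$-close to the identity, with displacement bounded by $x^{d+1}$ in ordinary coordinates by (HG.13). Then the pullback $F_i^*g_i'=g_i$ can be inverted through the smooth reference $H(a_i)$: I write
\[
h_i'=\bigl((F_i^{-1})^*H(a_i)-H(a_i)\bigr)+(F_i^{-1})^*h_i .
\]
The first bracket is a Lie-derivative-type expression in $V_i$ with smooth coefficients, so it is $O(\|V_i\|_{k+1,\alpha,d})$ at weight $d$. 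The second term requires composing a $C^{k,\alpha}$ tensor with a $C^{k+1,\alpha}$ near-identity map. This costs no derivatives of $h_i$ beyond order $k$, and it preserves weight because $x\circ F_i^{-1}/x\to1$ uniformly with its derivatives. Together these give $h_i'\to0$ in $\mathcal H^{k,\alpha}_d$, which is (HG.21).

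For higher orders, I observe that $g_i'$ is now a zero of $\Phi(a_i,\cdot)$ by Lemma~\ref{lem:hg-equation}, since it is Einstein and in harmonic identity gauge. The rescaled-chart Schauder bootstrap from the proof of Lemma~\ref{lem:hg-finite-reductions} then applies. On each hyperbolically rescaled ball at height $x_0$, the remainder $h_i'$ satisfies a uniformly elliptic quasilinear system whose inhomogeneity is $\Phi(a_i,0)=O(x_0^3)\cdot o(1)$ in $a_i$, and whose lower-order norm is small. Differentiated interior Schauder estimates, divided by $x_0^d$, bound $\|h_i'\|_{r,\alpha,d}$ by $C_r(\|h_i'\|_{k,\alpha,d}+\|\Phi(a_i,0)\|_{r-2,\alpha,d})$, and both terms tend to zero. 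The main obstacle is this step: the constants must be uniform in $i$, and the smallness of $\Phi(a_i,0)$ at every order needs smooth dependence of $H(a)$ on $a$ in each finite norm, which (HG.3) provides.

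Finally, when the family is transverse, the pair $(a_i,h_i')$ is a zero of $\Phi$ tending to $(0,0)$ in $A\times\mathcal H^{k,\alpha}_d$. The local uniqueness in the implicit-function chart of Lemma~\ref{lem:hg-finite-reductions}, built with the projection $\Pi$, says that every zero in a small neighborhood lies on the chart. Its chart coordinate is $(a_i,\Pi h_i')$, which tends to $(0,0)$. Hence the representatives converge in the finite-dimensional chart topology.
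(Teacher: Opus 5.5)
Your proof is correct. Its first, second and last steps are the paper's: the same gauge root with $\|V_i\|_{k+1,\alpha,d}\le C'\|h_i\|_{k,\alpha,d}$, and the same splitting of $h_i'$ into a fixed pushforward of the small tensor $h_i$ plus a Taylor term on the smooth reference. Only boundedness of each fixed pushforward is used there, which is why that step is legitimate despite your remark about ``avoiding'' it. The last step is the same appeal to continuity of the inverse of the low-order chart. Note that the chart coordinate is $\Pi(a_i,h_i')$, with $\Pi$ projecting $\mathbb R^J\oplus\mathcal H^{k,\alpha}_d$ onto the kernel of the augmented operator, not $(a_i,\Pi h_i')$; this is harmless because $\Pi$ is bounded.

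The genuine difference is the higher-order convergence. The paper adjoins auxiliary boundary directions spanning the cokernel at the center. Each $g_i'$ is then a zero of this transverse augmented equation with the auxiliary parameters equal to zero. Convergence at every order follows from the parameter Lipschitz bound (HG.23), i.e.\ from \eqref{eq:uniform-finite-family-regularity}, which rests on the higher-order implicit-function theorem of Lemma~\ref{lem:hg-finite-reductions}.

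You instead bootstrap $\Phi(a_i,h_i')=0$ directly in rescaled charts. This works because the gauged operator is quasilinear. Its principal part is $-\tfrac12 g^{ij}\partial_i\partial_j h$, and its lower-order part depends smoothly on the reference jets, $g$ and $\partial h$, reducing to $\Phi(a,0)$ at $h=0$. Each induction step therefore gives an estimate linear in $\|h_i'\|_{j,\alpha,d}$ and $\|\Phi(a_i,0)\|_{j-1,\alpha,d}$, with constants fixed by lower norms that are already controlled. (HG.3) then gives $\Phi(a_i,0)\to0$ at every weighted order.

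Your route is more elementary: it needs neither transversality nor augmentation, and it yields an explicit bound. The paper's route reuses an estimate it needs anyway for the smooth overlap construction, and it places the convergence directly in finite-dimensional chart coordinates.
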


\begin{proof}
The harmonic-map implicit-function theorem gives displacement
$V_i\to0$ in $\mathcal H^{k+1,\alpha}_d$.
A moving pullback of a general fixed-order rough tensor is not
declared continuous into the same order.  Instead use the exact
decomposition
\[
 h'_i=(F_i)_*h_i+
       \bigl((F_i)_*H(a_i)-H(a_i)\bigr).
 \tag{HG.22}
\]
The first term satisfies a uniform norm bound
\[
 \|(F_i)_*h_i\|_{k,\alpha,d}
 \le C\|h_i\|_{k,\alpha,d}.
\]
This is boundedness of each pullback, with uniform bounds for its
Jacobian and inverse in the rescaled charts.  The second term has the bound
\[
 \|(F_i)_*H(a_i)-H(a_i)\|_{k,\alpha,d}
 \le C\|V_i\|_{k+1,\alpha,d}.
\]
Here the tensor being moved is the spatially smooth reference in a
finite smooth family; its extra spatial derivatives make this
estimate a direct Taylor and chain-rule calculation.  The leading
boundary tensor changes only in weight $d$ because the maps have
identity three-jet and their displacement has weight $d$.  These bounds prove (HG.21).

Each $g'_i$ is an Einstein zero of (HG.5), and we have already
proved its convergence at the fixed low weighted order.  To obtain
higher-order convergence we use the finite-dimensional zero set.

Choose finitely many additional smooth boundary directions spanning
the cokernel at the central solution, as in
Lemma~\ref{lem:hg-finite-reductions}, and give them auxiliary
parameters.  This does not change the original solutions: each
$g'_i$ occurs with those auxiliary parameters equal to zero.  The
augmented equation has a transverse local zero chart, whose
coordinates $z_i$ tend to those of the central metric because of
the low-order convergence.  On a compact convex subchart $Q$, the
higher-order parameter estimate
\eqref{eq:uniform-finite-family-regularity} reads
\begin{equation}
 \|h(z)-h(z')\|_{r,\alpha,d}\le C_{r,Q}|z-z'|,
 \qquad r\ge k,\quad z,z'\in Q.
 \tag{HG.23}
\end{equation}
Thus the remainders converge at every preassigned higher weighted
order.  The inverse of the local
low-order chart is continuous, so convergence in that norm also gives
convergence in the finite-dimensional chart topology.

\end{proof}

\subsubsection{Decay of comparison isometries}

Two gauge representatives of the same marked filling are related by
an isometry fixing the conformal boundary pointwise.  To compare their
finite-dimensional charts, this isometry must decay at the same rate
as the metric remainders.  Ordinary boundary regularity alone gives
a weaker statement; the harmonic-map equation will improve it to the
chosen weight.  The next two lemmas record the qualitative and
quantitative forms of that improvement.

Here we start with an interior-smooth marked isometry extending
$C^{3,\theta}$ to the compactification for some $0<\theta<1/2$.
The canonical negative-scalar compactification and its connection
transformation law provide this regularity and its compactness in the
separate topology argument.  The following lemmas give the exact
weighted regularity and smallness required for differentiable overlaps.

The boundary three-jet is forced by the matching prepared metric
jets, and the subsequent decay improvement is an elliptic consequence.

\begin{lemma}[Vector inverses for prepared conormal metrics]
\label{lem:conormal-vector-inverse}
Let $G$ be an interior-smooth Poincar\'e--Einstein metric on the
interior of a compact four-manifold, with $\operatorname{Ric}_G=-3G$.
Assume that $x^2G$ is a classical $C^{2,\theta}$ compactification
for some $0<\theta<1$, that $G$ is asymptotically hyperbolic, and that
its coefficients and their inverses have uniformly bounded geometric
derivatives of every finite order in the rescaled charts.
For every $-1<\delta<4$, every integer $r\ge2$, and $0<\alpha<1$,
the operator
\[
 Q_G=\nabla_G^*\nabla_G+3:
 \mathcal H^{r,\alpha}_\delta(TX)
 \longrightarrow\mathcal H^{r-2,\alpha}_\delta(TX)
\]
is an isomorphism. Its inverse satisfies
\[
 \|V\|_{r,\alpha,\delta}
 \le C_{G,r,\alpha,\delta}
       \|Q_GV\|_{r-2,\alpha,\delta}.
\]
For a compact finite-dimensional family continuous in all the stated
geometric coefficient norms, with uniform compactification and
ellipticity bounds, these constants may be chosen uniformly, at each
fixed $r,\alpha,\delta$.
\end{lemma}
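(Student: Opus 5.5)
Our plan follows the route already used for the reference operator in Lemma~\ref{lem:hg-nonlinear-gauge}, now for a general prepared metric $G$: Fredholm theory in the weighted range, an $L^2$ coercivity argument for injectivity and surjectivity, and a contradiction argument for the uniform constants.

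\emph{Step 1: Fredholm range.} Since $G$ is asymptotically hyperbolic with $C^{2,\theta}$ compactification, its rescaled charts converge to hyperbolic charts near the boundary. The indicial roots of $Q_G$ are therefore those computed in (HG.12), from $s(3-s)+6$ and $s(3-s)+4$, with common open interval $(-1,4)$. We invoke Lee's weighted Fredholm theorem \cite[Theorem~C and Proposition~E]{Lee2006} in the full geometric H\"older spaces. Lee's theorem requires only finite regularity of the compactification together with bounded geometry. It shows that $Q_G:\mathcal H^{r,\alpha}_\delta\to\mathcal H^{r-2,\alpha}_\delta$ is Fredholm for $-1<\delta<4$. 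Its kernel and cokernel coincide with the $L^2$ kernel of $Q_G$, independently of $\delta$ in this range. The order $r\ge2$ is handled by interior Schauder estimates in the rescaled charts, since all coefficients have bounded geometric derivatives of every order.

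\emph{Step 2: triviality of the $L^2$ kernel.} Let $V\in L^2$ satisfy $Q_GV=0$. Elliptic regularity gives $V\in\mathcal H^{\infty}_{\delta'}$ for some $\delta'>3/2$, so $\nabla V\in L^2$. Integrating against $V$ with cutoffs of bounded geometric gradient yields
\[
\|\nabla V\|_{L^2}^2+3\|V\|_{L^2}^2=0,
\]
hence $V=0$. Since $Q_G$ is formally self-adjoint, the obstruction space is the same $L^2$ kernel and also vanishes. Thus $Q_G$ is an isomorphism, and the open mapping theorem gives the stated bound on the inverse.

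\emph{Step 3: uniformity (main obstacle).} For a compact family we argue by contradiction. Suppose there are $G_n\to G_\infty$ and $V_n$ with $\|V_n\|_{r,\alpha,\delta}=1$ and $\|Q_{G_n}V_n\|_{r-2,\alpha,\delta}\to0$. The difficulty is that the norm may concentrate either in the interior or at the boundary.
\begin{itemize}
\item The uniform Schauder estimates reduce control of $V_n$ to control of $\sup x^{-\delta}|V_n|$.
\item If that supremum is attained near the boundary, we use the barrier $x^{\delta}$ from the proof of Lemma~\ref{lem:hg-nonlinear-gauge}, for which $(-\Delta+3)x^\delta=(3+\delta(3-\delta)+O(x))x^\delta$. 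Combined with Kato's inequality for $|V_n|$, this bounds $x^{-\delta}|V_n|$ on a thin collar. The collar width is uniform because the $O(x)$ term is controlled by the uniform $C^{2,\theta}$ compactification bounds.
\item If the supremum is attained in the interior region, we pass to a limit there. This yields a nonzero element of the kernel of $Q_{G_\infty}$ in $\mathcal H_\delta$, contradicting Step~2.
\end{itemize}
The barrier argument covers only part of the weight range, namely where $3+\delta(3-\delta)>0$. For other $\delta\in(-1,4)$ we would instead rely on the uniform Fredholm parametrix of Lee, whose boundary model depends only on the hyperbolic limit and is therefore identical across the family.
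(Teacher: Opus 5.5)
Your Steps 1 and 2 follow the paper. They use Lee's weighted Fredholm theorem, the identification of kernel and obstruction with the $L^2$ kernel, the coercive identity $\langle Q_GV,V\rangle_{L^2}=\|\nabla V\|_{L^2}^2+3\|V\|_{L^2}^2$, and rescaled Schauder estimates for the order.

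One precision is needed there. With only a $C^{2,\theta}$ compactification, Lee's theorem is available at $\mathcal H^{2,\alpha_0}_\delta\to\mathcal H^{0,\alpha_0}_\delta$ with $\alpha_0\le\theta$, because its H\"older range is capped by the regularity of the compactification. It is not available directly at $(r,\alpha)$. The isomorphism at $(r,\alpha)$ is obtained as follows:
\begin{itemize}
\item embed $f\in\mathcal H^{r-2,\alpha}_\delta$ into the low-order target and solve there;
\item combine the chart estimate $\|V\|_{r,\alpha,\delta}\le C_r(\|f\|_{r-2,\alpha,\delta}+\|V\|_{0,\delta})$ with the low-order inverse bound on $\|V\|_{0,\delta}$;
\item get uniqueness from the same embedding.
\end{itemize}

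The genuine gap is in Step 3. Your Kato--barrier comparison only sees the scalar polynomial $s(3-s)+3$. It therefore works only for $\tfrac{3-\sqrt{21}}2<\delta<\tfrac{3+\sqrt{21}}2$, which is strictly inside $(-1,4)$. At $\delta=3.9$, say, $3+\delta(3-\delta)<0$ and no positive barrier exists. Falling back on a ``uniform Fredholm parametrix'' is not an argument. The constants in Lee's estimates depend on the whole metric, not only on the hyperbolic model, and their uniformity over the family is exactly what the lemma claims. Even in the good range, the collar comparison needs an extra term $\epsilon x^{\delta'}$ with $\delta'<\delta$, because $|V_n|$ and $Cx^\delta$ have the same rate at $x=0$.

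The missing idea is that no concentration analysis is required. Steps 1--2 already give invertibility for every member of the compact family. Then:
\begin{itemize}
\item fix one defining function and one weighted background atlas;
\item continuity in the geometric coefficient norms makes $s\mapsto Q_{G_s}$ continuous in $\mathcal L(\mathcal H^{2,\alpha_0}_\delta,\mathcal H^{0,\alpha_0}_\delta)$;
\item openness of the invertible operators and the Neumann series bound $\|Q_{G_s}^{-1}\|$ locally in $s$;
\item a finite cover of the parameter set makes the low-order bound uniform for every $\delta\in(-1,4)$;
\item the Schauder constants in the upgrade are uniform by the assumed coefficient and ellipticity bounds.
\end{itemize}
This is the paper's proof. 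Your blow-up route would only be needed if the family converged merely locally. Even then, it would need a barrier adapted to the normal and tangential vector components to reach the full weight range.
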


\begin{proof}
Choose $0<\alpha_0<\min\{\theta,\alpha\}$. First work only at
the low order allowed by the classical compactification. Lee's
Theorem~C and Proposition~E \cite{Lee2006}, applied to one-forms and then transferred
to vectors by the metric, give a Fredholm map of index zero
\[
 Q_G:\mathcal H^{2,\alpha_0}_\delta
       \longrightarrow\mathcal H^{0,\alpha_0}_\delta
       \qquad(-1<\delta<4).
\]
Here $l=2$, $\beta=\theta$ in the theorem, so
$2<2+\alpha_0\le2+\theta$ is within its asserted regularity range.
The indicial radius is
$\sqrt{3^2/4+1+3}=5/2$, with central norm weight $3/2$.
The exterior coercivity follows even globally from
\[
 \langle Q_GV,V\rangle_{L^2}
      =\|\nabla V\|_2^2+3\|V\|_2^2
      \quad(V\in C_c^\infty(TX)).
\]
The Fredholm kernel equals the $L^2$ kernel. Cutoff integration of
this identity shows that the latter is zero. Index zero therefore
makes the displayed low-order map an isomorphism.

Given $f\in\mathcal H^{r-2,\alpha}_\delta$, the embedding into
$\mathcal H^{0,\alpha_0}_\delta$ is bounded. Let $V$ be the unique
low-order solution of $Q_GV=f$. On nested uniform charts $B_1\Subset
B_2$ centered at height $x_0$, the ordinary interior Schauder estimate
for this vector system gives
\[
 \|V\|_{C^{r,\alpha}(B_1)}
 \le C_r\bigl(
       \|f\|_{C^{r-2,\alpha}(B_2)}
       +\|V\|_{C^0(B_2)}\bigr).
\]
At $r=2$ this is also the usual regularity implication for the already
constructed $C^{2,\alpha_0}$ solution; the target exponent is
$\alpha$, not $\alpha_0$. At higher $r$ differentiate the equation
and induct. The coefficient assumptions give the required uniform
$C^{r-2,\alpha}$ bounds at every fixed order. For $Q_G$ on vectors
these use at most $r$ geometric derivatives of the metric, which are
available by assumption.

In each such chart $x/x_0$ and its inverse have bounded derivatives.
Dividing by $x_0^\delta$, taking the supremum over a uniform cover,
and including finitely many interior charts yield
\[
 \|V\|_{r,\alpha,\delta}
 \le C_r\bigl(
       \|f\|_{r-2,\alpha,\delta}
       +\|V\|_{0,\delta}\bigr).
\]
The low-order inverse bounds the final term by
$C\|f\|_{0,\alpha_0,\delta}$ and hence by
$C'\|f\|_{r-2,\alpha,\delta}$. This proves existence and the
asserted bound at the original requested exponent $\alpha$.
Uniqueness follows by the embedding into the low-order space.

For a compact parameter family use a fixed defining function and
background weighted atlas. The low-order operators depend
continuously in bounded-operator norm. Their inverses are locally
uniform by the Neumann-series identity, and a finite cover of the
parameter set makes the low-order bound uniform. The fixed-order
Schauder constants are uniform by the assumed coefficient and
ellipticity bounds. This proves the final statement.
\end{proof}

\begin{lemma}[Decay improvement for marked isometries]
\label{lem:hg-isometry-decay}
Let
\[
 g_i=H_i(q)+h_i,\qquad h_i\in\mathcal H^\infty_d,
 \quad \operatorname{Ric}_{g_i}=-3g_i,
 \quad \tau(g_i,H_i(q),\operatorname{Id})=0,
\]
where both references have compactified leading form
$dx^2+q-x^2P_q+O(x^3)$ in the same fixed collar.
If $F:(X,g_1)\to(X,g_2)$ is a boundary-identity isometry extending
$C^{3,\theta}$ for some $0<\theta<d-2$, then it has identity
three-jet.  Near the boundary its displacement vector belongs to
$\mathcal H^\infty_d(TX)$.
Consequently fixed such isometries and their inverses act boundedly
on each weighted tensor scale and preserve the weight-$d$ affine
metric models.  The same conclusion holds for any two all-orders
conormal PE representatives with this prepared boundary jet.
\end{lemma}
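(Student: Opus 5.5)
The plan is to work in two stages: first pin down the boundary jet of $F$ from the matching prepared metric jets, and then upgrade the decay of its displacement by using the harmonic-map equation that $F$ satisfies between the two gauges.

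\emph{Stage 1: the three-jet.} Since $F$ is an isometry $(X,g_1)\to(X,g_2)$ fixing the boundary pointwise and extends $C^{3,\theta}$, it is also a $C^{2,\theta}$ isometry between the compactified metrics $x^2g_1$ and $F^*(x^2g_2)=(x\circ F)^2g_1$. Write $x\circ F=e^{\varphi}x$. By Lemma~\ref{lem:hg-admissibility}, both compactifications share the two-jet of $dx^2+q-x^2P_q$. Comparing the induced boundary metrics gives $\varphi|_{\partial X}=0$ and $dF=\Id$ on $T\partial X$. Comparing the normal components of $F^*(x^2g_2)=e^{2\varphi}x^2 g_1$ at $x=0$ forces $dF(\partial_x)=\partial_x$. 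Hence $F$ has identity one-jet.
For the second and third jets, I will expand $F=\Id+\sum_{j\le3}x^jY_j(y)+o(x^3)$, with $F$ tangential in the $x$-direction at each order, and substitute into the metric identity order by order. The prepared jets agree through order two, and the weighted remainders are $O(x^{d})$ with $d>2$. Each coefficient then solves a homogeneous algebraic system of conformal-Killing type on the boundary with zero boundary data, so $Y_2=Y_3=0$. The $C^{3,\theta}$ hypothesis with $\theta<d-2$ makes the order-three comparison legitimate. In particular the displacement vector $W$, defined through $F(z)=\exp^{G_*}_zW(z)$ near the boundary, satisfies $|D^jW|\le Cx^{3+\theta-j}$ in ordinary coordinates. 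In geometric norms this puts $W$ in some weight $\delta_0=2+\theta>2$ at low order.

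\emph{Stage 2: decay improvement.} Because $F$ is an isometry and both $g_i$ are in harmonic identity gauge relative to references with the same boundary metric, naturality of tension (as in the last step of Lemma~\ref{lem:hg-nonlinear-gauge}) gives
\[
\tau\bigl(g_1,H_2(q),F\bigr)=\tau\bigl(g_2,H_2(q),\Id\bigr)=0 .
\]
I will Taylor expand this equation in $W$ about $\Id$, exactly as in the proof of Lemma~\ref{lem:hg-nonlinear-gauge}. This yields
\[
Q_{g_1}W=T(g_1,H_2(q))+\mathcal R(W),
\]
where $Q_{g_1}=\nabla^*\nabla+3$ up to weight-$d$ corrections. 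The source $T(g_1,H_2)$ involves only $h_1$, $h_2$ and the difference $H_1-H_2$. That difference lies in $\mathcal H^\infty_3\subset\mathcal H^\infty_d$ because both references share the prepared jet. The remainder $\mathcal R$ is quadratic in $W$ or bilinear in $W$ and weight-$d$ tensors. Since $\delta_0>2$, products gain weight: they land in $\mathcal H_{\min(2\delta_0,\,\delta_0+d)}$, which exceeds $\delta_0$.

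I will then apply Lemma~\ref{lem:conormal-vector-inverse}, with $\delta$ ranging over $(-1,4)$, using cutoffs near the boundary so that the equation holds globally. The interior error is compactly supported, hence of arbitrary weight. Uniqueness of solutions across weights, as in Lemma~\ref{lem:hg-nonlinear-gauge}, lets me bootstrap $\delta_0\to\min(d,\delta_0+\epsilon)\to\dots$ until I reach $W\in\mathcal H_d$. Higher geometric orders then follow from Schauder estimates in the rescaled charts, because $h_i\in\mathcal H^\infty_d$, which gives $W\in\mathcal H^\infty_d$.

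\emph{Consequences.} The bounded action of $F$ and $F^{-1}$ on each weighted scale, and the preservation of the affine models $H+\mathcal H_d$, follow from the decomposition (HG.22) used in Lemma~\ref{lem:hg-local-capture}. The statement for general all-orders conormal representatives follows from the same argument, with $g_1$ taken as the background.

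\emph{Main obstacle.} The hardest step is the first one: matching jets at finite regularity. I must make sure that the $C^{3,\theta}$ assumption is enough to identify the third-order coefficient, since the metric comparison only sees two derivatives of $F$. I would first try handling the third jet via the connection transformation law: it expresses $\partial^2F$ through Christoffel symbols that agree to order $x^{1+\theta}$, and integrating that relation should give the identity three-jet.
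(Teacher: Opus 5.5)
Your proposal is correct and follows the same route as the paper. It matches jets order by order in $F^*\bar g_2=(x\circ F/x)^2\bar g_1$, gets harmonicity of $F:(X,g_1)\to(X,H_2(q))$ from naturality of tension, expands as $(\nabla^*\nabla+3)V=T(g_1,H_2(q))+E_d(V)+Q(V)$ with right side of weight $d$, and applies the two-weight inverse of Lemma~\ref{lem:conormal-vector-inverse} with uniqueness at the lower weight (for general conormal representatives one uses that an isometry is harmonic into the fixed target $g_2$).

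The step you flag as the main obstacle is not one. The identity involves $dF$, so with $F^0=x+Ax^3+o(x^3)$ and $F^a=y^a+B^ax^3+o(x^3)$, its $00$ and $0a$ coefficients at order $x^2$ read $6A=2A$ and $3B_a=0$ directly. No iteration in the weight is needed either, because $2\delta_0>4>d$ puts the right side in weight $d$ in one step.
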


\begin{proof}
The argument has two parts: the shared boundary metric determines
the first three jets of the isometry, and the harmonic-map equation
improves its decay from the resulting weak weight to $d$.

\smallskip\noindent\emph{The boundary jets.}
Write $\bar g_i=x^2g_i$.  The isometry equation is
\[
 F^*\bar g_2=(x\circ F/x)^2\bar g_1.
 \tag{HG.15}
\]
Tangential restriction at $x=0$ makes the positive normal derivative
of $F$ equal to one, since both boundary metrics are $q$.
The mixed equation then kills its normal tangential component.
Thus $dF=\operatorname{Id}$ at the boundary.  Write the remaining
second normal jet as $F^0=x+ax^2+o(x^2)$,
$F^a=y^a+b^ax^2+o(x^2)$.  The $00$ and $0a$ components of
(HG.15) at order $x$ give $4a=2a$ and $2b_a=0$.
Tangential and mixed second derivatives are fixed by the boundary
value and first jet; hence $j^2F=j^2\operatorname{Id}$.

Write the remaining third normal jet as
\[
 F^0=x+A(y)x^3+o(x^3),\qquad
 F^a=y^a+B^a(y)x^3+o(x^3).
\]
Both compact metrics have $g_{00}=1+O(x^d)$ and
$g_{0a}=O(x^d)$, and their tangential second jet is $-P_q$.
The $00$ coefficient of (HG.15) at order $x^2$ is
$6A=2A$, so $A=0$.  Its $0a$ coefficient gives $3B_a=0$.
All third derivatives containing a tangential derivative are already
fixed by the identity second jet on the boundary.  Therefore
$j^3F=j^3\operatorname{Id}$, and Taylor's theorem gives
\[
 D^j(F-\operatorname{Id})=O(x^{3+\theta-j}),\qquad 0\le j\le3.
 \tag{HG.16}
\]

\smallskip\noindent\emph{Improvement to the prepared weight.}
Near infinity define $V=\exp_{g_1}^{-1}F$ and choose
$2<\delta<2+\theta<d$.  In rescaled charts (HG.16) puts
$V$ in weight $\delta$, initially through two derivatives with
any fixed H\"older exponent $\alpha<1$: the bound on the third
ordinary derivative controls the rescaled second-derivative
H\"older seminorm.  Since the identity on $(X,g_2)$ is harmonic
into $(X,H_2(q))$, naturality makes
$F:(X,g_1)\to(X,H_2(q))$ harmonic.  Interior Schauder bootstrap
in the rescaled charts gives $V\in\mathcal H^\infty_\delta$.

Expand that equation at the identity using $g_1$ exponential
coordinates and parallel transport.  For target $g_1$ the
linearization is $-(\nabla^*\nabla-\operatorname{Ric}_{g_1})$.
The actual target differs from $g_1$ by a weight-$d$ tensor, so
\[
 (\nabla^*\nabla+3)V
 =T(g_1,H_2(q))+E_d(V)+Q(V).
 \tag{HG.17}
\]
Every coefficient of the at-most-second-order operator $E_d$ has
weight $d$, and $Q$ is at least quadratic in the two-jet of $V$.
This is direct Taylor expansion of (HG.9); all geometric derivatives
of the coefficients are bounded.  The terms on the right have
weights $d$, $d+\delta$, and at least $2\delta>d$, respectively.
Thus the right side lies in $\mathcal H^\infty_d$.

Cut $V$ off toward the compact interior.  Its image under
$\nabla^*\nabla+3$ lies in $\mathcal H^\infty_d$; cutoff errors
have compact support.  Lemma~\ref{lem:conormal-vector-inverse}
makes the vector operator an isomorphism at weights $\delta$ and $d$,
even when the prepared compactification is initially only
$C^{2,\theta}$. Its two inverse solutions agree by uniqueness
in weight $\delta$. Hence $V\in\mathcal H^\infty_d$.
The same argument applies to $F^{-1}$.  For arbitrary prepared
conormal PE representatives, use the actual fixed target $g_2$
instead of $H_2(q)$.  An isometry $F:(X,g_1)\to(X,g_2)$ is
harmonic, the target has bounded derivatives of every order in
rescaled charts, and $g_2-g_1$ has weight $d$.  The identical
expansion and two-weight inverse argument therefore apply.  This
last argument requires no moving-rough-coefficient differentiability:
its target is fixed and conormal smooth, and it is used only for
spatial regularity of the given map.  Uniform-chart chain rules
prove bounded fixed pullback on each weighted scale.  The prepared
second metric jet and the improved displacement preserve the affine
metric model.  
\end{proof}

\begin{lemma}[Quantitative upgrade for nearby comparison maps]
\label{lem:hg-quantitative-isometries}
Suppose $g$ and $H$ range in finite smooth weighted families as above,
and $F:(X,g)\to(X,H)$ is harmonic, has identity three-jet, and
converges to the identity in ordinary $C^{3,\theta}$ for some
$0<\theta<d-2$.  Suppose $T(g,H)\to0$ in
$\mathcal H^{k-1,\alpha}_d$.  Then its displacement converges to
zero in $\mathcal H^{k+1,\alpha}_d$.
\end{lemma}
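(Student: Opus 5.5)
The plan is to run the argument of Lemma~\ref{lem:hg-isometry-decay} quantitatively, with the fixed two-weight inverse of Lemma~\ref{lem:conormal-vector-inverse} as the only elliptic input. The three steps are: a weak-weight smallness estimate from the ordinary $C^{3,\theta}$ convergence, a linear equation (HG.17) for the displacement with small right side, and a weighted inverse estimate with absorption.

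\emph{Step 1: a weak-weight smallness statement.} Near the boundary, write $V=\exp_g^{-1}F$. The identity three-jet and $F\to\operatorname{Id}$ in $C^{3,\theta}$ give the Taylor bound
\[
|D^j(F-\operatorname{Id})|\le\varepsilon\,x^{3+\theta-j},\qquad 0\le j\le3,
\]
with $\varepsilon\to0$. Fix $2<\delta<2+\theta<d$. As in (HG.16), the rescaled-chart conversion then gives $\|V\|_{2,\alpha,\delta}\to0$ for any $\alpha<1$, where the third ordinary derivative controls the rescaled second-order H\"older seminorm. In the compact interior, ordinary $C^{3,\theta}$ convergence directly bounds every chart norm of $V$.

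\emph{Step 2: the displacement equation.} Harmonicity of $F:(X,g)\to(X,H)$, expanded at the identity in $g$-exponential coordinates with parallel transport, gives
\[
(\nabla_g^*\nabla_g+3)V=T(g,H)+E(V)+Q(V).
\]
Here $E$ is a second-order operator whose coefficients are built from $H-g$ and its derivatives, and $Q$ is at least quadratic in the two-jet of $V$. Both families are finite smooth weighted families, so $H-g$ is bounded in weight $d$ at every order, uniformly. Moreover $T(g,H)\to0$ means the identity is nearly harmonic.

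\emph{Step 3: the inverse estimate and absorption.} Apply Lemma~\ref{lem:conormal-vector-inverse} to $Q_g$ with constants uniform over the compact family. I would do this first at weight $\delta$ and order $2$, then bootstrap order by interior rescaled Schauder, and then pass to weight $d$. The quadratic term gives $\|Q(V)\|_{r-2,\alpha,d}\le C\|V\|_{r,\alpha,\delta}^2$, since $2\delta>d$. The linear error gives $\|E(V)\|_{r-2,\alpha,d}\le C\|V\|_{r,\alpha,\delta}$ when $\delta\ge0$, since the coefficients already carry weight $d$. Together this yields
\[
\|V\|_{k+1,\alpha,d}\le C\bigl(\|T(g,H)\|_{k-1,\alpha,d}+\|V\|_{k+1,\alpha,\delta}+\|V\|_{k+1,\alpha,\delta}^2\bigr).
\]
The compactly supported cutoff errors are controlled by interior $C^{3,\theta}$ convergence together with interior elliptic regularity of the harmonic-map equation.

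\textbf{Main obstacle.} The difficulty is making the middle term small. The coefficient of $E$ is only bounded, not small, and weak-weight smallness is available only at low order. I would proceed as follows.
\begin{enumerate}
\item Upgrade $\|V\|_{k+1,\alpha,\delta}\to0$ at weight $\delta$ by rescaled Schauder estimates for the quasilinear harmonic-map system. In each rescaled chart, the $C^0$ smallness from Step 1 bootstraps through all orders, with uniform coefficients at every order.
\item Then interpolate, using the two weights. The weighted norm at weight $d$ of $E(V)$ is bounded by $\sup$ of $x^{d-\delta}\cdot(\text{coefficient})$ times the weight-$\delta$ norm.
\end{enumerate}
If instead one wants smallness driven by $T$ alone, I would use that $H-g\to0$ when $T\to0$ along the families. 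Absent that, convergence of the weight-$\delta$ norm suffices, because the right side then tends to zero. Uniqueness of the inverse in weight $\delta$ identifies the two solutions, exactly as in Lemma~\ref{lem:hg-isometry-decay}, and this gives $\|V\|_{k+1,\alpha,d}\to0$.
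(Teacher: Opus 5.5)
Your proposal is correct. Its first two steps coincide with the paper's: the weak-weight bound from the identity three-jet is (HG.18), and your rescaled Schauder bootstrap at weight $\delta$ is (HG.19). The routes differ only in how weight $d$ is reached.

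\emph{The paper's last step.} Once $\|V\|_{k+1,\alpha,\delta}\to0$, the paper notes that the root $V(a,h)$ of Lemma~\ref{lem:hg-nonlinear-gauge} is also the unique small root at the auxiliary weight $\delta$. Hence $F$ \emph{is} the implicit-function gauge map, after comparing the fixed-reference and $g$-exponential parametrizations. Then \eqref{eq:uniform-small-harmonic-gauge} gives (HG.20), $\|V\|_{k+1,\alpha,d}\le C\|T(g,H)\|_{k-1,\alpha,d}$.

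\emph{Your last step.} You rerun the mechanism of Lemma~\ref{lem:hg-isometry-decay} quantitatively. You invert $\nabla_g^*\nabla_g+3$ with the uniform constants of Lemma~\ref{lem:conormal-vector-inverse}. You bound $E(V)$ and $Q(V)$ at weight $d$ by $\|V\|_{k+1,\alpha,\delta}$ and its square. Linear uniqueness at weight $\delta$ then identifies the weight-$d$ solution with $\chi V$.

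\emph{What each buys.} Your version uses only linear theory. It avoids the exponential reparametrization and the need for $(g,H)$ to lie in the nonlinear uniqueness neighborhood of a fixed center. The price is a bound that also carries $\|V\|_{k+1,\alpha,\delta}$, hence $\|F-\operatorname{Id}\|_{C^{3,\theta}}$; this still suffices for convergence. The paper's version gives a bound in terms of $T$ alone, and identifies $F$ directly with the canonical gauge root used in Proposition~\ref{prop:hg-overlap}. Your weight-$d$ smallness is exactly what that proposition consumes, since it then invokes uniqueness at weight $d$.

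\emph{Two small corrections.}
\begin{itemize}
\item The aside that $T\to0$ should force $H-g\to0$ is false. A gauged Einstein zero $g=H(a)+h$ has $T(g,H(a))=0$ with $h\neq0$ in general. As you say, the argument does not need it.
\item In your description of the bound for $E(V)$, the factor should be $x^{\delta-d}$ times the raw coefficient, not $x^{d-\delta}$. Your displayed estimate is already correct.
\end{itemize}
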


\begin{proof}
The two weights have different roles.  The lower one is obtained
from ordinary compactified regularity; the upper one is the weight
at which the harmonic-map implicit-function theorem was solved.
Uniqueness will identify the resulting maps.

Fix $2<\delta<2+\theta$.  For a comparison map sufficiently close
to the identity, put $V=\exp_g^{-1}F$.  The identity three-jet and
Taylor's theorem give, in uniform rescaled charts,
\[
 \|V\|_{2,\alpha,\delta}
 \le C\|F-\operatorname{Id}\|_{C^{3,\theta}(\bar X)}.
 \tag{HG.18}
\]
Indeed ordinary displacement is $O(x^{3+\theta})$; converting to a
bounded complete-metric vector divides it by one power of $x$.
The third ordinary derivative controls the rescaled H\"older
seminorm of its second derivative.  The strict inequality
$\delta<2+\theta$ gives convergence to zero at weight $\delta$.

The map $F:(X,g)\to(X,H)$ is harmonic.  Its equation has the rough
domain coefficients evaluated at the fixed point, and the target
coefficients evaluated at $F(z)$.  The latter range in a smooth
finite-dimensional family.  Differentiated rescaled Schauder
estimates, beginning with (HG.18), give
\[
 \|V\|_{k+1,\alpha,\delta}
 \le C_k\Bigl(\|F-\operatorname{Id}\|_{C^{3,\theta}(\bar X)}
       +\|T(g,H)\|_{k-1,\alpha,\delta}\Bigr)
 \longrightarrow0.
 \tag{HG.19}
\]
There is no constant term: the equation for $V$ differs from that
of the identity only by terms containing $V$, its derivatives, or
the identity-map tension.  The constants are uniform on a smaller
finite smooth parameter neighborhood.

The harmonic-map theorem also supplies a unique small solution
$V_d$ at weight $d$.  Since $d>\delta$, this is a solution at the
weaker weight.  Estimate (HG.19) places the given comparison map in
the uniqueness neighborhood at that weight. Hence the two harmonic
maps coincide. Express both displacement vectors using the same
exponential parametrization; the change from the fixed-reference
exponential to the $g$-exponential is uniformly smooth on these finite
families and preserves the weighted norm. The quantitative gauge
estimate \eqref{eq:uniform-small-harmonic-gauge} therefore gives
\[
 \|V\|_{k+1,\alpha,d}
 \le C\|T(g,H)\|_{k-1,\alpha,d}.
 \tag{HG.20}
\]
By hypothesis the right-hand side tends to zero, completing the
claim.  Smooth parameter
dependence on finite solution families follows from their
higher-order derivative reserves.
\end{proof}

\subsubsection{Smooth changes of gauge on finite-dimensional solution sets}

We now have enough regularity to compare two local constructions.
It is important that the smoothness claimed here is smoothness
\emph{between finite-dimensional solution charts}.  We make no
statement about the full diffeomorphism action on a rough Banach
space.  We use a fixed comparison isometry at the central
point and solve just one additional harmonic-map equation for the
nearby family.

\begin{proposition}[Differentiable gauge changes on finite zero sets]
\label{prop:hg-overlap}
Let $Z_i,Z_j$ be two finite-dimensional zero-set charts from
Lemma~\ref{lem:hg-finite-reductions}, for the same finite smooth
boundary family $q(a)$.  At a pair of marked-isometric points,
the induced change of representatives is smooth between the finite
charts.  It preserves $a$, and all triple-overlap identities hold
exactly.  If the marked quotient has the local properness property
that nearby equivalent representatives have comparison isometries
close to the given comparison isometry, these maps give actual
compatible local charts on that quotient.
\end{proposition}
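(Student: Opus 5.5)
We construct the change of representatives explicitly from one fixed comparison isometry and one harmonic-map gauge solve, and take smoothness from the derivative reserves of Lemma~\ref{lem:hg-finite-reductions}. Fix central points $z_i^0\in Z_i$ and $z_j^0\in Z_j$ with metrics $g_i(z_i^0)$ and $g_j(z_j^0)$. Fix a boundary-identity isometry $F_0:(X,g_i(z_i^0))\to(X,g_j(z_j^0))$, extending $C^{3,\theta}$ as supplied by the canonical negative-scalar compactification. By Lemma~\ref{lem:hg-isometry-decay}, $F_0$ has identity three-jet and displacement in $\mathcal H^\infty_d$. Hence $(F_0)_*$ acts boundedly on every weighted scale $\mathcal H^{r,\alpha}_d$ and preserves the prepared affine models $H(a)+\mathcal H_d$. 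For $z$ near $z_i^0$ we then set
\[
 \widetilde g(z):=(F_0)_*g_i(z)=H_j(a(z))+\widetilde h(z).
\]
Two facts make this a legitimate first step. The pushforward is linear and bounded in the tensor, and $F_0$ is fixed. So \eqref{eq:uniform-finite-family-regularity} gives smooth dependence of $\widetilde h(z)$ on $z$ in each $\mathcal H^{r,\alpha}_d$. The boundary parameter $a(z)$ is unchanged, because $F_0$ fixes the boundary and the prepared jet.

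The second step puts $\widetilde g(z)$ into the harmonic identity gauge of chart $Z_j$. We apply Lemma~\ref{lem:hg-nonlinear-gauge} to $(a(z),\widetilde h(z))$. This gives $V(z)$ in $\mathcal H^{k+1,\alpha}_d$, depending smoothly on $z$. The gauged metric $g'(z)=(F_{V(z)})_*\widetilde g(z)$ is then an Einstein zero of (HG.5) for the same $a(z)$.

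The main obstacle is the moving pushforward $(F_{V(z)})_*$. Composition with a moving diffeomorphism loses derivatives, so it is not smooth on a fixed H\"older scale. We handle this as follows.
\begin{itemize}
\item Because $\widetilde h(z)$ is smooth in $z$ into every order $r$, the map $z\mapsto (F_{V(z)})_*\widetilde h(z)$ is $C^m$ into order $r$, using the reserve at order $r+m+1$.
\item The reference part $(F_{V(z)})_*H_j(a)-H_j(a)$ involves only the spatially smooth reference, as in the proof of Lemma~\ref{lem:hg-local-capture}.
\item Since $r$ and $m$ are arbitrary and are fixed one at a time, $z\mapsto g'(z)$ is smooth into $\mathcal H^{k,\alpha}_d$.
\end{itemize}
By Lemma~\ref{lem:hg-local-capture}, $g'(z)$ lies in the zero-set chart $Z_j$. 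Its chart coordinate is obtained from the fixed projection $\Pi$ and the smooth inverse chart. This defines a smooth map $\phi_{ji}$ from a neighborhood of $z_i^0$ into $Z_j$ that preserves $a$. Reversing roles with $F_0^{-1}$ gives $\phi_{ij}$.

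For the cocycle identities we use uniqueness of small harmonic maps. Take three representatives with central comparison isometries. The composite $F_{V_{kj}}\circ F_0^{jk}\circ F_{V_{ji}}\circ F_0^{ij}$ and the map built directly for the pair $(i,k)$ are both boundary-identity maps that are harmonic into the same reference $H_k(a)$. Both are close to the central composite isometry. We choose the central isometries compatibly, so that $F_0^{ik}=F_0^{jk}\circ F_0^{ij}$. Then Lemma~\ref{lem:hg-quantitative-isometries} places both maps in the same uniqueness neighborhood at weight $\delta$ and then at weight $d$. Hence they coincide, which gives $\phi_{ki}=\phi_{kj}\circ\phi_{ji}$ exactly. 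The case $k=i$ gives $\phi_{ij}\circ\phi_{ji}=\mathrm{id}$, so each $\phi_{ji}$ is a diffeomorphism.

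For the final assertion we use the local properness hypothesis. Suppose two nearby points of $Z_i$ and $Z_j$ are marked-equivalent. Their comparison isometry is then $C^{3,\theta}$-close to $F_0$. After composing with $F_0^{-1}$, Lemma~\ref{lem:hg-quantitative-isometries} places it in the uniqueness neighborhood of the constructed gauge map. So the equivalence is exactly the map $\phi_{ji}$. Therefore the charts inject into the marked quotient, their transition maps are the smooth $\phi_{ji}$, and they form a compatible atlas.
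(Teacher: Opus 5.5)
Your construction of the transition follows the paper's proof step for step. You push forward once by the central marked isometry $F_0$, which is bounded on every weighted scale by Lemma~\ref{lem:hg-isometry-decay}. You solve the harmonic-map equation into $H_j(a)$ once for the moving family. You use the finite derivative reserves of Lemma~\ref{lem:hg-finite-reductions} to differentiate the moving pushforward. For the quotient statement you use local properness and then the weight upgrade of Lemma~\ref{lem:hg-quantitative-isometries}.

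In the reserve step you also need $z\mapsto V(z)$ to be smooth into the higher orders, not only into $\mathcal H^{k+1,\alpha}_d$. This follows by solving the gauge equation at the higher order and invoking uniqueness in the low-order neighborhood.

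The one place you argue differently is the cocycle. You compare the direct map and the composite as two small harmonic maps from $(F_0^{ik})_*g_i(z)$ into $H_k(a)$. The paper instead invokes boundary-identity rigidity, Lemma~\ref{lem:boundary-identity}. Your version has the merit of showing directly, without properness, that the two image representatives in $Z_k$ coincide.

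However, your sentence ``we choose the central isometries compatibly'' is not a choice you are free to make. Each transition was already built from its own $F_0$. The identity $F_0^{ik}=F_0^{jk}\circ F_0^{ij}$ holds only because a boundary-identity isometry between two PE fillings is unique, and that uniqueness is exactly Lemma~\ref{lem:boundary-identity}. You should cite it there; it also makes the transitions independent of all choices. With that citation the two arguments combine into a complete proof: rigidity fixes the central comparison maps, and harmonic-map uniqueness propagates the cocycle identity to nearby points.
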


\begin{proof}
A fixed comparison isometry identifies the central metrics.
We then solve the harmonic-map equation for the \emph{finite moving
family}, using its spatial derivative reserves.  The uniqueness of
marked isometries makes the resulting transition canonical.

Let $F_0$ be the marked isometry at the chosen overlap point.
By Lemma~\ref{lem:hg-isometry-decay}, fixed pushforward by $F_0$
preserves all weighted scales.  Push the $i$-family forward by
$F_0$ once; its central metric is now the selected metric of the
$j$-family.  Solve (HG.9) from this moving domain family into
$H_j(a)$, for a map near the identity.  Its derivative in the map
variable is invertible: throughout the sufficiently small $j$-chart
it is a small bounded-operator perturbation of the invertible
operator (HG.11) at the central reference Einstein metric.
The nonlinear harmonic-map implicit-function theorem therefore
provides the unique small map $F(z)$.

By Lemma~\ref{lem:hg-finite-reductions}, the domain family has every
finite weighted spatial derivative reserve.  For any prescribed
integer $r$, choose $r+2$ additional spatial derivatives.  Ordinary
chain rules in the uniform charts, including
\[
 \partial_b F^{-1}
 =-(dF)^{-1}(\partial_bF)\circ F^{-1},
\]
show that inversion and pushforward are $C^r$ into the chosen lower
weighted tensor norm on these families.  One unused spatial
derivative makes the H\"older remainder estimates continuous; the
extra derivative in the stated reserve covers the metric tensor
Jacobian factors.  The bounds are uniform in the rescaled charts
and preserve weight $d$, since $F-\operatorname{Id}$ has that
geometric weight.  This is a finite-regularity chain-rule argument
with a finite derivative reserve, not differentiability of
$\operatorname{Diff}^{k+1,\alpha}\times\operatorname{Met}^{k,\alpha}$
into $\operatorname{Met}^{k,\alpha}$.

The pushed-forward metric is Einstein and lies in harmonic identity
gauge relative to $H_j(a)$, hence belongs to $Z_j$ after shrinking
the overlap.  Composing with its finite-dimensional chart gives a
$C^r$ transition for every $r$, hence a smooth transition.
Local properness ensures that any other representative of the same
nearby marked class uses a nearby comparison isometry.  That
isometry is harmonic into $H_j(a)$ and is therefore the same root
by the implicit-function uniqueness: more explicitly, after centering
at $F_0$, local properness first gives ordinary $C^{3,\theta}$
closeness, and Lemma~\ref{lem:hg-quantitative-isometries} upgrades it
to smallness in the weight-$d$ norm in which that uniqueness holds.
This proves that the
transition is the transition of quotient charts, not only a map
between selected families.

The inverse transition is obtained by reversing the roles.  On a
triple overlap, both the direct comparison map and the composition
are PE isometries with the same boundary identity.  Their quotient
is a boundary-trivial PE isometry, hence the identity by the
boundary-identity rigidity lemma.  Thus inverses and cocycles hold
exactly, independently of choices of gauge construction.
\end{proof}

\subsubsection{Keeping the prescribed group action}

So far we have worked with one Einstein metric, without using the
finite group.  At an equivariant filling the isometries give an
action on the tensor spaces and commute with the equations.  Choosing
invariant references allows us to restrict the construction to
invariant tensors.  The additional point is that a nearby \emph{fixed
marked class} has a genuinely invariant representative in this
particular gauge.

At a fixed marked Einstein class let $I_\gamma$ be its coherent
finite isometric action.  Choose $x$, the collar, and the references
$H(a)$ invariant.  Formula (HG.9) is natural, so uniqueness gives
\[
 F(I_\gamma z)=I_\gamma F(z)
\]
whenever the domain metric is invariant.  The finite-dimensional
reduction in invariant tensors therefore parametrizes invariant
solutions, including degenerate ones.

There is also a direct local fixed-locus check:  if a nearby
marked class is fixed by the prescribed boundary action, its
isometric extensions are close, by marked-isometry properness, to
the central $I_\gamma$.  Compose each extension with
$I_\gamma^{-1}$.  Both the transformed metric and the original one
are in the same harmonic reference gauge, and the comparison map is
near the identity.  Harmonic-map uniqueness forces that comparison
map to be the identity.  Thus the harmonic-gauge representative
itself is $I_\gamma$-invariant.  Conversely an invariant
representative certainly defines a fixed marked class.

On overlaps the comparison isometry conjugates the two prescribed
isometric extensions: both sides are isometries with the same
boundary action, so boundary-identity rigidity identifies them.
Consequently the invariant finite reductions have exactly the same
smooth transition and cocycle properties as the ambient reductions.

\subsubsection{What the local construction provides}

The distinction between a local differential construction and its
global topology is worth keeping clear.  The arguments above provide
smooth finite-dimensional solution families and smooth transitions.
They do not yet prove that every relevant filling belongs to one of
these neighborhoods or that the solution space is Hausdorff and
compact.  We establish those properties separately below.

In particular, the analytic results above resolve the moving-pullback
problem for
finite smooth parameter families.  What remains is
the separately proved geometric marked-isometry properness and the
identification of compactness with the weighted local topology.
Once those are supplied, the finite reductions form a genuine
Hausdorff finite-dimensional continuation manifold near the compact
unperturbed zero set.  

\subsection{The invariant obstruction and the hyperbolic seed}
\label{sec:finite-obstruction-seed}

The finite-dimensional parameters are useful only if they can remove
\emph{every} obstruction to solving the Einstein equation.  The tensor
operator is formally self-adjoint, so its obstruction space can be
identified with the space of square-integrable Jacobi fields.
The point which matters for our application is that this
identification respects the finite group action.  We shall then
use a boundary Green formula to show that variations of the
prescribed boundary metric detect every invariant Jacobi field.

Throughout this subsection $G$ is a smoothly conformally compact Einstein
metric on $B^4$, $\operatorname{Ric}_G=-3G$, and $q$ is a smooth
representative of its conformal infinity.  If a finite group acts
isometrically, the geodesic collar, the positive interior extension of
its defining function, and all cutoffs are chosen invariant.  This is
possible by uniqueness of the geodesic defining function for the
invariant metric $q$ and by averaging away from that collar.

We first dispose of a basic ambiguity.  The boundary marking must
identify an isometry uniquely: otherwise two different extensions of
one boundary transformation could give incompatible group laws.
For Einstein fillings this ambiguity does not occur.

\begin{lemma}[Boundary-identity rigidity]\label{lem:boundary-identity}
A PE isometry extending as a $C^1$ diffeomorphism of the compactification,
equal to the identity on its boundary, is the identity.
This applies both to smooth compactified
representatives and to the prepared weighted representatives used in
the continuation construction.
\end{lemma}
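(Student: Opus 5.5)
The plan is to show that the isometry agrees with the identity to infinite order at the boundary, and then conclude by unique continuation for isometries of the Einstein metric. Let $F$ be a PE isometry of $(X,G)$ that extends as a $C^1$ diffeomorphism of $\bar X$ with $F|_{\partial X}=\operatorname{Id}$. Since $F$ is an isometry, it preserves the conformal infinity; because it restricts to the identity on the boundary, it preserves the chosen representative $q$. The geodesic defining function $x$ attached to $q$ is unique near the boundary, so $x\circ F=x$ on a collar. The normal geodesics $\gamma_y(s)$, which leave the boundary point $y$ and are parametrized by $r=-\log x$, are then carried to normal geodesics of the same family. They start at the same boundary point because $F|_{\partial X}=\operatorname{Id}$, and they have the same level function. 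It follows that $F(\gamma_y(s))=\gamma_y(s)$ on the whole collar. The argument needs only that $F$ is a $C^1$ diffeomorphism of the compactification: then the endpoint of the image geodesic is the image of the endpoint. This is the step I expect to need the most care, especially for the prepared weighted representatives, whose compactifications are only $C^{2,\theta}$. For these I would verify directly that the geodesic defining function still exists. Lemma~\ref{lem:geodesic-gauge-acquisition} constructs it from a $C^{3,\beta}$ scalar-flat compactification, and Lemma~\ref{lem:hg-admissibility} supplies a smooth compactification after a boundary-identity collar change, so we may transfer the statement to that smooth representative. The transfer is legitimate because conjugation by a boundary-fixing diffeomorphism preserves the hypothesis.

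There is an alternative route that avoids defining functions. Lemma~\ref{lem:hg-isometry-decay} already shows that $dF=\operatorname{Id}$ at the boundary, and the isometry $F$ carries $G$-geodesics to $G$-geodesics. A geodesic that tends to a boundary point $y$ perpendicularly is determined by its endpoint together with its asymptotic normalization $t=r+O(1)$, and $F$ fixes both. Either way, $F$ is the identity on an open collar.

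An isometry of a connected Riemannian manifold that equals the identity on a nonempty open set has identity differential at one of its points. Since an isometry is determined by its value and differential at a single point of a connected manifold, $F=\operatorname{Id}$ on $X=\operatorname{int}B^4$. By continuity $F$ is then the identity on $\bar X$. No Einstein-specific analytic unique continuation is needed beyond this elementary rigidity of isometries.

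For the invariant setting, the same argument applied to $I_\gamma\circ I'_\gamma{}^{-1}$ shows that any two extensions of the same boundary transformation coincide, which is how the lemma will be used.
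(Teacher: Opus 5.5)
Your main argument is correct and is essentially the paper's. Uniqueness of the geodesic defining function gives $x\circ F=x$, so $F$ preserves $\nabla_G r$ and fixes each normal flow line pointwise. Hence $F$ is the identity on a collar, and then everywhere by rigidity of isometries. The weighted case is handled, as in the paper, by conjugating with the boundary-identity collar change to a smooth representative.

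The one point the paper does not take for granted is the appeal to uniqueness. It first observes that the normal scaling of $F$ at $\partial X$ is one. It then uses the negative-scalar compactification of Lemma~\ref{lem:negative-scalar-compactification}, via naturality and the maximum principle, to make $F$ an isometry of a compact metric. This makes $F$ smooth up to the boundary, so $x\circ F$ is an honest smooth geodesic defining function.

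In your version $x\circ F$ is only $C^1$, and $\omega=\log\bigl((x\circ F)/x\bigr)$ is only continuous at $x=0$. So the usual characteristics proof of uniqueness does not literally apply. You can close this directly:
\begin{itemize}
\item With $\bar g=x^2G=dx^2+h_x$, the condition $|d(x\circ F)|_{(x\circ F)^2G}=1$ reads $2\partial_x\omega+x|d\omega|_{\bar g}^2=0$ in the interior.
\item Hence $\omega$ is nonincreasing in $x$ with boundary value $0$, so $\omega\le0$.
\item Applying the same argument to $F^{-1}$ gives $\omega'\le0$ for $\omega'=\log\bigl((x\circ F^{-1})/x\bigr)$, and $\omega=-\omega'\circ F\ge0$. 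Therefore $\omega=0$.
\end{itemize}

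Your alternative route, by contrast, does not work as stated, for two reasons. First, Lemma~\ref{lem:hg-isometry-decay} assumes a $C^{3,\theta}$ extension, not merely $C^1$. Second, a geodesic ending at $y$ is not determined by its endpoint, orthogonality, and $t=r+O(1)$. In $\mathbb H^4$ every geodesic ending at $y$ meets the boundary orthogonally in the compactification and has $r=s+O(1)$ along it. What singles out the normal geodesics is that they are integral curves of $\nabla_G r$, which is exactly what your main route uses. Simply drop the alternative.
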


\begin{proof}
First take a smoothly compactified representative.  The compactified
isometry identity and the identity tangential derivative force the
normal scaling to be one.  In particular the conformal ratio used
in the following compactification has continuous boundary value one.
The canonical
negative-scalar compactification in
Lemma~\ref{lem:negative-scalar-compactification} makes a marked PE
isometry an isometry of a compact smooth metric, and hence smooth up
to the boundary by boundary harmonic coordinates and the connection
equation.  Equivalently one may use the usual boundary regularity of
PE isometries.  Let $x$ be the geodesic defining function for $q$.
Then $x\circ F$ is the geodesic defining function with the same
boundary representative: the boundary marking is the identity and
the compactified tangential metric forces its normal scaling to be
one.  Uniqueness gives $x\circ F=x$ near the boundary.  Therefore
$F$ preserves $x^2G$ and its inward normal field $\nabla x$.
Since it fixes every starting point of the normal flow, it is the
identity on a collar.  An isometry of a connected manifold which is
the identity on an open set is the identity everywhere, by uniqueness
of geodesics continued along paths.

For a prepared weighted representative use the admissible smooth
representative supplied by the classical boundary-regularity and
global-collar-extension argument in the compactness bridge.
Conjugation preserves the boundary marking.  The same argument then
applies and conjugates back to the original representative.
\end{proof}

The following lemma gives the actual equivariant index calculation.
An ambient index equal to zero would not be sufficient by itself:
passing to fixed vectors can change the dimensions of the kernel
and cokernel unless their \emph{representations} are identified.
Here self-adjointness provides precisely that identification.

\begin{lemma}[The weighted kernel and obstruction representation]
\label{lem:finite-invariant-linear-index}
Let $d=5/2$, $k\ge4$, and $0<\alpha<1$.  The operator
\[
 L_G=\tfrac12\nabla^*\nabla-\mathring R:
 \mathcal H^{k,\alpha}_d(S^2T^*X)
 \longrightarrow\mathcal H^{k-2,\alpha}_d(S^2T^*X)
\]
is Fredholm.  Its kernel is the finite-dimensional $L^2$ Jacobi space
$K_G$, and its range is precisely the annihilator of $K_G$ under the
$L^2(G)$ pairing.  Every $\kappa\in K_G$ is trace-free and
divergence-free.  For a finite isometric group $I\Gamma$, the invariant
restriction has
\[
 \ker L_G^{I\Gamma}=K_G^{I\Gamma},\qquad
 \operatorname{coker}L_G^{I\Gamma}\cong K_G^{I\Gamma}.
 \tag{OB.1}
\]
In particular its Fredholm index is zero as an equality of the actual
kernel and obstruction representations.
\end{lemma}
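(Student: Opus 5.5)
The plan is to prove the ambient Fredholm statement first, then characterize the kernel, and only then pass to invariants using self-adjointness.

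\emph{Fredholm property.} We apply Lee's weighted Fredholm theory \cite[Theorem~C and Proposition~E]{Lee2006} to the Lichnerowicz-type operator $L_G=\tfrac12\nabla^*\nabla-\mathring R$ on symmetric two-tensors, exactly as for the vector operator in Lemma~\ref{lem:hg-nonlinear-gauge} and Lemma~\ref{lem:conormal-vector-inverse}. The indicial roots of $L_G$ on $\mathbb H^4$ for trace-free symmetric tensors are $0$ and $3$, with central weight $3/2$ and indicial radius $3/2$. The trace part has roots $-1$ and $4$. Hence $d=5/2$ lies in the common Fredholm interval $(0,3)$. Lee's theorem then gives a Fredholm map of index zero, whose kernel equals the $L^2$ kernel and whose range is the $L^2$-annihilator of the $L^2$ kernel of the formal adjoint. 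Since $L_G$ is formally self-adjoint with respect to $G$, that adjoint kernel is again $K_G$. This gives the stated range description, with the pairing well defined because elements of $K_G$ decay like $x^3$.

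\emph{Trace-free and divergence-free.} For $\kappa\in K_G$ we take the trace. Using $\tr\mathring R\kappa=\langle\Ric,\kappa\rangle=-3\tr\kappa$, the trace satisfies $\tfrac12(\Delta^*\,\cdot)+3$ acting on $\tr\kappa$, which is a coercive scalar equation. An $L^2$ integration by parts, justified with cutoffs as in Lemma~\ref{lem:hg-equation}, then forces $\tr\kappa=0$. For the divergence, we apply $\beta_G$: the Bianchi/Weitzenb\"ock commutation used for (HG.8) gives $\beta_G L_G\kappa=\tfrac12(\nabla^*\nabla-\Ric)\beta_G\kappa=\tfrac12(\nabla^*\nabla+3)\beta_G\kappa$. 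Coercivity of $\nabla^*\nabla+3$ on $L^2$ one-forms, or Lemma~\ref{lem:conormal-vector-inverse}, then gives $\delta\kappa=0$. The step I expect to be most delicate is checking that these commutation identities hold exactly at an Einstein metric, and that $\nabla\kappa$ has the decay needed to discard boundary terms.

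\emph{Equivariance.} The group $I\Gamma$ acts by isometries, and the invariant collar and weight make the action preserve $\mathcal H^{k,\alpha}_d$ and commute with $L_G$. Because the group is finite, the averaging projector $\Pi=|I\Gamma|^{-1}\sum_\gamma\gamma^*$ is bounded and commutes with $L_G$. Restricted to invariant tensors, the kernel is therefore $K_G\cap(\text{invariants})=K_G^{I\Gamma}$. For the cokernel, an invariant $f$ lies in $L_G(\mathcal H^{k,\alpha,I\Gamma}_d)$ if and only if $f\perp K_G$: given $f=L_Gu$, we have $f=\Pi f=L_G\Pi u$. For invariant $f$, pairing with $\kappa\in K_G$ gives $\langle f,\kappa\rangle=\langle f,\Pi\kappa\rangle$ because the $L^2$ pairing is invariant. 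Consequently the invariant range is the annihilator of $K_G^{I\Gamma}$ inside the invariant target space. The map $f\mapsto(\langle f,\kappa_j\rangle)_j$, with $\kappa_j$ a basis of $K_G^{I\Gamma}$, then identifies the invariant cokernel with $K_G^{I\Gamma}$. Surjectivity of this map follows by pairing against compactly supported invariant tensors, which lie in the target space; nondegeneracy of the $L^2$ form on $K_G^{I\Gamma}$ then gives (OB.1). The same argument applied to each isotypic component shows that kernel and obstruction agree as representations, not merely in dimension, and this proves the index-zero statement in its equivariant form.
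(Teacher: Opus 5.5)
Your proposal is correct and follows the paper's proof essentially step by step. Like the paper, you use Lee's weighted Fredholm theorem together with self-adjointness for the kernel and range description, the trace identity $\tfrac12(\nabla^*\nabla+6)\tr\kappa=0$ and the Bianchi identity $\beta_GL_G\kappa=\tfrac12(\nabla^*\nabla+3)\beta_G\kappa$ with coercive cutoff integrations, and the averaging projector to identify the invariant range with the annihilator of $K_G^{I\Gamma}$.

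There are two harmless slips, neither of which affects the Fredholm interval $(0,3)\ni 5/2$. The roots $-1,4$ belong to the mixed normal--tangential block, while the pure-trace and normal--normal blocks have roots $(3\pm\sqrt{33})/2$. Also, the tensor indicial result the paper cites is Lee's Proposition~D, not Proposition~E, which is the one used for the vector operator.
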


\begin{proof}
Lee's weighted Fredholm theorem applies directly to the full uniform
geometric H\"older spaces in this lemma
\cite[Theorem~C and Proposition~D]{Lee2006}.  Since
$2L_G=\Delta_L+6$, the real-hyperbolic tensor indicial radius is
$3/2$ around central weight $3/2$, giving the interval $(0,3)$.
The weight $d=5/2$ lies strictly inside it.  The theorem identifies
the kernel with the $L^2$ Jacobi space and, using self-adjointness,
the range with its $L^2$ annihilator.  Thus both kernel and
obstruction retain their full representations under the isometry
group.

This choice of Fredholm reference also resolves a point about
function spaces.  The full bounded uniform H\"older spaces defined
in (HG.1) need not equal the closure of smooth bounded sections in
the same H\"older norm. Lee's theorem is invoked in its own full geometric spaces.

Taking the trace of $L_G\kappa=0$ gives
$(\nabla^*\nabla+6)\operatorname{tr}\kappa=0$.
The contracted Bianchi identity for its linearization gives
$\beta_G L_G\kappa=\tfrac12(\nabla^*\nabla+3)\beta_G\kappa$.
Weighted elliptic regularity supplies derivatives at the same decaying
weight, so these fields and their first derivatives are in $L^2$.
The corresponding coercive integrations imply
$\operatorname{tr}\kappa=0$ and $\beta_G\kappa=0$, hence
$\delta_G\kappa=0$.  Integrations may first be made with compact
cutoffs of bounded geometric derivative; $d>3/2$ makes the error
terms tend to zero.

Every isometry commutes with $L_G$, preserves $K_G$, and preserves the
$L^2$ pairing.  Consequently the obstruction identification is
equivariant, rather than just an equality of dimensions.  More
explicitly, let $\mathsf A=|\Gamma|^{-1}\sum I_\gamma^*$ be
averaging.  It is a bounded projection on both weighted spaces.  If
$f$ is invariant and lies in the full range, $L_Gu=f$ implies
$L_G\mathsf Au=f$.  For invariant $f$, pairing with an arbitrary
$\kappa\in K_G$ is the same as pairing with $\mathsf A\kappa$.
Thus the invariant range is exactly the annihilator of
$K_G^{I\Gamma}$, which proves both statements in (OB.1).
\end{proof}

At the beginning of the boundary path we have the hyperbolic
metric.  To start parity from a single regular solution, we need to
know that it has no decaying Jacobi fields.  On hyperbolic space this
follows from a simple spectral estimate.

\begin{lemma}[Coercivity at the hyperbolic metric]
\label{lem:finite-hyperbolic-linear-seed}
For the hyperbolic metric $G_{\rm hyp}$ on $B^4$, $K_{G_{\rm hyp}}=0$.
The fixed-boundary weighted metric operator, and every finite-group
invariant restriction of it, are therefore isomorphisms.
\end{lemma}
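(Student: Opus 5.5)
By Lemma~\ref{lem:finite-invariant-linear-index}, it suffices to show that $K_{G_{\rm hyp}}=0$. That lemma identifies the kernel of $L_G$ on $\mathcal H^{k,\alpha}_d$ with $K_G$, and its range with the $L^2$-annihilator of $K_G$, equivariantly. So once $K_{G_{\rm hyp}}=0$, the operator $L_{G_{\rm hyp}}$ is injective with full range, and by the open mapping theorem it is an isomorphism. The same holds for every invariant restriction, by (OB.1). The plan is therefore a Bochner/Koiso-type integral inequality showing that there is no nonzero square-integrable Jacobi field on $\mathbb H^4$.

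First I reduce the Jacobi equation on $\mathbb H^4$. Let $\kappa\in K_{G_{\rm hyp}}$. By Lemma~\ref{lem:finite-invariant-linear-index}, $\kappa$ is trace-free and divergence-free, and lies in $\mathcal H^\infty_d$ with $d=5/2$. For sectional curvature $-1$ we have $R_{ikjl}=-(g_{ij}g_{kl}-g_{il}g_{kj})$. Hence $\mathring R\kappa=\kappa-(\tr\kappa)g=\kappa$, and the equation $L_G\kappa=0$ becomes
\[
\nabla^*\nabla\kappa=2\kappa,\qquad\text{so}\qquad \int|\nabla\kappa|^2\,dV=2\int|\kappa|^2\,dV .
\]
The integration by parts is done with compactly supported cutoffs of bounded geometric derivative. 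The cutoff errors vanish in the limit because $|\kappa|+|\nabla\kappa|=O(x^{5/2})$ and $x^{5}$ is integrable against $x^{-4}dx\,dV_q$.

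Next comes the competing lower bound, from the Codazzi tensor $T_{kij}=\nabla_k\kappa_{ij}-\nabla_i\kappa_{kj}$. From $|T|^2\ge0$ we get
\[
\int|\nabla\kappa|^2\ge\int\langle\nabla_k\kappa_{ij},\nabla_i\kappa_{kj}\rangle .
\]
I integrate the right-hand side by parts and commute covariant derivatives. The term $\nabla_i\nabla_k\kappa_{kj}$ vanishes since $\delta\kappa=0$. The commutator contributes a Ricci term and a full-curvature term; with $\Ric=-3g$, $\mathring R\kappa=\kappa$ and $\tr\kappa=0$, these give $3|\kappa|^2+|\kappa|^2$. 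Thus
\[
\int|\nabla\kappa|^2\ge4\int|\kappa|^2 ,
\]
again justified with the same cutoffs and decay.

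Combining the two displays gives $2\int|\kappa|^2\ge4\int|\kappa|^2$, hence $\kappa=0$. The step I expect to need the most care is the sign bookkeeping in the curvature commutator. The argument only needs the constant to exceed $2$. As a cross-check I would use the known bottom of the $L^2$ spectrum of $\nabla^*\nabla$ on trace-free symmetric tensors on $\mathbb H^4$, which is $9/4+4>2$ (Lee). Either bound excludes nonzero decaying Jacobi fields, and the isomorphism statements follow as explained in the first paragraph.
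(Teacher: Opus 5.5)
Your proof is correct, but it obtains the key inequality by a different route from the paper. The paper never uses $\delta\kappa=0$. It combines Kato's inequality with the scalar bound $\int|df|^2\ge\tfrac94\int f^2$ on $\mathbb H^4$, which comes from expanding $\int|df+\tfrac32 f\,db|^2$ for a Busemann function $b$. This gives $\|\nabla\kappa\|_2^2\ge\tfrac94\|\kappa\|_2^2$ for any decaying trace-free tensor, and hence $2\langle L\kappa,\kappa\rangle=\|\nabla\kappa\|_2^2-2\|\kappa\|_2^2\ge\tfrac14\|\kappa\|_2^2$.

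You instead use the full transverse-traceless property and the Codazzi (Koiso) Bochner identity. Your commutator bookkeeping is right: $[\nabla_k,\nabla_i]\kappa_{kj}=(\Ric\circ\kappa)_{ij}-(\mathring R\kappa)_{ij}=-4\kappa_{ij}$. So $\|\nabla\kappa\|_2^2\ge4\|\kappa\|_2^2$ holds, and the cutoff justifications are fine at weight $5/2$.

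The two routes buy different things.
\begin{itemize}
\item The paper's estimate is a coercivity bound on all compactly supported trace-free tensors. Using $\langle\mathring Rh,h\rangle\le|h|^2$, it even covers arbitrary tensors. That is why the paper can reuse the same estimate for invertibility of the hyperbolic normal operator in the later boundary-trace lemma.
\item Your identity, $\tfrac12\|d^\nabla\kappa\|_2^2=\langle\mathring R\kappa,\kappa\rangle_{L^2}-3\|\kappa\|_2^2$, applies only to genuine TT Jacobi fields. So it proves $K_{G_{\rm hyp}}=0$ but not that operator coercivity.
\item On the other hand, your argument is pointwise-algebraic and gives a better constant. It would rule out Jacobi fields under the much weaker pinching $\mathring R<3$. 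The Kato route instead needs $\mathring R<9/8$ together with the bottom of the scalar spectrum.
\end{itemize}

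One correction to your cross-check. For $\nabla^*\nabla$, the tangential trace-free block has indicial polynomial $s(3-s)+2$, so the continuous spectrum on trace-free symmetric tensors reaches down to $9/4+2=17/4$, not $9/4+4$. This does not affect your proof, since you only need a constant above $2$.
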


\begin{proof}
The scalar spectral inequality on $\mathbb H^4$ is
\[
 \int|df|^2\ge\frac94\int f^2,
 \qquad f\in C_c^\infty(\mathbb H^4).
\]
Explicitly, choose a Busemann function $b$ with
$|db|=1$ and $\Delta b=3$.  Expand the integral of
$|df+\tfrac32f\,db|^2$ and integrate its cross term to obtain
$0\le\int|df|^2-\tfrac94\int f^2$.
Apply this inequality and Kato's inequality to compact cutoffs of
$|\kappa|$, passing to the limit in $L^2$.
For sectional curvature $-1$ the curvature action on a trace-free
symmetric tensor is $\mathring R\kappa=\kappa$.  Hence
\[
 2\langle L\kappa,\kappa\rangle
 =\|\nabla\kappa\|_2^2-2\|\kappa\|_2^2
 \ge\frac14\|\kappa\|_2^2.
\]
A Jacobi field in the kernel is trace-free by the preceding lemma,
so it vanishes.  Fredholm range identification then gives the stated
isomorphisms.  The same estimate proves invertibility of the
hyperbolic normal operator used in the boundary parametrix below.
\end{proof}

\subsubsection{The boundary value of a decaying Jacobi field}
\label{subsec:strong-kernel-boundary-trace}

The obstruction is an interior tensor, whereas our free parameters
are boundary metrics.  A Green formula can relate them only if the
Jacobi tensor has a sufficiently definite leading coefficient at
infinity.  A weak asymptotic statement is not enough: the boundary
term involves both that coefficient and its normal derivative.
The next lemma supplies strong asymptotics in every tangential
$C^m$ norm.  We isolate the indicial root responsible for
the leading term and then solve a radial equation for the coefficient.

\begin{lemma}[Strong leading trace of an $L^2$ Jacobi tensor]
\label{lem:strong-jacobi-boundary-trace}
Let $G=x^{-2}(dx^2+q_x)$ be a smooth conformally compact PE metric
on a four-manifold, with smooth geodesic compactification and smooth
compact boundary.  Put
\[
 L_G=\tfrac12\nabla^*\nabla-\mathring R.
\]
If $\kappa\in L^2$ and $L_G\kappa=0$, then there is a smooth
tangential trace-free tensor $\kappa_{(3)}$ on the boundary such that,
in a smooth zero-frame (orthonormal after a smooth bundle change),
\begin{equation}\label{eq:strong-jacobi-trace}
 u=x^3\kappa_{(3)}+O_b(x^{3+\epsilon}),\qquad
 D u=3x^3\kappa_{(3)}+O_b(x^{3+\epsilon}),
 \qquad D=x\partial_x,
\end{equation}
for every $0<\epsilon<1$.  Here $u$ denotes the components of
$\kappa$ in the covector frame $dx/x,dy^a/x$ (equivalently the
ordinary compactified tensor $x^2\kappa$), and $O_b$ controls all
compositions of $x\partial_x$ and ordinary tangential derivatives.
The normal components are $O_b(x^{3+\epsilon})$.  In particular,
$x^{-3}u_T\to\kappa_{(3)}$ in every boundary $C^m$ norm, and the
corresponding radial derivative has the asserted strong limit.
\end{lemma}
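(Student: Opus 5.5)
Our plan is to treat $L_G$ as a $0$-elliptic (edge) operator on the smooth compactification. We then combine the indicial-root analysis already used for Lemma~\ref{lem:finite-invariant-linear-index} with a radial ODE argument in the variable $t=-\log x$.

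\emph{Step 1: a priori decay from $L^2$.} By Lemma~\ref{lem:finite-invariant-linear-index}, $\kappa$ lies in every weighted space $\mathcal H^{j,\alpha}_\delta$ with $\delta<3$ and is trace-free and divergence-free. In the zero-frame this says $u=O_b(x^{3-\epsilon'})$ for every $\epsilon'>0$, with all compositions of $x\partial_x$ and $x\partial_y$ controlled. We will upgrade this to control of ordinary tangential derivatives $\partial_y$ by commuting: since $q_x$ and the frame are smooth, $[L_G,\partial_y]$ is again a $0$-operator with smooth coefficients, so $\partial_y^m u$ satisfies $L_G(\partial_y^m u)=\sum(\text{lower }\partial_y\text{-terms})$. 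We then induct using the weighted Fredholm inverse away from the kernel, or simply weighted Schauder estimates on rescaled charts with the right side already known at weight $3-\epsilon'$. This yields $u=O_b(x^{3-\epsilon'})$ in the strong sense.

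\emph{Step 2: indicial decomposition.} Next we write $L_G$ in the zero-frame as $I(D)+x\,E$, where $I(D)$ is the hyperbolic indicial operator, acting componentwise on normal-normal, mixed and tangential parts, and $E$ is a $0$-differential operator with smooth coefficients. Here we use that $q_x=q-x^2P_q+O(x^3)$ makes the deviation from the hyperbolic model $O(x^2)$ in the frame, and certainly $O(x)$. For trace-free tangential components the indicial roots of $2L=\Delta_L+6$ are $s=0$ and $s=3$; the normal and mixed components have roots at different values, none lying in $(3-\epsilon,3]$ except the tangential root $3$. Treating $D=x\partial_x$ as the radial derivative, the equation becomes $I(D)u=-xEu=O_b(x^{4-\epsilon'})$ after Step~1, where tangential derivatives in $E$ cost nothing thanks to the strong norms.

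\emph{Step 3: the radial ODE.} On the tangential trace-free block, $I(D)=-\tfrac12(D^2-3D)+\ldots=-\tfrac12 D(D-3)$ up to normalization, so we solve $D(D-3)u_T=f$ with $f=O_b(x^{4-\epsilon'})$ by variation of constants from $x=0$. The homogeneous solutions are $1$ and $x^3$, and decay excludes the constant. The particular solution given by integrating from $0$ is $O_b(x^{4-\epsilon'})$, since the exponent $4-\epsilon'$ differs from both roots. The coefficient of $x^3$ arises as the limit $\kappa_{(3)}=\lim x^{-3}(u_T-\text{particular})$, which exists in every $C^m$ norm because $f$ is integrable against $x^{-4}\,dx/x$ with all tangential derivatives. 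This gives \eqref{eq:strong-jacobi-trace} with $\epsilon=1-\epsilon'$, and $Du$ follows by applying $D$ to the explicit formula. On the normal and mixed blocks the same ODE argument, with roots distinct from $3$, and the divergence-free condition show that no $x^3$ term appears, so those components are $O_b(x^{3+\epsilon})$. Trace-freeness of $\kappa_{(3)}$ then follows from $\operatorname{tr}\kappa=0$. Smoothness of $\kappa_{(3)}$ comes from the uniform $C^m$ bounds.

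The main obstacle is Step~1: converting the $L^2$/weighted information, which controls only $x\partial_y$, into uniform control of ordinary tangential derivatives, so that the error $xEu$ really gains a full power of $x$ in $O_b$ norms. Our first attempt will be the commutator induction with $\partial_y$ in a product collar. If the commutators turn out to be badly behaved, we would instead fall back on Mazzeo's edge calculus polyhomogeneity.
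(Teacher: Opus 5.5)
Your argument is correct in substance. It reaches the key conormal estimate by a different mechanism than the paper.

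\textbf{How the two routes differ.} The paper obtains $u\in\mathcal A^3$, i.e.\ $D^j\partial_y^\alpha u=O(x^3)$ with ordinary tangential derivatives, in one step. It checks invertibility of the hyperbolic normal operator (the $\tfrac18$ coercivity) and then uses the Biquard--Mazzeo generalized inverse, whose kernel projection is very residual, so $\kappa=Q\kappa$ inherits full conormality at weight $3$.

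You instead start from Lee's weight-independence of the kernel on $(0,3)$. That only gives $O(x^{3-\epsilon'})$ in $0$-norms, and it uses Lee's theorem at every $\delta\in(0,3)$, not just the paper's lemma at $d=5/2$. You then use the fact that $\partial_y$ commutes with $x\partial_x$ and $x\partial_y$. The tensor $\chi\partial_y^\alpha u$ is a priori at weight $2-\epsilon'$, while $L(\chi\partial_y^\alpha u)$ is at weight $3-\epsilon'$. Since the kernel is $K_G$ and the range is the $L^2$-annihilator of $K_G$ at both weights, $\chi\partial_y^\alpha u$ lifts to weight $3-\epsilon'$.

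\textbf{What each route buys.} Yours stays inside the Fredholm framework the paper already relies on and avoids importing the edge-calculus parametrix. The paper's route gives the sharp weight $3$, so $v_T=x^{-3}u_T$ is bounded from the outset and the mixed block comes out $O_b(x^4(1+|\log x|))$. Your weaker starting point still closes the ODE step. In $(D+3)Dv_T=x^{-3}f$, the $x^{-3}$ mode of $Dv_T$ is excluded because $x^3Dv_T=O(x^{3-\epsilon'})\to0$. The remaining blocks come out $O_b(x^{4-\epsilon'})$, which suffices for every $\epsilon<1$.

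\textbf{Points to fix in the write-up.}
\begin{itemize}
\item Only the Fredholm decay improvement raises the weight. Rescaled Schauder estimates alone need the unknown's own weighted $C^0$ norm on the right-hand side, so they cannot.
\item The coefficient $\kappa_{(3)}$ comes from convergence of $\int_0 x^{-3}f\,dx/x=\int_0 x^{-4}f\,dx$. The integral $\int_0 x^{-4}f\,dx/x$ that you wrote diverges for $f=O(x^{4-\epsilon'})$.
\item On the normal, mixed and tangential-trace blocks you need no indicial root in $(3,4)$, not merely none in $(3-\epsilon,3]$. This holds because those roots are $-1,4$ and $(3\pm\sqrt{33})/2$. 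You should compute them, as the paper does; once you have them, the divergence-free condition is not needed there.
\item For curved $q$, the zero-frame operator already deviates from the hyperbolic model at order $x$ (through the Christoffel symbols of $q$), not $x^2$. This is harmless, since $O(x)$ is all you use.
\end{itemize}
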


\begin{proof}
There are three steps.  A parametrix first gives decay with all
tangential derivatives.  The indicial matrix identifies which
components can occur at leading order.  A radial integration then
produces an actual smooth boundary coefficient, with the normal
derivative needed for Green's formula.

\paragraph{Conormal regularity from a parametrix.}
Biquard--Mazzeo \cite[Definition~8 and proof of Theorem~13, pp.~1437--1438 and 1441--1442]{BiquardMazzeo2011} construct a generalized inverse whose
kernel projection is a very residual operator, conormal on the
ordinary product compactification.  Applied to a smooth
asymptotically real-hyperbolic metric, their conclusion is that a
nullspace element is in $\mathcal A^\tau$, where $\tau$ is the
smallest positive indicial root.  Their definition of
$\mathcal A^\tau$ controls \emph{ordinary tangential derivatives};
it is stronger than the rescaled-derivative reserve used in the
nonlinear gauge section.

The theorem's full ellipticity requirement must be checked before its
kernel conclusion can be used.  The principal symbol of $2L_G$ is
the positive scalar Laplace symbol on symmetric two-tensors.  Its
normal operator is the corresponding operator on hyperbolic four-space;
it is invertible on $L^2$ by the following elementary coercivity
calculation.  For a compactly supported tensor $h$ on
hyperbolic space,
\[
 \langle\mathring Rh,h\rangle=|h|^2-(\operatorname{tr}h)^2
 \le |h|^2.
\]
A Busemann function with $|\nabla t|=1$, $\Delta t=3$ gives
\[
 \int|\nabla f|^2\ge\frac94\int f^2
\]
by expanding $\int|\nabla f+(3/2)f\nabla t|^2$.  Kato's
inequality therefore yields
\[
 \int\langle L_{\mathbb H^4}h,h\rangle
 \ge\left(\frac98-1\right)\int|h|^2
 =\frac18\int|h|^2.
\]
The Friedrichs realization is consequently invertible on $L^2$.
This verifies the normal-operator invertibility required by the theorem.

We compute below that $\tau=3$.  Since the kernel projection is
very residual, applying it to $\kappa$ gives
\begin{equation}\label{eq:jacobi-b-conormal}
 u\in\mathcal A^3:
 \quad D^j\partial_y^\alpha u=O(x^3)
 \quad\hbox{for every }j,\alpha.
\end{equation}
Equivalently this follows from a left parametrix
$\mathcal G L=I-Q$: for $L\kappa=0$, $\kappa=Q\kappa$.
We use a bounded kernel projection.

\paragraph{The indicial roots.}
The asymptotic equation separates according to the normal, mixed,
and tangential components of a symmetric tensor.  The tangential
trace-free component will be the only one capable of contributing
at the leading order $x^3$.
Use the hyperbolic orthonormal frame
$e_0=\partial_t$, $e_a=e^{-t}\partial_{y^a}$, with $x=e^{-t}$.
At the indicial level
\[
 \nabla_{e_0}e_a=0,\qquad
 \nabla_{e_a}e_0=e_a,\qquad
 \nabla_{e_a}e_b=-\delta_{ab}e_0.
\]
Write a constant symmetric tensor as
\[
 U=\begin{pmatrix}a&v^T\\v&A\end{pmatrix}.
\]
Let $J_a$ be the skew matrix with $J_ae_0=e_a$ and
$J_ae_a=-e_0$.  The connection on symmetric tensors acts by
$[J_a,U]$, and direct matrix multiplication gives
\[
 -\sum_a[J_a,[J_a,U]]
 =\begin{pmatrix}
     6a-2\operatorname{tr}A&6v^T\\
     6v&2A-2aI
   \end{pmatrix}.
\]
Using $\mathring RU=U-(\operatorname{tr}U)I$ in constant
curvature $-1$, the indicial action of $2L$ on $x^sU$ is
\begin{align*}
 a&\longmapsto [s(3-s)+6]a,\\
 v&\longmapsto [s(3-s)+4]v,\\
 A&\longmapsto s(3-s)A+2(\operatorname{tr}A)I.
\end{align*}
Thus the tangential trace-free block has roots $0,3$, the mixed
block has roots $-1,4$, and both scalar blocks have roots
$(3\pm\sqrt{33})/2$.  The root closest to $3/2$ on the positive
side is $3$, and only the tangential trace-free block has that root.
It is simple in its scalar indicial polynomial, whose derivative
at $3$ is $-3$.  In particular $\tau=3$ in
\eqref{eq:jacobi-b-conormal}.

\paragraph{Recovering a strong boundary trace.}
The indicial calculation gives the possible power of $x$, but not
by itself convergence of its coefficient.  Because the parametrix
controls ordinary tangential derivatives, the remaining argument is
an ordinary differential equation in the radial variable, with
values in each boundary $C^m$ space.
For the smooth PE metric the full operator, in this zero-frame,
has the form
\[
 2L=I(2L,D)+x\mathcal R,
\]
where $\mathcal R$ is a second-order operator generated by
$D$ and ordinary tangential derivatives, with smooth bounded
coefficients up to $x=0$.  Terms with two tangential derivatives
may in fact contain $x^2$; only the displayed first-order vanishing
is needed.  The frame may be chosen parallel in the radial
direction; all differences from its hyperbolic indicial connection
vanish at least to order $x$.  Equation
\eqref{eq:jacobi-b-conormal} therefore gives
\[
 I(2L,D)u=f,\qquad f\in\mathcal A^4.
\]
All the following ordinary differential equations hold in every
boundary $C^m$ norm because the tangential derivatives are already
bounded by \eqref{eq:jacobi-b-conormal}.

In the tangential trace-free block put $u_T=x^3v_T$.  Then
\[
 D(D+3)v_T=F_T,\qquad F_T\in\mathcal A^1.
\]
Let $w=Dv_T$.  Since $w$ is bounded, integrating
$D w+3w=F_T$ from zero gives
\[
 w(x)=x^{-3}\int_0^x t^2F_T(t)\,dt=O_b(x).
\]
Consequently $\partial_xv_T=w/x$ is bounded in every boundary
$C^m$ norm, and $v_T$ has a limit $\kappa_{(3)}\in C^\infty$ with
\[
 v_T-\kappa_{(3)}=O_b(x),\qquad Dv_T=O_b(x).
\]

For the mixed block write $u_M=x^3v_M$.  Its equation is
\[
 (D-1)(D+4)v_M=F_M,\qquad F_M\in\mathcal A^1.
\]
Putting $w=(D+4)v_M$ and integrating the first equation from a
fixed $x_0>0$ gives
\[
 w(x)=x\left(\frac{w(x_0)}{x_0}
       +\int_{x_0}^x\frac{F_M(t)}{t^2}\,dt\right)
       =O_b(x(1+|\log x|)).
\]
Boundedness of $v_M$ eliminates the $x^{-4}$ solution of its
second equation, so
\[
 v_M(x)=x^{-4}\int_0^x t^3w(t)\,dt
       =O_b(x(1+|\log x|)).
\]
Thus $v_M\in\mathcal A^\epsilon$ for every $\epsilon<1$.
This estimate deliberately allows a possible resonant $x\log x$
term.
For either scalar block write $u_S=x^3v_S$, set
$a_+=(\sqrt{33}-3)/2>1$ and
$b_+=(\sqrt{33}+3)/2>0$, and obtain
\[
 (D-a_+)(D+b_+)v_S=F_S,\qquad F_S\in\mathcal A^1.
\]
The same integrations give first
$(D+b_+)v_S=O_b(x)$ (the extra homogeneous term is $O(x^{a_+})$)
and then $v_S=O_b(x)$; boundedness excludes $x^{-b_+}$.
All normal and tangential-trace components therefore vanish after
division by $x^3$.

Combining the blocks proves \eqref{eq:strong-jacobi-trace}.
In a radial parallel orthonormal frame the outward normal to
$\{x\ge\varepsilon\}$ is $N=-x\partial_x$, so it also gives
\[
 \nabla_N\kappa=-3\kappa+O_b(x^{3+\epsilon}),
 \qquad \kappa(N,\cdot)=O_b(x^{3+\epsilon}).
\]
These are strong estimates sufficient for the boundary Green
pairing. 
\end{proof}

We can now explain the significance of the trace.  Varying the
boundary metric produces an inhomogeneous term in the linearized
Einstein equation.  Integrating that term against a Jacobi field
leaves, at infinity, an ordinary pairing between the boundary
variation and the leading Jacobi coefficient.  If every such pairing
vanished, boundary unique continuation would force the Jacobi field
to vanish.  Thus the boundary variations span the obstruction space.

\begin{proposition}[Smooth invariant boundary directions span every obstruction]
\label{prop:finite-invariant-obstruction}
Suppose $G$ carries the coherent isometric action $I\Gamma$ extending
the prescribed boundary action.  Let $e(\eta)$ be the variation of
the prepared reference metric under a smooth boundary variation
$\eta$:
\[
 e(\eta)=\chi\{x^{-2}\pi^*\eta-\pi^*(DP_q\eta)\}.
\]
Let $S_G\eta=D_a\Phi_G(\eta,0)$ be the boundary derivative of the
harmonic-identity Einstein equation.  Then $S_G\eta$ belongs to
$\mathcal H^\infty_d$, and
\begin{equation}\label{eq:finite-obstruction-green}
 \int_X\langle S_G\eta,\kappa\rangle_G\,dV_G
 =-\frac32\int_{S^3}\langle\eta,\kappa_{(3)}\rangle_q\,dV_q,
 \qquad \kappa\in K_G.
\end{equation}
Here $\kappa_{(3)}$ is the strong smooth boundary coefficient from
Lemma~\ref{lem:strong-jacobi-boundary-trace}.
Consequently
\[
 \mathcal O_G:C^\infty(S^2T^*S^3)^\Gamma\longrightarrow K_G^{I\Gamma},
 \qquad \mathcal O_G\eta=\mathsf P_{K_G}S_G\eta,
\]
is onto.  Finitely many smooth invariant tensors suffice to span its
range.  Adjoining their coefficients to the local equation makes its
full derivative surjective, with finite-dimensional kernel.
\end{proposition}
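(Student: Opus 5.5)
The plan is to prove the three assertions of Proposition~\ref{prop:finite-invariant-obstruction} in order: the weighted membership of $S_G\eta$, the Green identity \eqref{eq:finite-obstruction-green}, and then surjectivity of $\mathcal O_G$ together with the augmented-derivative statement.

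\emph{Weighted membership and the form of $S_G\eta$.} Along the family $a\mapsto H(a)$ with $h=0$ one has $g=H(a)$, so the gauge one-form $\omega(H(a),H(a))$ vanishes identically. Differentiating in $a$ therefore gives
\[
S_G\eta=D(\Ric+3g)_G\,e(\eta)=L_G e-\delta_G^*\beta_G e,
\]
where $e=e(\eta)$. Membership in $\mathcal H^\infty_d$ is then the derivative of (HG.3) in the direction $\eta$. Since (HG.3) holds smoothly in $a$ for the prepared jet $q(a)-x^2P_{q(a)}$, its $a$-derivative lies in $\mathcal H^\infty_3\subset\mathcal H^\infty_d$. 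The second-jet check after (HG.3a) is linear in the variation, so it applies verbatim to $\eta-x^2DP_q\eta$. I note that $e$ itself is only $O(1)$ in the frame $dx/x,\,dy/x$, and is not in any positive weight; this is precisely why a boundary term survives below.

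\emph{Green identity.} Fix $\kappa\in K_G$ and integrate over $X_\varepsilon=\{x\ge\varepsilon\}$. The formal self-adjointness of $L_G$ and $L_G\kappa=0$ give
\[
\int_{X_\varepsilon}\langle L_Ge,\kappa\rangle
=\tfrac12\int_{\{x=\varepsilon\}}\bigl(\langle\nabla_Ne,\kappa\rangle-\langle e,\nabla_N\kappa\rangle\bigr)\,dA .
\]
Since $\delta_G\kappa=0$ (Lemma~\ref{lem:finite-invariant-linear-index}), the gauge term $\int\langle\delta^*\beta e,\kappa\rangle$ reduces to a boundary integral of $\beta e\cdot\kappa(N,\cdot)$. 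That integrand is $O(x^{3+\epsilon})\cdot O(1)$ against the area element $x^{-3}dV_q$, so it tends to zero by the normal-component estimate of Lemma~\ref{lem:strong-jacobi-boundary-trace}. In the remaining term I will insert $e_T=\eta+O(x^2)$ in the zero-frame, $\nabla_Ne=O(x)$ (the indicial root $0$), and $\kappa=x^3\kappa_{(3)}+O_b(x^{3+\epsilon})$, $\nabla_N\kappa=-3\kappa+O_b(x^{3+\epsilon})$. Against $dA=\varepsilon^{-3}(1+O(\varepsilon))dV_q$, the limit is $\pm\tfrac32\int_{S^3}\langle\eta,\kappa_{(3)}\rangle_q\,dV_q$: this is the Wronskian of the roots $0$ and $3$, times $\tfrac12$. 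I will fix the sign with the outward-normal convention $N=-x\partial_x$. Here the strong (tangential $C^m$) convergence from Lemma~\ref{lem:strong-jacobi-boundary-trace} is exactly what is needed to pass the boundary integrals to the limit.

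\emph{Surjectivity.} Every isometry $I_\gamma$ preserves $K_G$, and the trace map $\kappa\mapsto\kappa_{(3)}$ is natural, so $\kappa\in K_G^{I\Gamma}$ has a $\Gamma$-invariant trace. Suppose $\kappa\in K_G^{I\Gamma}$ is $L^2$-orthogonal to the range of $\mathcal O_G$. Taking $\eta=\kappa_{(3)}$, which is smooth and invariant, in \eqref{eq:finite-obstruction-green} gives $\|\kappa_{(3)}\|_{L^2(q)}^2=0$. Then $\kappa$ is an $L^2$ Jacobi field whose expansion has no $x^0$ term (it decays) and no $x^3$ coefficient. Both Cauchy data at the conformal boundary vanish, and I will invoke unique continuation at conformal infinity for the gauged linearized Einstein operator (Biquard's unique continuation, or Anderson's) to conclude $\kappa\equiv0$. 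I expect this to be the main obstacle. In particular, the conormal expansion must show that the remaining components, including the possible $x\log x$ mixed term, then vanish to infinite order, so that the unique continuation theorem applies.

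\emph{Finiteness and the augmented derivative.} Since $K_G^{I\Gamma}$ is finite-dimensional, finitely many invariant $\eta_j$ have images spanning it. By (OB.1) the invariant range of $L_G$ is the annihilator of $K_G^{I\Gamma}$. Adjoining the columns $S_G\eta_j$ therefore gives a surjective augmented derivative, and its kernel is finite-dimensional because $L_G$ is Fredholm and the number of added columns is finite.
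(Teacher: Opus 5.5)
Your treatment of the membership $S_G\eta\in\mathcal H^\infty_d$ agrees with the paper. So do your handling of the Bianchi boundary term via $\kappa(N,\cdot)=O(x^{3+\epsilon})$ and the augmented derivative. In the surjectivity reduction, taking $\eta=\kappa_{(3)}$ (smooth and invariant) is a fine substitute for the paper's averaging. One small correction: with $\nabla^*\nabla=-\operatorname{tr}\nabla^2$ and outward $N=-x\partial_x$, Green's formula reads $\int_{X_\varepsilon}\langle L_Ge,\kappa\rangle=\tfrac12\int_{\{x=\varepsilon\}}\bigl(\langle e,\nabla_N\kappa\rangle-\langle\nabla_Ne,\kappa\rangle\bigr)\,dA$. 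This is the reverse of the order you wrote, and it is this order that yields the asserted $-\tfrac32$ once $\nabla_N\kappa=-3\kappa+\cdots$ is inserted.

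The genuine gap is the unique continuation step. You propose to show from the conormal expansion that $\kappa_{(3)}=0$ forces $\kappa=O(x^\infty)$, and then to apply unique continuation at infinity to $L_G$. That implication is false, even using $\operatorname{tr}\kappa=0$ and $\delta_G\kappa=0$.
\begin{itemize}
\item The mixed block has indicial root $4$. In the hyperbolic frame $(\operatorname{div}x^sU)_b=(4-s)x^sv_b+\cdots$, so at $s=4$ the divergence constraint holds automatically to leading order. That coefficient is therefore a free local datum.
\item It is carried by infinitesimal gauge modes. Suppose that on a collar $V$ satisfies $(\nabla^*\nabla+3)V=0$, $\delta V=0$, and $V\sim x^4V_{(4)}$ tangentially; such $V$ exist, e.g.\ with a pole at an interior point. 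Then $\kappa=\delta^*V$ is trace-free. It is divergence-free, since $\delta\delta^*V=\tfrac12(\nabla^*\nabla+3)V+\tfrac12 d\delta V$. It lies in $\ker L_G$, since $L_G\delta^*V=\delta^*\beta_G\delta^*V=\tfrac12\delta^*(\nabla^*\nabla+3)V$.
\item For this $\kappa$ the tangential part is $O(x^5)$, so $\kappa_{(3)}=0$, while the mixed part is $-\tfrac52x^4V_{(4)}\neq0$.
\end{itemize}

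So ``both Cauchy data vanish'' is true only in the gauge-invariant sense. From it, the theorems of Biquard or Anderson--Herzlich give only that $\kappa$ is infinitesimally pure gauge near $S^3$. Passing to $\kappa=0$ is a global step, and that is exactly where $\delta_G\kappa=0$ and the topological hypothesis $\pi_1(B^4,S^3)=0$ enter; neither appears in your argument.

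The paper applies Anderson--Herzlich, Corollary~4.4 (with the erratum). Its hypotheses are precisely what you already have:
\begin{itemize}
\item $|\kappa|_G=o(x^3)$, from the strong trace lemma;
\item $\delta_G\kappa=0$;
\item the linearized Einstein equation, i.e.\ (OB.2) with vanishing Bianchi term;
\item $\pi_1(B^4,S^3)=0$.
\end{itemize}
Invoke it in that form and drop the infinite-order vanishing step.
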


\begin{proof}
\emph{The Green identity.}
When $h=0$, the domain and target reference metric coincide, so
$\Phi(a,0)=\operatorname{Ric}_{H(a)}+3H(a)$.  The second-jet
calculation (HG.3a) shows that its boundary derivative has weight
three, and hence weight $d$.  At the central Einstein metric its
linearization is
\[
 S_G\eta=L_Ge(\eta)-\delta_G^*\beta_Ge(\eta).
 \tag{OB.2}
\]
For $X_\epsilon=\{x\ge\epsilon\}$, the outward unit normal near
its boundary is $N=-x\partial_x$.  Since $L_G\kappa=0$ and
$\delta_G\kappa=0$, two integrations by parts give
\[
 \begin{split}
 \int_{X_\epsilon}\langle S_G\eta,\kappa\rangle
 ={}&\frac12\int_{x=\epsilon}
    \{\langle e,\nabla_N\kappa\rangle
                 -\langle\nabla_Ne,\kappa\rangle\}\,dA_G\\
 &-\int_{x=\epsilon}
       (\beta_Ge)(\kappa(N,\cdot)^\sharp)\,dA_G.
 \end{split}
 \tag{OB.3}
\]
The coefficient $1/2$ is the coefficient of $\nabla^*\nabla$ in
$L_G$.  Near the boundary, $e$ has leading coefficient $\eta$ in
the rescaled covector frame, $\nabla_Ne=O(x^2)$, and
$\beta_Ge=O(1)$.  The strong boundary expansion gives
\[
 \kappa=x^3\kappa_{(3)}+O_0(x^{3+\varepsilon}),\quad
 \nabla_N\kappa=-3x^3\kappa_{(3)}+O_0(x^{3+\varepsilon}),\quad
 \kappa(N,\cdot)=O_0(x^{3+\varepsilon})
\]
for any fixed $0<\varepsilon<1$, with the first two formulas
understood in the rescaled frame and the leading coefficient purely
tangential.  Also $dA_G=x^{-3}(1+O(x^2))\,dV_q$.
Thus the last term of (OB.3) and the term containing $\nabla_Ne$
tend to zero, while the remaining term tends to the right side of
\eqref{eq:finite-obstruction-green}.  The integral on the left
converges absolutely.  This proves the sign and the factor $3/2$.
The calculation is made at one smoothly compactified filling, independent of a
Weyl bound.

\smallskip\noindent\emph{Surjectivity on invariant tensors.}
If the invariant range were proper, choose a nonzero
$\kappa\in K_G^{I\Gamma}$ annihilating it.  For any smooth
boundary tensor $\eta$, its group average is invariant, and
naturality of the formula gives the same pairing with $\kappa$.
Thus the right side of \eqref{eq:finite-obstruction-green} vanishes
for every smooth $\eta$.  Its smooth coefficient
$\kappa_{(3)}$ must vanish.  The strong expansion now gives
$|\kappa|_G=o(x^3)$.

Apply the boundary unique-continuation theorem for divergence-free
infinitesimal Einstein deformations:
\cite[Corollary 4.4]{AndersonHerzlich2008}, with the coordinate
construction corrected in \cite{AndersonHerzlich2010Erratum}.
Thus, $G$ is smoothly conformally compact,
$\delta_G\kappa=0$, $\kappa$ solves the linearized Einstein
equation by (OB.2) with zero Bianchi term, and
$\pi_1(B^4,S^3)=0$.  The conclusion is $\kappa=0$, a
contradiction.  Only the smooth case is used here, so the finite
regularity reserve in the erratum is automatic.  Equivalently this
is the precise smooth case of \cite[Proposition 3.1]{Anderson2010}.

Surjectivity onto a finite-dimensional space permits choosing a
finite set of smooth invariant preimages of a basis.  Adding these
directions makes the full derivative onto.  Since the metric
derivative is Fredholm of index zero, adding $\ell$ parameters
gives a Fredholm map of index $\ell$; its kernel is finite-dimensional
and complemented.  The finite-dimensional reduction lemma from the
harmonic-gauge section therefore applies.  
\end{proof}

There are two properties to check at the initial endpoint: the
hyperbolic filling is regular, as proved above, and it is the only
filling with the prescribed round boundary marking.  The
Li--Qing--Shi comparison and equality in Bishop--Gromov give the
second statement.

\begin{lemma}[Uniqueness of the marked hyperbolic seed]
\label{lem:finite-hyperbolic-seed}
The fiber with prescribed boundary metric $g_{\rm rd}$ has exactly
one marked PE class, the hyperbolic class.  It is a regular seed in
every invariant finite-parameter problem considered above.
\end{lemma}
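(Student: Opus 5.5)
The plan has two parts: uniqueness of the marked class in the round fiber, and regularity of the hyperbolic seed.

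\emph{Uniqueness.} Let $G$ be any PE metric on $\operatorname{int}B^4$ with conformal infinity $[g_{\rm rd}]$, in any marking, and fix an interior base point $o$. We run the Li--Qing--Shi relative-volume argument, in the same form as Steps~1--5 of Theorem~\ref{thm:boundary-rigidity}, but with the boundary $(S^3,g_{\rm rd})$ compact. The geodesic defining function $x$ for $g_{\rm rd}$ exists, since $G$ is smoothly compactified. The renormalized distance $F=x^{-1/2}e^{-t/2}$ then has a positive Lipschitz trace $F_0$.

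\begin{itemize}
\item Averaging logarithmically in $x$ gives the distributional inequality
\[
-8\Delta_{g_{\rm rd}}\psi+6\psi\le 6\psi^5,\qquad \psi=\sqrt2F_0 .
\]
The zeroth-order term now comes from $R_{g_{\rm rd}}=6$ through the second jet $h''_0=-2\mathsf P_{g_{\rm rd}}$, which was computed in Lemma~\ref{lem:geodesic-gauge-acquisition}. This is the one new computation to check: the coefficient $a/(2x)$ no longer averages to zero, but converges to $\tfrac12\tr h''_0/2$-type constants. I expect it to produce precisely the conformal Laplacian.
\item Bishop--Gromov against $\mathbb H^4$ gives $\int\psi^6\le\omega_3$ and $\Theta\ge A/\omega_3$, exactly as in \eqref{eq:br-mass}.
\item Testing the inequality against $\psi$ gives $\mathcal Q_{g_{\rm rd}}(\psi)\le 6A^{2/3}$. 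Combining this with $\mathcal Q\ge\Lambda=6\omega_3^{2/3}$ yields $A\ge\omega_3$.
\end{itemize}

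Hence $\Theta=1$, and equality in Bishop--Gromov makes $G$ hyperbolic.

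\emph{Identifying the marking.} It remains to see that the marked class is the standard one. The boundary identification of $G\cong\mathbb H^4$ induces a conformal diffeomorphism of $(S^3,g_{\rm rd})$, namely the endpoint map of Step~6 of Theorem~\ref{thm:boundary-rigidity}. This map is a Möbius transformation, and every Möbius transformation extends to a hyperbolic isometry. Composing with that isometry gives a boundary-fixing isometry to the standard ball model. Thus all marked fillings of $g_{\rm rd}$ are equivalent, and Lemma~\ref{lem:boundary-identity} ensures that the identification is unique.

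\emph{Regularity.} Lemma~\ref{lem:finite-hyperbolic-linear-seed} gives $K_{G_{\rm hyp}}=0$. By Lemma~\ref{lem:finite-invariant-linear-index}, the invariant restriction $L^{I\Gamma}$ is therefore an isomorphism for any finite group, and the round action of $\Gamma<SO(4)$ extends to hyperbolic isometries, so the seed is invariant. In the augmented problem of Lemma~\ref{lem:hg-finite-reductions}, invertibility of the metric derivative means that the fiber over the round boundary parameter is a single transverse point. The projection of the solution manifold to the parameter space is thus a local diffeomorphism there, which is exactly regularity of the seed.

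\emph{Main obstacle.} The hard step is the compact-boundary version of the averaging argument, specifically the curvature term. The Euclidean proof used $a/x=O(x^\beta)$, whereas here $a/x\to\tfrac12\tr h_0''$ with $\tr h_0''=-\tfrac12R$. The signs must combine to give $+R\psi$ on the correct side of the inequality. If the coefficient fails to match, I would fall back on citing the Li--Qing--Shi sharp relative-volume inequality directly, which states $\Theta\ge(Y(S^3,[q])/Y(S^3))^{3/2}$, and conclude $\Theta=1$ from that.
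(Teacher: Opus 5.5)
Your proposal is correct and is essentially the paper's proof: the paper simply cites Proposition~\ref{prop:LQS} to get $\Lambda\le\mathcal Q_{g_{\rm rd}}(w)\le 6A^{2/3}=\Lambda\Theta^{2/3}\le\Lambda$, and then uses Bishop--Gromov rigidity, the endpoint map plus Liouville, and Lemma~\ref{lem:finite-hyperbolic-linear-seed} for regularity, just as you do; for the endpoint map, note that a local conformal diffeomorphism defined on the compact $S^3$ is a covering, hence a diffeomorphism. Your main worry also resolves in your favor: since $h_x=g_{\rm rd}-x^2\mathsf P_{g_{\rm rd}}+O(x^3)$, one gets $a/(2x)\to-\tfrac18R_{g_{\rm rd}}$, so the logarithmic average yields exactly $-8\Delta\psi+R\psi\le 6\psi^5$ (with equality for $\mathbb H^4$ centered at the base point), and the fallback citation is not needed.
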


\begin{proof}
Apply Proposition~\ref{prop:LQS} below to any smooth PE filling
with round conformal infinity, choosing any interior point.  No
group invariance is needed for this application.  Its positive
boundary test function satisfies
\[
 Y(S^3)\le\mathcal Q_{g_{\rm rd}}(w)
 \le6\left(\int_{S^3}w^6\,dV_{g_{\rm rd}}\right)^{2/3}
 =Y(S^3)\,\Theta^{2/3}\le Y(S^3).
\]
Thus $\Theta=1$.  Bishop--Gromov equality gives hyperbolic space.
The geodesic-ray calculation in the proof of
Theorem~\ref{thm:boundary-rigidity} gives a smooth local conformal
endpoint map from the prescribed round boundary to the ideal round
sphere. Its domain is compact, so this local diffeomorphism is a
covering of $S^3$; since $S^3$ is simply connected it is a
diffeomorphism. The explicit hyperboloid expression
$Z=(x^{-1}U+xV)/2$ also shows that the interior isometry
extends smoothly to this boundary map in ball coordinates.
Liouville makes the boundary map a M\"obius transformation,
and hence the boundary value of a hyperbolic isometry.  Compose with its inverse to obtain an
isometry preserving the given boundary marking.  Hence there is
exactly one marked class.  Its invariant weighted metric derivative
is invertible by Lemma~\ref{lem:finite-hyperbolic-linear-seed}; the
fixed-coordinate implicit-function theorem gives the unique regular
local seed branch.
\end{proof}

\subsection{Marked compactifications and weighted compactness}
\label{sec:continuation-compactness-bridge}

The preceding lemmas solve the local analytic problem.  To use
parity, however, we need \emph{one compact space of actual fillings}
containing all solutions over the prescribed path.  The geometric
compactness theorem works with adapted compactifications and
boundary-fixing diffeomorphisms.  The local equation works with
weighted tensors and harmonic-map gauges.  These are different
notions of convergence until their equivalence has been proved.

A convenient intermediary is a compact metric obtained
from each filling.  It will control boundary-marked isometries,
and hence the topology of the quotient.  We shall then show that
geometrically convergent fillings enter the weighted local charts.
This is why the following topology is constructed explicitly rather
than left implicit in the word ``moduli.''

Fix a smooth compact ball $\overline X$, its prescribed identification
$\partial X=S^3$, a smooth defining function $x$, and a smooth collar.
Fix $k\ge4$, $0<\beta<1$, and the weight
\[
 d=\frac52.
\]
The notation $C^{k,\beta}_{0}$ denotes the uniformly local H\"older
space in hyperbolic rescaling charts; its derivatives are generated by
$x\partial_x,x\partial_{y^a}$ and its tensor norm uses a fixed smooth
conformally compact reference metric.  Set
\[
 E_j=x^d C^{k+j,\beta}_{0}(S^2T^*X),\qquad j\ge0.
\]
These are complete Banach spaces with their displayed weighted norms.

Let $P_q=\operatorname{Ric}(q)-R_q q/4$ be the three-dimensional
Schouten tensor.  Choose a fixed smooth positive extension $B(q)$
with compactified collar expansion
\begin{equation}\label{eq:prepared-boundary-model}
 x^2B(q)=dx^2+q-x^2P_q+O(x^4).
\end{equation}
Positivity can be arranged for every $q$ by using
$q\exp(-x^2q^{-1}P_q)$ for its tangential part in the collar, then
patching with a fixed interior metric.  This construction is smooth
on every finite smooth family of boundary metrics.  A metric
$G=B(q)+h$, $h\in E_0$, has compactification
\[
 \overline g=x^2G=dx^2+q-x^2P_q+O(x^4)+\overline h,
 \qquad \overline h=O_0(x^d).
\]
The suffix $0$ denotes rescaled derivative estimates.  The second
boundary jet is now prescribed, which is the reason for using a
weight strictly between two and three.

For the global topological set, take smooth boundary metrics with
their $C^\infty$ topology and remainders in
$E_\infty=\bigcap_{j\ge0}E_j$ with the subspace topology from $E_0$.
This topological set is not called a Banach space.  The local
elliptic implicit-function theorem uses the complete space $E_0$;
its Einstein zeros in harmonic gauge acquire membership in
$E_\infty$.  Use interior-smooth, boundary-marked $C^{2,\alpha}$
equivalences, with some $\alpha>1/2$; the decay argument below
upgrades actual equivalences between conormal representatives to
weight $d$ at every order.  Every fixed such pullback is a
homeomorphism, so orbit projection is open.  

\subsubsection{The ordinary regularity supplied by the prepared weight}

The first observation translates decay in the complete metric into
ordinary regularity on the closed ball.  The vanishing boundary jets
are as important as the $C^2$ estimate: they make two nearby metrics
comparable in a common prepared affine model.

\begin{lemma}[Prepared weight gives a classical $C^2$ compactification]
\label{lem:prepared-weight-weak-compactification}
For every $0<\theta<d-2=1/2$, the map $h\mapsto x^2h$ is bounded
from $E_0$ to ordinary $C^{2,\theta}(\overline X)$, and its full
second boundary jet vanishes.  It is also bounded into $W^{2,p}$
for every finite $p$.  Convergence in $E_0$ implies convergence
in these weaker compactification norms.
\end{lemma}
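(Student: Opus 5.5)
The plan is to reduce the lemma to the pointwise estimates already proved in Lemma~\ref{lem:hg-admissibility}, namely (HG.14), and then obtain the H\"older and Sobolev statements from them. The reduction is that the frame $x\partial_x, x\partial_{y^a}$ converts rescaled derivatives into ordinary ones. Fix $h\in E_0=x^dC^{k,\beta}_0$ with $k\ge4$. In the covector frame $dx/x,\ dy^a/x$, the components of $h$ are exactly the ordinary components of $\overline h=x^2h$ in the collar coordinates. Each ordinary derivative $\partial^\gamma$ with $|\gamma|=j\le3$ can be written as $x^{-j}$ times a polynomial in the rescaled derivations $x\partial_x, x\partial_y$, with bounded coefficients. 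Since rescaled derivatives of the components are $O(x^d)\|h\|_{E_0}$ up to order $k\ge3$, this gives
\[
|D^j\overline h|\le C\,x^{d-j}\,\|h\|_{E_0},\qquad 0\le j\le3,
\]
which is (HG.14) with the constant linear in the norm. Away from the collar everything is ordinary interior regularity.

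Since $d-2=1/2>0$, the function $\overline h$ and its first and second derivatives extend continuously by zero to $x=0$. Consequently the full second boundary jet of $\overline h$ vanishes. For the $C^{2,\theta}$ seminorm with $\theta<1/2$, I will use the two-case argument already given. Take points $z,z'$ at coordinate separation $\ell$, and let $x_{\min}$ be the smaller height.
\begin{itemize}
\item If $\ell\gtrsim x_{\min}$, then both heights are at most $C\ell$, and the pointwise bound gives $|D^2\overline h(z)-D^2\overline h(z')|\le C\ell^{d-2}\le C\ell^\theta$.
\item If $\ell\ll x_{\min}$, integrate the third-derivative bound along a segment at comparable height to get $C\ell x_{\min}^{d-3}\le C\ell^{d-2}$.
\end{itemize}
In both cases the estimate is linear in $\|h\|_{E_0}$. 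This yields boundedness of $E_0\to C^{2,\theta}(\overline X)$, and in fact into $C^{2,1/2}$. The strict inequality $\theta<1/2$ is only needed for the convergence statement.

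For $W^{2,p}$ I will simply note that $C^{2,\theta}(\overline X)\subset W^{2,p}$ boundedly for every finite $p$ on the compact ball. Alternatively, $|D^2\overline h|\le Cx^{1/2}$ is bounded, so it is in every $L^p$. Convergence follows by linearity: if $h_i\to h$ in $E_0$, apply the bounds to $h_i-h$.

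The only delicate point is the commutation in the first step. Expressing $\partial_x^{j}$ through $(x\partial_x)$ produces lower-order terms like $x^{-j}(x\partial_x)(x\partial_x-1)\cdots$, and the frame change on tensors adds further terms. I would check that every resulting term still carries the factor $x^{d-j}$, using $d>2$ and $k\ge3$. The three-derivative count is where $k\ge4$ leaves a margin.
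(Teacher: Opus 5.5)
Your proposal is correct and follows the paper's proof essentially step for step: identify the components of $h$ in the frame $dx/x,\,dy^a/x$ with the ordinary components of $\overline h=x^2h$, derive $|D^j\overline h|\le Cx^{d-j}\|h\|_{E_0}$ for $j\le3$, run the same two-case H\"older comparison, and pass to differences for convergence. One small correction: because the map is linear and your bound already holds into $C^{2,1/2}$, convergence in $C^{2,1/2}$ also follows, so the strict inequality $\theta<1/2$ is not actually needed for the convergence statement either.
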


\begin{proof}
In the covector frame $dx/x,dy^a/x$, write
$h=x^d h_{IJ}(dx^I/x)(dx^J/x)$, with uniformly bounded rescaled
coefficients and derivatives through order three.  The ordinary
components of $x^2h$ are $x^d h_{IJ}$ and satisfy
\[
 |D^j(x^d h_{IJ})|\le Cx^{d-j},\qquad 0\le j\le3.
\]
Thus derivatives through order two extend continuously, with zero
boundary values.  For the second-derivative H\"older estimate,
compare two points at Euclidean distance $r$.  If $r$ is at least
half the smaller boundary height, the size estimate gives the bound
$Cr^{d-2}$; if $r$ is smaller, the segment stays at comparable
height $x$, and the third-derivative estimate gives
$Cx^{d-3}r\le Cr^{d-2}$.  A finite collar cover and the compact
interior give the result, with any smaller exponent $\theta$.
Apply the same argument to differences.
\end{proof}

Arbitrary conormal derivative reserve still does not imply ordinary
smoothness: it is only the stated $C^{2,\theta}$ conclusion which
is used in the boundary regularity theorem.

\subsubsection{A canonical compact metric}

Different defining functions give different compact metrics for the
same Einstein filling.  For controlling marked isometries, we prefer
a compactification characterized by an intrinsic equation and a
fixed boundary value.  We choose scalar curvature $-1$.  Unlike the
scalar-flat equation, its Dirichlet linearization has a strictly
positive zeroth-order term, giving both uniqueness and smooth
parameter dependence.

\begin{lemma}[Negative-scalar compactification]
\label{lem:negative-scalar-compactification}
Let $\overline g$ be a uniformly positive $W^{2,p}$ metric,
$p>4$, on a smooth compact four-manifold with boundary. Assume every
connected component meets the boundary and its scalar curvature is bounded.  There is a unique positive solution
$U$, with $U|_{\partial X}=1$, of
\begin{equation}\label{eq:negative-scalar-dirichlet}
 -6\Delta_{\overline g}U+R_{\overline g}U+U^3=0.
\end{equation}
The metric $\widehat g=U^2\overline g$ has scalar curvature $-1$
and the same boundary metric.  Near every such metric the map
$\overline g\mapsto U$ is smooth from $W^{2,p}$ metrics to
$W^{2,p}$ functions.  If $\overline g$ is $C^{2,\theta}$,
$0<\theta<1$, then $U$ is $C^{2,\theta}$ and the same map is smooth
from the open set of $C^{2,\theta}$ metrics to $C^{2,\theta}$ functions.
The construction is natural under any boundary-marked conformal
equivalence that preserves the boundary representative.
\end{lemma}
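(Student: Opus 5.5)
The plan is to treat \eqref{eq:negative-scalar-dirichlet} as a semilinear Dirichlet problem whose cubic term is the critical conformal power in dimension four. Existence will come from minimization, positivity from the strong maximum principle, and uniqueness from a Picone/Brezis--Oswald identity. Smooth dependence then follows from the implicit-function theorem, once I show that the Dirichlet linearization has trivial kernel. Throughout, $R_{\overline g}$ may change sign; this is the only real difficulty, because the operator $-6\Delta+R$ is then not known to be coercive on $H^1_0$.

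\emph{Existence and positivity.} I would minimize
\[
\mathcal E(U)=\int_X\Bigl(3|dU|^2+\tfrac12R_{\overline g}U^2+\tfrac14U^4\Bigr)\,dV_{\overline g}
\]
over the affine space $U-1\in H^1_0$. Coercivity follows from the pointwise bound $\tfrac12RU^2+\tfrac14U^4\ge\tfrac18U^4-C(\sup|R|)$ together with Poincar\'e applied to $U-1$. Weak lower semicontinuity is standard, and the quartic term is controlled by the Sobolev embedding $H^1\subset L^4$ in dimension four.
\begin{itemize}
\item Since $\mathcal E(|U|)=\mathcal E(U)$, I may take the minimizer $U\ge0$.
\item $U$ is bounded above by the constant $M=\max\{1,\sqrt{\sup|R|}\}$, which is a supersolution; this follows by truncation or the weak maximum principle.
\item $U$ then solves $-6\Delta U+cU=0$ with bounded potential $c=R+U^2$. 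Since $\overline g\in W^{2,p}$ with $p>4$ gives continuous coefficients, $L^p$ theory yields $U\in W^{2,p}\subset C^{1}$.
\item The strong maximum principle (Harnack) gives $U>0$ on each component, because every component meets the boundary, where $U=1$.
\item If $\overline g\in C^{2,\theta}$, then $R\in C^{0,\theta}$ and Schauder estimates give $U\in C^{2,\theta}$.
\item Conformal covariance of the four-dimensional conformal Laplacian, $R_{U^2\overline g}=U^{-3}(-6\Delta U+RU)$, turns the equation into $R_{\widehat g}=-1$. The boundary metric is unchanged because $U|_{\partial X}=1$.
\end{itemize}

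\emph{Uniqueness.} Let $U,V$ be two positive solutions, both $C^1$ up to the boundary and bounded below near it. I would test the equation for $U$ against $(U^2-V^2)/U$ and the equation for $V$ against $(U^2-V^2)/V$, then subtract. Both test functions vanish on $\partial X$, so no boundary terms appear. The $R$ terms cancel exactly, and the Picone identity yields
\[
6\int\Bigl(\bigl|dU-\tfrac UV dV\bigr|^2+\bigl|dV-\tfrac VU dU\bigr|^2\Bigr)+\int(U^2-V^2)^2=0,
\]
which forces $U=V$. This is the step I regard as the main obstacle. Sign-indefinite $R$ rules out a direct maximum-principle comparison, and the Brezis--Oswald identity is exactly the tool that bypasses it. The points to check are integrability of the quotients near the boundary and the $W^{2,p}$ regularity needed to integrate by parts.

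\emph{Smooth dependence and naturality.} The map $(\overline g,U)\mapsto-6\Delta_{\overline g}U+R_{\overline g}U+U^3$ is smooth from $W^{2,p}$ metrics times $\{U:U-1\in W^{2,p}\cap W^{1,p}_0\}$ into $L^p$, since $W^{2,p}$ is an algebra for $p>4$. The same holds in the $C^{2,\theta}$ scale with target $C^{0,\theta}$. The linearization in $U$ is $-6\Delta+R+3U^2$ with Dirichlet conditions.
\begin{itemize}
\item The operator $-6\Delta+R+U^2$ is nonnegative on $H^1_0$: it has the positive solution $U$, and the ground-state (Picone) substitution $v=Uw$ gives $\int(6|dv|^2+(R+U^2)v^2)=6\int U^2|dw|^2\ge0$.
\item Adding $2U^2>0$ therefore makes the linearization injective.
\item Being Fredholm of index zero, it is an isomorphism, and the implicit-function theorem gives a smooth map $\overline g\mapsto U$. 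By the uniqueness just proved, this local solution is the global one.
\end{itemize}
Finally, suppose $F$ is a boundary-marked equivalence with $F^*\overline g'=f^2\overline g$ and $f|_{\partial X}=1$. Conformal covariance of the operator in \eqref{eq:negative-scalar-dirichlet}, whose cubic term has the critical exponent $(n+2)/(n-2)=3$, shows that $f\cdot(U'\circ F)$ solves the problem for $\overline g$ with boundary value one. Uniqueness then gives $F^*\widehat g'=\widehat g$.
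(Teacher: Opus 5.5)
Your proof is correct, but the existence and uniqueness steps take a different route from the paper's. The linearization and naturality steps agree with the paper in substance. Your ground-state substitution $v=Uw$, giving $\int(6|dv|^2+(R+U^2)v^2)\,dV_{\overline g}=6\int|dw|^2_{\widehat g}\,dV_{\widehat g}$, is the quadratic-form version of the paper's conjugation identity $(-6\Delta_{\overline g}+R_{\overline g}+3U^2)(Uz)=U^3(-6\Delta_{\widehat g}+2)z$.

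\emph{Existence.} The paper uses monotone iteration between the subsolution $0$ and a constant supersolution, which gives $0\le U\le A$ immediately. You minimize the energy and then truncate. Both are standard.

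\emph{Uniqueness.} The paper avoids the obstacle you single out (no maximum-principle comparison because $R_{\overline g}$ changes sign) by a conformal change rather than an integral identity. Write $V=fU$. Then $\widehat g_2=f^2\widehat g_1$ and both metrics have scalar curvature $-1$, so $-6\Delta_{\widehat g_1}f=f-f^3$ with $f|_{\partial X}=1$. An interior maximum above $1$ or an interior minimum below $1$ is then impossible, and the sign-indefinite $R_{\overline g}$ has dropped out.

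Your Brezis--Oswald identity is right: the $R$-terms cancel and the gradient terms combine exactly as you display. It does, however, require $U,V\in H^1$ up to the boundary with a positive lower bound, so that $(U^2-V^2)/U$ and $(U^2-V^2)/V$ lie in $H^1_0$. The paper's ratio argument needs only that $f$ be $C^2$ in the interior and continuous on $\overline X$ with boundary value one.

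\emph{What the difference buys.} The paper relies on that extra robustness later, when it compares $\Phi^*\widehat g(G,q)$ with $\widehat g(H,q)$ for a collar map $\Phi$ that is only $C^{1,\lambda}$ up to the boundary. There it notes that no boundary second derivatives of the pulled-back compact metric are needed. Your version proves the lemma as stated. For naturality under such low-regularity boundary-marked maps, however, you would want the maximum-principle form of uniqueness, or an additional approximation argument.
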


\begin{proof}
\emph{Existence and positivity.}
For existence, $0$ is a subsolution and any constant
\[
 A\ge\max\{1,\|R_{\overline g}^{-}\|_\infty^{1/2}\}
\]
is a supersolution.  Add a sufficiently large positive zero-order
constant to $-6\Delta_{\overline g}$ and apply monotone iteration
on $[0,A]$, with Dirichlet value one.  The iterates remain ordered and
bounded.  Uniform elliptic $W^{2,p}$ estimates, followed by compact
embedding and passage to the equation, give a solution. On each connected component the nonzero boundary
value and the strong maximum principle give positivity in the interior; its boundary
value and continuity give a positive minimum on the compact closure.

\smallskip\noindent\emph{Uniqueness.}
For uniqueness, if $\widehat g_2=f^2\widehat g_1$ and both scalar
curvatures are $-1$, then $f=1$ on the boundary and
\[
 -6\Delta_{\widehat g_1}f=f-f^3.
\]
An interior maximum greater than one or minimum less than one is
impossible.  Thus $f=1$.

\smallskip\noindent\emph{Dependence on the compact metric.}
The equation defines a smooth map of the Banach variables
$(\overline g,U)\in W^{2,p}\times W^{2,p}$ into $L^p$;
$p>4$ guarantees the required products, inverse metrics and
$C^{1,\alpha}$ control.  For a variation $v=Uz$ with zero boundary
value, conformal covariance gives
\begin{equation}\label{eq:negative-scalar-linearization}
 ( -6\Delta_{\overline g}+R_{\overline g}+3U^2)v
 =U^3(-6\Delta_{\widehat g}+2)z.
\end{equation}
The operator on the right, with zero Dirichlet condition, is an
isomorphism $W^{2,p}\cap W^{1,p}_0\to L^p$: existence follows
from coercivity, and elliptic regularity and the maximum principle
give the stated inverse and uniqueness.  The Banach implicit-function
theorem proves smooth dependence.  This argument applies in a
$W^{2,p}$ neighborhood; bounded scalar curvature was used only for
the elementary initial existence construction.

\smallskip\noindent\emph{The classical H\"older topology.}
For the H\"older assertion, classical Schauder regularity first gives
$U\in C^{2,\theta}$ when $\overline g\in C^{2,\theta}$.
The nonlinear equation is a smooth map of the open set of uniformly
positive $C^{2,\theta}$ metrics and positive $C^{2,\theta}$ functions
with fixed boundary value into $C^{0,\theta}$: scalar curvature uses
two metric derivatives, and all its products and inverse metrics are
smooth in these Banach spaces.  Formula
\eqref{eq:negative-scalar-linearization} now identifies its derivative
with the conjugate of the Dirichlet isomorphism
\[
 -6\Delta_{\widehat g}+2:
 \{z\in C^{2,\theta}:z|_{\partial X}=0\}
 \longrightarrow C^{0,\theta}.
\]
The Schauder estimate, coercivity and maximum principle give its
bounded inverse.  The Banach implicit-function theorem therefore
proves smooth dependence in precisely the $C^{2,\theta}$ topology.
In particular convergence of the prepared compactifications in that
topology implies convergence of their negative-scalar compact
metrics in the same topology.  Naturality follows from the uniqueness
argument applied after pullback.
\end{proof}

Applied to $\overline g=x^2G$, this lemma supplies an intrinsic
compact metric $\widehat g(G,q)$ for the marked pair $(G,q)$.
If $F^*H=G$, $F|_{\partial X}=\mathrm{id}$, and both prescribed
boundary representatives are $q$, then
\begin{equation}\label{eq:negative-compactification-naturality}
 F^*\widehat g(H,q)=\widehat g(G,q).
\end{equation}
Indeed the two sides are conformal compactifications of $G$ with
scalar curvature $-1$ and boundary metric $q$, so their positive
conformal ratio has boundary value one and the preceding maximum
principle applies.  

\subsubsection{Smooth representatives and boundary regularity}

A weighted solution need not be ordinarily smooth at the boundary
in its initial coordinates.  Nevertheless, the $C^2$ estimate above
is strong enough to invoke the precise boundary-regularity theorem
for conformally compact Einstein metrics.  This supplies an
individual smooth representative of each marked filling.  The
remaining issue is whether the identifying map belongs to our
allowed class of boundary-fixing maps; the canonical compact metric
will provide the necessary improvement.

The boundary-regularity provided by Chru\'sciel--Delay--Lee--Skinner,
\emph{Boundary regularity of conformally compact Einstein metrics},
J. Differential Geom.\ 69 (2005), Theorem~A, arXiv:math/0401386v2,  applies here. 
It requires a $C^2$ conformal compactification,
a smooth prescribed conformal-infinity representative, and
$\operatorname{Ric}_G=-nG$. In dimension four the theorem gives a smooth
geodesic compactification after a boundary-identity collar
diffeomorphism of class $C^{1,\lambda}$, for any $0<\lambda<1$.

There is a small further regularity point because our equivalence
category requires more than the $C^{1,\lambda}$ collar map supplied by the theorem.  Choose the theorem's representative to be $q$
itself.  The initial compactification and the resulting smooth
geodesic compactification have the same boundary value $dx^2+q$.
Their conformal metric transformation therefore gives
$D\Phi=\mathrm{id}$ on the boundary.  Hence its displacement is
$O(x^{1+\lambda})$ and its derivative error is $O(x^\lambda)$.
In the product collar, write the tangential displacement with a
fixed smooth boundary exponential map.  Multiply this displacement
and its normal displacement by $\chi(x/r)$ and set the map equal
to the identity outside the collar.  The derivative error, including
the cutoff derivative, is $O(r^\lambda)$.  For sufficiently small
$r$ the resulting map is a global boundary-identity
$C^{1,\lambda}$ diffeomorphism, smooth in the interior and equal
to the collar map near the boundary.  The pulled-back PE
metric $H=\Phi^*G$
has a smooth compactification on the original smooth ball.  Its
scalar-curvature $-1$ compactification is smooth.  The
original $\overline g=x^2G$ is $C^{2,\theta}$ and
\eqref{eq:negative-scalar-dirichlet}, now by classical Schauder
regularity, has $U\in C^{2,\theta}$, so its compactification
is $C^{2,\theta}$.  Naturality and the conformal-ratio maximum
principle make the extended map an isometry between these two
compact metrics.  Explicitly, $\Phi^*\widehat g(G,q)$ and
$\widehat g(H,q)$ differ by a positive factor $f$, smooth in the
interior and continuous on the compact closure, with $f=1$ on the
boundary.  Both scalar curvatures equal $-1$ in the interior, so
$-6\Delta f=f-f^3$ gives $f=1$ by maximum and minimum principles.
This does not require prior boundary second derivatives of the
pulled-back compact metric.  Such an isometry is $C^{3,\theta}$: in harmonic
coordinates it is weakly harmonic and gains regularity by the
second-order elliptic equation; equivalently, after the first
$C^2$ improvement the connection identity
\eqref{eq:isometry-connection-control} bootstraps it.  Thus the
collar identification lies in the admissible marked
category.  This also retains the original smooth structure and
boundary marking of $B^4$.

Only individual smoothing of the intrinsic compactified geometry is
needed.  Finite-dimensional parameter smoothness is proved in the
weighted harmonic gauges; the negative-scalar compactification
has the $C^{2,\theta}$ dependence needed for marked properness by
Lemma~\ref{lem:negative-scalar-compactification}.

\subsubsection{Compactness of boundary-marked isometries}

Suppose two sequences of compactified metrics converge and are
pairwise isometric by maps fixing the boundary.  To pass to a
quotient, we need the isometries themselves to converge after a
subsequence.  This is a familiar consequence of the connection
transformation law once the compact metrics have controlled
$C^{2,\theta}$ regularity.  It is the source of both local uniqueness
and the Hausdorff property below.

\begin{proposition}[Compactness of the actual equivalences]
\label{prop:weak-marked-equivalence-properness}
Let $q_i$ be smooth boundary metrics converging to $q$ in $C^\infty$.
Let
\[
 G_i=B(q_i)+h_i\longrightarrow G=B(q)+h,
 \qquad H_i=B(q_i)+\ell_i\longrightarrow H=B(q)+\ell
\]
in the weight-$d$ $E_0$ topology.  Suppose $G_i,H_i$ are PE and
$F_i^*H_i=G_i$, with $F_i$ fixing the boundary pointwise.  Initially
allow equivalences that are smooth in the interior and $C^{2,\alpha}$
up to the boundary; smooth representatives and the harmonic gauges
constructed in the continuation argument satisfy this convention.
Then, after a subsequence, $F_i$ converge in $C^{3,\theta}$ for
every fixed $0<\theta<1/2$ to a boundary-marked equivalence
$F^*H=G$.  The same holds for the inverses.  In particular the
equivalence relation is closed.
\end{proposition}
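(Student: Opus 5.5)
The plan is to pass through the canonical negative-scalar compactification of Lemma~\ref{lem:negative-scalar-compactification}, where the maps $F_i$ become isometries between compact metrics. Convergence of isometries then follows from an Arzel\`a--Ascoli argument driven by the connection transformation law.

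\emph{Step 1: compact metrics.} By Lemma~\ref{lem:prepared-weight-weak-compactification}, $E_0$ convergence gives $x^2G_i\to x^2G$ and $x^2H_i\to x^2H$ in $C^{2,\theta'}(\overline X)$ for every $\theta'<1/2$. The boundary terms $B(q_i)$ converge smoothly because $q_i\to q$ in $C^\infty$. Lemma~\ref{lem:negative-scalar-compactification} then gives
\[
\widehat g(G_i,q_i)\to\widehat g(G,q),\qquad \widehat g(H_i,q_i)\to\widehat g(H,q)
\]
in $C^{2,\theta'}$. The naturality identity \eqref{eq:negative-compactification-naturality} applies because $F_i$ fixes the boundary and both representatives equal $q_i$. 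Hence $F_i^*\widehat g(H_i,q_i)=\widehat g(G_i,q_i)$, so each $F_i$ is an isometry between uniformly $C^{2,\theta'}$-controlled, uniformly elliptic compact metrics on $\overline X$. The identity may first be justified by the conformal-ratio maximum principle, using only $C^{2,\alpha}$ regularity of $F_i$.

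\emph{Step 2: uniform bounds on $F_i$.} Isometries between uniformly comparable compact metrics are uniformly bilipschitz. Each $F_i$ is the identity on $\partial X$, and the compact diameter is bounded. In fixed coordinate charts of the ball, the connection transformation law
\[
\partial^2F^c=\Gamma(g_1)^{\ell}_{ij}\,\partial_\ell F^c-\Gamma(g_2)^c_{ab}(F)\,\partial_iF^a\partial_jF^b
\]
bounds $\partial^2F_i$ in $C^0$ by the bilipschitz bounds and the $C^{1,\theta'}$ bounds on the metrics. Differentiating this identity once more bounds $\partial^3F_i$ in $C^{0,\theta'}$, since Christoffel symbols lie in $C^{1,\theta'}$ and composition with the $C^{1,1}$ map $F_i$ preserves H\"older regularity. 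This gives uniform $C^{3,\theta'}$ bounds. Charts must be matched up with some care, because $F_i$ may move interior points a definite distance. Uniform Lipschitz bounds and a finite atlas with a Lebesgue number handle this: each chart image lies in a fixed enlarged chart. Near the boundary the charts are boundary charts, and the boundary values are fixed.

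\emph{Step 3: passage to the limit.} Choose $\theta<\theta'<1/2$. Arzel\`a--Ascoli gives a subsequence with $F_i\to F$ in $C^{3,\theta}$. The limit fixes the boundary and satisfies $F^*\widehat g(H,q)=\widehat g(G,q)$. Bilipschitz bounds pass to the limit, so $F$ is injective. Applying the same argument to $F_i^{-1}$, which are isometries in the reverse direction, gives a further subsequence with $F_i^{-1}\to F'$. Passing to the limit in $F_i\circ F_i^{-1}=\mathrm{id}$ gives $F\circ F'=\mathrm{id}$, so $F$ is a diffeomorphism.

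It remains to check $F^*H=G$. In the interior, $G_i=\widehat\rho_i^{-2}\widehat g(G_i,q_i)$, where the conformal factors $\widehat\rho_i=U_i x$ converge locally uniformly in $C^2$. The relation $F_i^*H_i=G_i$ converges locally in $C^2$ on compact interior sets, which yields $F^*H=G$. Interior smoothness of $F$ then follows because it is an isometry of smooth Einstein metrics. Closedness of the equivalence relation is an immediate consequence.

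The main obstacle is Step 2. The $C^{3,\theta}$ bound needs three derivatives of $F_i$, but the compact metrics are only $C^{2,\theta'}$, so differentiating the connection law uses exactly the available regularity and nothing more. This is also why the H\"older exponent is capped below $1/2=d-2$. If the composition term $\Gamma(g_2)\circ F_i$ fails to stay in $C^{1,\theta'}$ uniformly, the fallback is to write $F_i$ in harmonic coordinates of the target metric and apply Schauder estimates to the weakly harmonic map equation.
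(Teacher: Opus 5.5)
Your proposal is correct and follows essentially the same route as the paper: naturality makes $F_i$ isometries of the $C^{2,\theta'}$-convergent negative-scalar compactifications, the connection transformation law is bootstrapped to uniform $C^{3,\theta'}$ bounds for $F_i$ and $F_i^{-1}$, and the limit is taken by compact embedding and by passing to the limit in $F_i^*H_i=G_i$ on interior compact sets. The paper also runs a preliminary $W^{2,p}$/$W^{3,p}$ version of the same estimate, which your direct H\"older argument makes unnecessary; your remarks on matching charts and on justifying naturality for maps that are only $C^{2,\alpha}$ supply details the paper leaves implicit.
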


\begin{proof}
We use the compactifications to bound the isometries,
and then differentiate the connection equation to gain compactness
through three derivatives.

Lemmas~\ref{lem:prepared-weight-weak-compactification} and
\ref{lem:negative-scalar-compactification} give
\[
 \widehat g_i\to\widehat g,
 \qquad \widehat h_i\to\widehat h
 \quad\hbox{in }W^{2,p},
\]
for any finite $p>4$.  Naturality makes $F_i$ isometries of these
compact metrics.  Uniform positivity gives a uniform bound on
$DF_i$ and $DF_i^{-1}$.  The connection transformation law is
\begin{equation}\label{eq:isometry-connection-control}
 \partial_a\partial_bF_i^c
 =\Gamma(\widehat g_i)^d_{ab}\partial_dF_i^c
  -\Gamma(\widehat h_i)^c_{ef}(F_i)
       \partial_aF_i^e\partial_bF_i^f.
\end{equation}
Since $W^{2,p}\subset C^{1,\theta}$, this first bounds $F_i$ in
$C^{2,\theta}$.  Differentiating
\eqref{eq:isometry-connection-control} once gives a $W^{3,p}$
bound: the derivatives of the two Christoffel symbols are in $L^p$,
and changing variables by $F_i$ preserves their bounds because the
Jacobians are uniformly bounded above and below.  The inverse maps
satisfy the same estimates.  Compact embedding gives the proposed
subsequence for $\alpha'<1-4/p$; $p$ can be arbitrarily large.
For the stronger asserted convergence, choose
$\theta<\epsilon<1/2$.  The prepared compactifications converge
in $C^{2,\epsilon}$, and the same scalar Dirichlet implicit-function
theorem and Schauder estimates give convergence of the 
compact metrics in $C^{2,\epsilon}$.  Their Christoffel symbols
are $C^{1,\epsilon}$.  Differentiating the connection identity once
therefore bounds $F_i$ and their inverses in $C^{3,\epsilon}$;
compact embedding gives $C^{3,\theta}$ convergence.
The two inverse limits compose to the identity.  Passing to
$F_i^*H_i=G_i$ on compact subsets of the interior proves $F^*H=G$,
and the boundary marking persists.
\end{proof}

\begin{lemma}[The third jet of a prepared marked isometry]
\label{lem:marked-isometry-second-jet}
If $G$ and $H$ have the same prepared second compactified jet
$dx^2+q-x^2P_q$, any $C^{3,\theta}$ boundary-identity equivalence
$F^*H=G$ has $j^3_{\partial X}F=j^3_{\partial X}\mathrm{id}$.
Consequently its displacement has complete-metric norm
$O(x^{2+\theta})$.
If $F_i\to\mathrm{id}$ in $C^{3,\theta}$, these displacements
tend to zero in weighted $C^0_\delta$ for every
$\delta<2+\theta$.
\end{lemma}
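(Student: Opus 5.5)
The argument follows the jet computation already carried out in the boundary-jet step of Lemma~\ref{lem:hg-isometry-decay}, now phrased for the prepared models $B(q)+E_0$. We work in the fixed collar with coordinates $(x,y^a)$ and compactified metrics $\overline g=x^2G$, $\overline h=x^2H$. By Lemma~\ref{lem:prepared-weight-weak-compactification} both are $C^{2,\theta'}$ for every $\theta'<1/2$. Their common two-jet is $dx^2+q-x^2P_q$, and their deviation from it is $O(x^{d})$ together with one ordinary derivative. The isometry condition $F^*H=G$ becomes the compactified identity
\[
F^*\overline h=\Bigl(\frac{x\circ F}{x}\Bigr)^2\overline g .
\]
Write $F^0=x\,\lambda(x,y)$ with $\lambda$ continuous and positive up to the boundary. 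Since $F$ is $C^{3,\theta}$ and $F^0$ vanishes on $x=0$, the ratio $\lambda=(x\circ F)/x$ is $C^{2,\theta}$.

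\emph{First and second jets.} Restrict the identity to $x=0$ along tangential directions. Because $F|_{\partial X}=\mathrm{id}$ and both boundary metrics equal $q$, this gives $\lambda|_{x=0}=1$, so $\partial_xF^0=1$ at the boundary. The $0a$ component then forces $\partial_xF^a=0$, hence $dF=\mathrm{Id}$ on the boundary. Next expand $F^0=x+ax^2+Ax^3+o(x^3)$ and $F^a=y^a+b^ax^2+B^ax^3+o(x^3)$. All mixed tangential derivatives of the jet are fixed by the boundary values, so only these pure normal coefficients remain to be determined. Comparing the $00$ and $0a$ components at order $x$ gives $4a=2a$ and $2b_a=0$. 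At order $x^2$ the same components give $6A=2A$ and $3B_a=0$, where we use $g_{00}=1+O(x^d)$ and $g_{0a}=O(x^d)$ with $d>2$, and the fact that the tangential $x^2$ coefficients $-P_q$ agree. The delicate point is making these order-$x^2$ comparisons rigorous when $F$ is only $C^{3,\theta}$ and the metrics are only $C^{2,\theta'}$. The approach is to differentiate the identity twice in $x$ and evaluate at $x=0$, which is legitimate because each side has continuous second derivatives. The right side requires care because $\lambda$ involves $\partial_x^3F^0$ through its second derivatives; this is exactly where $6A$ versus $2A$ arises. Since $A=B_a=0$ and the lower coefficients vanish, we conclude $j^3F=j^3\mathrm{id}$.

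\emph{Weighted decay.} Taylor's theorem with the $C^{3,\theta}$ Hölder remainder gives $|D^j(F-\mathrm{id})|\le C\|F-\mathrm{id}\|_{C^{3,\theta}}\,x^{3+\theta-j}$ for $0\le j\le 3$. Converting the ordinary displacement to a complete-metric vector field, for example through $V=\exp^{-1}_{G}F$ in the fixed reference, divides by one power of $x$. This gives $|V|_G=O(x^{2+\theta})$ and proves the second assertion. For the last claim, when $F_i\to\mathrm{id}$ in $C^{3,\theta}$ the same constant tends to zero. Then for $\delta<2+\theta$ we have $x^{-\delta}|V_i|_G\le C\|F_i-\mathrm{id}\|_{C^{3,\theta}}x^{2+\theta-\delta}\to0$ uniformly, so the displacements tend to zero in weighted $C^0_\delta$.
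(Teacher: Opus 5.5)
Your proposal is correct and follows the paper's proof. Both arguments compare the $00$ and $0a$ components of the compactified isometry identity order by order; your relations $4a=2a$, $2b_a=0$, $6A=2A$, $3B_a=0$ become the paper's coefficients $2A$, $2B_a$, $4C$, $3D_a$ after relabeling and cancelling the $\lambda^2$ contribution. Both then apply Taylor's formula with the $C^{3,\theta}$ remainder and divide by one power of $x$ to pass to the complete metric. Your added check that the order-$x^2$ comparison is legitimate, because both sides are $C^2$ with $\lambda=\int_0^1\partial_xF^0(sx,y)\,ds\in C^{2,\theta}$, makes explicit a regularity point the paper leaves implicit.
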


\begin{proof}
Write $F^x=ax+o(x)$ and $F^y=y+bx+o(x)$ at the boundary.
The tangential part of
\[
 x^2G=\left(\frac{x}{x\circ F}\right)^2F^*(x^2H)
\]
gives $a=1$, and its cross term gives $b=0$.  Thus $DF$ is the
identity at the boundary.  All tangential and mixed second
derivatives of $F-\mathrm{id}$ vanish.  Write the remaining terms
as $F^x=x+A(y)x^2+o(x^2)$,
$F^y=y+B(y)x^2+o(x^2)$.  In the displayed metric identity the
$xx$ coefficient at order $x$ is $2A$, and the $xa$ coefficient
is $2B_a$.  Both vanish because the compactified metrics have zero
first normal jet.  Thus $A=B=0$.  Next write
$F^x=x+C(y)x^3+o(x^3)$ and
$F^y=y+D(y)x^3+o(x^3)$.  At order $x^2$, the same metric
identity has extra $xx$ coefficient $4C$ and cross coefficient
$3D_a$.  The prescribed second jets on its two sides agree, so
$C=D=0$.  Tangential and mixed third derivatives vanish by
differentiating the second-jet identity along the boundary.
Taylor's formula with the $C^{3,\theta}$ remainder gives the
stated estimate, uniformly for
convergent families.  The corresponding exponential displacement
has the same estimate, since the complete metric is uniformly
equivalent to $x^{-2}(dx^2+q)$ and the displacement is small near
the boundary.
\end{proof}

Choose once and for all
\[
 2<\delta_-<2+\theta<d=\frac52.
\]
The preceding proposition and lemma give exactly the weak
$C^0_{\delta_-}$ control needed in the harmonic-map uniqueness
argument.  The nonlinear gauge construction then upgrades an
equivalence between gauged solutions to
$C^{k+1,\beta}_{\delta_-}$ by the equation, and uniqueness at
$\delta_-$ identifies it with the gauge constructed at weight $d$.

For completeness, the global asymptotic weight improvement has the
same mechanism.  Let $V$ be the exponential displacement of a marked
isometry near infinity and cut it off in the interior.  Its harmonic
map equation, compared to the identity and to the same target
metric, has the form
\[
 (\nabla_G^*\nabla_G+3)V
 =f_d+O_0(x^d)(V,\nabla V,\nabla^2V)
        +O_0(V^2,V\nabla V,|\nabla V|^2),
\]
where $f_d$ has weight $d$ because the two metrics share both $q$
and the prescribed second coefficient $-P_q$.  Interior cutoff terms are compactly
supported.  Starting with $V$ of weight $\delta_->2$, all terms
on the right have weight $d$, since $2\delta_->4>d$.  Invertibility
of the vector operator at weights $\delta_-$ and $d$, with the identical
$L^2$ kernel zero, promotes $V$ to weight $d$.  The local elliptic estimates
give the corresponding derivative reserve.  Thus a fixed equivalence
acts by a bounded pullback map on every weighted level on which it
is used. 

The marked weighted source is metrizable (it is a subspace of the
product of the smooth-boundary Fr\'echet space and $E_0$).
The properness proposition therefore makes its equivalence relation
closed.  Every fixed admissible pullback is a homeomorphism, so the
orbit projection is open.  Its product with itself is an open
quotient map; the inverse image of the diagonal is the closed
equivalence relation.  Consequently the marked weighted quotient is
Hausdorff.  This uses only fixed pullbacks and actual isometries.

\subsubsection{From geometric convergence to weighted convergence}

The essential bridge is now straightforward to formulate, although
keeping track of the derivatives requires care.  Geometric
compactness gives convergence of compactified metrics in ordinary
$C^K$.  After choosing common geodesic coordinates and subtracting
the prepared second boundary jet, the difference vanishes through
order two.  Taylor's theorem then provides precisely the factor
$x^{5/2}$ needed for the local weighted norm.  Thus the compactness
theorem produces \emph{actual points of the analytic local charts},
even at a degenerate filling.

\begin{proposition}[Weighted representative capture]
\label{prop:weighted-representative-capture}
Let $q_i$ range over a compact smooth finite-parameter family of PSC
metrics on $S^3$, and let $G_i$ be any sequence of marked PE metrics
on $B^4$ with conformal infinities $[q_i]$.  Theorem~\ref{thm:compactness} supplies, after a subsequence, a PE
limit $G_\infty$ and boundary-identity representatives which are
close to $G_\infty$ in every prescribed finite weight-$d$ norm
after placing the metrics in a common geodesic collar.  In particular
the sequence is captured by the finite local gauge chart at the
limit; this includes degenerate limits.
\end{proposition}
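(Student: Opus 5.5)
Theorem~\ref{thm:compactness} is the starting point. Applied to the compact family $\{q_i\}$, it gives, after a subsequence, boundary-fixing diffeomorphisms $\Phi_i$ such that $\Phi_i^*g_i\to g_\infty$ in $C^{K,\alpha'}(B^4)$. Here $g_i=\rho_i^2G_i$ are the scalar-flat adapted compactifications, and we use a large finite order $K$, available because the $q_i$ range in a compact smooth family, so every $k$ is admissible. The defining functions $\rho_i$ converge in the same norm by Step~5 of that proof, and the limit $G_\infty$ is a PE metric with a smoothly compactified representative after a boundary-marked change of coordinates. The next task is to pass from this adapted gauge to the geodesic collar of the fixed defining function $x$ used in the weighted spaces $E_j$.

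For each $i$ we solve the geodesic defining-function equation $x_i=e^{\omega_i}\rho_i$ with $|dx_i|_{x_i^2G_i}=1$ and boundary representative $q_i$, exactly as in Lemma~\ref{lem:geodesic-gauge-acquisition}, but now at high finite order. Since $\rho_i$ and $g_i$ converge in $C^{K}$, the characteristic construction gives $\omega_i\to\omega_\infty$ in $C^{K-2}$ on a collar of uniform width. The normal-flow diffeomorphisms $\Psi_i$, which identify $\{x_i=\text{const}\}$ with $\{x=\text{const}\}$ and fix $S^3$ pointwise, also converge in $C^{K-3}$. We extend them by a cutoff isotopy inside the ball, as in Step~3 of the compactness proof. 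In the common collar, $x^2(\Psi_i\Phi_i)^*G_i=dx^2+h^{(i)}_x$. The computation in the second part of Lemma~\ref{lem:geodesic-gauge-acquisition} then forces $h^{(i)}_0=q_i$, $\partial_xh^{(i)}|_0=0$, and $\tfrac12\partial_x^2h^{(i)}|_0=-P_{q_i}$. Consequently $\overline h_i:=x^2\bigl((\Psi_i\Phi_i)^*G_i-B(q_i)\bigr)$ has vanishing two-jet at $x=0$, by \eqref{eq:prepared-boundary-model}, and converges in ordinary $C^{K-3}(\overline X)$ to the corresponding $\overline h_\infty$.

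The main step is converting this ordinary convergence with vanishing two-jet into weight-$d$ convergence at a prescribed order $r$. Taylor's theorem with integral remainder gives $|D^j(\overline h_i-\overline h_\infty)|\le C\,x^{3-j}\,\|\overline h_i-\overline h_\infty\|_{C^{r+3}}$ for $j\le r$. Note that the remainder in \eqref{eq:prepared-boundary-model} is $O(x^4)$ with smooth dependence on $q$, which is harmless. In rescaled charts at height $x_0$, each rescaled derivative $(x\partial)^j$ of the components in the frame $dx/x,dy/x$ is then $O(x_0^3)\,\epsilon_i$, where $\epsilon_i\to0$. Since $3>d=5/2$, dividing by $x_0^d$ leaves a factor $x_0^{1/2}\epsilon_i$. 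Hölder seminorms are handled by the same two-regime argument as in Lemma~\ref{lem:prepared-weight-weak-compactification}. The conclusion is $\|h_i-h_\infty\|_{E_r}\to0$ for each fixed $r$, after choosing $K\ge r+6$.

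I expect the main obstacle to be the loss of derivatives when composing with the moving maps $\Psi_i\Phi_i$. This is the reason for fixing $r$ first and then choosing the compactness order $K$ large. One must also check that $x^2B(q_i)$ depends smoothly on $q_i$, so that the subtraction loses nothing. Once this convergence holds, Lemma~\ref{lem:hg-local-capture}, applied at the limit $G_\infty$ with $a_i\to0$ parametrizing $q_i$ near $q_\infty$, produces harmonic-gauge representatives converging in $\mathcal H^{k,\alpha}_d$ and at every higher order. After adjoining cokernel-spanning boundary directions by Proposition~\ref{prop:finite-invariant-obstruction}, these representatives lie in the finite zero-set chart of Lemma~\ref{lem:hg-finite-reductions}, whether or not $K_{G_\infty}$ vanishes. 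All maps used fix $S^3$ pointwise, so the marking is preserved.
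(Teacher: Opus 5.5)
Your proposal follows the paper's proof essentially step for step: Theorem~\ref{thm:compactness} at a large order $K$, convergence of the adapted defining functions, a common geodesic collar built from the eikonal equation and its normal flow, Taylor division of a remainder with vanishing two-jet (using $3>d$), and capture through the nondegenerate vector gauge operator, which is why degenerate limits are included. Two small points are harmless because $K$ is arbitrary. First, pulling the $C^{K-2}$ compactification back by the $C^{K-3}$ normal flow leaves only $C^{K-4}$, so the budget should be $K\ge r+7$ plus a H\"older reserve, matching the paper's $K\ge k+7$ with $\nu'>\beta$. Second, if the given $G_i$ are only weakly compactified, you should first replace them by smooth representatives using the Chru\'sciel--Delay--Lee--Skinner boundary regularity theorem before invoking Theorem~\ref{thm:compactness}, as the paper does.
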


\begin{proof}
The main point is a finite derivative count.  The geometric theorem
provides high ordinary regularity; obtaining a common geodesic
collar consumes derivatives, and dividing out the first three Taylor
coefficients then gives the prepared weight.

If an initial representative is only weakly compactified, the
boundary-regularity result above first supplies a smooth representative
of the same marked geometric class.  Theorem~\ref{thm:compactness}
is then applied to the canonical scalar-flat adapted compactifications
$g_i=\rho_i^2G_i$ for the prescribed $q_i$.  For any preassigned
large integer $K$ it gives smooth boundary-identity diffeomorphisms
after which
\[
 g_i\longrightarrow g_\infty\quad\hbox{in }C^{K,\nu}.
\]
One chooses $K$ before applying the theorem; smoothness of the finite
boundary family supplies all its required boundary bounds. 

\smallskip\noindent\emph{Convergence of the adapted defining functions.}
The adapted identities give convergence of the defining functions.
Indeed $|d\rho_i|\le1$, the compact diameters are uniformly bounded,
$P_i|_{\partial X}=2H_i/3$, and
\[
 dP_i=\operatorname{Ric}(g_i)(\nabla\rho_i,\cdot),\qquad
 \nabla^2\rho_i=-\tfrac12\rho_i\operatorname{Ric}(g_i)
                         -\tfrac12P_i g_i.
\]
The first two observations bound $\rho_i,P_i$.  Repeated use of
these equations gives $\rho_i\to\rho_\infty$ in
$C^{K,\nu'}$ and $P_i\to P_\infty$ in
$C^{K-1,\nu'}$, after decreasing the exponent and taking a
subsequence.  The limiting eikonal identity rules out an interior
zero of $\rho_\infty$.

\smallskip\noindent\emph{A common geodesic collar.}
Acquire the geodesic defining functions for $q_i$ by the actual
noncharacteristic eikonal equation and its normal flow.  The fixed
finite derivative budget is as follows: coefficients of the
Hamiltonian are $C^{K-1,\nu'}$; its characteristic field is
$C^{K-2,\nu'}$; solving gives the defining function at least
$C^{K-2,\nu'}$; taking the normal field and pulling back by its
flow spends two additional derivatives.  Choosing
\[
 K\ge k+7,\qquad \beta<\nu'<\nu
\]
therefore gives convergence of the pulled-back geodesic
compactifications in $C^{k+3,\nu'}$.  The common collar has a fixed
positive width because the initial metrics, inverse metrics and
defining functions converge and the boundary is compact.  These
collar diffeomorphisms fix the boundary and are close to a fixed
limit; a cutoff collar isotopy extends them to the full ball, so no
extension of an arbitrary sphere diffeomorphism is required.

\smallskip\noindent\emph{Taylor division and weighted convergence.}
In the resulting fixed coordinates,
\[
 G_i=x^{-2}(dx^2+h_i(x)),\qquad
 h_i(0)=q_i,\quad h_i'(0)=0,\quad h_i''(0)=-2P_{q_i},
\]
and the compactified metrics converge in $C^{k+3,\nu'}$.
Choose the local background
\[
 B_*(q)=G_\infty+\chi(x)\bigl[x^{-2}(q-q_\infty)-(P_q-P_{q_\infty})\bigr],
\]
where $\chi=1$ on the common collar.  For
$r_i=G_i-B_*(q_i)$, the compactified tensor $T_i=x^2r_i$
has zero jets through order two and tends to zero in
$C^{k+3,\nu'}$.  Taylor's formula gives the exact identity
\begin{equation}\label{eq:double-Taylor-weight-capture}
 \frac{T_i(x,y)}{x^d}
 =\frac{x^{3-d}}2\int_0^1(1-t)^2\partial_x^3T_i(tx,y)\,dt.
\end{equation}
The integral on the right tends to zero in ordinary $C^{k,\nu'}$.
Multiplication by $x^{3-d}=x^{1/2}$ is bounded in every rescaled
$C^{k,\beta}_0$ norm.  Thus $T_i/x^d\to0$ in that norm on
the collar; the interior is compact.
Hence $\|r_i\|_{E_0}\to0$.

The local nonlinear gauge theorem now gives its unique near-identity
gauge and the finite local representative.  This step uses the
nondegenerate vector gauge operator, not invertibility of the tensor
Einstein operator, so nondegeneracy of $G_\infty$ is not assumed.
\end{proof}

For clarity, we use more derivatives of a \emph{smooth
compact finite family} to enter a fixed Banach norm.  

\paragraph{Smooth limits in the auxiliary quotient.}
A final regularity point must be addressed before discussing
compactness of the quotient itself.  Convergence in a fixed finite
$C^K$ topology does not automatically produce a smooth limit in the
original coordinates.  We use a finite reserve of derivatives to
choose a smooth representative of the limit, without choosing a
parameter-dependent smooth atlas.

There is one precise issue if the auxiliary geometric quotient is
defined using only ordinarily smooth adapted pairs with the integer
$C^K$ topology.  A limit in a fixed $C^K$ atlas must not simply be
declared smooth.  The following construction supplies representatives
whose limit lies in that source.  Start with convergence of adapted
pairs at order $K+4$, with some positive H\"older exponent; the
smooth compact boundary family permits this finite reserve.
Apply the already proved mixed Schouten and harmonic metric bootstrap
to the limit itself.  Its smooth boundary data give a smooth adapted
metric and defining function in the limiting boundary harmonic atlas.
The harmonic coordinate transition from the original fixed atlas is
$C^{K+5,\nu}$, after reducing $\nu$ if needed, by the scalar harmonic
coordinate equation.  Interior harmonic charts give the same
regularity and then smoothness of the coefficients.  Overlap maps
between these smooth harmonic charts are smooth by their elliptic
coordinate equations.  Their boundary restrictions use the prescribed
smooth boundary structure.

These are compatible $C^{K+5}$ smooth structures on the same compact
ball, with the same boundary marking.  The precise relative smoothing
fact used here is the following: if two compact smooth manifolds with
boundary are related by a $C^r$ diffeomorphism, $r\ge1$, whose boundary
restriction is a smooth diffeomorphism, that map can be approximated
arbitrarily closely in $C^r$ by smooth maps with exactly the same
boundary restriction.  Sufficiently close approximants are smooth
diffeomorphisms.  Relative smooth approximation in boundary charts,
with the normal component zero and the specified tangential components
on the boundary, gives the maps; smoothing the remaining components
and using a target tubular retraction gives the usual finite-chart
construction.  The openness of the diffeomorphism condition in $C^1$
on a compact domain gives the last assertion.  

Apply this fact to the identity from the original structure to the
limiting harmonic structure with $r=K+5$.  It gives a boundary-identity
diffeomorphism $\Psi$, smooth between the two structures and at least
$C^{K+5}$ as a map in the original coordinates.  Hence
$\Psi^*g_\infty,\Psi^*\rho_\infty$ are ordinarily smooth on the
original ball.

Approximate this \emph{one fixed} map $\Psi$ by smooth
boundary-identity diffeomorphisms $\Psi_j$ in the integer
$C^{K+1}$ norm.  Relative smooth approximation applies because
$\Psi$ has more derivatives than required; positivity of its
Jacobian and compactness make the diffeomorphism condition open in
$C^1$.  Then, on a subsequence,
\[
 \Psi_j^*g_{i_j}\longrightarrow\Psi^*g_\infty,
 \qquad
 \Psi_j^*\rho_{i_j}\longrightarrow\Psi^*\rho_\infty
 \quad\hbox{in }C^K.
\]
The pullback estimate follows from the uniform $C^{K+1}$ bounds
on the maps and continuity of integer-order composition; the
limiting tensors have spare derivatives in any event.  Every map
applied to the original sequence is an allowed smooth
boundary-identity diffeomorphism, and the limiting pair is smooth.
This establishes sequential compactness in the smooth-pair quotient.

We are ready to turn subsequential convergence of metrics into
compactness of the marked solution space.  The auxiliary quotient is
built from smooth adapted pairs in a fixed integer regularity; its
open orbit projection and properness of marked isometries provide
a Hausdorff, second-countable topology.  Geometric compactness
makes this auxiliary quotient compact, and weighted capture
identifies it with the desired path inverse image.

\begin{theorem}[Actual compactness of the path zero set]
\label{thm:actual-weighted-path-compactness}
In the preceding marked weighted topology, the full set of PE classes
over a fixed compact smooth PSC boundary path, with the path parameter
retained, is compact and metrizable.  The same assertion holds over
any compact finite-dimensional smooth PSC boundary family.
\end{theorem}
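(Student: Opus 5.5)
The plan is to prove sequential compactness in the marked weighted quotient, and to obtain metrizability from the Hausdorff property and second countability already recorded. Let $(s_i,[G_i])$ be a sequence in the path zero set, with $G_i$ a marked PE metric whose conformal infinity is $[q_{s_i}]$. By compactness of $[0,1]$ we pass to a subsequence with $s_i\to s_\infty$. The boundary metrics $q_{s_i}$ lie in the compact set $\mathcal K=\{q_s\}$, which is a compact subset of $\operatorname{Met}^{k,\alpha}_{\rm PSC}(S^3)$ for every $k$. Theorem~\ref{thm:compactness} then applies with an arbitrarily large preassigned order $K\ge k+7$. It gives boundary-fixing diffeomorphisms after which the scalar-flat adapted compactifications converge in $C^{K,\nu}$ to the adapted compactification of a PE limit $G_\infty$ with boundary $q_{s_\infty}$. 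The limit is smooth after the relative smoothing construction for limits in the auxiliary quotient.

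Next I transfer this geometric convergence to the weighted topology. Proposition~\ref{prop:weighted-representative-capture} does this: it places the $G_i$ and $G_\infty$ in a common geodesic collar and uses the Taylor identity \eqref{eq:double-Taylor-weight-capture}. This shows that $G_i-B_*(q_{s_i})$ converges to $G_\infty-B_*(q_{s_\infty})$ in $E_0$. Lemma~\ref{lem:hg-local-capture} then puts harmonic-gauge representatives into the finite local chart at $G_\infty$, with convergence at every finite weighted order. Here I would take care that the modifying diffeomorphisms fix the boundary pointwise and are smooth, or at least admissible. That keeps the marked classes unchanged, and convergence of these representatives is then convergence in the quotient topology. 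No nondegeneracy of $G_\infty$ is needed, because the capture uses only the invertible vector gauge operator.

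Metrizability follows from three facts. The source space is a metrizable subspace of the product of the Fréchet space of smooth boundary metrics with $E_0$ (together with the path parameter). The orbit projection is open. The equivalence relation is closed by Proposition~\ref{prop:weak-marked-equivalence-properness}. An open quotient of a second-countable space is second countable, and a compact Hausdorff second-countable space is metrizable by Urysohn. Since the quotient is then first countable, sequential compactness implies compactness. The same argument works verbatim for any compact finite-dimensional smooth PSC family, with the parameter ranging in a compact manifold.

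The main obstacle is the identification of topologies. Convergence in the geometric compactness theorem is modulo some boundary-fixing diffeomorphisms. Convergence in the weighted quotient requires that the \emph{same} marked classes be represented by weighted representatives converging in $E_0$. To settle this I would rely on the naturality of the negative-scalar compactification \eqref{eq:negative-compactification-naturality}. Any two boundary-marked representatives of a class are related by isometries of the canonical compact metrics. Proposition~\ref{prop:weak-marked-equivalence-properness} makes those isometries precompact in $C^{3,\theta}$, and Lemma~\ref{lem:marked-isometry-second-jet} together with the decay improvement upgrades them to weight $d$. So the limit class is independent of the choices made, and the subsequence converges in the quotient.
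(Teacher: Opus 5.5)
There is a genuine gap in the point-set step where you pass from sequential compactness to compactness. You derive second countability of the weighted quotient from second countability of its source. But the source is not separable: $E_0=x^dC^{k,\beta}_0$ is a uniform weighted H\"older space on a complete manifold, and $\mathscr A^+$ itself fails to be separable even inside a single marked class.

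Here is an explicit family. Take displacements $\sum_n\epsilon_n c\,x_n^{d}\chi_n$, where the $\chi_n$ are bumps of a fixed profile in disjoint rescaled balls at heights $x_n\to0$, $c$ is small, and $\epsilon\in\{0,1\}^{\mathbb N}$. Pulling one filling back by the corresponding boundary-identity diffeomorphisms gives uncountably many points of $\mathscr A^+$ whose mutual $E_0$-distances are bounded below.

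Consequently, openness of the orbit projection gives you only first countability of the quotient. A first countable, Hausdorff, sequentially compact space need not be compact; the ordinal space $\omega_1$ is a counterexample. Your appeal to Urysohn does not close the gap. It assumes compactness to obtain metrizability, and then uses that metrizability to deduce compactness, which is circular.

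The paper's proof is organized to avoid exactly this. It runs the sequential argument in an auxiliary quotient of smooth canonical adapted pairs $(g,\rho)$ on the closed ball, with the parameter retained. The topology is the \emph{integer} $C^K\times C^K$ topology with $K\ge k+10$. Integer order on a compact manifold makes the source separable, which H\"older norms would not, so this quotient is second countable. It is Hausdorff by the isometry connection equation. Theorem~\ref{thm:compactness}, together with the defining-function and smooth-limit arguments, gives sequential compactness there. Lindel\"of plus countable compactness then gives compactness, and Urysohn gives metrizability.

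The analytic steps you describe (common geodesic collar, Taylor division, gauge capture) are used only to show that $(g,\rho)\mapsto\rho^{-2}g$ is sequentially continuous into the weighted quotient. Sequential continuity suffices because the auxiliary space is metrizable. Boundary regularity and uniqueness of the adapted compactification make this map a bijection onto the path inverse image. The weighted quotient is Hausdorff by marked properness, so this continuous bijection from a compact space is a homeomorphism, giving both compactness and metrizability of the path zero set.

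You can keep your sequential-compactness argument, but you must run it in a space already known to be second countable and then transfer the result to the weighted quotient in this way.
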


\begin{proof}
We use an auxiliary quotient of smooth adapted pairs.  Its topology
is elementary enough to make compactness precise; weighted capture
then shows that the auxiliary quotient and the actual marked path
inverse image describe the same objects with the same topology.

Let $T$ be the compact parameter set of the prescribed smooth boundary
family $q_t$ (in particular, $T=[0,1]$ for the path).  Form the
auxiliary source of smooth canonical adapted pairs $(g,\rho)$,
together with $t\in T$ and $g|_{TS^3}=q_t$, in the integer
$C^K\times C^K$ topology for the pair and the given topology for $t$.
Here $K\ge k+10$ is fixed.  Quotient by smooth boundary-identity
diffeomorphisms, keeping $t$ fixed.

This auxiliary quotient is second countable.  Indeed its source is a
subspace of a separable metric space, and its orbit projection is open
because each fixed smooth pullback is a homeomorphism.  The images of
a countable source base therefore give a countable quotient base.
It is Hausdorff: if two convergent sequences of smooth pairs are
related by marked diffeomorphisms, the isometry connection equation
first gives uniform Lipschitz bounds for those maps and their inverses,
then bounds and subsequential convergence of their derivatives.  The
limit is an isometry of the smooth limiting compact metrics and
preserves their defining functions and boundary marking.  The same
connection equation makes it smooth.  Thus the equivalence relation
is closed; openness of the orbit projection gives Hausdorffness as
above.

Theorem~\ref{thm:compactness}, the defining-function system
in the preceding capture proof, and the smooth-limit construction
in the preceding paragraph show sequential compactness in this exact
auxiliary source quotient.  Second countability gives the Lindel\"of
property.  Sequential compactness gives countable compactness, since
a countable open cover without a finite subcover would supply a
sequence without a convergent subsequence.  These two properties
prove actual compactness of the auxiliary quotient.  As a compact
Hausdorff second-countable space it is metrizable.

\smallskip\noindent\emph{Comparison with the weighted topology.}
There is a continuous map from this compact space to the weighted
marked solution space, preserving $t$ and the intrinsic PE metric
$G=\rho^{-2}g$.  To verify continuity, a sequence converging in the
auxiliary quotient has representatives converging in integer $C^K$:
images of successively smaller balls about one representative form a
neighborhood base because the orbit projection is open.  Choose
representatives in those balls.  The common geodesic collar
construction and triple Taylor division in
Proposition~\ref{prop:weighted-representative-capture}, with this
fixed finite derivative reserve, give convergence in $E_0$ after
subtracting the prepared boundary extension.  More explicitly, the
four-derivative collar budget and triple Taylor division spend seven
ordinary derivatives.  The integer choice $K\ge k+10$ leaves more
than one further derivative, so ordinary convergence implies the
required $C^{k,\beta}_0$ convergence.  This is distinct from the
$K\ge k+7$ budget for capture from $C^{K,\nu}$ convergence with
$\nu>\beta$, where the positive H\"older reserve is already present.  Convergence of the
retained parameter $t$ also gives $C^\infty$ convergence of $q_t$,
as required by the weighted source.  Local gauge capture gives
convergence in its quotient.  Sequential continuity suffices because
the auxiliary quotient is metrizable.

This map is onto the full weighted inverse image: classical boundary
regularity above supplies each weighted zero with a smooth intrinsic
representative, and the prescribed PSC boundary metric supplies its
canonical adapted compactification.  It is one-to-one: that adapted
compactification for fixed $(G,q_t)$ is unique.  Explicitly, the ratio
of two scalar-flat compactifications with common boundary metric is
positive, continuous up to the boundary with value one, and harmonic
in either compact scalar-flat metric by conformal covariance; the
maximum principle makes the ratio one.  Hence an equivalence of PE
metrics identifies the adapted pairs.  The target weighted quotient
is Hausdorff by marked properness.  A continuous bijection from a
compact space to a Hausdorff space is a homeomorphism.  Its image,
which is exactly the full inverse image $\mathcal K$, is therefore
compact and metrizable in the actual weighted topology.
\end{proof}

\subsubsection{Preservation of the group action under limits}

There is one more ingredient before continuation: the group action
must not be lost when a sequence of equivariant metrics converges.
The compactification just constructed is natural under isometries,
so the group elements themselves form compact families of maps.
Their limits retain the prescribed boundary transformations and
satisfy the same multiplication relations.

Suppose $q_i$ are $\Gamma$-invariant and $G_i$ carry coherent
isometric extensions $I_i(\gamma)$ of the prescribed boundary
action.  Naturality of the negative-scalar compactification makes
$I_i(\gamma)$ isometries of $\widehat g(G_i,q_i)$.
Proposition~\ref{prop:weak-marked-equivalence-properness} applies
with the fixed boundary map $\gamma$ in place of the identity.
For the finitely many $\gamma$, a common subsequence converges in
$C^{2,\alpha}$ to $I_\infty(\gamma)$.  The group law passes to
the limit, the boundary restriction remains $\gamma$, and
faithfulness follows from that restriction.  Boundary harmonic
regularity makes these intrinsic compactified isometries smooth.
Thus the equivariant locus is closed in the actual compactness
topology, with its prescribed action retained.

Near a fixed equivariant metric $G_*$, choose the harmonic-map
reference invariant under its coherent action $I_*(\Gamma)$.
The image of a nearby invariant class under its unique harmonic gauge
is invariant under this \emph{fixed} action.  Indeed, if $f$ is its
near-identity harmonic gauge and $I(\gamma)$ its coherent action,
then
\[
 I_*(\gamma)^{-1}\circ f\circ I(\gamma)
\]
is another harmonic map with boundary value the identity.  Properness
puts it in the weak $\delta_-$ neighborhood where uniqueness holds;
it equals $f$.  Therefore
$fI(\gamma)f^{-1}=I_*(\gamma)$.  This proves that nearby fixed
classes are represented by genuinely invariant tensors in the local
equivariant equation.  

\paragraph{Scope of the bridge.}
The lemmas above establish the admissibility, weak properness,
Hausdorffness, exact weighted capture, compactness, and symmetry
retention needed for finite-dimensional continuation.  The finite
solution manifolds and their smooth gauge transition maps are
constructed by the separate nonlinear harmonic-gauge argument, using
the two-weight uniqueness just identified.  Global gluing then uses
these actual marked classes and their compact topology.

\subsection{Finite-dimensional equivariant continuation}
\label{sec:finite-equivariant-continuation}

Everything needed for parity is now at hand.  The metric equation has
a finite-dimensional obstruction, detected by smooth invariant
boundary variations.  Its transverse finite-parameter zero sets are
smooth and have compatible overlaps.  Finally, geometric compactness
has been transferred to the marked weighted topology, and group
symmetry is retained under limits.

The remaining argument is topological.  We shall choose one finite
space of boundary variations that works near the entire compact
solution set, perturb the boundary path away from its endpoints,
and apply Sard's theorem.  We keep the boundary representatives,
not just their conformal classes, because the group action and the
perturbation parameters must be the same in every local chart.

We now specify a common topology for
all the local constructions.

Fix a smooth $\Gamma$-invariant defining function $x$, a smooth
$\Gamma$-equivariant collar, and a smooth complete
$\Gamma$-invariant reference metric on the fixed ball; a reference
metric with the prescribed invariant end can be made invariant in
the interior by finite averaging.  Fix $k\ge4$, $0<\beta<1/4$, and
$\delta_+=5/2$, and write
$E_j=\mathcal H^{k+j,\beta}_{\delta_+}(S^2T^*X)$ and
$E_\infty=\bigcap_{j\ge0}E_j$.  We use the prepared second jet
$q-x^2P_q$, where $P_q=\operatorname{Ric}_q-R_qq/4$.
A positive global extension, without a boundary-metric-dependent
collar width, is obtained from
\[
 B(q)=x^{-2}\bigl(dx^2+
          q(e^{-x^2P_q^\sharp}\,\cdot,\cdot)\bigr)
 \quad\hbox{on a smaller collar},\qquad P_q^\sharp=q^{-1}P_q.
\]
The endomorphism in the exponential is $q$-self-adjoint, so its
exponential is positive definite.  The second jet is exactly the
required one.  Patch this positive metric to the fixed reference
metric by a fixed convex interpolation.  The construction is
equivariant for the fixed linear boundary action and smooth on
every finite smooth family of boundary metrics.  Set
\[
 \mathscr A^+=\{(q,h):q\in\operatorname{Met}_{\rm PSC}^\infty(S^3),
    \ h\in E_\infty,\ G=B(q)+h>0,\ \operatorname{Ric}_G=-3G\}.
\]
Give this set the subspace topology from the smooth Fr\'echet
topology on $q$ and the single Banach norm $E_0$ on $h$.  The set
$E_\infty$ with this topology is not asserted to be complete; it
is not the Banach domain on which an implicit-function theorem is
used.  The local equations are defined on the complete $E_0$, and
their Einstein zeros acquire membership in $E_\infty$ by
Lemma~\ref{lem:hg-finite-reductions}.

Let $\mathscr D^\infty_{\delta_+}$ be the group of interior-smooth
boundary-identity diffeomorphisms whose displacement and inverse
displacement, near infinity and measured with the fixed reference
exponential map, belong to
$\mathcal H^\infty_{\delta_+}(TX)$.  Such maps extend
$C^{3,\theta}$ for every $\theta<\delta_+-2$, with the
identity boundary three-jet.  The rescaled chain rule proves closure
under composition and inversion.  Every fixed member acts by a
homeomorphism on $\mathscr A^+$: it acts boundedly on $E_0$,
preserves the prepared affine weight-$\delta_+$ condition, and
$q\mapsto\phi^*B(q)-B(q)$ is continuous into $E_0$.  This is
a statement about each fixed pullback, not about differentiability
of a moving group action.

The quotient
\[
 \mathscr Q^+=\mathscr A^+/\mathscr D^\infty_{\delta_+}
\]
is the marked solution space used here.  It agrees with the marked
geometric equivalence relation.  Indeed, boundary regularity gives
an ordinarily smooth representative of every Einstein class in
$\mathscr A^+$, and common geodesic collar coordinates put that
representative back into $\mathscr A^+$.  Here the prepared
weight $\delta_+>2$ gives a $C^{2,\theta}$ compactification and
permits the precise boundary-regularity theorem used in the bridge.
Conversely a marked isometry between two such normalized objects
is an isometry of their negative-scalar compactifications.
These have $C^{2,\theta}$ regularity, so the connection equation
upgrades it to $C^{3,\theta}$, with any fixed
$0<\theta<\delta_+-2$.  Equal prepared second metric jets imply
that its boundary three-jet is the identity.  After the first two
jets have been fixed, write
$F^x=x+A(y)x^3+o(x^3)$ and
$F^y=y+B(y)x^3+o(x^3)$.  In
$F^*(x^2H)=(x\circ F/x)^2x^2G$, the normal coefficient at order
$x^2$ gives $6A=2A$, and the mixed coefficient gives $3B=0$.
Thus $A=B=0$; all other third derivatives already vanish by
tangential differentiation of the first two jets.  Its displacement first
has weight $\delta\in(2,2+\theta)$.  Since every isometry is a harmonic
map into its target, the same expansion as in
Lemma~\ref{lem:hg-isometry-decay}, now using the second Einstein
metric as target, gives
\[
 (\nabla_G^*\nabla_G+3)V
 =f_{\delta_+}+E_{\delta_+}(V)+Q(V).
\]
The two metrics have the same boundary metric and differ by weight
$\delta_+$.  Hence $f_{\delta_+}$ and the coefficients of
$E_{\delta_+}$ have that weight, and $Q$ is quadratic.  Rescaled
elliptic estimates first give all derivatives at weight $\delta$;
since $2\delta>\delta_+$, the right side then has weight
$\delta_+$.  Invertibility of the vector
operator at both weights improves $V$ to weight $\delta_+$, with all
derivatives, exactly as in that lemma.  Thus this equivalence belongs
to $\mathscr D^\infty_{\delta_+}$.  This argument does not require either
identity map to be in a preselected reference gauge: the marked
isometry itself is harmonic.

This quotient is Hausdorff.  The source $\mathscr A^+$ is
metrizable, so it is enough to check closedness of the equivalence
relation on convergent sequences.  Weight-$\delta_+$ convergence gives
$C^{2,\theta}$ convergence of the negative-scalar
compactifications.  Proposition~\ref{prop:weak-marked-equivalence-properness}
gives a convergent marked isometry, and the preceding weight
improvement puts its limit in $\mathscr D^\infty_{\delta_+}$.
The proof of that proposition applies equally to an arbitrary
$C^\infty$-convergent sequence of boundary metrics: it uses their
$W^{2,p}$ convergence and uniform bounds, not a finite-dimensional
parametrization.  The relation is closed.  The quotient projection
is open because each fixed group element acts by a homeomorphism.
Its product is also an open quotient, so the closed relation implies
that the quotient diagonal is closed.  This proves Hausdorffness.

A local reference $H_G(q)$ used in the harmonic gauge differs from
$B(q)$ by an $E_\infty$ tensor, smoothly on each finite boundary
family.  Every finite-dimensional zero family from
Lemma~\ref{lem:hg-finite-reductions} therefore maps continuously to
$\mathscr A^+$ and hence to $\mathscr Q^+$.  This establishes
continuity in the actual global topology.

An element of $\mathscr Q^+$ is denoted $(q,[G])$.  Let
\[
 b:\mathscr Q^+\longrightarrow\operatorname{Met}_{\rm PSC}(S^3),
 \qquad b(q,[G])=q.
\]
The fixed linear extensions of the boundary group act continuously by
pullback on this space; replacing each pullback by the inverse pullback
if necessary gives a left action.  Its common fixed set is denoted
$\mathscr Q^+_\Gamma$.

\subsubsection{The local properties needed for continuation}
\label{subsec:finite-packet-interface}

We collect the local analytic properties and compactness results
used in the continuation argument.

\begin{enumerate}
\item At $z=(q,[G])\in\mathscr Q^+_\Gamma$, choose its coherent
isometric action $I^G_\gamma$ and an invariant gauge background.  For
every finite-dimensional, jointly smooth invariant boundary family
$q(a)$ with $q(a_0)=q$, the local gauged Einstein equation is a smooth
map
\[
 \Phi_G:A\times E_G^\Gamma\longrightarrow F_G^\Gamma.
\]
Here $E_G^\Gamma,F_G^\Gamma$ are complete weighted Banach spaces, the
metric represented by $(a,h)$ has boundary metric $q(a)$, and every
zero is an actual Einstein metric in the chosen gauge.  The derivative
in the metric variable at the central zero is a Fredholm operator
$L_G^\Gamma$ of index zero.

\item The obstruction map from smooth invariant boundary variations
is onto:
\[
 \mathcal O_G:C^\infty(S^2T^*S^3)^\Gamma
       \longrightarrow\operatorname{coker}L_G^\Gamma
       \cong K_G^\Gamma.
\]
The kernel of $L_G^\Gamma$ is also $K_G^\Gamma$.  Thus this is the
representation-level index calculation.

\item Whenever the full derivative of such a finite-parameter
equation is onto, its zero set near the central zero is a smooth
finite-dimensional manifold.  Its representatives have the spatial
regularity reserves proved in the gauge section.  The map from this
zero set to pairs consisting of a marked class and its finite
parameters is a homeomorphism onto a neighborhood in the corresponding
geometric solution set, as proved in
Lemma~\ref{lem:finite-topological-slice} below.  Every sufficiently close marked solution is
captured, including degenerate ones; two sufficiently close gauged
representatives of the same marked solution coincide.

\item For two such finite-dimensional solution families, the
boundary-fixing transformation placing one family in the other gauge
depends smoothly on the finite parameters.  This uses the
spatial reserves and the two-weight uniqueness argument from the
gauge and compactness-bridge sections. 

\item The full exact inverse image of any smooth compact PSC
boundary family is compact in the weighted marked topology.  This follows from Theorem~\ref{thm:compactness} and the
compactness bridge, and concerns arbitrary sequences of fillings.
\end{enumerate}

These local analytic properties and the compactness theorem provide
the ingredients for continuation.

\subsubsection{From a gauge slice to a neighborhood of marked fillings}

There is a distinction between constructing solutions of a gauged
equation and obtaining coordinates on the geometric solution space.
The first is an implicit-function theorem; the second requires
capture of every nearby filling and uniqueness of its representative
in the chosen gauge.  The following lemma makes that distinction
explicit, before transversality is imposed.

\begin{lemma}[Actual local quotient neighborhoods]
\label{lem:finite-topological-slice}
For a fixed finite smooth boundary family, the harmonic identity
slice is, locally, homeomorphic to an open neighborhood in the
marked solution set over that family.  Observe this assertion holds before
assuming transversality; it is simply about zero sets and their
topologies.  Whenever the invariant finite-parameter equation is
transverse, its finite-dimensional zero-set chart is consequently
an actual chart of the fixed marked solution set.
\end{lemma}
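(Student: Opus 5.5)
The map in question is
\[
\Sigma:\mathcal Z\longrightarrow\mathscr Q^+\times A,\qquad (a,h)\longmapsto\bigl((q(a),[H(a)+h]),\,a\bigr),
\]
where $\mathcal Z$ is a small neighborhood of the central zero in the zero set of \textup{(HG.5)} for the fixed family $q(a)$, with the subspace topology from $A\times\mathcal H^{k,\alpha}_d$. I would show that $\Sigma$ is continuous, injective, open onto its image, and that the image is a neighborhood of the central point in the marked solution set over the family. The transversality statement then follows at once: by Lemma~\ref{lem:hg-finite-reductions}, a transverse zero set is a smooth finite-dimensional manifold, and it carries the subspace topology. Composing its chart with $\Sigma$ gives the desired chart.

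\emph{Continuity.} Every zero lies in $\mathcal H^\infty_d$ by Lemma~\ref{lem:hg-finite-reductions}. The harmonic reference $H(a)$ differs from the prepared extension $B(q(a))$ by a tensor in $E_\infty$ that depends smoothly on $a$. Therefore $(a,h)\mapsto(q(a),B(q(a))+[H(a)-B(q(a))+h])$ is continuous into $\mathscr A^+$, and the quotient projection is continuous.

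\emph{Injectivity.} Suppose two zeros $(a,h)$ and $(a',h')$ represent the same marked class. Then $a=a'$ because the boundary metric is recorded, and there is a boundary-identity isometry $F$ with $F^*(H(a)+h')=H(a)+h$. This $F$ is harmonic from $H(a)+h$ to $H(a)$, since the identity is harmonic into $H(a)$ for the second metric, and tension is natural. I would shrink the neighborhood. Proposition~\ref{prop:weak-marked-equivalence-properness}, applied to sequences, shows that $F$ is $C^{3,\theta}$ close to an isometry of the central metric fixing the boundary. By Lemma~\ref{lem:boundary-identity}, that limiting isometry is the identity. Lemma~\ref{lem:marked-isometry-second-jet} and Lemma~\ref{lem:hg-quantitative-isometries} then place $F$ in the weight-$d$ uniqueness neighborhood of Lemma~\ref{lem:hg-nonlinear-gauge}. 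Since the identity is also a solution there, $F=\mathrm{Id}$ and $h=h'$.

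\emph{Openness and neighborhood property.} I would argue by contradiction. Suppose marked solutions over the family converge to the central class in $\mathscr Q^+$ but are not in the image of $\Sigma$ restricted to small neighborhoods. Choose representatives converging in $E_0$ after adjusting by a fixed element of $\mathscr D^\infty_{\delta_+}$; this is possible because the orbit projection is open and each fixed pullback is a homeomorphism. Lemma~\ref{lem:hg-local-capture} then puts their harmonic-gauge representatives into $\mathcal Z$, converging to the center in the weighted norm, and this contradicts the assumption. The same argument applied to an arbitrary convergent sequence in the image shows that $\Sigma^{-1}$ is sequentially continuous. Both spaces are metrizable ($\mathscr A^+$ is metrizable and the quotient is Hausdorff with an open projection), so sequential continuity suffices. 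The reverse direction is handled by continuity of $\Sigma$ together with injectivity.

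\emph{Main obstacle.} The delicate step is the capture in the openness argument. Convergence in $\mathscr Q^+$ gives representatives whose remainders converge relative to $B(q)$, but these may differ from the central gauge by a diffeomorphism in $\mathscr D^\infty_{\delta_+}$ that is not small. One must check that this diffeomorphism can be absorbed into a fixed pullback, so that it does not move with the sequence, before Lemma~\ref{lem:hg-local-capture} applies. I would first try the neighborhood-base description: the images of small $E_0$-balls about one fixed representative form a neighborhood base in the quotient. This lets me choose representatives inside such balls directly.
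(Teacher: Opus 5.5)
Your sequential route is a workable alternative to the paper's argument, and its analytic core is sound: continuity of $\Sigma$, injectivity via properness, boundary-identity rigidity and the weight upgrade, and capture at the centre. The last assertion of the lemma is not proved, however, and two steps need repair.

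\textbf{Missing step: the invariant assertion.} The final sentence of the lemma concerns the \emph{invariant} equation and the \emph{fixed} marked locus. Your remark that ``the transversality statement then follows at once'' does not address either.

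Restricting $\Sigma$ to the invariant zeros $\mathcal Z^{I}$ gives a homeomorphism onto $\Sigma(\mathcal Z^{I})$. For this to be a chart of the fixed locus you also need $\Sigma(\mathcal Z)\cap\mathscr Q^+_\Gamma\subset\Sigma(\mathcal Z^{I})$. In words, the gauged representative of every nearby \emph{fixed} class must itself be invariant. Transversality of the invariant equation says nothing about this, and the full equation need not be transverse, so $\mathcal Z$ need not be a manifold.

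The paper gets this from Lemma~\ref{lem:finite-local-invariance}. You can also get it from your own injectivity step:
\begin{itemize}
\item Choose $x$, the collar, the cutoffs and $H(a)$ invariant under the central coherent action $I_\gamma$, so that (HG.5) is equivariant.
\item Shrink $\mathcal Z$ so that it is $I$-stable.
\item If $\Sigma(a,h)$ is fixed, then $I_\gamma^*(H(a)+h)$ is a zero in $\mathcal Z$ with the same $a$, because each $q(a)$ is invariant.
\item It also has the same marked class, because $I_\gamma$ differs from the fixed extension $L_\gamma$ by a boundary-fixing equivalence.
\item Injectivity then forces $I_\gamma^*(H(a)+h)=H(a)+h$.
\end{itemize}

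\textbf{Two repairs.}

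First, a Hausdorff open quotient of a metrizable space is first countable, but it need not be metrizable; it need not even be regular. Your metrizability claim for $\mathscr Q^+$ is therefore unjustified. First countability is all your sequential arguments actually use, so state that instead.

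Second, Lemma~\ref{lem:hg-local-capture} is stated only for convergence to the \emph{central} point. Its proof moves only the smooth reference $H(a_i)$ by the varying gauge maps. Continuity of $\Sigma^{-1}$ at a non-central image point $\Sigma(a_0,h_0)$ requires moving $h_0$ as well:
\[
 (F_i)_*h_i-h_0=(F_i)_*(h_i-h_0)+\bigl((F_i)_*h_0-h_0\bigr).
\]
The last term is small only because $h_0\in\mathcal H^\infty_d$ has spare spatial derivatives. This is not ``the same argument''; it is exactly the continuity step the paper proves for its retraction.

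Your neighborhood-base device does correctly remove the non-small diffeomorphism you worried about. The central metric is itself a point of $\mathscr A^+$, so images of small balls about it form a countable base at the central class, and no adjustment is needed.

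\textbf{Comparison with the paper.} The paper avoids sequences altogether:
\begin{itemize}
\item It defines the gauge retraction $r$ on an open subset of the source $\mathscr A_A$.
\item It proves $r$ continuous at every point using the decomposition above.
\item It shrinks to an $r$-stable open set $U$, which is possible because $r^2=r$.
\item It gets openness of the slice projection directly from $\pi(r^{-1}(W))=\pi(W)$ and openness of $\pi$.
\end{itemize}
Your route reuses Lemmas~\ref{lem:hg-local-capture} and \ref{lem:hg-quantitative-isometries}. It needs first countability, though, and in the end requires the same pointwise continuity of the retraction.

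Your injectivity step is the paper's uniqueness step in different packaging. You upgrade to weight $d$ through Lemma~\ref{lem:hg-quantitative-isometries}, while the paper runs the Schauder bound and harmonic-map uniqueness at $\delta_-$.
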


\begin{proof}
The gauge construction retracts nearby Einstein metrics to the
identity slice.  We must check continuity on the conormal-smooth
source and uniqueness of the representative of each nearby class.
The quotient topology then identifies the slice with an open
neighborhood of geometric solutions.

Work near a fixed smooth central representative and its smooth
reference family $H(a)$.  The nonlinear harmonic-map theorem
constructs, for every nearby $(a,h)\in\mathscr A^+$, a unique
small $F(a,h)$ placing $G=H(a)+h$ in harmonic identity gauge.
Its pushforward is Einstein and is a zero of the fixed-coordinate
equation, hence has all conormal spatial derivatives.  This
constructs a local gauge retraction $r$ onto the zero set in the
identity slice.  It fixes every point already in that slice.

This retraction is continuous in the topology actually used on
$\mathscr A^+$.  The vector solution is smooth into
$\mathcal H^{k+1,\beta}_{\delta_+}$.  To check pushforward, center at any
one conormal-smooth source $(a_0,h_0)$ and first compose by its
fixed gauge map, reducing its gauge to the identity.  For nearby
maps $F\to\operatorname{Id}$, decompose the remainder as
\[
 F_*h-h_0=F_*(h-h_0)+(F_*h_0-h_0).
\]
The first term is small in $E_0$ by the bounded composition estimate
on uniformly controlled diffeomorphisms.  The second is small
because this one fixed $h_0$ has an additional spatial derivative.
Similarly $F_*H(a)-H(a)$ tends to zero in weight $\delta_+$, since the
reference has every spatial derivative and $F-\operatorname{Id}$
has that weight.  Inversion of the maps is continuous here by the
same fixed-smooth-base decomposition.  These estimates prove
continuity of $r$ at each point in the conormal-smooth source. 

\smallskip\noindent\emph{Uniqueness of the representative.}
We next verify orbit uniqueness in a sufficiently small slice
neighborhood.  If it failed, two sequences of distinct gauged
representatives could approach the central one while being related
by marked isometries.  Properness of the negative-scalar
compactifications gives convergence of those isometries to the
identity in $C^{3,\theta}$, for every fixed
$\theta<\delta_+-2$.
Their boundary three-jets are the identity.  Choose
\[
 2<\delta_-<\delta_+,\qquad
 \max\{\beta,\delta_--2\}<\theta<\delta_+-2.
\]
The Taylor estimate gives convergence of their displacement to zero
in weighted $C^{2,\beta}_{\delta_-}$ in the rescaled charts.
Both comparison maps and the identity are harmonic from the first
gauged metric into the common smooth reference $H(a)$.  Subtracting
their equations and applying successive rescaled Schauder estimates
gives
\[
 \|V_i\|_{k+1,\beta,\delta_-}
 \le C\|V_i\|_{2,\beta,\delta_-}\longrightarrow0.
\]
The constants are uniform: the gauged Einstein equations bound each
needed conormal coefficient norm on the fixed low-order
neighborhood, and the smooth finite reference family has all these
bounds.  In coordinates the harmonic-map equation has fixed domain
principal coefficients and smooth target coefficients; the
difference equation has no term independent of $V_i$ because the
identity is also harmonic.  Differentiating successively therefore
retains the same decay, and the small first-derivative terms are
absorbed in the usual local Schauder estimate.  This explains the
displayed estimate and, in particular, its small right-hand side.
The uniqueness theorem at weight $\delta_-$ identifies the
comparison map with the identity.  This contradiction proves local
orbit uniqueness.

\smallskip\noindent\emph{The quotient topology.}
For completeness the quotient topology supplies openness, not just
a sequential representative statement.  Over the finite boundary
family let
\[
 \mathscr A_A=\{(a,(q,h))\in A\times\mathscr A^+:q=q(a)\}.
\]
Its projection to the marked family is open.  For a basic relatively
open set this follows from openness of the original orbit
projection, because a boundary-identity orbit preserves $q$ and
$a$.  Thus base change by $a\mapsto q(a)$ preserves this open
quotient property.

Shrink the local domain of $r$ to an open set $U$ which is stable
under $r$.  This is possible by first taking a smaller open set
$U_0$ and then replacing it by
$U_0\cap r^{-1}(U_0)$; since $r^2=r$, the new set is stable.
The slice $S=U\cap\operatorname{im}r$ maps bijectively to the
open marked neighborhood $\pi(U)$ by local orbit uniqueness.
For every relatively open $W\subset S$, the set $r^{-1}(W)$ is
open in $U$ and
\[
 \pi(r^{-1}(W))=\pi(W).
\]
The marked image $\pi(W)$ is therefore open.  The slice projection
is continuous and open, hence a homeomorphism.  Restricting this
statement to fixed classes and using
Lemma~\ref{lem:finite-local-invariance} gives the invariant
assertion.  A transverse invariant equation is a finite-dimensional
manifold by the Banach implicit-function theorem on its complete
spaces, so these are genuine finite-dimensional marked charts.
\end{proof}

\begin{lemma}[Closed fixed set and coherent action]
\label{lem:finite-fixed-closed}
The set $\mathscr Q^+_\Gamma$ is closed in $\mathscr Q^+$.  Every
element $(q,[G])$ in it carries a unique coherent isometric action
$I_\gamma$ extending the prescribed boundary action.
\end{lemma}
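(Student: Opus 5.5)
I would prove the two assertions in the reverse order: first construct, for any class in $\mathscr Q^+_\Gamma$, its coherent action together with uniqueness, and then deduce closedness from the preservation-of-action argument in the compactness bridge.

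\emph{Existence and uniqueness of the coherent action.} Let $(q,[G])\in\mathscr Q^+_\Gamma$. By definition, for each $\gamma\in\Gamma$ the pullback of $(q,G)$ under the fixed linear extension $\tilde\gamma$ of the boundary action represents the same marked class. So there is $\phi_\gamma\in\mathscr D^\infty_{\delta_+}$ with $\phi_\gamma^*\tilde\gamma^*G=G$, and since $q$ is $\Gamma$-invariant the boundary metrics agree. Set $I_\gamma:=\tilde\gamma\circ\phi_\gamma$. This is an isometry of $G$ with boundary restriction $\gamma$. If $I'_\gamma$ is another such isometry, then $I_\gamma^{-1}I'_\gamma$ is a PE isometry that is the identity on the boundary. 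It extends $C^{1}$ because both maps are $C^{3,\theta}$ on the compactification: naturality of the negative-scalar compactification, \eqref{eq:negative-compactification-naturality} with $\gamma$ in place of the identity, plus the connection identity \eqref{eq:isometry-connection-control}. Lemma~\ref{lem:boundary-identity} then gives $I'_\gamma=I_\gamma$. Coherence follows the same way: $I_{\gamma_1}I_{\gamma_2}I_{\gamma_1\gamma_2}^{-1}$ is a boundary-identity isometry, hence the identity, so $\gamma\mapsto I_\gamma$ is a homomorphism. It is faithful because its boundary restriction is.

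\emph{Closedness.} Since $\mathscr Q^+$ is Hausdorff and its source $\mathscr A^+$ is metrizable, it is enough to treat sequences. Take $z_i=(q_i,[G_i])\in\mathscr Q^+_\Gamma$ converging to $z=(q,[G])$, and lift to representatives converging in $\mathscr A^+$ (orbit projection is open). Each $q$ is a $C^\infty$ limit of invariant metrics, so it is invariant. Each $G_i$ carries its coherent action $I_i(\gamma)$, and these are isometries of $\widehat g(G_i,q_i)$ with fixed boundary map $\gamma$. Proposition~\ref{prop:weak-marked-equivalence-properness}, applied with $\gamma$ in place of the identity (its proof uses only the connection law and uniform $W^{2,p}$/$C^{2,\epsilon}$ convergence of the compactifications), gives a subsequence with $I_i(\gamma)\to I_\infty(\gamma)$ in $C^{3,\theta}$ for the finitely many $\gamma$. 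The limit satisfies $I_\infty(\gamma)^*G=G$ on the interior and has boundary value $\gamma$.

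The main obstacle is showing that $I_\infty(\gamma)$ witnesses fixedness in the \emph{weighted} quotient, that is, that $\tilde\gamma^{-1}\circ I_\infty(\gamma)$ lies in $\mathscr D^\infty_{\delta_+}$ rather than being merely a $C^{3,\theta}$ boundary-identity isometry between $G$ and $\tilde\gamma^*G$. I would use the argument given after the definition of $\mathscr Q^+$. Both metrics have the same prepared second jet, since $B(q)$ is equivariant for the linear boundary action and $q$ is invariant. So the third-jet computation gives identity three-jet, and the displacement starts in weight $\delta\in(2,2+\theta)$. An isometry is harmonic into its target, so the expansion from Lemma~\ref{lem:hg-isometry-decay}, together with invertibility of $\nabla^*\nabla+3$ at both weights from Lemma~\ref{lem:conormal-vector-inverse}, upgrades the displacement to $\mathcal H^\infty_{\delta_+}$. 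Hence $z$ is fixed, and $\mathscr Q^+_\Gamma$ is closed.
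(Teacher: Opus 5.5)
Your proposal is correct. The existence, uniqueness, group law and faithfulness of the coherent action are exactly the paper's argument. You choose $\phi_\gamma$ with $\phi_\gamma^*\tilde\gamma^*G=G$ and set $I_\gamma=\tilde\gamma\circ\phi_\gamma$. You then apply boundary-identity rigidity to the quotient of any two isometries with the same boundary value. Your extra remark about the $C^1$ extension does no harm; for $I_\gamma$ itself it is immediate, since $\phi_\gamma\in\mathscr D^\infty_{\delta_+}$ extends $C^{3,\theta}$ and $\tilde\gamma$ is linear.

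You diverge from the paper on closedness, where the paper uses a one-line topological argument:
\begin{itemize}
\item Each fixed linear extension acts by a continuous self-map $z\mapsto\gamma\cdot z$ of $\mathscr Q^+$, and $\mathscr Q^+$ has already been shown to be Hausdorff.
\item Hence $\{z:\gamma\cdot z=z\}$ is the preimage of the closed diagonal under $z\mapsto(z,\gamma\cdot z)$.
\item Intersecting over the finitely many $\gamma$ gives a closed set.
\end{itemize}

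Your sequential argument is valid. It lifts via the open projection, applies marked-isometry properness with $\gamma$ in place of the identity, and then upgrades the weight to land in $\mathscr D^\infty_{\delta_+}$. But in this special case it re-derives the closedness of the marked equivalence relation, which the paper had already packaged into the Hausdorff property together with continuity of the pullback action. So the ``main obstacle'' you single out is already absorbed by those two established facts.

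What your route gains is extra information: the coherent actions $I_i(\gamma)$ themselves converge to the coherent action of the limit. That is the content of the paper's separate paragraph on preservation of the group action under limits, and it is not needed for this lemma.

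One small correction: reducing to sequences is justified because $\mathscr Q^+$ is first countable, being an open continuous image of the metrizable space $\mathscr A^+$, not because it is Hausdorff.
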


\begin{proof}
The common fixed set of finitely many continuous maps into a Hausdorff
space is closed.  If $[L_\gamma^*G]=[G]$, choose a boundary-fixing
equivalence $\phi_\gamma$ with
$\phi_\gamma^*L_\gamma^*G=G$ and put
$I_\gamma=L_\gamma\circ\phi_\gamma$.  These are isometries with
boundary values $\gamma$.  Both $I_\gamma I_\delta$ and
$I_{\gamma\delta}$ have the same boundary value; their quotient is a
boundary-identity isometry.  Boundary-identity rigidity makes it the
identity.  The same argument proves uniqueness, and faithfulness
follows from faithfulness of the boundary action.  
\end{proof}

\begin{lemma}[Invariant representatives in a local gauge]
\label{lem:finite-local-invariance}
Fix $z=(q,[G])\in\mathscr Q^+_\Gamma$ and its coherent action
$I_\gamma$.  For an invariant finite boundary family, sufficiently
nearby fixed marked classes are represented in the local gauge
precisely by the $I$-invariant zeros of its equation.
\end{lemma}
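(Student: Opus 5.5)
The plan is to prove the two inclusions separately. The easy one says that an $I$-invariant zero of the local invariant equation represents a fixed marked class. The harder one says that the gauge representative of a nearby fixed class is automatically $I$-invariant. Throughout we work with the invariant gauge background chosen at $z$: the defining function $x$, the collar, the cutoffs and the reference family $H(a)$ are all $I_\gamma$-invariant. This is possible because the boundary family $q(a)$ is $\Gamma$-invariant and the coherent action exists and is unique by Lemma~\ref{lem:finite-fixed-closed}. By naturality of (HG.5) and (HG.9), each $I_\gamma$ commutes with $\Phi$ and with the harmonic-map tension, so the invariant subspace $E_G^\Gamma$ is preserved.

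\emph{Invariant zeros give fixed classes.} Let $g=H(a)+h$ be a zero with $I_\gamma^*h=h$. By Lemma~\ref{lem:hg-equation} it is Einstein, and $I_\gamma$ is an isometry of $g$ whose boundary value is $\gamma$. Write $L_\gamma$ for the fixed linear extension of $\gamma$. Then $L_\gamma^{-1}\circ I_\gamma$ is a boundary-identity map carrying $L_\gamma^*g$ to $g$. By Lemma~\ref{lem:hg-isometry-decay} this map lies in $\mathscr D^\infty_{\delta_+}$, so $[L_\gamma^*g]=[g]$ and the class lies in $\mathscr Q^+_\Gamma$.

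\emph{Fixed classes have invariant representatives.} Let $(q(a),[G'])$ be a fixed class near $z$, and let $g'$ be its unique gauge representative from Lemma~\ref{lem:finite-topological-slice}. Lemma~\ref{lem:finite-fixed-closed} gives a coherent action $I'_\gamma$ by isometries of $g'$ with boundary value $\gamma$. Consider $\psi_\gamma=I_\gamma^{-1}\circ I'_\gamma$. It is a boundary-identity map, and it is an isometry from $g'$ onto $(I_\gamma^{-1})^*g'$. Since $H(a)$ is $I_\gamma$-invariant and the identity is harmonic from $g'$ to $H(a)$, naturality shows that $\psi_\gamma$ is harmonic from $(X,g')$ into $(X,H(a))$. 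We then apply the closedness argument of Section~\ref{sec:continuation-compactness-bridge}. Take a sequence of fixed classes converging to $z$. Their actions $I'_\gamma$ converge to $I_\gamma$ in $C^{3,\theta}$, by Proposition~\ref{prop:weak-marked-equivalence-properness} with boundary map $\gamma$ together with boundary-identity rigidity, which identifies the limit. Hence $\psi_\gamma\to\mathrm{id}$ in $C^{3,\theta}$, and the identity three-jet follows from Lemma~\ref{lem:marked-isometry-second-jet}. Lemma~\ref{lem:hg-quantitative-isometries}, used in its two-weight form at $\delta_-$ exactly as in the uniqueness step of Lemma~\ref{lem:finite-topological-slice}, then places $\psi_\gamma$ inside the harmonic-map uniqueness neighborhood. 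There the identity is the unique small harmonic map, so $\psi_\gamma=\mathrm{id}$. Thus $I'_\gamma=I_\gamma$ and $h'$ is invariant. Arguing by contradiction along sequences turns this into a statement on a fixed neighborhood.

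\emph{Main obstacle.} The hardest part is the uniform smallness of $\psi_\gamma$ in the weighted norm where uniqueness holds. Properness only yields $C^{3,\theta}$ convergence, and the weighted upgrade needs the identity three-jet and the decay improvement of Lemma~\ref{lem:hg-isometry-decay}. I would handle this with the same contradiction argument over sequences used for orbit uniqueness. It still has to be checked that conjugating by the fixed $I_\gamma$ keeps all weighted estimates uniform. This holds because $I_\gamma$ is a fixed element of $\mathscr D^\infty_{\delta_+}$ and acts boundedly on every weighted scale.
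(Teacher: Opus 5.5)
Your proposal is correct and follows the paper's argument. The paper observes that $I_\gamma^*G'$ is again in the invariant gauge and represents the same fixed marked class as $G'$; it then uses properness of marked isometries together with boundary-identity rigidity to make the comparing equivalence close to the identity, and concludes $I_\gamma^*G'=G'$ by local gauge uniqueness. That is exactly your route once the uniqueness step is unfolded for the explicit comparison map $\psi_\gamma=I_\gamma^{-1}\circ I'_\gamma$, and your converse matches the paper's as well.

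One harmless slip: $\psi_\gamma$ is an isometry from $g'$ onto $I_\gamma^*g'$, not onto $(I_\gamma^{-1})^*g'$. This affects neither the harmonicity of $\psi_\gamma$ into $H(a)$ nor the three-jet step.
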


\begin{proof}
The background, extension, and gauge equation are invariant under
the fixed isometries $I_\gamma$.  Thus $I_\gamma^*G'$ is in the same
gauge whenever $G'$ is, and is close to $G$ if $G'$ is close to $G$.
If $[G']$ is a fixed marked class, $I_\gamma^*G'$ and $G'$ represent
the same class: $I_\gamma$ differs from $L_\gamma$ by a
boundary-fixing diffeomorphism.

The properness lemma for marked compactified isometries implies that
the equivalence between these two close representatives is close to
the identity.  Indeed, a contrary sequence would converge to a
nontrivial boundary-identity isometry of $G$, contradicting rigidity.
The local gauge uniqueness theorem therefore gives
$I_\gamma^*G'=G'$ for every $\gamma$.  Conversely, an invariant
representative gives a fixed marked class because
$I_\gamma L_\gamma^{-1}$ fixes the boundary.
\end{proof}

\begin{lemma}[Smooth finite-dimensional overlaps]
\label{lem:finite-overlaps}
Let two transverse finite-parameter local solution families use the
same actual boundary family $q(a)$ and be centered at fixed marked
classes.  Their images in the geometric solution set are open.  On
every nonempty overlap the change of finite-dimensional coordinates
is smooth and preserves $a$.  The changes obey the cocycle identities.
\end{lemma}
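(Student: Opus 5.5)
The plan is to assemble the lemma from three earlier results: the slice homeomorphism of Lemma~\ref{lem:finite-topological-slice}, the smooth gauge changes of Proposition~\ref{prop:hg-overlap}, and the local invariance of Lemma~\ref{lem:finite-local-invariance}. Openness comes first. Lemma~\ref{lem:finite-topological-slice} shows that each transverse invariant zero-set chart $Z_i$ maps homeomorphically onto an open subset $\pi(U_i)$ of the marked solution set over the family $q(a)$, with the parameter $a$ retained. Restricting to fixed classes, Lemma~\ref{lem:finite-local-invariance} identifies that image with an open subset of the fixed locus $\mathscr Q^+_\Gamma$, pulled back along $a\mapsto q(a)$. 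The fixed locus is closed by Lemma~\ref{lem:finite-fixed-closed}, which is needed only for the relative topology. This gives the first assertion.

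Next take a point $w$ in the overlap $\pi(U_i)\cap\pi(U_j)$, and let $z_i\in Z_i$ and $z_j\in Z_j$ be its unique representatives. Uniqueness holds by the local orbit uniqueness in Lemma~\ref{lem:finite-topological-slice}. By Lemma~\ref{lem:hg-isometry-decay} these two representatives are related by a marked isometry $F_0$ whose displacement lies in $\mathcal H^\infty_d$. I would then apply Proposition~\ref{prop:hg-overlap} centered at $w$ with this $F_0$. Its hypothesis of local properness is supplied by Proposition~\ref{prop:weak-marked-equivalence-properness} together with boundary-identity rigidity (Lemma~\ref{lem:boundary-identity}). The argument is by contradiction: if nearby equivalences did not stay close to $F_0$, a subsequence would converge to a second marked isometry, and composing it with $F_0^{-1}$ would give a nontrivial boundary-identity isometry, which rigidity forbids. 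Proposition~\ref{prop:hg-overlap} then gives a smooth transition between neighborhoods of $z_i$ and $z_j$ that preserves $a$, because boundary-identity maps do not change the boundary metric. Two further points remain. The transition must agree with the quotient-chart transition $\pi_j^{-1}\circ\pi_i$, and this follows from orbit uniqueness in $Z_j$. The pushed-forward family must also stay invariant: the comparison map intertwines the coherent actions $I^{(i)}$ and $I^{(j)}$ by rigidity, so harmonic-map uniqueness keeps the image inside $Z_j^\Gamma$, as in the subsection on keeping the prescribed group action.

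Smoothness is a local property, so these local transitions cover the whole overlap. Because each one coincides with $\pi_j^{-1}\circ\pi_i$, the whole change of coordinates is a well-defined smooth map. Inverses and cocycles then hold automatically: all transitions are restrictions of the set-theoretic maps $\pi_j^{-1}\circ\pi_i$ between charts of one space, so $\tau_{ki}=\tau_{kj}\circ\tau_{ji}$ holds exactly. At the level of metrics, the composite comparison isometry and the direct one differ by a boundary-identity PE isometry, which is the identity by Lemma~\ref{lem:boundary-identity}.

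I expect the main obstacle to be the local properness step, specifically getting the comparison isometries close to $F_0$ in the weight-$d$ norm where harmonic-map uniqueness holds. The plan is to go through $C^{3,\theta}$ convergence of the negative-scalar compactifications and then upgrade with Lemma~\ref{lem:hg-quantitative-isometries}, using the two-weight argument with $\delta_-$. I would check carefully that the constants there are uniform over the finite-dimensional subcharts.
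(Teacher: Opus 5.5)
Your proposal is correct and follows essentially the same route as the paper. In both, openness comes from local capture; the two representatives at an overlap point are related by a fixed boundary-identity comparison isometry, unique by rigidity; the first family is regauged into the second gauge, with local uniqueness identifying the result; invariance follows from the intertwining of the coherent actions; and the cocycle identities hold because all transitions identify the same marked metric and parameters. You are more explicit than the paper about supplying the local-properness hypothesis of Proposition~\ref{prop:hg-overlap} through Proposition~\ref{prop:weak-marked-equivalence-properness}, rigidity, and Lemma~\ref{lem:hg-quantitative-isometries}, which the paper's short proof leaves implicit in its appeal to local uniqueness.
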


\begin{proof}
Openness and the homeomorphism assertion are the local capture
property.  Consider a point in the overlap.  Let $G_1^*,G_2^*$ be
its two representatives and let $\psi$ be their boundary-fixing
identification.  It is unique by boundary-identity rigidity.
Pull the first family back by this fixed $\psi$, and then apply the
smooth finite-family regauging theorem with the second background.
The resulting metric has the same actual boundary representative,
is in the second gauge, and represents the same marked solution.
Local uniqueness identifies it with the second representative.
Composing the smooth regauging map with the second local
finite-dimensional coordinate map proves smoothness of the
transition.  Reversing the roles proves smoothness of its inverse.

This also checks the equivariance issue.  If the fixed actions in the
two charts are $I^1$ and $I^2$, then $\psi$ intertwines them at the
overlap point.  Indeed, conjugating $I^1_\gamma$ by $\psi$ gives an
isometry of $G_2^*$ with boundary value $\gamma$, hence it equals
$I^2_\gamma$.  This is equality of fixed diffeomorphisms.  The entire
first family pulled back by $\psi$ is therefore $I^2$-invariant.
Uniqueness in the second gauge preserves this invariance.  

Finally the transition maps identify exactly the same marked metric
and the same boundary parameters.  Their compositions therefore
coincide on triple overlaps.  Equivalently, a discrepancy would be a
boundary-identity isometry.  This proves the cocycle identities.
\end{proof}

The preceding lemma constructs charts directly on a subspace of the
already Hausdorff marked solution space.  

\subsubsection{Choosing finitely many boundary directions}

We recall why a finite family suffices.  The obstruction space at a
filling is finite-dimensional.  The Green pairing shows that smooth
invariant boundary variations span it.  Thus we can select finitely
many such variations at each filling, and surjectivity persists in
a neighborhood.  Compactness will soon reduce these local choices
to one common finite family.

Proposition~\ref{prop:finite-invariant-obstruction} proves
surjectivity of $\mathcal O_G$ on smooth invariant boundary tensors,
using the strong boundary trace established in
Lemma~\ref{lem:strong-jacobi-boundary-trace}.  Concretely, if the
image were proper, choose a nonzero $\kappa\in K_G^\Gamma$
annihilating it.  Averaging any smooth boundary variation over the
prescribed action shows that $\kappa$ annihilates all smooth
variations.  The justified Green formula makes its smooth leading
boundary coefficient zero, and boundary unique continuation then
gives $\kappa=0$, a contradiction.  Finite-dimensionality of
$K_G^\Gamma$ permits choosing finitely many smooth invariant tensors
$\eta_1,\ldots,\eta_\ell$ with independent images spanning the
obstruction.  These are actual metric variations.

\subsubsection{Compactness precedes transversality}

We emphasize the order of the construction.  Compactness of the
\emph{unperturbed} path inverse image is established before any
finite-dimensional directions are selected. Only after compactness do we
take a finite cover of the solution set.

Theorem~\ref{thm:actual-weighted-path-compactness} establishes actual
compactness of the full exact inverse image in the weighted marked
topology specified here, before any finite-cover argument is used.
Its auxiliary geometric quotient has a separable integer-$C^K$
source and an open, Hausdorff quotient; this turns the
geometric sequential compactness into compactness.  The identity on
marked fillings maps that compact exact inverse image continuously
into the weighted quotient, with the retained finite boundary
parameter supplying the required $C^\infty$ convergence of the
actual boundary metrics.  The bridge takes $K\ge k+10$ in this
integer-topology argument, paying separately for the collar change,
third-order Taylor division, and an extra derivative controlling
the H\"older norm.  

\subsubsection{The continuation theorem}

\begin{theorem}[Finite-dimensional equivariant PE continuation]
\label{thm:finite-equivariant-continuation}
Let $\Gamma<SO(4)$ be finite and free on $S^3$.  Let
$q_s$, $0\le s\le1$, be a jointly smooth family of
$\Gamma$-invariant PSC metrics with $q_0=g_{\rm rd}$.
Then there is a complete PE metric $G$ on $B^4$ with
\[
 \operatorname{Ric}_G=-3G,
 \qquad\partial_\infty[G]=[q_1],
\]
and a coherent isometric action of $\Gamma$ extending the prescribed
boundary action.  The marked filling belongs to the connected
component of the hyperbolic filling in $\mathscr Q^+_\Gamma$ after
allowing the boundary representative to vary.  Degenerate fillings
are included throughout the construction.
\end{theorem}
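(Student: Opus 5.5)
We argue by contradiction, using a mod-two count on a perturbed one-dimensional solution space. Suppose the fiber of $\mathscr Q^+_\Gamma$ over $q_1$ is empty. Let $\mathcal K\subset\mathscr Q^+_\Gamma$ be the set of fixed marked classes lying over the path $\{q_s\}$, with the parameter $s$ retained. Theorem~\ref{thm:actual-weighted-path-compactness} shows that the full inverse image of the path is compact and metrizable. Lemma~\ref{lem:finite-fixed-closed} shows that the fixed set is closed. Hence $\mathcal K$ is compact, and it is so before any transversality choices are made.

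At each $z\in\mathcal K$, Proposition~\ref{prop:finite-invariant-obstruction} supplies finitely many smooth invariant boundary tensors whose images span $\operatorname{coker}L^\Gamma_G\cong K_G^\Gamma$. Surjectivity of the augmented derivative is an open condition. By Lemma~\ref{lem:finite-topological-slice} and properties (1)--(3), it therefore persists on a marked neighborhood of $z$. Compactness of $\mathcal K$ lets us take a finite subcover. We then take the union of all chosen directions $\eta_1,\ldots,\eta_\ell$, which gives one common invariant family.

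Next we fix a smooth cutoff $\phi(s)$ vanishing near $s=0$ and $s=1$. We consider the $(1+\ell)$-parameter invariant boundary family
\[
 q(s,t)=q_s+\phi(s)\sum_{j=1}^\ell t_j\eta_j,\qquad |t|\le\varepsilon .
\]
For $\varepsilon$ small these metrics are still PSC and $\Gamma$-invariant. Where $\phi\neq0$, the full derivative in $(s,t,h)$ is onto near $\mathcal K$, because the $t$-directions alone span every obstruction. Near $s=0$ the only solution is the hyperbolic seed, which is nondegenerate by Lemma~\ref{lem:finite-hyperbolic-seed}. Near $s=1$ there are no solutions at all, by hypothesis together with compactness. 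So transversality holds on a neighborhood of all of $\mathcal K$ in the extended space.

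Lemmas~\ref{lem:hg-finite-reductions}, \ref{lem:finite-local-invariance} and \ref{lem:finite-overlaps} then make the solution set $\mathcal M$ over $[0,1]\times B_\varepsilon$ near $\mathcal K$ a smooth Hausdorff manifold of dimension $1+\ell$, with smooth projection $\pi:\mathcal M\to[0,1]\times B_\varepsilon$. Compactness of $\mathcal K$, together with closedness of the solution set over compact families (Theorem~\ref{thm:actual-weighted-path-compactness} applied to the compact family $q(s,t)$), shows that for small $\varepsilon$ all solutions lie in this neighborhood. Hence $\pi$ is proper.

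By Sard's theorem, applied to the composition $\mathcal M\to B_\varepsilon$, we can choose a regular value $t_*$ arbitrarily close to $0$. Then $\mathcal M_{t_*}$ is a compact one-manifold with boundary lying over $s\in\{0,1\}$. Because $\phi=0$ near the endpoints, the boundary fibers agree with those of the unperturbed problem. Over $s=0$ there is exactly one point, the regular hyperbolic seed given by Lemma~\ref{lem:finite-hyperbolic-seed}; we must check that it is a transverse boundary point. Over $s=1$ there are no points. A compact one-manifold has an even number of boundary points, so this is a contradiction.

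It follows that the fiber over $q_1$ is nonempty. The fixed-set lemma then supplies the coherent action. The component of $\mathcal M_{t_*}$ starting at the seed ends over $s=1$, and letting $t_*\to0$ and using compactness gives a limit in the hyperbolic component, which yields the connectivity claim.

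The main obstacle is making the perturbed zero set properly a manifold near $s=0,1$ and ensuring that $\mathcal M$ over the $t$-ball is compact. This means checking that compactness (item 5) applies uniformly to the enlarged smooth compact family, and that no new solutions appear near $s=1$ for small $t$ while $\phi$ is still nonzero. My first approach is to argue by contradiction with a sequence $t_i\to0$, using the compactness theorem for the family $q(s,t)$ to produce a limit over $s$ close to $1$ and hence an unperturbed solution near $q_1$. I would then shrink the support of $\phi$ so that it avoids a neighborhood of $s=1$ on which the unperturbed fiber is already empty.
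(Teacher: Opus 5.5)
Your existence argument is sound and follows the paper's route: argue by contradiction, get compactness of the unperturbed inverse image first, choose finitely many invariant obstruction directions, switch the perturbation off near both endpoints, then apply Sard and the parity count. The one genuine difference is the isolation step. You confine all small-parameter solutions to the chart neighborhood by applying the compactness theorem to the enlarged compact family $q(s,t)$, which makes the projection proper. The paper instead takes a relatively compact neighborhood $W$ of the exact fiber $K\times\{0\}$ inside the finite-dimensional manifold $\mathcal M$, and uses that $|\varrho|$ has a positive minimum on the compact frontier $\overline W\setminus W$. That needs only compactness of the unperturbed fiber and local compactness of $\mathcal M$. Both work. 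A minor imprecision: near $s=0$ the solutions are not "only the seed." Rather, uniqueness at $s=0$ plus compactness puts every solution with small $s$ on the regular branch through the seed.

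The gap is in the last assertion, that the filling lies in the hyperbolic component. You assumed the whole fiber of $\mathscr Q^+_\Gamma$ over $q_1$ is empty, so the contradiction yields only \emph{some} terminal filling. Your follow-up, that the component of $\mathcal M_{t_*}$ starting at the seed ends over $s=1$, uses a manifold you can no longer build once that hypothesis is dropped. You used emptiness to let $\phi$ vanish on a collar of $s=1$ while keeping transversality. If the terminal fiber contains degenerate fillings, the solution set over that collar, where the perturbation is switched off, need not be a manifold, so there is no arc whose endpoint you can follow. Letting $t_*\to0$ is also beside the point in your setup, since those endpoints already solve the unperturbed problem. If you instead kept $\phi(1)\neq0$, you would need a different argument: compactness over the enlarged family plus closedness of the component.

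The paper's fix is simpler. Let $X$ be the connected component of the hyperbolic class in $\mathscr Q^+_\Gamma$, and assume for contradiction that $X$ has no point over $q_1$. Components are closed, so $K=\{(z,s)\in X\times[0,1]: b(z)=q_s\}$ is still compact. Each connected chart ball projects to a connected set containing its central point in $X$, so it lies in $X$. The same parity contradiction then produces a terminal filling in $X$ directly.
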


\begin{proof}
The proof follows the mod-two degree principle: start with one
regular filling, enlarge the problem to make it transverse, and
observe that a compact one-manifold cannot have precisely one
boundary point.  The analytic work above is what allows that
simple principle to be used for actual Einstein fillings.

Let $X$ be the connected component of the hyperbolic marked filling
in $\mathscr Q^+_\Gamma$.  This component is closed. Suppose, for contradiction,
that there is no filling in $X$ with prescribed boundary metric
$q_1$.  Put
\[
 K=\{(z,s)\in X\times[0,1]:b(z)=q_s\}.
\]
The full inverse image over the path is compact in the actual
weighted quotient topology.  The fixed locus and the component
$X$ are closed, and the boundary map is continuous.  Therefore
$K$ is compact.  It is nonempty because it contains the hyperbolic
seed at time zero.

\paragraph{Protecting the endpoints.}
A perturbation will be introduced only in the interior of the
parameter interval.  We first isolate the unique hyperbolic seed
near $s=0$ and a putatively empty region near $s=1$.
Round-boundary rigidity gives exactly one marked filling at $s=0$,
the hyperbolic metric, and its invariant Jacobi space vanishes.
The fixed-gauge implicit-function theorem therefore gives a unique
regular local branch over $q_s$ near zero.  Local capture identifies
this with a neighborhood in the marked topology.  Compactness of
$K$ shows that every exact solution for all sufficiently small $s$
lies in this regular neighborhood: otherwise a sequence with
$s_i\to0$ would have a convergent subsequence to the unique seed,
contradicting the neighborhood condition.  The assumed absence of
time-one solutions, together with compactness, also gives an empty
terminal collar in $K$.

Choose $\varepsilon>0$ so that all points of $K$ with
$s\le3\varepsilon$ are regular and no points have
$s\ge1-3\varepsilon$.  Choose a smooth function
\[
 \chi:[0,1]\longrightarrow[0,1]
\]
which vanishes on neighborhoods of $0$ and $1$ and equals one on
$[2\varepsilon,1-2\varepsilon]$.

\paragraph{One finite family for the whole solution set.}
The obstruction directions chosen at different fillings must be
made into a single finite-dimensional parameter space.  This is
possible because the exact path inverse image is compact.  A
cutoff in $s$ keeps the two endpoint problems unchanged.
At every point of the compact middle part
\[
 K_{\rm mid}=K\cap\{2\varepsilon\le s\le1-2\varepsilon\},
\]
choose finitely many smooth invariant boundary tensors spanning its
invariant obstruction space.  Surjectivity of the coupled derivative
persists in a local gauge neighborhood, because it is a split
surjection with finite-dimensional kernel.  Finitely many such
neighborhoods cover $K_{\rm mid}$.  On each of these neighborhoods
the quantitative tensor estimates
\eqref{eq:uniform-gauge-quadratic}--\eqref{eq:uniform-gauge-derivative},
the finite-rank inverse \eqref{eq:uniform-augmented-inverse},
and the harmonic comparison estimate (HG.20) have constants fixed
at every prescribed finite order.  Shrinking the chosen finite cover
and taking the maximum gives one set of constants on it.  No
uniformity simultaneously in infinitely many derivative orders is
needed.  This is the exact point where compactness enters the
nonlinear estimates; the local estimates themselves do not prove
geometric compactness.

Let $\eta_1,\ldots,\eta_J$ span the union of their finite collections.
Use the single global boundary family
\begin{equation}\label{eq:finite-common-boundary-family}
 q_{s,p}=q_s+\chi(s)\sum_{j=1}^J p_j\eta_j,
 \qquad p\in P=\mathbb R^J.
\end{equation}
For a sufficiently small fixed parameter ball this is positive
definite and PSC: the original path is compact and PSC is open in
the $C^2$ topology.  It is jointly smooth and invariant.  The
perturbation vanishes on fixed collars of both endpoints.

At every point of $K$ the derivative of its local equation in
$(h,p)$ is onto.  On $K_{\rm mid}$ this follows from the selected
obstruction directions and $\chi=1$.  Elsewhere it follows from
invertibility of the invariant metric derivative, or there are no
points of $K$.  Since the metric derivative has index zero, the
full local equation in $(h,s,p)$ has index $J+1$.

\paragraph{The transverse solution manifold.}
The enlarged equation is surjective along the exact zero set.
The local implicit-function theorem gives finite-dimensional
solution manifolds, and the overlap theorem identifies them as
charts on one Hausdorff space of marked solutions.  Thus we obtain
an actual manifold.
Consider the set of actual solutions for the common family,
\[
 Z=\{(z,s,p)\in\mathscr Q^+_\Gamma\times[0,1]\times P:
                                  b(z)=q_{s,p}\}.
\]
At each point of $K\times\{0\}$, the local equation is a
submersion.  Its zero set is therefore a smooth
$(J+1)$-dimensional manifold.  Local capture gives a homeomorphism
onto an open neighborhood in $Z$; local invariant representatives
are justified by Lemma~\ref{lem:finite-local-invariance}.

Shrink each local finite-dimensional chart to a connected coordinate
ball (a half-ball at $s=0$).  Its projection to
$\mathscr Q^+_\Gamma$ is continuous and its image contains its
central point in $X$.  Therefore the entire image lies in the
connected component $X$. Extra finite boundary directions cause no problem:
the connected chart paths simply let the actual PSC boundary
metric vary within the same component.

By compactness choose finitely many of these chart images covering
$K\times\{0\}$, and let $\mathcal M$ be their union.  All charts
use the same family \eqref{eq:finite-common-boundary-family}.
Lemma~\ref{lem:finite-overlaps} gives smooth transitions and the
cocycle identities.  The topology is inherited from the Hausdorff
space $\mathscr Q^+\times[0,1]\times P$, so the resulting
manifold is Hausdorff.  A finite union of second-countable open
chart domains is second countable.  Thus $\mathcal M$ is an
ordinary smooth $(J+1)$-manifold, with boundary only at the genuine
time boundary.

At $s=0$ the local equation has invertible metric derivative and
$q_{0,p}=q_0$.  The boundary of $\mathcal M$ there consists, near
the exact fiber, of
\[
 \{(z_{\rm Hyp},0,p):p\text{ small}\}.
\]
There are no points with $s=1$: the boundary is unperturbed there
and the terminal fiber in $X$ was assumed empty.

\paragraph{Isolating the exact fiber.}
The compact solution set at zero auxiliary parameter has a
relatively compact neighborhood inside this finite-dimensional
manifold.  By choosing the auxiliary parameter sufficiently small,
we prevent new boundaries from appearing at the edge of that
neighborhood. 
Let
\[
 \varrho:\mathcal M\longrightarrow P,
 \qquad\varrho(z,s,p)=p.
\]
Because every chart lies in $X$ and the charts cover all exact
solutions in $X$ over the path,
\[
 \varrho^{-1}(0)=K\times\{0\}.
\]
Choose an open neighborhood $W$ of this compact fiber in
$\mathcal M$ whose closure is compact.  Such a neighborhood is
obtained by taking finitely many small coordinate balls with compact
closures.  The frontier
\[
 D=\overline W\setminus W
\]
is compact and disjoint from $\varrho^{-1}(0)$.  If $D$ is
nonempty, its continuous parameter norm has a positive minimum.
Choose $\delta>0$ so that $|\varrho|>2\delta$ on $D$ and so that
every $(z_{\rm Hyp},0,p)$ with $|p|<2\delta$ belongs to $W$.
If the frontier is empty only the latter condition is needed.

Observe compactness of the exact fiber and finite-dimensional
local compactness already isolate all sufficiently small parameter
fibers inside $W$.

\paragraph{Sard's theorem and the boundary count.}
A regular auxiliary parameter cuts the enlarged manifold down by
$J$ dimensions.  What remains is a compact one-manifold, with its
boundary controlled entirely by the two unperturbed endpoints.
When $J>0$, choose $p$ with $|p|<\delta$ which is a regular value
of $\varrho$ on a neighborhood of $\overline W$.
Ordinary finite-dimensional Sard applies: the dimension difference
is one, so $C^2$ regularity suffices.  On the boundary $s=0$ the
map is the identity parameter projection and is already a
submersion.  The set
\[
 L_p=\varrho^{-1}(p)\cap\overline W
\]
is a compact one-dimensional manifold.  It avoids $D$, hence has
no artificial boundary from $W$.  Its sole boundary point is
$(z_{\rm Hyp},0,p)$, and there are no time-one points.  This is
impossible: a compact one-dimensional manifold is a finite disjoint
union of circles and closed intervals, and its boundary has even
cardinality.  If $J=0$, the exact compact fiber itself is already
such a one-manifold with one boundary point, giving the same
contradiction without Sard.

The terminal fiber in $X$ is therefore nonempty.  For a terminal
representative $G$, membership in the fixed marked locus supplies
the coherent isometric action by
Lemma~\ref{lem:finite-fixed-closed}.  The original terminal boundary
problem is unchanged throughout: all finite perturbations vanish
at $s=1$.  
\end{proof}

The continuation argument rests on three ingredients: smooth
finite-dimensional solution charts, a Hausdorff topology of marked
Einstein fillings, and compactness of the inverse image of the
prescribed boundary path. Elliptic regularity makes the changes of
gauge smooth along the finite-dimensional families, while compactness
of marked isometries provides the global topological structure.
Together these properties yield the compact one-dimensional
cobordism used in the parity argument.

\section{Exact-path Fredholm parity}\label{sec:path-parity-proof}

For reference, we give a general form of the parity principle used
above.  Suppose a Fredholm map of index zero is followed along a
path in its target.  When the inverse image of the path is compact,
solutions cannot disappear without accounting for the boundary of
a one-dimensional cobordism.  Degenerate points can be handled by
adjoining finitely many auxiliary directions.

The compactness assumption is exactly
\eqref{eq:path-parity-compactness} below. The following theorem
states the abstract parity principle for a smooth Banach Fredholm map.
\begin{theorem}[Exact-path Fredholm parity]\label{thm:path-parity}
Let $X$ be a Hausdorff Banach manifold without boundary, let $B$ be a real Banach space, let $U\subset B$ be open, and let $F:X\to U$ be a $C^r$ Fredholm map of index zero with $r\ge2$.  Let $b:[0,1]\to U$ be $C^r$.  Assume
\begin{align}
F^{-1}(b([0,1]))&\text{ is compact},\label{eq:path-parity-compactness}\\
F^{-1}(b(0))&=\{x_1,\ldots,x_{2m+1}\},\qquad DF_{x_j}\text{ surjective for every }j.
\label{eq:path-parity-seed}
\end{align}
Then
\begin{equation}\label{eq:path-parity-conclusion}
F^{-1}(b(1))\neq\varnothing.
\end{equation}
\end{theorem}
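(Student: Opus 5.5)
The plan is to argue by contradiction, following the same finite-dimensional reduction and Sard argument used for Theorem~\ref{thm:finite-equivariant-continuation}, but now in the abstract Banach setting. Suppose $F^{-1}(b(1))=\varnothing$. Set $K=\{(x,s)\in X\times[0,1]:F(x)=b(s)\}$. The compactness hypothesis \eqref{eq:path-parity-compactness} implies that $K$ is compact. Indeed, a sequence $(x_i,s_i)$ in $K$ has a subsequence with $x_i\to x$ and $s_i\to s$, and continuity gives $F(x)=b(s)$.

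Consider the map $\widetilde F:X\times[0,1]\to B$ given by $\widetilde F(x,s)=F(x)-b(s)$. It is $C^r$ Fredholm of index one, since adding one parameter raises the index by one.

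\emph{Endpoint collars.} At each seed $x_j$ the derivative $DF_{x_j}$ is surjective of index zero, hence invertible. The implicit function theorem therefore gives unique regular branches $x_j(s)$ for $s\in[0,\varepsilon)$. Compactness of $K$ then shows that, for $3\varepsilon$ small, every point of $K$ with $s\le3\varepsilon$ lies on one of these $2m+1$ branches. This is proved by taking a limit of points escaping the branches: the limit must lie in $F^{-1}(b(0))$, hence equals some $x_j$, which contradicts local uniqueness. Likewise, the assumption $F^{-1}(b(1))=\varnothing$ together with compactness gives $K\cap\{s\ge1-3\varepsilon\}=\varnothing$.

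\emph{Finite stabilization.} At each point of $K_{\rm mid}=K\cap\{2\varepsilon\le s\le1-2\varepsilon\}$, the cokernel of $DF$ is finite-dimensional. Choose finitely many vectors $v_\ell\in B$ spanning a complement of the range. Surjectivity of $(u,p)\mapsto DF(u)+\sum p_\ell v_\ell$ is open in the point, so finitely many neighbourhoods cover $K_{\rm mid}$. Collect all the chosen vectors as $v_1,\dots,v_J$, and take a cutoff $\chi$ that vanishes near $s=0,1$ and equals one on $[2\varepsilon,1-2\varepsilon]$. Define
\[
\Psi(x,s,p)=F(x)-b(s)-\chi(s)\sum_{\ell=1}^Jp_\ell v_\ell .
\]
Then $D\Psi$ is onto at every point of $K\times\{0\}$: on the middle part by the choice of the $v_\ell$, and near $s=0$ because $DF$ is invertible there. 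Openness of surjectivity and compactness give a neighbourhood $\mathcal N$ of $K\times\{0\}$ in which $\mathcal M=\Psi^{-1}(0)\cap\mathcal N$ is a $C^r$ manifold of dimension $J+1$ with boundary only at $s=0$. This boundary is $\{(x_j(0),0,p)\}$ for each $j$, since $b(0)$ is unperturbed.

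\emph{Isolation and Sard.} This is the step I expect to need care, because $X$ is infinite-dimensional and local compactness is not automatic. The resolution is that $\mathcal M$ is finite-dimensional and $\varrho^{-1}(0)=K\times\{0\}$ is compact, where $\varrho(x,s,p)=p$. Choose a relatively compact open $W\supset K\times\{0\}$ in $\mathcal M$, built from finitely many closed coordinate balls. Its frontier $D$ is compact and disjoint from $\varrho^{-1}(0)$, so $|\varrho|>2\delta$ on $D$. Shrinking $\delta$ further ensures that the boundary points $(x_j(0),0,p)$ with $|p|<2\delta$ lie in $W$.

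Sard's theorem for the $C^r$ map $\varrho$ on the $(J+1)$-manifold, with $r\ge2>\max(0,1)$, gives a regular value $p$ with $|p|<\delta$. Regularity at the $s=0$ boundary is automatic, since there $\varrho$ is locally the identity projection. Then $L_p=\varrho^{-1}(p)\cap\overline W$ is a compact one-manifold. Its boundary consists of exactly the $2m+1$ points $(x_j(0),0,p)$, and no points occur at $s=1$ or on $D$. This contradicts the even count of boundary points of a compact one-manifold. When $J=0$, $K$ itself is already such a one-manifold, and the same contradiction follows without Sard. Hence $F^{-1}(b(1))\neq\varnothing$.
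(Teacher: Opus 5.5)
Your proposal is correct and follows essentially the same route as the paper: by contradiction, the exact solution set $K$ is compact, you add finitely many cokernel directions cut off near both endpoints, isolate the compact zero-parameter fiber in a relatively compact neighbourhood $W$ of the $(J+1)$-manifold, and then use Sard and the even boundary count of a compact one-manifold. The only differences are cosmetic. You isolate the endpoint collars through the regular branches $x_j(s)$ and use one common cutoff, as the paper does in its proof of Theorem~\ref{thm:finite-equivariant-continuation}, whereas the paper's proof of this theorem instead cuts off each local direction separately around the compact singular set $K_{\mathrm{sing}}$, whose time projection lies in a compact subinterval of $(0,1)$.
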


If the path were already transverse to $F$, its inverse image would
be a compact one-dimensional manifold.  An odd number of regular
solutions at the initial endpoint and no solutions at the final
endpoint would give a compact one-manifold with an odd number of
boundary points, which is impossible.  The proof below makes this
work without assuming transversality: finite-dimensional auxiliary
directions produce the required manifold, and compact isolation
ensures that artificial boundary points are not introduced.

\begin{proof}[Proof of Theorem~\ref{thm:path-parity}]
We make the parity argument rigorous by adjoining finitely many
directions near the compact set of critical solutions, then
choosing a regular finite-dimensional parameter.

Assume for contradiction that
\[
F^{-1}(b(1))=\varnothing.
\]
Define the exact solution set
\[
K:=
\{(x,t)\in X\times[0,1]:F(x)=b(t)\}.
\]
It is closed in
\[
F^{-1}(b([0,1]))\times[0,1]
\]
and is therefore compact.

\medskip\noindent
\textbf{Step 1. A finite-dimensional transverse thickening.}
\par\smallskip

Let
\[
K_{\rm sing}
:=
\{(x,t)\in K:DF_x\text{ is not surjective}\}.
\]
Since surjectivity is open in the space of Fredholm operators,
$K_{\rm sing}$ is compact.  It is disjoint from $t=0$, by
\eqref{eq:path-parity-seed}, and from $t=1$, because the terminal fiber
is empty.  Hence its time projection is contained in a compact
subinterval of $(0,1)$.

For each $(x,t)\in K_{\rm sing}$ choose a finite-dimensional subspace
$V_{x,t}\subset B$ satisfying
\[
\im DF_x+V_{x,t}=B.
\]
Choose a linear map
\[
A_{x,t}:\mathbb R^{k_{x,t}}\longrightarrow B
\]
with image $V_{x,t}$.  Then
\[
(DF_x,A_{x,t}):
T_xX\oplus\mathbb R^{k_{x,t}}
\longrightarrow B
\]
is a surjective Fredholm operator.  Its kernel is finite-dimensional
and therefore complemented, so it has a bounded right inverse.
Split surjectivity consequently persists in a neighborhood of
$(x,t)$.

Choose a smooth function $\chi_{x,t}$ supported in $(0,1)$ and equal
to one on a neighborhood of $t$.  Shrinking the corresponding
neighborhood in $K_{\rm sing}$ if necessary, assume its time projection
lies in the region where $\chi_{x,t}=1$.  Compactness of
$K_{\rm sing}$ gives finitely many such choices, indexed by
$i=1,\ldots,N$.

Set
\[
P:=\bigoplus_{i=1}^N\mathbb R^{k_i}\cong\mathbb R^k
\]
and
\[
A(t)p:=\sum_{i=1}^N\chi_i(t)A_ip_i.
\]
All $\chi_i$ vanish on fixed collars of $t=0$ and $t=1$.  Define
\begin{equation}\label{eq:parity-splitting}
H:X\times[0,1]\times P\longrightarrow B,
\qquad
H(x,t,p):=F(x)-b(t)-A(t)p.
\end{equation}
At every point of $K\times\{0\}$, the derivative of $H$ in the
$(x,p)$ variables is split-surjective: at singular points this follows
from the construction above, and at regular points $DF_x$ is already
surjective.  Hence $DH$ is split-surjective there.  Since $F$ has
Fredholm index zero,
\[
\operatorname{ind}DH=k+1.
\]

\medskip\noindent
\textbf{Step 2. A finite-dimensional manifold near the exact fiber.}
\par\smallskip

By openness of split surjectivity and the Banach submersion theorem,
there is an open neighborhood
\[
O\subset X\times[0,1]\times P
\]
of $K\times\{0\}$ such that
\[
\mathcal M:=H^{-1}(0)\cap O
\]
is a $C^r$ finite-dimensional manifold of dimension $k+1$.

At $t=0$ the perturbation vanishes.  Moreover
$DF_{x_j}$ is surjective and has Fredholm index zero, hence is an
isomorphism.  The submersion theorem for manifolds with boundary
therefore shows that the only boundary of $\mathcal M$ near the exact
fiber occurs at $t=0$.  Since $A(t)=0$ on a collar of $t=1$ and
$F^{-1}(b(1))=\varnothing$, $\mathcal M$ has no $t=1$ boundary points.

If $K_{\rm sing}=\varnothing$, take $P=\{0\}$.  Then $K$ itself is a
compact one-dimensional manifold whose boundary consists of the
$2m+1$ points
\[
(x_1,0),\ldots,(x_{2m+1},0)
\]
and no time-one points, which is impossible.  Hence suppose
$k>0$.

\medskip\noindent
\textbf{Step 3. Isolating the compact zero-parameter fiber.}
\par\smallskip

Let
\[
\rho:\mathcal M\longrightarrow P,
\qquad
\rho(x,t,p)=p.
\]
By construction,
\begin{equation}\label{eq:parity-frontier}
\rho^{-1}(0)=K\times\{0\}.
\end{equation}
In particular this fiber is compact.

Since $\mathcal M$ is finite-dimensional and Hausdorff, choose an open
neighborhood
\[
K\times\{0\}\subset W\Subset\mathcal M
\]
with compact closure.  Let
\[
D:=\overline W\setminus W.
\]
Then $D$ is compact and
\[
D\cap\rho^{-1}(0)=\varnothing.
\]
Therefore, after choosing any norm on the finite-dimensional space $P$,
there is $\delta>0$ such that
\begin{equation}\label{eq:parity-isolation}
\|\rho(z)\|>2\delta
\qquad(z\in D).
\end{equation}

Because the initial seed set is finite and $W$ contains every
$(x_j,0,0)$, after decreasing $\delta$ if necessary,
\[
(x_j,0,p)\in W
\qquad
\text{whenever }\|p\|<2\delta
\]
for every $j$.

\medskip\noindent
\textbf{Step 4. Sard and parity.}
\par\smallskip

Apply finite-dimensional Sard's theorem to $\rho$ on an open
neighborhood of the compact set $\overline W$ in $\mathcal M$.
Here
\[
\dim\mathcal M=k+1,
\qquad
\dim P=k,
\]
so the assumption $r\ge2$ is sufficient.

On the boundary $t=0$, the map $\rho$ is already a submersion:
because $DF_{x_j}$ is an isomorphism and $A=0$ near $t=0$, the local
solution set is
\[
\{(x_j,0,p):p\text{ small}\},
\]
with $\rho$ equal to the parameter projection.

Choose a regular value
\[
p\in P,
\qquad
\|p\|<\delta.
\]
Then
\[
L_p:=\rho^{-1}(p)\cap\overline W
\]
is compact.  By \eqref{eq:parity-isolation} it is disjoint from $D$,
so it is in fact contained in $W$ and has no artificial boundary from
$\partial W$.  It is a compact $C^r$ one-dimensional manifold.

Its boundary consists exactly of its points on the genuine
time-boundary $t=0$.  Since the perturbation vanishes there, these are
precisely
\[
(x_1,0,p),\ldots,(x_{2m+1},0,p).
\]
There are no time-one boundary points.  Thus $L_p$ has an odd number of
boundary points.

Every compact one-dimensional manifold is a finite disjoint union of
circles and closed intervals, and hence has an even number of boundary
points.  This contradiction proves
\[
F^{-1}(b(1))\neq\varnothing.
\]
\end{proof}

\section{Pathwise continuation and equivariant filling}\label{sec:filling}

The continuation theorem,
Theorem~\ref{thm:finite-equivariant-continuation}, is formulated
for a path of $\Gamma$-invariant metrics on $S^3$.  We must now produce such a
path from a conformal class on the quotient.  The needed fact
comes from three-dimensional positive-scalar-curvature geometry:
any two positive-scalar-curvature metrics on the spherical space
form can be connected through metrics of the same kind.  Lifting
the path preserves the action of $\Gamma$ automatically.

This gives an equivariant filling.  The only further geometric
requirement for the Yamabe test is a common interior fixed point
of its finite isometry group.

We record the path construction explicitly because the continuation
theorem requires a jointly smooth path, not merely a continuous
one.

\begin{lemma}[Invariant PSC path]\label{lem:invariant-psc-path}
Let $C$ be a conformal class on $M=S^3/\Gamma$ with $Y(M,C)>0$.  There exists a smooth path $h_s$, $s\in[0,1]$, of positive-scalar-curvature metrics on $M$ such that $h_0$ is quotient-round and $h_1\in C$.  Consequently
\begin{equation}\label{eq:lifted-psc-path}
q_s:=\pi^*h_s
\end{equation}
is a smooth compact family of $\Gamma$-invariant PSC metrics on $S^3$ joining the round metric to a representative of $\pi^*C$.
\end{lemma}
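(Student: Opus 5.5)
The plan is to invoke the Bamler--Kleiner contractibility theorem for the space of positive-scalar-curvature metrics on a closed three-manifold. Since $Y(M,C)>0$, the solution of the Yamabe problem supplies a smooth metric $h_1\in C$ of constant positive scalar curvature; alternatively, take the first eigenfunction $\phi>0$ of the conformal Laplacian of any representative and set $h_1=\phi^4h$. The quotient-round metric $h_0=h_{\rm rd}$ has scalar curvature $6$. Bamler--Kleiner \cite{BamlerKleiner2019} show that $\operatorname{Met}_{\rm PSC}(M)$ is contractible, and in particular path connected, in the $C^\infty$ topology. This gives a continuous path $\sigma:[0,1]\to\operatorname{Met}^\infty_{\rm PSC}(M)$ with $\sigma(0)=h_0$ and $\sigma(1)=h_1$.

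The continuation theorem needs a path that is jointly smooth in $(s,y)$, so the next step upgrades continuity to smoothness. Continuity into the Fr\'echet space gives, by compactness of $[0,1]$, a uniform positive lower bound $r_0$ for $R_{\sigma(s)}$ and uniform positivity of the metrics. Choose a fine partition $0=s_0<\dots<s_n=1$ so that consecutive metrics are $C^2$-close, within a tolerance for which every convex combination of two $C^2$-close PSC metrics keeps scalar curvature at least $r_0/2$. This works because $R$ is continuous in the $C^2$ topology and positive definiteness is convex. Replace $\sigma$ by the piecewise-linear interpolation of the $\sigma(s_j)$, reparametrized by a smooth monotone function $\lambda$ that is flat (all derivatives vanishing) at each $s_j$. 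The resulting path is jointly smooth, and each $h_s$ is a convex combination of two adjacent nodes, hence PSC. I expect this smoothing to be the only point needing care. The main check is that the $C^2$-closeness tolerance can be fixed uniformly along the compact image of $\sigma$, which follows from uniform continuity of $\sigma$ in $C^2$.

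Finally, pull back by the covering map: $q_s=\pi^*h_s$. Pullback by the local isometry $\pi$ preserves scalar curvature, so $R_{q_s}=R_{h_s}\circ\pi>0$. The family is jointly smooth because $\pi$ is a smooth local diffeomorphism. Each $q_s$ is $\Gamma$-invariant since $\pi\circ\gamma=\pi$ for all $\gamma\in\Gamma$. We also have $q_0=\pi^*h_{\rm rd}=g_{\rm rd}$ and $q_1=\pi^*h_1\in\pi^*C$. Compactness of the family in $\operatorname{Met}^{k,\alpha}_{\rm PSC}(S^3)$ follows from its continuity on $[0,1]$.
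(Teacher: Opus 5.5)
Your proposal is correct and follows the paper's proof essentially step for step. Both use a PSC representative $h_1\in C$, a continuous path from Bamler--Kleiner, a fine subdivision justified by $C^2$-openness of PSC on the compact image, affine segments between consecutive nodes, and pullback by the local isometry $\pi$. The only differences are cosmetic: your reparametrization, flat at the nodes, replaces the paper's separate corner-smoothing step, and your conformal-Laplacian eigenfunction $\phi^4h$ is a more elementary substitute for the Yamabe theorem in producing $h_1$.
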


\begin{proof}
The Yamabe problem first provides a positive-scalar-curvature
representative of the given class.  Contractibility of the
positive-scalar-curvature metric space provides a continuous path;
we then smooth that path without leaving the open PSC locus.

Since $Y(M,C)>0$, the Yamabe theorem gives a smooth representative
$h_1\in C$ with
\[
R_{h_1}>0.
\]
Let $h_0$ be the quotient-round metric.  By
\cite{BamlerKleiner2019}, the space
\[
\operatorname{Met}_{\rm PSC}(M)
\]
is contractible whenever it is nonempty.  Hence there is a continuous
path in $\operatorname{Met}_{\rm PSC}(M)$ joining $h_0$ to $h_1$.

The PSC condition is open in the $C^2$ topology.  More precisely, for
each metric $g$ in the compact image of the path, continuity of scalar
curvature as a map from $C^2$ metrics to $C^0$ functions, together
with $\min_M R_g>0$, gives a $C^2$-ball $U_g$ about $g$ whose closure
consists entirely of positive-definite metrics with positive scalar
curvature.  Shrinking the balls if necessary, we may take them to be
convex in the affine space of symmetric $2$-tensors.

A finite collection of these balls covers the compact image of the
path.  By the Lebesgue-number lemma, after subdividing $[0,1]$ into
finitely many intervals, the image of each subinterval lies in one of
these convex PSC neighborhoods.  On each subinterval, replace the
original path by the affine segment joining its endpoint metrics.
Convexity ensures that the resulting piecewise-smooth path remains in
the PSC locus.

Finally, smooth the finitely many parameter corners.  Since the PSC
locus is open, each corner has a convex $C^2$ neighborhood whose closure
still lies in the PSC locus; choosing the smoothing sufficiently
$C^2$-close to the piecewise-smooth path and leaving small neighborhoods
of $s=0$ and $s=1$ unchanged gives a smooth path
\[
h_s,\qquad 0\le s\le1,
\]
with the same endpoints and with
\[
R_{h_s}>0
\qquad\text{for every }s\in[0,1].
\]

Since $\pi:S^3\to M$ is a local isometry for the pullback metric,
\[
q_s:=\pi^*h_s
\]
is a smooth compact family of $\Gamma$-invariant PSC metrics on $S^3$,
joining the round metric to a representative of $\pi^*C$.
\end{proof}

The path is the last missing hypothesis in the continuation theorem.
We can therefore return from the analytic construction to the
geometric problem with which we began.

\begin{proposition}[Equivariant Poincar\'e--Einstein filling]
\label{prop:equivariant-filling}
Let $C$ be a smooth conformal class on $M=S^3/\Gamma$ with
$Y(M,C)>0$. There is a smooth complete PE metric $G$ on
$\operatorname{int}B^4$ such that
\begin{equation}\label{eq:equivariant-fill}
\Ric_G=-3G,\qquad \partial_\infty[G]=\pi^*C.
\end{equation}
It has a smoothly compactified representative and a coherent
isometric action
\begin{equation}\label{eq:coherent-action-map}
I:\Gamma\longrightarrow\operatorname{Isom}(B^4,G),
\qquad I_\gamma|_{S^3}=\gamma.
\end{equation}
The extension is unique for the marked filling. Its marked class
lies in the hyperbolic connected component of the fixed marked
solution space over PSC boundary metrics.
\end{proposition}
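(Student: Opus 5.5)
The plan is to assemble Proposition~\ref{prop:equivariant-filling} directly from Lemma~\ref{lem:invariant-psc-path} and Theorem~\ref{thm:finite-equivariant-continuation}, adding only the smoothness and uniqueness bookkeeping. Since $Y(M,C)>0$, Lemma~\ref{lem:invariant-psc-path} gives a jointly smooth path $h_s$ of PSC metrics on $M$, with $h_0$ quotient-round and $h_1\in C$. Its lift $q_s=\pi^*h_s$ is a smooth compact family of $\Gamma$-invariant PSC metrics on $S^3$ with $q_0=g_{\rm rd}$. These are exactly the hypotheses of Theorem~\ref{thm:finite-equivariant-continuation}. That theorem yields a complete PE metric $G$ with $\Ric_G=-3G$ and $\partial_\infty[G]=[q_1]=\pi^*C$, whose marked class lies in the hyperbolic component $X$ of $\mathscr Q^+_\Gamma$, together with a coherent isometric action extending $\gamma\mapsto\gamma|_{S^3}$. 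This gives \eqref{eq:equivariant-fill}, \eqref{eq:coherent-action-map}, and the component statement.

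Next comes regularity. The terminal representative lies in $\mathscr A^+$, so $G=B(q_1)+h$ with $h\in E_\infty$ of weight $5/2$. By Lemma~\ref{lem:prepared-weight-weak-compactification}, $x^2G$ is a classical $C^{2,\theta}$ compactification. Interior smoothness of $G$ follows from ellipticity of the gauged equation (Lemma~\ref{lem:hg-finite-reductions}). Because $q_1$ is smooth, the Chru\'sciel--Delay--Lee--Skinner argument recorded in Lemma~\ref{lem:hg-admissibility} produces a boundary-identity collar diffeomorphism, upgraded to $C^{3,\theta}$ and extended to the whole ball, after which the compactification is smooth. Conjugating the isometries $I_\gamma$ by this marked diffeomorphism keeps their boundary values equal to $\gamma$. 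Smoothness of the conjugated isometries up to the boundary then follows from naturality of the negative-scalar compactification (Lemma~\ref{lem:negative-scalar-compactification}) and the connection identity \eqref{eq:isometry-connection-control}, bootstrapped in boundary harmonic coordinates.

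Finally, uniqueness of the extension. Suppose $I_\gamma$ and $I'_\gamma$ both extend $\gamma$. Then $I_\gamma^{-1}I'_\gamma$ is a PE isometry that is the identity on $S^3$ and extends $C^1$ to the compactification. Lemma~\ref{lem:boundary-identity} makes it the identity, as in Lemma~\ref{lem:finite-fixed-closed}. The same rigidity gives the homomorphism property $I_\gamma I_\delta=I_{\gamma\delta}$, and injectivity of $I$ follows from faithfulness of the boundary action.

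I expect the main obstacle to be the passage from the weighted marked representative to an honestly smooth compactification while keeping the group action coherent. This step needs the isometries to be at least $C^1$ up to the boundary, which is required for boundary rigidity, and it requires the smoothing diffeomorphism to fix the boundary pointwise. Here I would lean on the collar-extension and $C^{3,\theta}$ upgrade already carried out in the compactness bridge, and check only that conjugation preserves all three properties: the marking, the boundary values $\gamma$, and the regularity of the conjugated maps.
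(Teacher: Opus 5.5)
Your proposal is correct and follows essentially the same route as the paper: lift the PSC path from Lemma~\ref{lem:invariant-psc-path}, apply Theorem~\ref{thm:finite-equivariant-continuation} to get a terminal marked filling in the hyperbolic component, take the coherent action together with its uniqueness and group law from boundary-identity rigidity (Lemmas~\ref{lem:finite-fixed-closed} and~\ref{lem:boundary-identity}), and obtain the smoothly compactified representative from classical boundary regularity plus the global collar extension, with smoothness of the compactified isometries coming from the connection equation. Your extra check is the bookkeeping the paper leaves implicit: conjugating by the boundary-fixing smoothing map preserves the marking, the boundary values $\gamma$, and the regularity of the $I_\gamma$.
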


\begin{proof}
Lemma~\ref{lem:invariant-psc-path} gives the smooth invariant PSC
path $q_s=\pi^*h_s$, with $q_0=g_{\rm rd}$ and $h_1\in C$.
Theorem~\ref{thm:finite-equivariant-continuation} supplies a terminal
marked filling with boundary representative $q_1$. Its exact path
inverse image is compact in the topology used for the finite
continuation charts by
Theorem~\ref{thm:actual-weighted-path-compactness}. 

Lemma~\ref{lem:finite-fixed-closed} gives the coherent action.
Every group law and uniqueness statement follows from
Lemma~\ref{lem:boundary-identity}: two extensions of the same
boundary transformation differ by a boundary-identity isometry.
Classical boundary regularity, followed by the admissible global
collar extension, supplies the smoothly compactified representative.
The compactified isometries are smooth by their connection equation.
This proves all the stated assertions.
\end{proof}

The test function will be built from the distance to an interior
point.  To make it $\Gamma$-invariant, we need that point to be
fixed by every element of the group.  A fixed-point theorem for
finite actions on acyclic four-manifolds applies once the group
contains a nontrivial normal $p$-subgroup.  The following elementary
observation reduces our situation to that theorem.

\begin{lemma}[Normal $p$-subgroup]\label{lem:normal-p-subgroup}
Every nontrivial finite group $\Gamma<SO(4)$ acting freely on $S^3$ contains a nontrivial normal $p$-subgroup for some prime $p$.
\end{lemma}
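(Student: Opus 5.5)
Since $\Gamma$ acts freely on $S^3$, every abelian subgroup is cyclic. For odd $p$ this follows because a free action forbids $\mathbb Z/p\times\mathbb Z/p$, and for $p=2$ the only element of order two is $-I$. I would use this either through the classical fact that groups with periodic cohomology have cyclic or generalized quaternion Sylow subgroups, or through the explicit list of finite subgroups of $SO(4)$ acting freely. The plan avoids the full classification and uses a short structural argument instead.

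\emph{Case 1: $|\Gamma|$ is even.} A free action means no nonidentity element has eigenvalue $1$. An element $\gamma$ of order two in $SO(4)$ has eigenvalues $\pm1$, so it must equal $-I$. Thus $\Gamma$ has a unique involution, namely $-I$, which is central. The subgroup $\{\pm I\}$ is then a nontrivial normal $2$-subgroup, and this case is finished.

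\emph{Case 2: $|\Gamma|$ is odd.} In this case every Sylow subgroup is cyclic, by the periodicity remark above. I would prove this directly. An abelian $p$-subgroup acting freely on $S^3$ decomposes $\mathbb C^2$ into characters, and freeness forces it to be cyclic. A $p$-group in which every abelian subgroup is cyclic is cyclic for odd $p$, by the standard theorem that a $p$-group with a unique subgroup of order $p$ is cyclic or generalized quaternion. A finite group all of whose Sylow subgroups are cyclic is metacyclic, by H\"older--Burnside--Zassenhaus. It has the form $\mathbb Z/m\rtimes\mathbb Z/n$ with $\gcd(m,n)=1$, and the cyclic factor $\mathbb Z/m$ is normal. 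If $m>1$, pick a prime $p\mid m$; the unique subgroup of order $p$ in the normal cyclic group $\mathbb Z/m$ is characteristic there, hence normal in $\Gamma$. If $m=1$, then $\Gamma=\mathbb Z/n$ is cyclic, and any prime-order subgroup is normal.

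\emph{Main obstacle.} The step I expect to be the real obstacle is keeping Case 2 elementary. Invoking Burnside's metacyclic theorem is the cleanest route. As a fallback, I would use the explicit list of free orthogonal actions, in which odd-order groups are cyclic or $\mathbb Z/m\rtimes\mathbb Z/n$ metacyclic. In every case the final step is simply to take a characteristic prime-order subgroup of a normal cyclic subgroup.
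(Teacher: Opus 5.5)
Your proof is correct. Your even-order case is exactly the paper's, but in the odd-order case you take a different route.

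The paper invokes Feit--Thompson to get solvability of $\Gamma$. It then observes that a minimal normal subgroup of a solvable group is elementary abelian, $(C_p)^r$. That argument uses nothing about the free action beyond $|\Gamma|$ being odd.

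You instead use freeness substantively, in three steps.
\begin{enumerate}
\item Complexifying $\mathbb R^4$ (or splitting it into rotation planes) shows every eigencharacter of an abelian subgroup is faithful, so every abelian subgroup embeds in $\mathbb C^*$ and is cyclic.
\item A subgroup of order $p$ in the center of a Sylow $p$-subgroup shows that the Sylow subgroup has a unique subgroup of order $p$. Otherwise the two generate $C_p\times C_p$. This one-line step is worth writing out. It makes the Sylow subgroup cyclic for odd $p$.
\item The H\"older--Burnside--Zassenhaus theorem then gives $\Gamma\cong\mathbb Z/m\rtimes\mathbb Z/n$ with $\mathbb Z/m$ normal and cyclic. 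Its prime-order subgroup is characteristic in $\mathbb Z/m$, hence normal in $\Gamma$.
\end{enumerate}

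Your route replaces Feit--Thompson with classical transfer-theoretic results on groups with cyclic Sylow subgroups; Burnside's normal $p$-complement theorem is what supplies solvability there. It also yields a sharper conclusion: a normal subgroup of prime order, indeed a metacyclic structure. The paper's version is shorter to state and applies verbatim to any group of odd order, at the cost of citing a far deeper theorem. The two arguments meet in one place: by your abelian-subgroup observation, the paper's $(C_p)^r$ is in fact cyclic of order $p$.
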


\begin{proof}
If $2\mid|\Gamma|$, Cauchy's theorem provides $a\in\Gamma$ of order $2$.  Since $a\in SO(4)$ and $a^2=\Id$, all eigenvalues are $\pm1$.  Freeness on $S^3$ excludes a $+1$ eigenvalue; hence $a=-\Id$.  Thus $-\Id$ is the unique involution, so $\langle-\Id\rangle\cong C_2$ is central and normal.

Assume $|\Gamma|$ is odd.  By Feit--Thompson \cite{FeitThompson1963}, $\Gamma$ is solvable.  Let $A\neq1$ be a minimal normal subgroup.  Then $[A,A]\triangleleft\Gamma$ and $[A,A]\subsetneq A$ by solvability, so minimality gives $[A,A]=1$.  Hence $A$ is abelian.  Each primary component of $A$ is characteristic in $A$ and therefore normal in $\Gamma$; minimality implies that $A$ is a $p$-group for one prime $p$.  The subgroup $A[p]:=\{a\in A:a^p=1\}$ is nontrivial and characteristic in $A$, hence equals $A$ by minimality.  Therefore $A\cong(C_p)^r$ and is a nontrivial normal $p$-subgroup.
\end{proof}

The normal subgroup is purely algebraic.  It now combines
with the smooth extension of the group action to give the required
geometric fixed point.

\begin{proposition}[Common fixed point]\label{prop:common-fixed-point}
Let $G$ be the PE metric furnished by Proposition~\ref{prop:equivariant-filling}.  Then
\begin{equation}\label{eq:common-fixed-point}
\operatorname{Fix}_{B^4}(\Gamma):=\{x\in B^4:I_\gamma(x)=x\ \forall\gamma\in\Gamma\}\neq\varnothing.
\end{equation}
\end{proposition}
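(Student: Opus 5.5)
The plan is to work with the smooth action of $\Gamma$ on the compact ball $B^4$ given by Proposition~\ref{prop:equivariant-filling}, pass to the nontrivial normal $p$-subgroup $A\triangleleft\Gamma$ of Lemma~\ref{lem:normal-p-subgroup}, and show that $A$ has exactly one fixed point. The compactified isometries $I_\gamma$ are smooth up to the boundary and restrict to the free linear action on $S^3$. By normality, each $I_\gamma$ maps $\operatorname{Fix}(A)$ to itself. So once $\operatorname{Fix}(A)$ is known to be a single point, that point is fixed by every $I_\gamma$, which proves \eqref{eq:common-fixed-point}.

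\emph{Step 1: nonemptiness and acyclicity.} We apply P.~A.~Smith theory to the finite $p$-group $A$ acting on the compact, contractible, finite-dimensional space $B^4$. It says that $\operatorname{Fix}(A)$ is nonempty and $\mathbb Z_p$-acyclic; in particular it is connected. For an elementary abelian $A\cong(C_p)^r$ we apply the cyclic case inductively: first to one $C_p$ factor, then to the remaining factors acting on the previous fixed set. That fixed set is again $\mathbb Z_p$-acyclic and finite-dimensional, so the induction continues. Since $A$ acts freely on $S^3$, the set $\operatorname{Fix}(A)$ misses $\partial B^4$. Being closed in the compact ball, it is therefore a compact subset of the interior. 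There it is the fixed set of a group of isometries of $G$, hence a compact totally geodesic submanifold of $\operatorname{int}B^4$ without boundary. Connectedness makes it a single closed manifold of one dimension $m$.

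\emph{Step 2: the dimension must be zero.} Let $x\in\operatorname{Fix}(A)$. Then $A$ acts faithfully and orthogonally on $T_xB^4$, and $T_x\operatorname{Fix}(A)$ is the fixed subspace. Faithfulness holds because an isometry fixing $x$ with trivial differential is the identity.
\begin{itemize}
\item If $p=2$, the first case of Lemma~\ref{lem:normal-p-subgroup} lets us take $A=\langle -\Id\rangle\cong C_2$. Then $\operatorname{Fix}(A)$ is a closed manifold, and a closed manifold of dimension $m\ge1$ has $H_m(\cdot;\mathbb Z_2)\cong\mathbb Z_2\ne0$. This contradicts $\mathbb Z_2$-acyclicity.
\item If $p$ is odd, the $I_\gamma$ preserve orientation, since they extend elements of $SO(4)$ and the extension is orientation preserving. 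A nontrivial element of odd order acts on the normal space by rotations through angles different from $\pi$. So the normal space carries an invariant complex structure, the normal bundle of $\operatorname{Fix}(A)$ is orientable, and $\operatorname{Fix}(A)$ is orientable of even dimension $m\in\{0,2\}$. The case $m=4$ is excluded by faithfulness.
\item For $m=2$, the closed orientable surface has $H_2(\cdot;\mathbb Z_p)\cong\mathbb Z_p\ne0$, again contradicting acyclicity.
\end{itemize}
Hence $m=0$. A connected compact $0$-manifold is a single point, so $\operatorname{Fix}(A)=\{x_0\}$, and normality finishes the argument as described above.

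I expect the main obstacle to be Step~1: applying Smith theory correctly to a non-cyclic elementary abelian $p$-group, and confirming that the fixed set contains no boundary points and no lower-dimensional strata. The latter follows from the $A$-action being by isometries and free on $S^3$. I would first try the inductive cyclic argument sketched above. If that proves delicate, I would use instead that each fixed set is a totally geodesic closed submanifold, and run the homological argument of Step~2 one cyclic factor at a time.
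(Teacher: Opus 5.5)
Your argument is correct, but it takes a different route from the paper. The paper first checks that the $I_\gamma$ extend smoothly to $\bar B^4$ and preserve orientation. It then restricts to $\operatorname{int}B^4\cong\mathbb R^4$ and quotes Guazzi--Mecchia--Zimmermann (Lemma~3(iii)): a finite group containing a nontrivial normal $p$-subgroup, acting on an acyclic four-manifold, has a global fixed point.

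You prove the case needed here directly. Smith theory on the compact ball gives $\operatorname{Fix}(A)\neq\varnothing$ and $\mathbb Z_p$-acyclic. Freeness of $\Gamma$ on $\partial B^4=S^3$ then makes $\operatorname{Fix}(A)$ a closed connected manifold in the interior, so acyclicity forces it to be a point. Normality of $A$ makes that point $\Gamma$-fixed.

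What your route buys is this. On the open ball alone, nothing excludes a two-dimensional fixed surface for $A$ (a noncompact acyclic surface, i.e.\ a plane). A general lemma of the GMZ type must therefore also handle the residual action of $\Gamma/A$ on that plane. Your use of the compactification and boundary freeness removes this case outright. It replaces the external citation by classical Smith theory, and it even shows that $\operatorname{Fix}(\Gamma)$ is a single point. Both arguments use the extension of the action to $\bar B^4$ (the paper only to read off orientation). Yours additionally uses freeness at the boundary, which the cited lemma does not need.

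Two minor points in Step~2:
\begin{itemize}
\item Orientation preservation is automatic for odd $p$, since an orthogonal map of odd order has determinant one.
\item For noncyclic $A\cong(C_p)^r$, a single element need not act without fixed vectors on the normal space of $\operatorname{Fix}(A)$. Evenness and orientability of the normal bundle instead come from the normal representation of $A$ having no trivial summand.
\end{itemize}
The cleanest fix for the second point is to run your dimension count for one cyclic subgroup $\langle a\rangle\le A$, obtaining $\operatorname{Fix}(a)=\{x_0\}$. Then $\varnothing\neq\operatorname{Fix}(A)\subseteq\operatorname{Fix}(a)$ gives $\operatorname{Fix}(A)=\{x_0\}$, and normality finishes as you describe.
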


\begin{proof}
Choose the smooth $\Gamma$-invariant representative $q_1$
of the conformal infinity furnished by the construction in
Proposition~\ref{prop:equivariant-filling}, and let $x$
be its geodesic defining function. Boundary regularity gives
a smooth geodesic compactification. Uniqueness gives
$x\circ I_\gamma=x$, since $I_\gamma$ is a $G$-isometry
and its boundary action preserves $q_1$.
Hence each $I_\gamma$ is an isometry of this smooth
compactification in a collar and extends smoothly to
$\bar B^4$. It preserves orientation: its boundary
tangent map is the prescribed orientation-preserving
$\gamma\in SO(4)$, and its induced map on the inward
normal quotient is positive.

Restrict the action to
\[
\operatorname{int}B^4\cong\mathbb R^4.
\]
This is an acyclic smooth four-manifold.  By
Lemma~\ref{lem:normal-p-subgroup}, $\Gamma$ contains a nontrivial
normal $p$-subgroup.  We can therefore apply
\cite[Lemma~3(iii)]{GuazziMecchiaZimmermann2011} to give
 a global fixed point for the $\Gamma$-action on
$\operatorname{int}B^4$.  In particular,
\[
\operatorname{Fix}_{B^4}(\Gamma)\neq\varnothing.
\]
\end{proof}

\section{Proof of the sharp Yamabe bound}\label{sec:sharp-test}

\subsection{The Li--Qing--Shi boundary test}

The analytic and topological work is finished.  We have an Einstein
filling with the prescribed boundary action and a common fixed point
$p$ in its interior.  The remaining argument uses the relation
between distance, hyperbolic volume growth, and the Yamabe quotient
on the boundary.

The central construction is due to Li, Qing, and Shi
\cite{LiQingShi2017}.  It is worth noticing that the same
\emph{renormalized distance} used earlier to exclude a boundary
bubble now has a constructive role.  There it yielded rigidity
when the boundary was Euclidean.  Here its boundary value gives a
test function with the sharp constant.  Choosing the base point
$p$ to be fixed makes this function invariant automatically.

We state the comparison in the normalization used throughout the
paper, including the area ratio and the symmetry assertion.

\begin{proposition}[Li--Qing--Shi boundary test in the PE case]
\label{prop:LQS}
Let $(\operatorname{int}B^4,G)$ be a complete $C^3$ conformally
compact Poincar\'e--Einstein manifold,
\[
\Ric_G=-3G,
\]
with conformal infinity $(S^3,[q])$ of positive Yamabe type.
Choose any $p\in\operatorname{int}B^4$.

Let $x$ be the geodesic defining function associated with $q$, and put
\[
r:=-\log(x/2),
\qquad
t:=d_G(p,\cdot).
\]
The function
\[
e^{(r-t)/2}
\]
has a Lipschitz boundary trace; denote this trace by
\[
w:=\left.e^{(r-t)/2}\right|_{S^3}.
\]
Then
\[
0\neq w\in H^1(S^3),
\]
and, if a finite group of $G$-isometries preserves $q$ and fixes $p$,
$w$ is invariant under its boundary action. If
\[
A:=\int_{S^3}w^6\,dV_q
\]
and
\[
\Theta
:=
\lim_{T\to\infty}
\frac{\Area_G S_G(p,T)}
{2\pi^2\sinh^3T},
\]
then
\begin{equation}\label{eq:LQS}
\mathcal Q_q(w)\le6A^{2/3},
\qquad
A=2\pi^2\Theta,
\qquad
0<\Theta\le1.
\end{equation}
Consequently
\[
\mathcal Q_q(w)\le\Lambda.
\]
\end{proposition}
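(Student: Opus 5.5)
The proof follows the template of Theorem~\ref{thm:boundary-rigidity}, now on a compact boundary $(S^3,q)$ rather than $\mathbb R^3$. Work in the geodesic compactification $G=x^{-2}(dx^2+q_x)$ supplied by $q$, with $q_x=q-x^2P_q+O(x^3)$. Set $F=e^{(r-t)/2}=(x/2)^{-1/2}e^{-t/2}$. Because $S^3$ is compact, the collar argument of Step~1 of that proof applies globally, with no need to localize to patches. In the thin collar the quantity $\phi=r'(\gamma)$ along minimizing geodesics from $p$ satisfies $\phi'=(1-\phi^2)(1+O(x))$, and the same computation gives $t-r=O(1)$ and $|d(t-r)|_{dx^2+q_x}\le C$. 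Hence $F$ extends as a positive Lipschitz function to $x=0$, its trace $w$ lies in $H^1(S^3)$ and is nonzero.

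Invariance is immediate. An isometry $I$ fixing $p$ and preserving $q$ satisfies $x\circ I=x$ by uniqueness of the geodesic defining function, and $t\circ I=t$. Therefore $F\circ I=F$, and on the boundary $w\circ I|_{S^3}=w$.

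For the differential inequality, use distributional Laplacian comparison $\Delta_Gt\le3\coth t$ on the complete Einstein manifold. This gives $(\Delta_G+\tfrac54)v\ge -3v^5/(1-v^4)$ for $v=e^{-t/2}$. Conjugating by $x^{1/2}$ as in \eqref{eq:br-F-pde} then produces an inequality whose zeroth-order term is $\tfrac{a}{2x}F$ with $a=\partial_x\log\sqrt{\det q_x}$. Now $q_x$ has the second coefficient $-2P_q$, so $a/x\to-\tr_qP_q=-R_q/4$ instead of vanishing. Apply the logarithmic averaging in $dx/x$ from Step~3, using the eikonal bound \eqref{eq:br-eikonal} to kill the normal term. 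The result should be $\Delta_qw-\tfrac18R_qw\ge-c\,w^5$ in distributions; the constant $c$ must be fixed by the normalization $r=-\log(x/2)$. Test against $w$ itself, which needs no cutoff on the compact boundary. The scale is chosen so that this yields
\[
\int_{S^3}\bigl(8|dw|^2+R_qw^2\bigr)\le 6\int w^6 .
\]
Hölder's inequality gives $\int w^6=A$, hence $\mathcal Q_q(w)\le 6A/A^{1/3}=6A^{2/3}$.

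For the mass identity, compute the volume of $\{t\le T\}$ in the collar, where $t=-\log(x/2)-2\log F$. The density is $x^{-4}\sqrt{\det q_x}$, and its leading term gives $\Vol B_G(p,T)\sim\tfrac{e^{3T}}{24}\int w^6\,dV_q$. The factor $2$ inside $r$ is what converts the constants here. Compare with $\Area S(p,T)\sim2\pi^2e^{3T}/8$ via the coarea formula, or equivalently with the ball volumes. This yields $A=2\pi^2\Theta$, with the limit existing by Bishop--Gromov monotonicity. Bishop--Gromov also gives $\Theta\le1$, and $A>0$ gives $\Theta>0$. Finally $6A^{2/3}\le6(2\pi^2)^{2/3}=\Lambda$.

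I expect the main obstacle to be the boundary inequality. The singular coefficients must average to exactly the conformal-Laplacian term $-\tfrac18R_qw$, with the constants $8$ and $6$ matching after the factor-$2$ rescaling. This requires controlling the $O(x)$ error in $a/x$ and the distributional cut-locus contribution uniformly. If averaging at this precision proves delicate, I would fall back on citing the corresponding Li--Qing--Shi computation \cite{LiQingShi2017} for the inequality and the volume asymptotics, and add only the invariance argument above.
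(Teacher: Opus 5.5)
Your proposal is correct, but it takes a different route from the paper. The paper's proof of Proposition~\ref{prop:LQS} imports three things from \cite{LiQingShi2017}: the Lipschitz trace, the boundary Yamabe comparison (their (4.7) and (4.10)--(4.12)), and the relative-area identity $\Theta=\Vol(S^3,w^4q)/2\pi^2$. It adds only Bishop--Gromov for $0<\Theta\le1$ and the same invariance argument you give. You instead re-derive those three inputs by transplanting Steps 1--4 of the proof of Theorem~\ref{thm:boundary-rigidity} to the compact boundary, and the details do close. In Step 1 the tangential Hessian error is actually $O(x^2)$, since $h_x'=-2xP_q+o(x)$; any error integrable against $dx/x$ suffices. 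In the averaging step the coefficient $a/(2x)$ converges uniformly to $-R_q/8$ instead of to zero, and this is exactly what produces the conformal Laplacian. The constant you left open is $c=3/4$. With $v=(x/2)^{1/2}F$ the right side becomes $-\tfrac34F^5/(1-x^2F^4/4)$, so the limit is $\Delta_qw-\tfrac18R_qw\ge-\tfrac34w^5$. Multiplying by $-8$ and testing against $w$ gives $\int(8|dw|^2+R_qw^2)\le6A$ with no further adjustment. For $A=2\pi^2\Theta$ you need the two-sided volume count; the rigidity proof used only the lower bound, but on compact $S^3$ the upper bound is immediate because $F\to w$ uniformly. A Stolz--Ces\`aro passage from the monotone area ratio to the ball-volume ratio then identifies the two normalizations of $\Theta$. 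Your route buys a self-contained proof, unified with Section~\ref{sec:boundary-rigidity}, in which every normalization can be checked. The citation buys brevity and coverage of the literal $C^3$ hypothesis. Your argument needs the expansion $q_x=q-x^2P_q+o(x^2)$ with $C^2$ control up to $x=0$, for the uniform limits of $a/x$ and of the tangential coefficients. That control is available for the smoothly compactified fillings to which the proposition is actually applied.
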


\begin{proof}
\emph{The boundary conformal factor.}
A Poincar\'e--Einstein metric satisfies
\[
\Ric_G=-3G,
\qquad
R_G=-12,
\]
so the curvature assumptions of the relative-volume theorem
\cite[Theorem~1.5]{LiQingShi2017} hold with equality.

We use the boundary conformal factor constructed in the proof of that
theorem.  With the normalization
\[
r=-\log(x/2),
\qquad
t=d_G(p,\cdot),
\]
their construction considers the interior function
\[
\psi:=e^{(r-t)/2}.
\]
It extends Lipschitzly to the conformal boundary, and its boundary trace
is precisely the function $w$ defined above.  The corresponding
boundary metric is
\[
\widetilde q=w^4q.
\]
Hence
\[
\Vol(S^3,\widetilde q)
=
\int_{S^3}w^6\,dV_q
=
A.
\]

The boundary Yamabe comparison in the proof of
\cite[Theorem~1.5]{LiQingShi2017}, beginning with their equation
(4.7) and completed by the estimates (4.10)--(4.12), gives in boundary
dimension three
\[
\int_{S^3}
\left(
8|dw|_q^2+R_qw^2
\right)dV_q
\le
6A.
\]
Dividing by
\[
\left(\int_{S^3}w^6\,dV_q\right)^{1/3}
=
A^{1/3}
\]
gives
\[
\mathcal Q_q(w)\le6A^{2/3}.
\]

\smallskip\noindent\emph{Volume comparison.}
The relative-area identity in the same proof gives
\[
\Theta
=
\frac{\Vol(S^3,\widetilde q)}
{\Vol(S^3,g_{\rm rd})}
=
\frac{A}{2\pi^2},
\]
and therefore
\[
A=2\pi^2\Theta.
\]
Since $w\not\equiv0$,
\[
A=\int_{S^3}w^6\,dV_q>0,
\]
and hence
\[
\Theta=\frac{A}{2\pi^2}>0.
\]
Bishop--Gromov comparison gives the upper bound
\[
\Theta\le1.
\]
Therefore
\[
0<\Theta\le1.
\]
It follows that
\[
\mathcal Q_q(w)
\le
6(2\pi^2)^{2/3}\Theta^{2/3}
=
\Lambda\Theta^{2/3}
\le
\Lambda.
\]

\smallskip\noindent\emph{Invariance under the group action.}
For the invariance assertion suppose the stated group action is present. Let
\[
I_\gamma\in\operatorname{Isom}(B^4,G)
\]
be the coherent extension of $\gamma\in\Gamma$.  Since $q$ is
$\Gamma$-invariant, $x\circ I_\gamma$ is a geodesic defining function
inducing the same boundary representative $q$.  Uniqueness of the
geodesic defining function gives
\[
x\circ I_\gamma=x.
\]
Since $p$ is fixed,
\[
t\circ I_\gamma
=
d_G(p,I_\gamma(\cdot))
=
d_G(I_\gamma^{-1}p,\cdot)
=
t.
\]
Hence
\[
w\circ\gamma=w
\]
on $S^3$.  Thus $w$ is $\Gamma$-invariant.
\end{proof}

We now return to the theorem stated at the beginning of the paper.
We have shown that the invariant test function reduces the remaining argument to the covering formula.

\begin{proof}[Proof of Theorem~\ref{thm:main}]
Fix a smooth conformal class $C$ on $M$.

If $Y(M,C)\le0$, then \eqref{eq:main-bound} follows from $N^{-2/3}\Lambda>0$.  Assume henceforth that
\begin{equation}\label{eq:positive-case}
Y(M,C)>0.
\end{equation}
Choose $h\in C$ with $R_h>0$, set
\begin{equation}\label{eq:lifted-boundary-metric}
q:=\pi^*h,
\end{equation}
and let $(B^4,G)$ be the equivariant PE filling given by Proposition~\ref{prop:equivariant-filling}.  By Proposition~\ref{prop:common-fixed-point}, choose
\begin{equation}\label{eq:fixed-basepoint}
p\in\operatorname{Fix}_{B^4}(\Gamma).
\end{equation}
Since $\Ric_G=-3G$, one has
\begin{equation}\label{eq:PE-curvature-exact}
\Ric_G=-3G,
\qquad
R_G=-12.
\end{equation}
Let $x$ be the geodesic defining function associated with the $\Gamma$-invariant representative $q$, and set
\begin{equation}\label{eq:r-t-definitions}
r:=-\log(x/2),
\qquad
t:=d_G(p,\cdot).
\end{equation}
For every $\gamma\in\Gamma$, uniqueness of the geodesic defining function and \eqref{eq:fixed-basepoint} imply
\begin{equation}\label{eq:r-t-invariance}
r\circ I_\gamma=r,
\qquad
t\circ I_\gamma=t.
\end{equation}
Thus Proposition~\ref{prop:LQS} gives a nonzero $\Gamma$-invariant function
\begin{equation}\label{eq:w-test}
w:=\left.e^{(r-t)/2}\right|_{S^3}\in H^1(S^3),
\qquad
\mathcal Q_q(w)\le\Lambda.
\end{equation}
Since $w\in H^1(S^3)$ is $\Gamma$-invariant, it descends through the
finite smooth covering
\[
\pi:S^3\longrightarrow M
\]
to a unique function
\[
\bar w\in H^1(M)
\]
satisfying
\begin{equation}\label{eq:w-descends}
w=\pi^*\bar w.
\end{equation}
Indeed, locally on an evenly covered coordinate neighborhood of $M$,
$\Gamma$-invariance identifies the Sobolev representatives on the
different sheets; these local representatives patch to $\bar w$.
Equivalently,
\[
H^1(M)\cong H^1(S^3)^\Gamma
\]
under pullback by the finite covering.
Using $q=\pi^*h$ and $\deg\pi=N$,
\begin{align}
\int_{S^3}\left(8|dw|_q^2+R_qw^2\right)dV_q
&=N\int_M\left(8|d\bar w|_h^2+R_h\bar w^2\right)dV_h,
\label{eq:numerator-covering}\\
\int_{S^3}|w|^6dV_q
&=N\int_M|\bar w|^6dV_h.
\label{eq:denominator-covering}
\end{align}
This is the point at which the order of the group enters the
estimate: the energy integral is multiplied by $N$, while the
normalizing $L^6$ integral is multiplied by $N^{1/3}$.  The quotient
therefore acquires the factor $N^{2/3}$.

Equations \eqref{eq:numerator-covering}--\eqref{eq:denominator-covering} imply
\begin{equation}\label{eq:covering-scaling}
\mathcal Q_q(w)=N^{2/3}\mathcal Q_h(\bar w).
\end{equation}
Therefore, by the definition of the full Yamabe infimum,
\begin{equation}\label{eq:terminal-chain}
Y(M,C)
\le\mathcal Q_h(\bar w)
=N^{-2/3}\mathcal Q_q(w)
\le N^{-2/3}\Lambda.
\end{equation}
The invariant test function constructed on the covering sphere
descends to an admissible function for the full Yamabe infimum on $M$.

\smallskip\noindent\emph{Equality in the quotient-round class.}
It remains to prove \eqref{eq:round-equality}.  Let $h_{\rm rd}$ be the quotient-round metric.  Since $R_{h_{\rm rd}}=6$ and $\Vol(M,h_{\rm rd})=\omega_3/N$, the constant test gives
\begin{equation}\label{eq:round-upper}
Y(M,[h_{\rm rd}])\le6(\omega_3/N)^{2/3}=N^{-2/3}\Lambda.
\end{equation}
Conversely, if $0\neq u\in H^1(M)$ and $v:=\pi^*u$, then
\begin{equation}\label{eq:round-lift-scaling}
\mathcal Q_{g_{\rm rd}}(v)=N^{2/3}\mathcal Q_{h_{\rm rd}}(u).
\end{equation}
Since $\mathcal Q_{g_{\rm rd}}(v)\ge\Lambda$, \eqref{eq:round-lift-scaling} implies $\mathcal Q_{h_{\rm rd}}(u)\ge N^{-2/3}\Lambda$.  Taking the infimum over $u\neq0$ and combining with \eqref{eq:round-upper} gives \eqref{eq:round-equality}.
Taking the supremum over all smooth conformal classes in
\eqref{eq:main-bound} and using the attained value
\eqref{eq:round-equality} gives \eqref{eq:main-sigma}.
\end{proof}

\providecommand{\bysame}{\leavevmode\hbox to3em{\hrulefill}\thinspace}
\providecommand{\MR}{\relax\ifhmode\unskip\space\fi MR }
\providecommand{\MRhref}[2]{%
  \href{http://www.ams.org/mathscinet-getitem?mr=#1}{#2}}
\providecommand{\href}[2]{#2}

\section*{Acknowledgments}

The author would like to express his gratitude to his advisor, Professor Xianzhe Dai, for suggesting this problem and for his mentorship and encouragement.

\end{document}